\definecolor{blue(pigment)}{rgb}{0.2, 0.2, 0.6}
\definecolor{ultramarine}{rgb}{0.07, 0.04, 0.56}
\definecolor{darkspringgreen}{rgb}{0.09, 0.45, 0.27}
\definecolor{hookersgreen}{rgb}{0.0, 0.44, 0.0}
\definecolor{plum(traditional)}{rgb}{0.56, 0.27, 0.52}
\definecolor{purple(html/css)}{rgb}{0.5, 0.0, 0.5}
\definecolor{magenta(dye)}{rgb}{0.79, 0.08, 0.48}
\numberwithin{equation}{section}
\newtheorem{theorem}{Theorem}[section]
\newtheorem{lemma}[theorem]{Lemma}
\newtheorem{statement}[theorem]{Proposition}
\newtheorem{definition}[theorem]{Definition}
\newtheorem{corollary}[theorem]{Corollary}
\theoremstyle{remark}
\newtheorem*{remark}{Remark}
\newcommand{\R}{\mathbb{R}}
\newcommand{\C}{\mathbb{C}}
\newcommand{\J}{\mathbb{J}}
\newcommand{\K}{\mathbb{K}}
\newcommand{\T}{\mathbb{T}}
\newcommand{\TJ}{\T_\J}
\newcommand{\TK}{\T_\K}
\newcommand{\Cond}{{\bf (C0)}}
\newcommand{\CondTwo}{{\bf (C1)}}
\newcommand{\X}{{\bf X}}
\newcommand{\A}{{\bf A}}
\newcommand{\OO}{{\bf O}}
\newcommand{\II}{{\bf I}}
\newcommand{\RR}{{\bf R}}
\newcommand{\V}{{\bf V}}
\newcommand{\Lam}{{\bf \Lambda}}
\newcommand{\TTT}{{\bf T}}
\newcommand{\y}{{\bf y}}
\newcommand{\rr}{{\bf r}}
\newcommand{\W}{{\bf W}}
\newcommand{\Y}{{\bf Y}}
\newcommand{\ve}{\varepsilon}
\newcommand{\tTj}{\widetilde T_{n}^{(j)}}
\def\rr{\mathtt{r}}
\DeclareMathOperator{\Tr}{Tr}
\DeclareMathOperator{\E}{\mathbb{E}}
\DeclareMathOperator{\Pb}{\mathbb{P}}
\DeclareMathOperator{\one}{\mathds{1}}
\DeclareMathOperator{\re}{Re}
\DeclareMathOperator{\imag}{Im}
\DeclareMathOperator{\supp}{supp}
\DeclareMathOperator{\eqdef}{\,\,\mathrel{\overset{\makebox[0pt]{\mbox{\normalfont\tiny\sffamily def}}}{=}}\,\,}
\DeclareMathOperator{\tow}{\,\,\mathrel{\overset{\makebox[0pt]{\mbox{\normalfont\tiny\sffamily w}}}{\longrightarrow}}\,\,}
\begin{document}

\vspace{1in}

\title[Local Laws for non-Hermitian random matrices]{\bf On  Local laws for non-Hermitian random matrices\\ and their products}

\author[F. G{\"o}tze]{F. G{\"o}tze}
\address{Friedrich G{\"o}tze\\
 Faculty of Mathematics\\
 Bielefeld University \\
 Bielefeld, Germany
}
\email{goetze@math.uni-bielefeld.de}

\author[A. Naumov]{A. Naumov}
\address{Alexey A. Naumov\\
 National Research University Higher School of Economics, Moscow, Russia
 }
\email{anaumov@hse.ru}

\author[A. Tikhomirov]{A. Tikhomirov}
\address{Alexander N. Tikhomirov\\
 Department of Mathematics\\
 Komi Science Center of Ural Division of RAS \\
 Syktyvkar, Russia; and National Research University Higher School of Economics, Moscow, Russia
 }
\email{tikhomirov@dm.komisc.ru}

%\thanks{}

\keywords{Random matrices, local circle law, product of non-Hermitian random matrices, Stieltjes transform, logarithmic potential, Stein's method}

\date{\today}

\begin{abstract}
The aim of this paper is to prove a local version of the circular law for non-Hermitian random matrices and its generalization to the product of non-Hermitian random matrices under weak moment conditions. More precisely we assume that the entries \(X_{jk}^{(q)}\) of non-Hermitian random matrices \(\X^{(q)}, 1 \le j,k \le n, q = 1, \ldots, m, m \geq 1\) are i.i.d. r.v. with \(\E X_{jk}^{(q)} =0, \E |X_{jk}^{(q)}|^2 = 1\) and \(\E |X_{jk}^{(q)}|^{4+\delta} < \infty\) for some \(\delta > 0\). It is shown that the local law holds on the optimal scale \(n^{-1+2a}, 0 < a < 1/2\), up to some logarithmic factor.  We further develop a Stein type method to estimate the perturbation of the equations for the Stieltjes transform of the limiting distribution. We also generalize the recent results~\cite{Bourgade2014a},~\cite{TaoVu2015a} and~\cite{nemish2017}. 
 
\end{abstract}

\maketitle

\section{Introduction and main result}
One of the main questions of the Random matrix theory (RMT) is to investigate the limiting behaviour of spectra of random matrices from different ensembles.  In the current paper we shall study  the case of products of non-Hermitian random matrices. More precisely, we consider a set of random non-Hermitian matrices 
\begin{eqnarray*}
\X^{(q)} = \big[X_{jk}^{(q)}\big]_{j,k=1}^n, q = 1, \ldots, m, \quad  m \in \mathbb N.
\end{eqnarray*}
Assume that \(X_{jk}^{(q)}, 1\le j,k \le n, q = 1, \ldots, m\), are independent random variables (r.v.) with zero mean. Note that the distribution of \(X_{jk}^{(q)}\) may depend on \(n\). Denote by \((\lambda_1(\X), ... , \lambda_n(\X))\) -- the eigenvalues of the matrix 
\begin{eqnarray*}
\X \eqdef \frac{1}{n^\frac m2} \prod_{q=1}^m \X^{(q)}.
\end{eqnarray*} 
For any set \( B \in \mathfrak B(\C) \) we introduce the counting function of the eigenvalues in \(B\):
\begin{eqnarray*}
N_B \eqdef N_B(\X) \eqdef \# \{1 \le k \le n: \lambda_k(\X) \in B\}. 
\end{eqnarray*}
It is also convenient to denote by \( \mu_n(\cdot)\) --  the empirical spectral distribution of \(\X\): 
\begin{eqnarray*}
\mu_n(B) \eqdef \frac{1}{n} N_B, \quad B \in \mathfrak B(\C).
\end{eqnarray*}
We first assume that \( m = 1\). 
Denote
\begin{eqnarray}
p^{(1)}(z) \eqdef \frac{1}{\pi} \one[|z|\le 1], \quad z \in \C,
\end{eqnarray}
and let \(A(\cdot)\) be the Lebesgue measure on \(\C\).  By \(\tow  \) we denote weak convergence of probability measures. We first assume that \( m = 1\). Then the following result is the well-known {\it circular law}. 
\begin{theorem}[Macroscopic circular law]\label{th: circular law global regime} Let \(X_{jk}, 1 \le j, k \le n\) be i.i.d. complex r.v. with \(\E X_{jk} = 0, \E |X_{jk}|^2 = 1\). Then \(\mu_n \tow \mu^{(1)}\) \it{a.s.} as \(n\) tends to infinity, where 
\begin{eqnarray*}
d\mu^{(1)}(z) =  p^{(1)}(z) dA(z).
\end{eqnarray*}
\end{theorem}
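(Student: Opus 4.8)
The plan is to establish the circular law via the Hermitization technique combined with the logarithmic potential approach, which is the standard route for non-Hermitian ensembles. First I would recall that a sequence of probability measures $\mu_n$ on $\C$ converges weakly to $\mu$ provided their logarithmic potentials
\begin{eqnarray*}
U_{\mu_n}(z) \eqdef -\int_\C \log|w - z| \, d\mu_n(w)
\end{eqnarray*}
converge for (Lebesgue-)almost every $z \in \C$ to $U_\mu(z)$, together with a tightness/uniform-integrability argument to upgrade almost-everywhere convergence of potentials to weak convergence of the measures. Since $U_{\mu_n}(z) = -\frac1n \log|\det(\X - z\II)|$, the problem is reduced to controlling, for fixed $z$, the empirical singular value distribution $\nu_n^z$ of the Hermitized matrix $\X - z\II$, because $U_{\mu_n}(z) = -\int_0^\infty \log x \, d\nu_n^z(x)$.

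The key steps, in order, would be: (i) for each fixed $z \in \C$, identify the limiting symmetrized singular value distribution $\nu^z$ of $\X - z\II$ — this is a standard computation via the Stieltjes transform of the $2n \times 2n$ Hermitization
\begin{eqnarray*}
\HH_z \eqdef \begin{pmatrix} 0 & \X - z\II \\ (\X - z\II)^* & 0 \end{pmatrix},
\end{eqnarray*}
whose Stieltjes transform satisfies a self-consistent (quadratic) equation in the limit, giving an explicit $\nu^z$ depending only on $|z|$; (ii) prove that $\nu_n^z \tow \nu^z$ almost surely, which by classical Hermitian RMT arguments (e.g. the method of moments or the Stieltjes transform method for sample-covariance-type matrices) follows from $\E X_{jk} = 0$, $\E|X_{jk}|^2 = 1$; (iii) verify that $\log(\cdot)$ is uniformly integrable against $\nu_n^z$ for almost every $z$, which controls both the singularity of $\log x$ at $x = 0$ (the smallest singular value of $\X - z\II$ must not be too small) and the tail at $x = \infty$ (operator-norm bound); (iv) conclude $U_{\mu_n}(z) \to U_{\mu^{(1)}}(z)$ a.s. for a.e. $z$, and identify $U_{\mu^{(1)}}$ with the logarithmic potential of $p^{(1)}(z)\,dA(z)$ by a direct calculation; (v) apply the lower-semicontinuity/tightness lemma linking convergence of logarithmic potentials to weak convergence of measures to finish.

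The hard part will be step (iii), and within it the control of the smallest singular value of $\X - z\II$: one needs $s_{\min}(\X - z\II) \ge n^{-C}$ with overwhelming probability (or at least the weaker statement that $\log s_{\min}$ is not too negative on average), since the logarithm is unbounded near zero and a single abnormally small singular value could destroy convergence of the potential. Under only second-moment assumptions this requires a genuine anti-concentration / small-ball argument for the columns of $\X - z\II$ against random hyperplanes (in the i.i.d. real or complex case this is classical, going back to the arguments of Tao–Vu and Rudelson–Vershynin, adapted to matrices with possibly $n$-dependent entry distributions by a truncation step). The tail control at infinity is comparatively routine: one truncates the entries, uses a standard bound on $\|\X\|$ after truncation, and handles the truncation error via Markov's inequality and Borel–Cantelli. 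Once these integrability inputs are in place, steps (i), (ii), (iv), (v) are standard, and the explicit identification in (iv) confirms that the limiting density is exactly $p^{(1)}(z) = \frac1\pi \one[|z| \le 1]$.
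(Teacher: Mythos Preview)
The paper does not actually prove Theorem~\ref{th: circular law global regime}; it is stated as a known background result, with the optimal second-moment version attributed to Tao and Vu~\cite{TaoVu2010a} (see the historical discussion immediately following the theorem). So there is no ``paper's own proof'' to compare against.

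That said, your outline is precisely the standard Girko hermitization scheme used in the cited literature (\cite{Girko1984}, \cite{GotTikh2010}, \cite{PanZhou2007}, \cite{TaoVu2010a}), and the paper itself employs the same skeleton in Section~3 for its local result: rewrite $U_{\mu_n}(z)$ as the log-moment of the symmetrized singular-value distribution of the Hermitization $\V(z)$, control the bulk of the integral via Stieltjes-transform convergence, and handle the edge contributions near $0$ and $\infty$ by bounding $s_{\min}$ from below and $\|\X\|$ from above. Your identification of the smallest-singular-value bound as the crux is exactly right.

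One point worth flagging: under \emph{only} the second-moment hypothesis, the Rudelson--Vershynin small-ball machinery you allude to does not directly deliver the needed $s_{\min}(\X - z\II) \ge n^{-C}$ bound uniformly; the Tao--Vu argument in~\cite{TaoVu2010a} proceeds instead through a replacement principle that compares to the Gaussian case after a truncation, combined with a more delicate treatment of the log-integrability (allowing a small exceptional set of $z$). If you intend to write out a full proof rather than a sketch, you would need either to invoke that replacement principle explicitly or to impose a slightly stronger moment condition to make the direct small-ball route go through.
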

The circular law was first proven by Ginibre~\cite{Ginibre1965} in 1965 in the case when \(X_{jk}\) are standard complex Gaussian r.v. His proof was based on the joint density of \((\lambda_1(\X), \ldots, \lambda_n(\X))\). If \(X_{jk}\) are complex (real) Gaussian r.v. we say that \(\X\) belongs to complex (resp. real) {\it Ginibre ensemble} of random matrices. Here we also refer to the book of M. Mehta~\cite{Mehta}. Later on the circular law was extended to more general classes of random entries by V. Girko~\cite{Girko1984}. Therefore the circular law is often referred to as the {\it Girko--Ginibre circular law}. It has been  further extended in a number of papers, for instance in~\cite{BaiSilv2010},~\cite{GotTikh2007},~\cite{GotTikh2010},~\cite{PanZhou2007},~\cite{TaoVu2007},~\cite{TaoVu2010a}. In particular,  F. G\"otze and A. Tikhomirov, see \cite{GotTikh2010}, established  the circular law under the assumption that \(\max_{j,k}\E |X_{jk}|^2 \log^{19+\eta}(1+|X_{jk}|) < \infty\) for any \(\eta > 0\). They also generalized it to the case of sparse random matrices as well. That is, let us define \(\X^\ve \eqdef (np)^{-1/2} [ X_{jk}^{(1)} \ve_{jk}]_{j,k=1}^n\), where \( \ve_{jk} \)  are i.i.d. Bernoulli r.v. with parameter \(p\). It follows from~\cite{GotTikh2010}, that one may take \( p \geq c \log n /n \) for some \(c > 0\). A result with optimal moment conditions, see Theorem~\ref{th: circular law global regime}, was established by T. Tao and V. Vu in~\cite{TaoVu2010a}. The progress made in \cite{GotTikh2010}, \cite{PanZhou2007}, \cite{TaoVu2007}, \cite{TaoVu2010a} was based on bounds for the least singular value of shifted matrices \( \X - z \II, z \in \C \), due to M. Rudelson and R. Vershynin, see e.g. \cite{RudVesh2008}. For a detailed account we refer the interested reader to the  overview~\cite{BordenaveChafai2014}.

In applications the case of the non-homogeneous circular law is of considerable interest, which means dropping the assumption of identical distribution of entries, while still assuming that \( \E |X_{jk}^{(1)}|^2 = \sigma_{jk}^2 \). In particular, the papers \cite{GotTikh2010}, \cite{TaoVu2010a} already deal with non i.i.d. entries but under the  additional assumption that all \( \sigma_{jk}^2 = 1\). An extended model  one would require some appropriate conditions on the matrix \( \Sigma \eqdef [\sigma_{jk}^2 ]_{j,k=1}^n\). For example, one may assume that \(\Sigma\) is doubly-stochastic. See e.g.~\cite{Adamczak2011}, \cite{Cook2017}, \cite{Erdos2017}.     

The circular law may be further generalized to the case of dependent r.v. The typical example here is the case of matrices from { \it Girko's elliptic ensemble}. Here the pairs \((X_{jk}, X_{kj}), 1 \le j < k < n\), are i.i.d. random vectors, and \(\E X_{jk} X_{kj} = \rho\), for some \(\rho: |\rho| \le 1\). The global limiting distribution for spectra of elliptic random matrices is given by a uniform law in the ellipsoid with the semi-axes equal to \(1+\rho\) and \(1-\rho\) resp. We refer the interested reader to the papers to~\cite{Girko1985},~\cite{Nau2013b},~\cite{ORourkeNguyen2012},\cite{GotNauTikh2013sing}. In the case \(\rho = 1\) we get Wigner's semicircle law, \cite{Wigner1955}. In the case \(\rho = 0\) and under additional assumption that \(\X\) belongs to Ginibre's ensemble we again arrive at the circular law. For other models of non-Hermitian random matrices with dependent entries, see for instance~\cite{Adamczak2011}, \cite{Bordenave2012}, \cite{Adamczak2016}.

The main emphasis of the current paper is concerned with the generalization of the circular law to the case of arbitrary \(m \geq 1\). We denote
\begin{eqnarray}
p^{(m)}(z) \eqdef \frac{1}{\pi m} |z|^{\frac2m - 2} \one[|z|\le 1].
\end{eqnarray}
It is straightforward to check that \( p^{(m)}(z)\) is the density of \(m\)-th power of a uniform distribution on the unit circle. 
We state the following theorem in the macroscopic scale.  
\begin{theorem}[Products of random matrices, macroscopic regime]\label{th: products global regime} 
	Let \(m \in \mathbb N\) and \(X_{jk}^{(q)}, 1 \le j, k \le n, q = 1, \ldots, m\), be i.i.d. complex r.v. with \(\E X_{jk}^{(q)} = 0, \E |X_{jk}^{(q)}|^2 = 1\). Then \(\mu_n \tow \mu^{(m)}\) \it{in probability} as \(n\) tends to infinity, where 
	\begin{eqnarray*}
	d\mu^{(m)}(A) = p^{(m)}(z)dA(z).
    \end{eqnarray*}	
\end{theorem}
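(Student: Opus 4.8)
The natural route is Girko's Hermitization via logarithmic potentials, which turns the $m$-fold product problem into a question about the singular values of the shifted matrices $\X-z\II$. For $z\in\C$ let $\nu_n(\cdot,z)$ denote the empirical spectral distribution of the positive semidefinite Hermitian matrix $(\X-z\II)(\X-z\II)^*$, and write the logarithmic potential of $\mu_n$ as
\begin{align*}
U_n(z)&\eqdef-\int_\C\log|z-w|\,d\mu_n(w)=-\frac1n\log\bigl|\det(\X-z\II)\bigr|\\
&=-\frac12\int_0^\infty\log x\,d\nu_n(x,z).
\end{align*}
Since a probability measure on $\C$ is determined by its logarithmic potential, and since Lebesgue-a.e.\ pointwise convergence of the $U_n$ together with tightness of $\{\mu_n\}$ implies $\mu_n\tow\mu^{(m)}$ (the standard inversion lemma; see Girko and e.g.\ \cite{TaoVu2010a,BordenaveChafai2014}), it suffices to prove that for a.e.\ $z\in\C$,
\[
U_n(z)\longrightarrow U^{(m)}(z)\eqdef-\int_\C\log|z-w|\,p^{(m)}(w)\,dA(w)\quad\text{in probability.}
\]
Tightness of $\{\mu_n\}$ is immediate from $\E\,\frac1n\Tr(\X-z\II)(\X-z\II)^*=1+|z|^2$ and Markov's inequality.

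\textbf{(a) The Hermitian limit.} First I would fix $z$ and identify the deterministic weak limit $\nu(\cdot,z)$ of $\nu_n(\cdot,z)$. The product structure is handled by \emph{linearization}: the nonzero eigenvalues of $(\X-z\II)(\X-z\II)^*$ coincide, with multiplicities, with those of an $mn\times mn$ Hermitian block matrix built from $\X^{(1)},\dots,\X^{(m)}$ and $z\II$ (equivalently one works with the $2mn\times2mn$ symmetrization). Expanding its resolvent by Schur complements over the $m$ diagonal blocks, and controlling the diagonal and off-diagonal corrections by martingale/concentration estimates — which hold here after a standard truncation of the entries at a level $n^{\theta}$, only two moments being available — yields a closed system of $m$ coupled self-consistent equations for the block Stieltjes transforms $m_1(\zeta,z),\dots,m_m(\zeta,z)$. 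Their unique solution determines $\nu(\cdot,z)$; for $z=0$ this is the symmetrized Fuss--Catalan law of parameter $m$, and for general $z$ the corresponding free multiplicative convolution. A direct computation of $-\tfrac12\int_0^\infty\log x\,d\nu(x,z)$ gives $-\log|z|$ for $|z|\ge1$ and $\tfrac m2\bigl(1-|z|^{2/m}\bigr)$ for $|z|<1$, which is precisely the logarithmic potential of the radial density $p^{(m)}$.

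\textbf{(b) Uniform integrability of the logarithm --- the main obstacle.} Weak convergence $\nu_n\to\nu$ does not suffice to pass to the limit in $\int_0^\infty\log x\,d\nu_n(x,z)$, since $\log$ is singular at both $0$ and $\infty$. The large-$x$ tail is harmless: $\int_K^\infty\log x\,d\nu_n(x,z)\le K^{-1}\log K\cdot\frac1n\Tr(\X-z\II)(\X-z\II)^*\to0$ as $K\to\infty$, uniformly in $n$ and in probability. The delicate point is the neighbourhood of $0$: one must show $\int_0^{\varepsilon}|\log x|\,d\nu_n(x,z)\to0$ as $\varepsilon\to0$, uniformly in $n$ and in probability, for a.e.\ $z$. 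This needs (i) a polynomial lower bound on the least singular value, $s_{\min}(\X-z\II)\ge n^{-A}$ with probability $\ge1-n^{-c}$ for a.e.\ fixed $z$, obtained by passing to the linearized block matrix and invoking least-singular-value anti-concentration bounds for shifted i.i.d.\ matrices in the spirit of Rudelson--Vershynin and Tao--Vu \cite{RudVesh2008,TaoVu2010a} (again after truncation and a replacement argument); and (ii) a bound on the number of small singular values, $\nu_n([0,t],z)\le C\,t^{\gamma}+o_{\Pb}(1)$ valid down to a polynomially small scale $t$, which follows from part~(a) together with the local boundedness of the density of $\nu$ and a resolvent estimate for $\nu_n$ at an imaginary part $\eta=n^{-c}$ (a weak form of a local law). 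Granting (i) and (ii): on the event $\{s_{\min}\ge n^{-A}\}$ the integral over $[0,n^{-A}]$ is empty, while $\int_{n^{-A}}^{\varepsilon}|\log x|\,d\nu_n(x,z)$ is controlled by decomposing $[n^{-A},\varepsilon]$ dyadically and using $\nu_n([0,2^{-k}],z)\lesssim2^{-k\gamma}$, whose contribution $\sum_k(k+1)2^{-k\gamma}$ is summable and hence small once $\varepsilon$ is small.

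\textbf{(c) Conclusion.} Combining (a) and (b), $U_n(z)\to U^{(m)}(z)$ in probability for a.e.\ $z\in\C$; together with tightness, the inversion lemma for logarithmic potentials yields $\mu_n\tow\mu^{(m)}$ in probability, which is the assertion. (Once the local law proved later in the paper is available, this macroscopic statement also follows by integrating it against smooth compactly supported test functions; but the Girko scheme above is self-contained modulo the two singular-value inputs.) The step I expect to be hardest is (b), and within it the least-singular-value bound for products of shifted i.i.d.\ matrices under only a second-moment assumption, together with making the count of small singular values quantitative enough --- down to a polynomially small scale --- for the dyadic estimate to close.
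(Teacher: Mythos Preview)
The paper does not prove Theorem~\ref{th: products global regime}; it is quoted as a known result, with a reference to G\"otze--Tikhomirov~\cite{GotTikh2011} and O'Rourke--Soshnikov~\cite{Soshnikov2011}. Your sketch via Girko's hermitization and logarithmic potentials is exactly the scheme those papers follow, so in that sense your proposal is correct and matches the literature the paper relies on.

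One structural difference is worth noting. You hermitize the product $\X-z\II$ first and then linearize the resulting singular-value problem. The cited proofs (and the paper's own treatment of the local law in Section~3) do it in the opposite order: they linearize first, passing to the $mn\times mn$ block matrix $\W$ whose $m$-th power has spectrum $\{\lambda_j(\X)\}$ with multiplicity $m$, observe that $\mu^{(m)}$ is the pushforward of the circular law $\mu^{(1)}$ under $z\mapsto z^m$, and then run Girko's scheme on the \emph{single} shifted matrix $\W-z\II$. This buys something concrete: the least-singular-value input (your step (b)(i)) is then needed for $\W-z\II$, a matrix with i.i.d.\ blocks and structural zero blocks, for which the Rudelson--Vershynin type bounds can be adapted directly (see~\cite{GotTikh2011}[Lemma~5.1], \cite{RS2011}). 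By contrast, a polynomial lower bound on $s_{\min}(\X-z\II)$ for the \emph{product} under only a second-moment hypothesis is not something you can quote off the shelf; in practice one reduces to the linearized form anyway. So the ``linearize first'' order of operations is not just cosmetic.

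One small correction to your closing parenthetical: the local law proved in this paper (Theorem~\ref{th:main res}) assumes $4+\delta$ moments via condition~\CondTwo, whereas Theorem~\ref{th: products global regime} is stated under a bare second-moment hypothesis. So the macroscopic statement does not follow from the paper's local law under the hypotheses as written; the second-moment case genuinely needs the separate argument you outline.
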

We refer here to the results of F. G\"otze and A. Tikhomirov~\cite{GotTikh2011} and S. O'Rourke and A. Soshnikov~\cite{Soshnikov2011}. For product of Girko's elliptic random matrices, see~\cite{Soshnikov2015} and~\cite{GotNauTikh2015prod}. 

The circular law and its generalisation to the product of random matrices are valid, in particular, for all circles \(B(z_0, r)\) with centre at \(z_0\) and  finite radius \(r > 0\) independent of \(n\). Such sets  typically contain a  macroscopically large number of eigenvalues, which means a number of order \(n\). In particular, the statement of Theorem~1.1--1.2 may be formulated as follows:
\begin{eqnarray}\label{for circular}
\frac{1}{nr^2}\E N_{B(z_0,r)}  = \frac{1}{r^2} \int_{B(z_0,r)} p^{(m)}(z) dA(z) + \frac{R_n}{r^2},
\end{eqnarray}
where 
\begin{eqnarray}\label{asymptotics}
\lim_{n \rightarrow \infty} R_n = 0. 
\end{eqnarray}
(similar statement  may be formulated for \( N_{B(z_0,r)} \)). Unfortunately for  smaller radius, when \(r\) tends to zero as \(n\) goes to infinity, the number of eigenvalues cease to be macroscopically large.  In this case it is essential to describe the second term in~\eqref{for circular}  more precisely, rather then~\eqref{asymptotics}. We say that the { \it local law} holds if the second term in~\eqref{for circular} tends to zero as \(r = r(n)\) tends to zero. The series of the results in that direction was recently proved by P. Bourgade, H.-T. Yau  and J. Yin~\cite{Bourgade2014a},~\cite{Bourgade2014b},~\cite{Yin2014c} and T. Tao and V. Vu~\cite{TaoVu2015a} in the case of \( m = 1\). They derived the local version of Theorem~\ref{th: circular law global regime} up to the optimal scale \(n^{-1+2a}, a > 0\).  In ~\cite{Bourgade2014a},~\cite{Bourgade2014b},~\cite{Yin2014c} the local circular law was proved under the assumption of sub-exponential tails for the distribution of entries (or assuming finite moments of all orders).  In \cite{TaoVu2015a} it was proved under similar assumptions by means of the so-called {\it fourth moment theorem}, which requires that the first four moments of \( X_{jk} \) match the corresponding moments of the standard Gaussian distribution. We also refer to the recent results~\cite{Erdos2017} and~\cite{xi2017}. The general case of \(m \ge 1\) was proved by Y. Nemish~\cite{nemish2017} who obtained a local version of Theorem~\ref{th: products global regime} under sub-exponential assumptions. For a more detailed discussion of these result, see  the next section after Theorem~\ref{th:main res}.

The aim of the current paper is to relax the above assumptions and prove local versions of Theorem~\ref{th: circular law global regime} and   Theorem~\ref{th: products global regime} under weak moment condition. More precisely we assume that \(4+\delta\) moments are finite for some \(\delta>0\). See the following section~\ref{sec: main result} for precise statements. This work continues the previous results of authors~\cite{GotzeNauTikh2015a},~\cite{GotzeNauTikh2015b}, where {\it the local semicircle law} for Hermitian random matrices was proved under similar moment conditions.

We continue to use Stein type  methods for the estimation of perturbations of the  equation for Stieltjes transforms of the limiting distribution, since it turn out to be very flexible and useful. In this context we provide a general result, i.e. Lemma~\ref{lem: T_n general lemma}, which may be of independent interest. In particular, as a consequence of this lemma one may derive   among others a Rosenthal type inequality for moments of linear forms (e.g.~\cite{Rosenthal1970}[Theorem~3] and~\cite{JohnSchecttmanZinn1985}[Inequality~(A)]), inequality for moments of quadratic forms (e.g.~\cite{GineLatalaZinn2000}[Proposition~2.4] or~\cite{GotTikh2003}[Lemma A.1]) with precise values of all constants involved. We also jointly apply the {\it additive descent} method introduced by  L.~Erd{\"o}s, B.~Schlein, H.-T.~Yau and et al., see \cite{ErdosSchleinYau2009}, \cite{ErdosSchleinYau2009b}, \cite{ErdosSchleinYau2010}, \cite{ErdKnowYauYin2012},~\cite{ErdKnowYauYin2013}, \cite{ErdKnowYauYin2013a}, \cite{LeeYin2014} among others, together with  multiplicative descent methods introduced in~\cite{Schlein2014} and further developed in~\cite{GotzeNauTikh2015a},~\cite{GotzeNauTikh2016a}. See Lemma~\ref{discent} for details. 

We finish this section discussing some related results. In particular, we have already mentioned the local semicircle law. Significant progress in studying the local semicircular law was made in a series of papers by L.~Erd{\"o}s, B.~Schlein, H.-T.~Yau and et al., \cite{ErdosSchleinYau2009}, \cite{ErdosSchleinYau2009b}, \cite{ErdosSchleinYau2010}, \cite{ErdKnowYauYin2012},~\cite{ErdKnowYauYin2013}, \cite{ErdKnowYauYin2013a}, \cite{LeeYin2014}. We also refer to the more recent results~\cite{Schlein2014},~\cite{GotzeNauTikh2015a},~\cite{GotzeNauTikh2015b}. An extension to the {\it elliptic} random matrix ensembles, which generalizes both ensembles considered above would be of interest.  This applies as well to local versions of the elliptic law and its extension to  products of such matrices. See~\cite{Soshnikov2015} and~\cite{GotNauTikh2015prod} for the limiting behaviour in the macroscopic regime.  In particular, it would be interesting to study the so-called {\it weak non-Hermicity limit}, i.e. the case \(\rho\) tends to one, see~\cite{Fyodorov1997} and recent result~\cite{AkCikVenker2016}.

\subsection{Notations}
\label{notations} 
Throughout the paper we will use the following notations. We assume that all random variables are defined on  a common probability space \((\Omega, \mathcal F, \Pb)\)  writing \(\E\) for the mathematical expectation with respect to \(\Pb\). We denote by \(\R\) and \(\C\) the set of all real and complex numbers. We also introduce \(\C^{+} \eqdef \{z \in \C: \imag  z \geq 0\}\).
\begin{enumerate}
	\item We denote by \(\one[A]\) the indicator function of the set \(A\).
	\item By \(C\) and \(c\) we denote some positive constants. If we write that \(C\) depends on \(\delta\) we mean that \(C = C(\delta, \mu_{4+\delta})\).
	\item For an arbitrary square matrix \(\A\) taking values in \(\C^{n \times n}\) (or \(\R^{n \times n}\)) we define the operator norm by \(\|\A\| \eqdef \sup_{x \in \R^n: \|x\| = 1} \|\A x\|_2\), where \(\|x\|_2 \eqdef (\sum_{j = 1}^n |x_j|^2)^{1/2}\). We use the Hilbert-Schmidt (Frobenius) norm given by \(\|\A\|_2 \eqdef \Tr^{1/2} \A \A^{*} = (\sum_{j,k = 1}^n |\A_{jk}|^2)^{1/2}\). 
	\item For a vector \( x = (x_1, \ldots, x_n)^\mathsf{T} \) we denote \( |x| \eqdef \max_{ 1 \le k \le n} |x_k|\). 
	\item For an arbitrary function from \(L_1(\C)\)-space we denote \(\|f\|_{L_1} \eqdef \int_{\C} |f(z)| \, dz\). 
	\item For an arbitrary function \(f\) we denote \( \|f\|\eqdef \sup_{z \in \C} |f(z)|\).   
	\item Define the Laplace operator in two dimensions as \(\Delta \eqdef \frac{\partial^2}{\partial x^2} + \frac{\partial^2}{\partial y^2}\).  
	\item We write \( f \sim g\) if there exist positive constants \(c_1, c_2\), such that \(c_1 |g| \le |f| \le c_2 |g| \). We shall often write\( f \lesssim g\) which mean that there exists positive constant \(c\) such that \(|f| \le c |g|\). 
\end{enumerate}

\section{Main result}\label{sec: main result}
Without loss of generality we will assume in what follows that \(\X^{(q)}\) are real non-symmetric matrices. Our results proven below apply to the case of complex matrices as well. Here we may additionally assume for simplicity that \(\re X_{jk}^{(q)}\) and \(\imag X_{jk}^{(q)}\) are independent r.v. for all \(1 \le j, k \le n, q = 1, \ldots, m\). 
Otherwise  one needs to extend the moment inequalities for linear and quadratic forms in complex r.v. (see~\cite{GotzeNauTikh2015a}[Theorem A.1-A.2]) to the case of dependent real and imaginary parts, the details of which we omit.  

We will often refer to the following conditions. 
\begin{definition}[Conditions \(\Cond\)]
We say that conditions \(\Cond\) hold if:
\begin{itemize}
	\item \quad \(X_{jk}^{(q)}, 1\le j, k\le n, q =1, \ldots, m\), are independent real random variables;
	\item \quad \(\E X_{jk}^{(q)} = a_{jk}^{(q)}, \, \E |X_{jk}^{(q)}|^2 = [\sigma_{jk}^{(q)}]^2 \);  
	\item \quad \(\max_{j,k,q, n}\E|X_{jk}^{(q)}|^{4+\delta} \eqdef \mu_{4+\delta} < \infty\) for some \(\delta > 0\) independent of \(n\); 
	\item \quad \(|a_{jk}^{(q)}| \le n^{-1 - \delta_0}, \, |1 - [\sigma_{jk}^{(q)}]^2| \le n^{-1-\delta_0}\) for some \( \delta_0 > 0 \) independent of \(n\).
\end{itemize}    	
\end{definition}
\begin{definition}[Conditions \(\CondTwo\)] We say that conditions \(\CondTwo\) hold if:
\begin{itemize}
	\item \quad \(\Cond\) hold;
	\item \quad There exists \( \phi = \phi(\delta) > 0 \)  such that \( |X_{jk}^{(q)}| \le D n^{1/2 - \phi} \) for all \( 1 \le j,k \le n\) and some \( D > 0\).  
\end{itemize}
Here one may take \( 0 < \phi \leq \delta/(2(4+\delta))\). 	
\end{definition}

Let \(f(z)\) be a smooth non-negative function with compact support, such that \(\|f\| \le C, \|f'\| \le n^C\) for some constant \(C\) independent of \(n\). Following\cite{Bourgade2014a}, we define for any \(a \in (0, 1/2)\) and \(z_0 \in \C\) the function \(f_{z_0}(z) \eqdef n^{2a} f((z-z_0)n^a)\) ( \(f_{z_0} \) is a smoothed delta-function at the point \(z_0\)). The main result of the current paper is the following theorem which provides a local version of  Theorem~\ref{th: circular law global regime} and   Theorem~\ref{th: products global regime}  under weak moment conditions \( \CondTwo\). 
\begin{theorem}[Local regime]\label{th:main res}
Assume that the conditions \(\CondTwo\) hold. Let \(z_0: ||z_0|-1| \geq \tau > 0\). Then for any \(Q > 0\) there exists constant \(c = c(\delta, \delta_0) > 0\) such that with probability at least \(1 - n^{-Q}\)
\begin{eqnarray}\label{eq: main result 0} 
\bigg|\frac{1}{n} \sum_{j=1}^n f_{z_0}(\lambda_j) - \int f_{z_0} (z) d\mu^{(m)}(z) \bigg| \le \frac{q(n) }{n^{1-2a}} \|\Delta f\|_{L^1},
\end{eqnarray}
where \(q(n) \le c \log^5 n\). 
\end{theorem}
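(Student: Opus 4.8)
\medskip
\noindent\emph{Proof strategy.} The plan is to run Girko's Hermitization scheme --- as carried out in \cite{Bourgade2014a} and \cite{TaoVu2015a} for $m=1$ and in \cite{nemish2017} for products --- but to feed every fluctuation estimate from the weak-moment toolbox of this paper, namely Lemma~\ref{lem: T_n general lemma} and the descent of Lemma~\ref{discent}, in place of sub-exponential concentration. Since $\Delta\log|\,\cdot\,|=2\pi\delta_0$ in the distributional sense, two integrations by parts give
\begin{equation*}
\frac1n\sum_{j=1}^{n}f_{z_0}(\lambda_j)-\int f_{z_0}(z)\,d\mu^{(m)}(z)=\frac{1}{2\pi}\int_{\C}\Delta f_{z_0}(z)\,\bigl[U_n(z)-U(z)\bigr]\,dA(z),
\end{equation*}
where $U_n(z)\eqdef\frac1n\sum_{j=1}^{n}\log|\lambda_j-z|=\frac1n\log|\det(\X-z\II)|$ is the logarithmic potential of $\mu_n$ and $U$ that of $\mu^{(m)}$. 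Since $\Delta f_{z_0}$ is supported in a disc of radius $\le Cn^{-a}$ about $z_0$, since $\|\Delta f_{z_0}\|_{L^1}=n^{2a}\|\Delta f\|_{L^1}$, and since $||z_0|-1|\ge\tau$ keeps $||z|-1|\ge\tau/2$ on that disc for large $n$, it suffices to prove the pointwise estimate
\begin{equation}\label{eq:sketch-potential}
\sup_{\,|z-z_0|\le Cn^{-a}}\bigl|U_n(z)-U(z)\bigr|\le\frac{c\log^{5}n}{n}
\end{equation}
with probability at least $1-n^{-Q}$; the displayed integral is then at most $\frac{c\log^{5}n}{2\pi n}\|\Delta f_{z_0}\|_{L^1}\le q(n)\,n^{-(1-2a)}\|\Delta f\|_{L^1}$ with $q(n)\le c\log^{5}n$, which is \eqref{eq: main result 0}.

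To establish \eqref{eq:sketch-potential} I would Hermitize. With $\X(z)\eqdef\X-z\II=n^{-m/2}\prod_{q=1}^{m}\X^{(q)}-z\II$ and $s_1(z)\ge\cdots\ge s_n(z)\ge0$ its singular values, one has $U_n(z)=\frac1n\sum_j\log s_j(z)$; following \cite{nemish2017}, linearizing the product reduces the study of the $s_j(z)$ to that of a Hermitian block matrix $\HH(z)$ of size $O(mn)$ built from $\X^{(1)},\dots,\X^{(m)}$ and $z$, whose resolvent $\RR(z,w)\eqdef(\HH(z)-w\II)^{-1}$ obeys a self-consistent (vector-valued when $m>1$) equation with deterministic limit. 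Writing $g_n(z,w)$ for the normalized trace of $\RR(z,w)$ --- equivalently, for the Stieltjes transform of the symmetrized singular-value distribution of $\X(z)$ --- and $g(z,w)$ for its limit, the standard integral representation of the logarithmic potential through the Stieltjes transform yields
\begin{equation*}
U_n(z)-U(z)=-\int_0^{\infty}\imag\bigl[g_n(z,iu)-g(z,iu)\bigr]\,du,
\end{equation*}
which I would split at $u_1\eqdef n^{-1+\gamma}$ and $u_2\eqdef n^{C_0}$, for a small $\gamma>0$ and a large $C_0$. On $[u_2,\infty)$ crude bounds on $\|\X(z)\|$ and $n^{-1}\|\X(z)\|_2^2$ (finite under \CondTwo) give $|g_n(z,iu)-g(z,iu)|=O(u^{-3})$, so this piece is $O(n^{-1})$. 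On $[u_1,u_2]$ the key input is the optimal-scale local law $|g_n(z,iu)-g(z,iu)|\le\log^{C}n/(nu)$ for $u\ge u_1$, giving $O\bigl(n^{-1}\log^{C}n\,\log(u_2/u_1)\bigr)=O(n^{-1}\log^{C+1}n)$. On $[0,u_1]$ one has $\int_0^{u_1}\imag g_n(z,iu)\,du=\frac1n\sum_j\log\sqrt{1+u_1^{2}/s_j(z)^{2}}$ up to negligible terms, while the analogous quantity for $g$ is $O(u_1^{2})$ because the limiting singular-value law has a spectral gap at $0$ of width $\ge c(\tau)$ when $||z|-1|\ge\tau/2$; here I would invoke (a) rigidity in that gap, a consequence of the local law, so that at most $\log^{C}n$ of the $s_j(z)$ fall below $c(\tau)$, and (b) the least singular value bound $s_n(z)\ge n^{-B}$ to control those finitely many small values, obtaining $O(n^{-1}\log^{C+1}n)$ for this piece. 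Summing the three contributions gives \eqref{eq:sketch-potential} for fixed $z$; uniformity on the disc follows by running this over an $n^{-D}$-net and interpolating with $|U_n(z)-U_n(z')|\lesssim n^{B}\log^{C}n\,|z-z'|$, which holds off the spectrum by the mesoscopic count $\#\{j:|\lambda_j-z|\le t\}\lesssim nt^{2}\log^{C}n$ and $\min_j|\lambda_j-z|\ge\sigma_{\min}(\X-z\II)\ge n^{-B}$, the last inequality also keeping the net points away from the eigenvalues.

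The heart of the argument, and the main obstacle, is the optimal-scale local law for $\HH(z)$ down to $\imag w\sim n^{-1+\gamma}$ under only $4+\delta$ moments; the boundedness $|X_{jk}^{(q)}|\le Dn^{1/2-\phi}$ granted by \CondTwo\ is what makes the sharp moment bounds effective, the general case of \Cond\ being reached by a separate truncation with error $O(n^{-c})$. One derives the self-consistent equation by the Schur complement and resolvent expansions; its perturbation is a combination of linear and quadratic forms in the bounded entries, whose moments must be estimated by Lemma~\ref{lem: T_n general lemma} rather than by Hanson--Wright or martingale inequalities. The resulting small error then has to be propagated from $u\sim1$ down to $u\sim n^{-1+\gamma}$ through the stability analysis of the self-consistent system --- this is exactly where the additive-plus-multiplicative descent of Lemma~\ref{discent} is used, and where the exponent $5$ in $\log^{5}n$ is produced. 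A secondary difficulty is the least singular value bound for $n^{-m/2}\prod_{q=1}^{m}\X^{(q)}-z\II$ under weak moments, for which one adapts Rudelson--Vershynin-type estimates, cf.~\cite{RudVesh2008}, to the product setting.
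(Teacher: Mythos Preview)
Your overall plan---Girko hermitization, linearization of the product, a local law for the resolvent of the Hermitized block matrix fed by the weak-moment Lemma~\ref{lem: T_n general lemma} and the descent Lemma~\ref{discent}, then integrating the Stieltjes transform to recover the logarithmic potential---is the right architecture and matches the paper. But the argument you give for the small-$u$ piece $\int_0^{u_1}\imag[g_n-g]$ has a real error: you claim that ``the limiting singular-value law has a spectral gap at $0$ of width $\ge c(\tau)$ when $||z|-1|\ge\tau/2$'', and then use rigidity in that gap. This is false for $|z|<1$. The support of the limiting symmetrized singular-value law is $[-\lambda_+,\lambda_+]$ when $|z|<1$ (see \eqref{G support}), with a bounded positive density at the origin; there is a gap only when $|z|>1$. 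Since the hypothesis $||z_0|-1|\ge\tau$ allows $|z_0|<1-\tau$, your treatment of $[0,u_1]$ collapses precisely in the bulk case: for $|z|<1$ one has $\int_0^{u_1}\imag g(z,iu)\,du\sim \pi g(z,0)\,u_1$ rather than $O(u_1^2)$, there is no rigidity keeping all but $\log^{C}n$ singular values above $c(\tau)$, and the difference $\int_0^{u_1}\imag(g_n-g)$ cannot be controlled by the gap mechanism you describe.

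The paper avoids this by not splitting the Stieltjes integral directly. Instead it passes through the Kolmogorov distance: after restricting to the good event $\Omega_n$ of \eqref{omega n def} one writes $|\overline U_n^{(r)}-U_{\mu^{(1)}}|\lesssim \Delta_n^*(z)\log^2 n$ via integration by parts of $\log|x|$, and then bounds $\Delta_n^*(z)$ by the smoothing inequality of Corollary~\ref{smoothing1}, which reduces everything to the local law bound $|\Lambda_n(z,w)|\le C\log^2 n/(nv)$ on the region $\mathcal D(z)$ of \eqref{D def}. This route is insensitive to whether the support of $G(z,\cdot)$ touches $0$ or not; the behaviour at the origin is absorbed into the endpoint terms $C_3 v+C_4\varepsilon^{3/2}$ of \eqref{eq: smoothing inequality 0} with $v_0=A_0 n^{-1}\log^2 n$. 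Two further structural differences: (i) the paper introduces a small random shift $r\zeta$ with $r=n^{-c\log n}$ to regularize $\log|\lambda_j-z|$ on the bad event $\Omega_n^c$, handling that piece by an $L^p$ moment bound plus Markov rather than by your net-plus-Lipschitz argument; (ii) the paper never claims a uniform pointwise bound \eqref{eq:sketch-potential} but controls the integral $\int|\Delta\widetilde f|\,|U_n^{(r)}-U_{\mu^{(1)}}|\,dA$ directly in $L^p$. Your net scheme is not wrong in spirit, but to make your route go through for $|z|<1$ you would in effect need a Kolmogorov-distance-type input anyway (e.g.\ to bound $\int h\,d(F_n-G)$ with $h(x)=\tfrac12\log(1+u_1^2/x^2)$ by $\|F_n-G\|_\infty\int|h'|$), which is exactly what the paper's smoothing inequality supplies.
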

An immediate corollary of the main theorem is the following statement. 
\begin{corollary}\label{cor 2.4} Assume that the conditions \( \Cond\) hold. Then the inequality~\eqref{eq: main result 0} holds with probability at least \(1 - n^{-c(\delta)}\), where \(c(\delta)\) is some positive constant. 
\end{corollary}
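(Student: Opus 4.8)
The plan is to deduce Corollary~\ref{cor 2.4} from Theorem~\ref{th:main res} by a \emph{truncation} argument: replace the entries $X_{jk}^{(q)}$ by bounded ones $\widehat X_{jk}^{(q)}$ obtained by cutting off at level $n^{1/2-\phi}$, apply Theorem~\ref{th:main res} to the truncated product, and then observe that on an event of probability $1-n^{-c(\delta)}$ the truncated and the original matrices are identical, so that their eigenvalues --- and hence the left-hand side of~\eqref{eq: main result 0} --- coincide there.

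Concretely, fix $\phi = \phi(\delta)$ with $0 < \phi \le \delta/(2(4+\delta))$ (for definiteness $\phi \eqdef \delta/(4(4+\delta))$), put $t_n \eqdef n^{1/2-\phi}$ and set
\[
\widehat X_{jk}^{(q)} \eqdef X_{jk}^{(q)}\,\one[|X_{jk}^{(q)}| \le t_n],\qquad \widehat\X^{(q)} \eqdef \big[\widehat X_{jk}^{(q)}\big]_{j,k=1}^n,\qquad \widehat\X \eqdef n^{-m/2}\prod_{q=1}^m \widehat\X^{(q)}.
\]
First I would verify that the $\widehat\X^{(q)}$ satisfy \CondTwo. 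Independence is clear, the cut-off gives $|\widehat X_{jk}^{(q)}| \le n^{1/2-\phi}$, and $\E|\widehat X_{jk}^{(q)}|^{4+\delta} \le \E|X_{jk}^{(q)}|^{4+\delta} \le \mu_{4+\delta}$. The mean and the second moment of $\widehat X_{jk}^{(q)}$ differ from those of $X_{jk}^{(q)}$ only by the tails $\E X_{jk}^{(q)}\one[|X_{jk}^{(q)}|>t_n]$ and $\E|X_{jk}^{(q)}|^2\one[|X_{jk}^{(q)}|>t_n]$, which are $O(t_n^{-(3+\delta)}\mu_{4+\delta})$ and $O(t_n^{-(2+\delta)}\mu_{4+\delta})$ respectively by the bound $|x|^p\one[|x|>t] \le |x|^{4+\delta}t^{-(4+\delta-p)}$; since $(1/2-\phi)(2+\delta) > 1$ for $\phi \le \delta/(2(4+\delta))$, both are $O(n^{-1-\delta_0'})$ for some $\delta_0' = \delta_0'(\delta,\delta_0) > 0$. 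Hence the $\widehat\X^{(q)}$ satisfy \Cond\ (and therefore \CondTwo) with $\delta_0$ replaced by $\delta_0'$; note that no recentring is needed, since \Cond\ already permits a mean of size $n^{-1-\delta_0'}$ and the limiting law $\mu^{(m)}$ is unchanged.

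Now apply Theorem~\ref{th:main res} to $\widehat\X$: for any $Q > 0$ there is $c = c(\delta,\delta_0') > 0$ such that with probability at least $1 - n^{-Q}$,
\[
\bigg|\frac1n\sum_{j=1}^n f_{z_0}(\lambda_j(\widehat\X)) - \int f_{z_0}(z)\,d\mu^{(m)}(z)\bigg| \le \frac{c\log^5 n}{n^{1-2a}}\,\|\Delta f\|_{L^1}.
\]
On the event $\Omega_0 \eqdef \{|X_{jk}^{(q)}| \le t_n \text{ for all } 1\le j,k\le n,\ 1\le q\le m\}$ we have $\widehat X_{jk}^{(q)} = X_{jk}^{(q)}$ for all $j,k,q$, hence $\widehat\X = \X$, so the displayed bound is precisely~\eqref{eq: main result 0} for $\X$ on $\Omega_0$. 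By the union bound and Markov's inequality,
\[
\Pb(\Omega_0^c) \le m n^2 \max_{j,k,q}\Pb\big(|X_{jk}^{(q)}| > t_n\big) \le m\,\mu_{4+\delta}\, n^{2 - (1/2-\phi)(4+\delta)} = m\,\mu_{4+\delta}\, n^{-(\delta/2 - \phi(4+\delta))},
\]
and $\delta/2 - \phi(4+\delta) > 0$ by the choice of $\phi$. Taking $Q$ at least $\delta/2-\phi(4+\delta)$ and intersecting with $\Omega_0$ gives~\eqref{eq: main result 0} for $\X$ with probability at least $1 - n^{-c(\delta)}$, where one may take $c(\delta) \eqdef \tfrac12(\delta/2 - \phi(4+\delta)) > 0$.

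There is no genuine obstacle: the entire analytic content is already carried by Theorem~\ref{th:main res}. The only point needing care is the bookkeeping --- tracking the degradation of $\delta_0$ to $\delta_0'$ under truncation, and choosing $\phi$ so as to meet simultaneously the constraint $\phi \le \delta/(2(4+\delta))$ required by \CondTwo\ and the positivity $\delta/2 - \phi(4+\delta) > 0$ needed to make $\Pb(\Omega_0^c)$ polynomially small.
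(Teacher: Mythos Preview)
Your approach is correct and is essentially the same as the paper's: truncate the entries at level $n^{1/2-\phi}$, observe that the truncated matrices satisfy \CondTwo, apply Theorem~\ref{th:main res}, and bound the probability that truncation changes any entry by Markov's inequality and a union bound. You have in fact supplied considerably more detail than the paper's own three-line proof---in particular the verification that the truncated means and variances still satisfy the $n^{-1-\delta_0'}$ bounds of \Cond, which the paper leaves implicit.
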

\begin{proof}[Proof of Corollary~\ref{cor 2.4}] Let \( \widehat \X^{(q)}\) be \(\X^{(q)}\) with \( X_{jk}^{(q)}\) replaced by \( X_{jk}^{(q)} \one[|X_{jk}^{(q)}| \le D n^{1/2-\phi'} ]\), where \(0 < \phi' < \phi\). Applying Markov's inequality we obtain
\begin{eqnarray*}
\Pb ( \X^{(q)} \neq \widehat \X^{(q)} ) \le \sum_{j,k =1}^n \Pb (|X_{jk}^{(q)}| \geq D n^{1/2-\phi'}) \le n^{-c(\delta)}. 
\end{eqnarray*}
This inequality implies the statement of Corollary~\ref{cor 2.4}.
\end{proof}

\begin{remark} 
It still remains one challenging open problem, namely extending the bounds to weaken the moment condition to \(\delta = 0\). Furthermore, it is not unlikely  that the power of the logarithmic factor in the upper bound for \( q(n) \) may be reduced.

It seems that the bound \(n^{-c(\delta)}\) of Corollary~\ref{cor 2.4} can not be improved in general. The main difficulty here is to estimate the least singular value, see~\eqref{smallest sing value}. The required bound should be faster than any polynomial. The proof of such bound is based on the result~\cite{RudVesh2008} and requires to control the largest singular value, see~\eqref{omega n def}. Unfortunately, this requires to assume high finite moments of matrix entries. Another way is to assume that the matrix entries have absolutely continuous and bounded densities, see~\cite{Erdos2017}.    	
	
It is possible to consider the case when \(z\) is near the edge of the unit circle and extend the results~\cite{Bourgade2014b}, \cite{Yin2014c}, but this topic leaves the scope of the current paper.  
\end{remark}

We finish this section comparing our result with~\cite{Bourgade2014a}  in the case \( m =1\) and~\cite{nemish2017} for \( m > 1\).  In these papers the authors assume instead of   condition \( (3)\) in \(\Cond\)  that the uniform sub-exponential decay condition is satisfied:
\begin{eqnarray*}
\exists\, \theta > 0: \, \max_{1 \le q \le m} \max_{1 \le j,k \le n} \Pb( |X_{jk}^{(q)}| \geq t)  \le \theta^{-1} e^{-t^\theta}. 
\end{eqnarray*}
They also extended the latter to the case of finite moments of all orders. Another difference is in the upper bound for \( q(n)\) in~\eqref{eq: main result 0}. It was proved that \( q(n) \le n^\ve \) for any small \(\ve > 0 \).  In the case \( m = 1\) in  the paper~\cite{Erdos2017} conditions \((1)\) and \((2)\) were replaced by assumption that \(X_{jk}^{(1)}\) may be non-i.i.d. and \( c_1  \le \E |X_{jk}^{(1)}|^2 \le c_2  \) for some \(c_1, c_2 > 0\), but one needs to assume that \( \E |X_{jk}^{(q)}|^l \le \mu_l < \infty \) for all \(l \in \mathbb N\) and \( X_{jk}^{(q)}\) have bounded density.

\section{Proof of Theorem~\ref{th:main res}}
\subsection{Linearization} We linearise the problem considering the following block matrix (see e.g.~\cite{Burda2010}): 
\begin{eqnarray*}
\W \eqdef \frac{1}{\sqrt n} \begin{bmatrix}
\OO& \X^{(1)} & \OO & \ldots & \OO\\
\OO & \OO &  \X^{(2)} & \ldots & \OO \\
\OO & \OO & \OO & \ldots & \X^{(m-1)} \\
\X^{(m)} &  \OO &  \OO & \ldots & \OO
\end{bmatrix}.
\end{eqnarray*}
It is straightforward to check that the eigenvalues of \( \W^m \) are \(\lambda_1(\X), \ldots, \lambda_n(\X)\) with multiplicity \(m\). Hence, the following identity holds: 
\begin{eqnarray}\label{linearization}
\frac{1}{n} \sum_{j=1}^n f_{z_0}(\lambda_j(\X)) - \int f_{z_0}(z) \mu^{(m)}(dz) &=& \frac{1}{nm} \sum_{j=1}^{nm} f_{z_0}(\lambda_j^m (\W)) - \int f_{z_0}(z^m) d\mu^{(1)}(z) \nonumber \\
&=& \frac{1}{nm} \sum_{j=1}^{nm} \widetilde f(\lambda_j(\W)) - \int \widetilde f(z) d\mu^{(1)}(z),
\end{eqnarray}
where \( \widetilde f(z) \eqdef f_{z_0}(z^m)\). 

Let us consider a r.v. \(\zeta\) uniformly distributed in the unit circle and independent of all other r.v. Then for any \(r>0\) the eigenvalues of \( \W - r \zeta \II \) are
\begin{eqnarray*}
\lambda_j(\W) - r \zeta. 
\end{eqnarray*}
We denote the counting measure of \(  \lambda_j(\W) - r \zeta, j = 1, \ldots, nm\), by \(\mu_n(r,\cdot)\). It follows that \( \mu_n^{(0)} = \mu_n\).  Since \(\| f'\| \le n^C\) we get the following bound
\begin{eqnarray*}
&&\frac{1}{n} \sum_{j=1}^n f_{z_0}(\lambda_j(\X)) - \int f_{z_0}(z) \mu^{(m)}(dz) = \\
&&\qquad\qquad\qquad= \frac{1}{nm} \sum_{j=1}^{nm} \widetilde f(\lambda_j(\W)-r\zeta) - \int \widetilde f(z) d\mu^{(1)}(z) + R_n(r),
\end{eqnarray*}
where \( |R_n(r)| \le r  n^C\). Choosing \(r\) small enough the term \(R_n(r)\) will be negligible. In what follows we assume that \( r \eqdef n^{-c\log n}\).  

Together with the eigenvalues of \(\W - r \zeta \II \) we will be also interested as well in the singular values of shifted matrices \(\W(z, r)\eqdef \W -r \zeta \II - z \II \), \(z \in \C\). Let \(s_j(z,r) \eqdef s_j(\W(z,r)), j = 1, \ldots, nm\),  be the singular values of \(\W(z,r)\) arranged in the non-increasing order, i.e. 
\begin{eqnarray*}
	s_1(z, r) \geq s_2(z, r) \geq \ldots \geq s_{nm}(z, r).
\end{eqnarray*}
We shall consider as well the following matrix
\begin{eqnarray*}
\V(z,r)\eqdef\begin{bmatrix}
\OO             & \W(z,r) \\
\W^{*}(z, r) & \OO
\end{bmatrix}.
\end{eqnarray*}
It is easy to check that \(\pm s_j(z,r), j = 1, \ldots, nm\), are the eigenvalues of \(\V(z,r)\). Introduce the empirical spectral distribution (ESD) of \(\V(z,r)\): 
\begin{eqnarray*}\label{F_n def lin} 
F_n(z, x, r) \eqdef \frac{1}{2nm} \sum_{j=1}^{nm} \one[s_j(z,r) \le x] + \frac{1}{2nm} \sum_{j=1}^{nm} \one[-s_j(z,r) \le x]. 
\end{eqnarray*}

\subsection{The logarithmic potential approach}
A common tool to deal with non-Hermitian random matrices is the logarithmic potential, which is defined as follows. Let \(\nu\) be an arbitrary (probability) measure on \(\C \). Then the logarithmic potential of \(\nu\) is given by
\begin{eqnarray*}
	U_{\nu}(z)\eqdef  -\int_{\C} \log|z - w| d\nu(w).
\end{eqnarray*}
For any \( f \in C_0^2 (\C ) \) we have
\begin{eqnarray}\label{green}
\int f(z) d\nu(z) = \frac{1}{2\pi} \int \Delta f(z) U_\nu (z) \, dA(z).
\end{eqnarray}
Applying~\eqref{green}, we obtain
\begin{eqnarray*}
\frac{1}{n} \sum_{j=1}^n f_{z_0}(\lambda_j(\X)) - \int f_{z_0}(z) d\mu^{(m)}(z) = \frac{1}{2\pi} \int \Delta \widetilde f(z) [U_n^{(r)}(z) - U_{\mu^{(1)}}(z)]\, dA(z) + R_n(r),
\end{eqnarray*} 
where \(U_n^{(r)}, U_{\mu^{(1)}}\) are the logarithmic potentials of \( \mu_n^{(r)}, \mu^{(1)}\) respectively. 
 
We observe that \(\widetilde f(z) = 0\) for all  \(z \in \mathcal M \eqdef \{ z: |z^m - z_0| < C n^{-a} \} \). 
For any \(z \in \mathcal M\) we introduce the following event
\begin{eqnarray}\label{omega n def}
\Omega_n \eqdef  \Omega_n(z) \eqdef \{\omega \in \Omega: s_{nm}(z,r) \geq n^{-c \log n}, \| \W \| \le K \}
\end{eqnarray}
for some large \(K\). It follows from~\cite{RudVesh2008} (see also~\cite{GotTikh2011}[Lemma 5.1], \cite{RS2011}[Theorem 31]) and~\cite{Vu2007} (see also~\cite{GotzeNauTikh2015b}[Lemma A.1]) that 
\begin{eqnarray}\label{smallest sing value}
\Pb(\Omega_n^c) \le n^{-c\log n}.
\end{eqnarray} 
We rewrite \(U_n^{(r)}(z)\) as follows
\begin{eqnarray*}
U_n^{(r)} &=& - \frac{1}{nm}\sum_{j=1}^{nm} \log |\lambda_j(\W) - r \zeta - z| \one[\Omega_n] -  \frac{1}{nm} \sum_{j=1}^{nm} \log |\lambda_j(\W) - r \zeta - z| \one[\Omega_n^c] \\
&\eqdef& \overline U_n^{(r)}(z) + \widehat U_n^{(r)}(z). 
\end{eqnarray*}
Let us investigate the difference \(  \overline U_n^{(r)}(z) - U_{\mu^{(1)}}(z) \). Following Girko~\cite{Girko1984} we use his {\it hermitization trick} and  rewrite \(U_{n}^{(r)}\) as the logarithmic moment of \(F_n(z,x,r)\): 
\begin{align*}
U_{n}^{(r)}(z) = - \frac{1}{nm}\log |\det \W(z,r)|  = - \frac{1}{2nm} \log |\det \V(z,r)| =  - \int_{-\infty}^\infty \log |x| \, d  F_n(z, x,r).
\end{align*} 
Moreover, it was proved in~\cite{GotTikh2010} that there exists a distribution function \(G(z,x)\) such that
\begin{eqnarray*}
U_{\mu^{(1)}} = - \int_{-\infty}^\infty \log |x| dG(z,x).
\end{eqnarray*}
These equations imply
\begin{eqnarray}\label{bound 10}
|\overline U_n^{(r)}(z) - U_{\mu^{(1)}}(z) | \le \mathcal I_1 + \mathcal I_2 + \mathcal I_3,
\end{eqnarray}
where
\begin{eqnarray*}
\mathcal I_1 &\eqdef& \bigg|\int_{|x| \le n^{-c \log n}} \log |x| \, d G(z,x) \bigg|, \quad \mathcal I_2 \eqdef \bigg|\int_{n^{-c\log n} \le |x| \le K} \log |x| \, d (F_n(z, x,r) - G(z,x,r) \bigg|, \\
\mathcal I_3 &\eqdef& \bigg|\int_{|x| \geq K} \log |x| \, d G(z,x) \bigg|.	
\end{eqnarray*}
We recall some properties of the limiting distribution and introduce additional notations.  Let us denote 
\(\alpha \eqdef \sqrt{1+8|z|^2}\). Define 
\begin{eqnarray*}
	w^2_{1,2} \eqdef \frac{(\alpha\pm3)^3}{8(\alpha\pm1)},
\end{eqnarray*}
and \(\lambda_{+} \eqdef |w_1|, \lambda_{-}\eqdef |w_2|\). Moreover, let
\begin{eqnarray}\label{G support}
\mathbb J(z) \eqdef 
\begin{cases}
x \in \R: x \in [-\lambda_{+}, -\lambda_{-}] \cup [\lambda_{-}, \lambda_{+}], & \text{ if } |z| > 1, \\
x \in \R: x \in [-\lambda_{+}, \lambda_{+}],  & \text{ if } |z| < 1.
\end{cases}
\end{eqnarray}
It is known (e.g.~\cite{GotTikh2010}) that \( \mathbb J(z)\) is the support of \(G(z,x)\).  Moreover, \(G(z,x)\) has an absolutely continues symmetric density \(g(z,x)\), which is bounded and at the endpoints \( \pm \lambda_\pm \) of the support \(\J(z)\) it behaves as follows \(g(z, x)\sim \sqrt{\gamma(x)}\), where
\begin{eqnarray*}
	\gamma(u) \eqdef 
	\begin{cases}
		\min(||u|- \lambda_{+}|, ||u| - \lambda_{-}|), & \text{ if } |z| > 1, \\
		||u|-\lambda_{+}|, & \text{ if } |z| < 1.
	\end{cases}
\end{eqnarray*}
Returning to \( \mathcal I_1 \) and \(\mathcal I_3\) we may conclude that
\begin{eqnarray}\label{bound 11}
	\mathcal I_1 \lesssim n^{-1} \quad \text{ and } \quad \mathcal I_3 = 0.
\end{eqnarray}
Let us  consider the second term \(\mathcal I_2\).  Applying integration by parts we obtain
\begin{eqnarray}\label{bound 12}
	\mathcal I_2 \lesssim \Delta_n^{*}(z,r) \, \log^2 n,
\end{eqnarray} 
where \(\Delta_n^{*}(z,r) \eqdef \sup_{x \in \R } |F_n(z,x,r) - G(z,x)|\).  It is easy to check that 
\begin{eqnarray*}
\Delta_n^{*}(z,r) \le \Delta_n^{*}(z,0) + C r. 
\end{eqnarray*}
We proceed by application of the smoothing inequality of Corollary~\ref{smoothing1}.  Let us denote \(\mathbb J_\ve \eqdef \{ x \in \mathbb J: \gamma(x) \geq \ve \} \) and introduce the following region in \( \C_{+}\): 
\begin{eqnarray}\label{D def}
\mathcal D(z) \eqdef \{w = u + i v \in \C^{+}: u \in \mathbb J_{\ve /2}(z), v_0/\sqrt{\gamma(u)} \le v \le V\}, 
\end{eqnarray} 
where 
\begin{eqnarray}\label{v_0}
v_0 \eqdef A_0 \, n^{-1}\log^2 n
\end{eqnarray} 
and \( V \geq 1, A_0 > 0 \) are some constants defined later in section~\ref{sec: general principe}. Denote the  Stieltjes transform of \(F_n(z,x) \eqdef F_n(z,x,0)\) by \(m_n(z,w)\).  It is known that under conditions of Theorem~\ref{th:main res} the Stieltjes transform \(m_n(z,w)\) converges a.s. to the Stieltjes transform \(s(z,w)\), which is a solution of the following cubic equation
\begin{eqnarray}\label{eq: s(z,w) equation}
s(z,w)=-\frac{w+s(z,w)}{(w+s(z,w))^2-|z|^2},
\end{eqnarray}
see, for instance,~\cite{GotTikh2010}. Moreover, \(s(z,w)\) is the Stieltjes transform of the distribution function \(G(z,x)\).  For  detailed properties of \(s(z,w)\) we refer to~\cite{Bourgade2014a}[Lemma 4.1, 4.2]. Let us denote 
\begin{eqnarray*}
	\Lambda_n(z, u+iv) \eqdef m_n(z, u + i v) - s(z, u + iv), 
\end{eqnarray*} 
We may conclude from Theorem~\ref{lem: all v} below that  there exists \( C > 0\) such that
\begin{eqnarray}\label{lambda bound}
\Pb \bigg( \bigcap_{z \in \mathcal M} \bigcap_{w \in \mathcal D} \bigg \{|\Lambda_n(z,w)| \le \frac{C \log^2 n}{nv} \bigg \} \bigg) \geq 1 - n^{-Q}.
\end{eqnarray} 
Applying the smoothing inequality, Corollary~\ref{smoothing1}, to \( \Delta_n^{*} \) we get the following bound
\begin{eqnarray}\label{eq: smoothing inequality 0}
\Delta_n^{*}(z,0) \le C_1\int_{-\infty}^\infty |\Lambda_n(z, u + i V)|\, du  + C_2 \sup_{x \in \mathbb J_{\varepsilon/2}} \bigg|  \int_{v'}^V \Lambda_n(z, x + i v) \, dv \bigg| +  C_3 \, v +  C_4 \, \varepsilon^\frac32,
\end{eqnarray}
\( v' = v_0/\sqrt{\gamma(x)}\). The proof of this inequality repeats the proof of its analogue in the case of the semi-circular law (see ~\cite{GotTikh2003}[Corollary~2.3]).  For the readers convenience we include the arguments in the appendix.  Let us take in this inequality \(\varepsilon \eqdef (2 v_0 a)^{2/3}\). Then \(C_3 v_0 + C_4 \varepsilon^{3/2} \le C n^{-1} \log^2 n\). 
It follows from~\eqref{bound 10},~\eqref{bound 11},~\eqref{bound 12} and~\eqref{eq: smoothing inequality 0} that 
\begin{eqnarray}\label{bound 13}
	\int |\Delta \widetilde f(z)| |\overline U_n^r(z) - U_\mu(z)|\, dA(z) &\le&  C_1 \log^2 n \int |\Delta \widetilde f(z)| \int_{-\infty}^\infty |\Lambda_n(z, u + i V)|\, du \, dA(z) \nonumber \\
	&+& C_2 \log^2 n \int |\Delta \widetilde f(z)|  \sup_{x \in \mathbb J_{\varepsilon/2}} \bigg|  \int_{v'}^V \Lambda_n(z, x + i v) \, dv \bigg|  \, dA(z) \nonumber \\
	&+& C_3 n^{-1} \log^4 n.
\end{eqnarray}
Inequality~\eqref{lambda bound} implies that with probability at least \(1 - n^{-Q} \)
\begin{eqnarray*}\label{eq: second integral 0}
\sup_{z \in \mathcal M} \sup_{x \in \mathbb J_{\varepsilon/2}} \bigg| \int_{v'}^V \Lambda_n(z, x + i v) \, dv \bigg| \lesssim   n^{-1} \log^3 n.
\end{eqnarray*}
Hence, 
\begin{eqnarray}\label{bound 14}
\int |\Delta \widetilde f(z)|  \sup_{x \in \mathbb J_{\varepsilon/2}} \bigg|  \int_{v'}^V \Lambda_n(z, x + i v) \, dv \bigg|  \, dA(z) \lesssim \|\Delta \widetilde f\|_{L_1}\, n^{-1} \log^3 n
\end{eqnarray}
with probability at least \(1 - n^{-Q} \).
We conclude from Lemma~\ref{lem: large v values} that
\begin{eqnarray*}\label{eq: estimate 2 assumption}
\E^\frac{1}{p}|\Lambda_n(z, u + i V)|^p \le \frac{Cp |s(z, u+iV)|^\frac{p+1}{p}}{n},
\end{eqnarray*}
which holds for all \(w = u + i V, u \in \R\). Hence,
\begin{eqnarray*}
	\E^\frac1p \bigg[ \int_{-\infty}^\infty |\Lambda_n(z, u + i V)|\, du \bigg ]^p &\le&
	\int_{-\infty}^\infty \E^\frac{1}{p}|\Lambda_n(z, u + i V)|^p\, du \\
	&\le& \frac{C p}{n} \int_{-\infty}^\infty \int_{-\infty}^\infty \frac{d u \,d G(z, x)}{((x-u)^2 + V^2)^\frac{p+1}{p}} \lesssim n^{-1} \log^2n.
\end{eqnarray*}
It is straightforward to check that
\begin{eqnarray*}
\E \bigg[ \int |\Delta \widetilde f(z)| \int_{-\infty}^\infty |\Lambda_n(z, u + i V)|\, du \, dA(z) \bigg]^p &\le& \|\Delta \widetilde f\|_{L_1}^p  \sup_{z \in \mathcal M} \E \bigg[ \int_{-\infty}^\infty |\Lambda_n(z, u + i V)|\, du \bigg ]^p \nonumber \\
&\le & \|\Delta \widetilde f\|_{L_1}^p  \, n^{-p} \log^{2p} n.
\end{eqnarray*}
Markov's inequality implies that with probability at least \(1 - n^{-Q}\)
\begin{eqnarray}\label{bound 15}
\int |\Delta \widetilde f(z)| \int_{-\infty}^\infty |\Lambda_n(z, u + i V)|\, du \, dA(z) \lesssim \|\Delta \widetilde f\|_{L_1} \, n^{-1} \log^2 n.
\end{eqnarray}
Combining now~\eqref{bound 13},~\eqref{bound 14} and \eqref{bound 15} we conclude that with probability at least \(1 - n^{-Q}\)
\begin{eqnarray}\label{bound 20}
\int |\Delta \widetilde f(z)| |\overline U_n^{(r)}(z) - U_{\mu^{(1)}}(z)|\, dA(z)  \lesssim \|\Delta \widetilde f\|_{L_1}  \, n^{-1} \log^5 n.
\end{eqnarray}
It remains to estimate
\begin{eqnarray*}
\int |\Delta \widetilde f(z)| |\widehat U_n^{(r)}(z)|\, dA(z).
\end{eqnarray*}
Let us consider \(\widehat U_n^{(r)}(z) \). We get
\begin{eqnarray*}
\E |\widehat U_n^{(r)}(z)|^p &\le&   \frac{1}{nm}\sum_{j=1}^{nm} \E^\frac12 \bigg[ \log^{2p} | \lambda_j - r \zeta - z |\bigg]\, \Pb^\frac12(\Omega_n^c).
\end{eqnarray*}
We fix \(j = 1, \ldots, nm\) and write
\begin{eqnarray*}
\frac{1}{2\pi} \int_{|\zeta|\le 1}\log^{2p} | \lambda_j - r \zeta - z |\, d\zeta  \le J_1 + J_2 + J_3,
\end{eqnarray*}
where 
\begin{eqnarray*}
J_1 & = &\frac{1}{2\pi}  \int_{|\zeta| \le 1, |\lambda_j - r \zeta - z | \le \ve} \log^{2p} | \lambda_j - r \zeta - z | \, d \zeta, \\
J_2 & = & \frac{1}{2\pi} \int_{|\zeta| \le 1, \ve < |\lambda_j - r \zeta - z | \le 1/\ve} \log^{2p} | \lambda_j - r \zeta - z | \, d \zeta, \\
J_3 & = & \frac{1}{2\pi} \int_{|\zeta| \le 1, |\lambda_j - r \zeta - z | \geq 1/\ve} \log^{2p} | \lambda_j - r \zeta - z | \, d \zeta,
\end{eqnarray*}
It is easy to see that
\begin{eqnarray*}
J_2 \le \log^p (1/\ve). 
\end{eqnarray*}
To estimate \( J_1\) we first note that for any \( b > 0\), the function \( - u^b \log u \) is not decreasing on the interval \(0 < u < e^{-1/b}\). Hence, for any \( 0 < u \le \ve < e^{-1/b}\) we obtain
\begin{eqnarray*}
- \log u \le \ve^b u^{-b} \log (1/\ve).
\end{eqnarray*} 
We take \(b\) such that \(b p = 1\). Then
\begin{eqnarray*}
J_1 \le \frac{1}{2\pi r^2} \ve^{bp}  \log^p (1/\ve) \int_{|\zeta| \le \ve} |\zeta|^{-bp} \, d\zeta \le \log^p (1/\ve) \ve^2 r^{-2}. 
\end{eqnarray*}
Choosing \( \ve = r \) we arrive at the inequality
\begin{eqnarray*}
J_1 \le \log^p (1/\ve). 
\end{eqnarray*}
It remains to estimate \(J_3\). It is straightforward to check that \( \log^{2p} u \le \ve^{2} u^{2} \log^{2p} \ve  \) for \( u \geq 1/\ve\) and \( p\) of order \( \log n\) (we recall that \(\ve = n^{-c \log n}\)).  Hence,
\begin{eqnarray*}
\frac{1}{nm} \sum_{j=1}^{nm} \E J_3 \le  m \,n \, r^{2} (2 + |z|^2) \log^{2p}\ve. 
\end{eqnarray*}
These bounds together imply that for \( p\) of order \( \log n\)
\begin{eqnarray*}
\sup_{z \in \mathcal M} \E |\widehat U_n^{(r)}(z)|^p \le n^{-c\log n}.
\end{eqnarray*}
Repeating the same arguments as in the proof of~\eqref{bound 15} we conclude the estimate 
\begin{eqnarray}\label{bound 21}
\int |\Delta \widetilde f(z)| |\widehat U_n^{(r)}(z)|\, dA(z) \lesssim \frac{\| \Delta \widetilde f \|_{L_1}}{n},
\end{eqnarray}
which holds with probability at least \(1 - n^{-Q}\). Combining~\eqref{bound 20} and~\eqref{bound 21} we come to the following bound
\begin{eqnarray*}
\bigg | \frac{1}{2\pi} \int \Delta \widetilde f(z) [U_n^{(r)}(z) - U_{\mu^{(1)}}(z)]\, dA(z) \bigg| \lesssim \frac{q(n) \|\Delta f\|_{L_1}}{n^{1-2a}}, 
\end{eqnarray*}
which holds with probability at least \(1 - n^{-Q}\). The last inequality implies the claim of Theorem~\ref{th:main res}.

\section{Local law for shifted matrices} 

The following theorem provides the estimate for \(\Lambda_n(z, w) \) up to the optimal scale \(v_0\) (see definition~\eqref{v_0}).  The proof of this result will be given later on in section. 
\begin{theorem}[Local law for eigenvalues of \(\V(z)\)]\label{lem: all v}
	Assume that \(\Cond\) hold. Let \( Q > 0\) be an arbitrary number. There exists constant \( C > 0\) such that
	\begin{eqnarray*}
	\Pb \bigg( \bigcap_{z \in \mathcal M} \bigcap_{w \in \mathcal D} \bigg \{|\Lambda_n(z,w)| \le \frac{C \log^2 n}{nv} \bigg \} \bigg) \geq 1 - n^{-Q}.
	\end{eqnarray*} 
\end{theorem}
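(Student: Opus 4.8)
The plan is the one standard for local laws of Hermitian ensembles, combined with the Stein type machinery advertised in the introduction: derive a self-consistent equation for the Stieltjes transform $m_n(z,w)$ of the Hermitization which reproduces the cubic equation~\eqref{eq: s(z,w) equation} up to a small random perturbation $T_n$, estimate $T_n$ via Lemma~\ref{lem: T_n general lemma}, use the stability of the cubic equation to convert smallness of $T_n$ into smallness of $\Lambda_n$, and bootstrap this bound from the global scale $v=V$ down to $v=v_0/\sqrt{\gamma(u)}$ along a net, closing with a union bound over $(z,w)$.

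\textbf{Self-consistent equation and the error term.} First I would fix $z\in\mathcal M$ and $w=u+iv\in\mathcal D$, set $r=0$ (the theorem concerns $F_n(z,x)=F_n(z,x,0)$, so $\V(z)\eqdef\V(z,0)$), and introduce the resolvent $\RR=\RR(z,w)\eqdef(\V(z)-w\II)^{-1}$, so that $m_n(z,w)=\frac1{2nm}\Tr\RR$. Exploiting the $2\times2$ block structure of $\V(z)$ together with the cyclic block structure of $\W$, Schur's complement formula for the removal of a single row/column of $\V(z)$ expresses each diagonal entry $\RR_{jj}$ through a linear form and a quadratic form in one row of $\W$ and a minor resolvent; averaging over $j$, replacing the minor resolvent traces by $m_n$ up to negligible rank-one corrections, and collecting the centred fluctuations of the linear and quadratic forms into an error term, one arrives at
\[
m_n(z,w)=-\frac{w+m_n(z,w)}{(w+m_n(z,w))^2-|z|^2}+T_n(z,w).
\]
The crucial estimate — for which Lemma~\ref{lem: T_n general lemma} is designed — is a bound of the form $\E^{1/p}|T_n|^p\lesssim\log^{O(1)}n\,\big(\sqrt{\imag m_n/(nv)}+1/(nv)\big)$ for $p$ of order $\log n$, valid on the event $\Omega_n$ of~\eqref{omega n def}. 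Here one uses $\|\RR\|\le1/v$, the bound $\|\W\|\le K$ from $\Omega_n$, the Ward identity $\sum_k|\RR_{jk}|^2=\imag\RR_{jj}/v$, and the moment inequalities for linear and quadratic forms valid under the $4+\delta$ moment assumption, with the explicit constants supplied by Lemma~\ref{lem: T_n general lemma}.

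\textbf{Stability and bootstrap in $v$.} Next I would analyse the stability of the cubic equation. Writing it as $s=g(z,w,s)$, the properties of $s(z,w)$ recalled from~\cite{Bourgade2014a}[Lemma 4.1, 4.2] give a lower bound on the spectral gap $1-|\partial_s g(z,w,s)|\gtrsim\sqrt{\gamma(u)+v}$ on $\mathcal D$ (degenerating only at the spectral edges $\pm\lambda_\pm$). Expanding $g$ around $s$ then yields the standard dichotomy: a weak bound $|\Lambda_n|^2\lesssim|T_n|$ always, and a strong bound $|\Lambda_n|\lesssim|T_n|/\sqrt{\gamma(u)+v}$ once the a priori bound $|\Lambda_n|\le c\sqrt{\gamma(u)}$ is known. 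One then runs the bootstrap in $v$. At $v=V$ the desired bound $|\Lambda_n|\le C\log^2 n/(nV)$ holds by Lemma~\ref{lem: large v values}. Decreasing $v$ in steps of size $n^{-C}$ and using that $m_n$ and $s$ are Lipschitz in $v$ with constant at most $v^{-2}\le v_0^{-2}=n^{O(1)}$, the a priori bound propagates from step to step; at each step $T_n$ is re-estimated as above (using $\imag m_n\le\imag s+|\Lambda_n|\lesssim\sqrt{\gamma(u)}$ on $\mathcal D$), the strong stability bound upgrades the a priori bound to $|\Lambda_n|\le C\log^2 n/(nv)$, and one verifies that the threshold $c\sqrt{\gamma(u)}$ is never crossed — guaranteed precisely by the constraint $v\ge v_0/\sqrt{\gamma(u)}$ defining $\mathcal D$, since then $\log^2 n/(nv)\le\sqrt{\gamma(u)}\log^2 n/(nv_0)\ll\sqrt{\gamma(u)}$ by~\eqref{v_0}.

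\textbf{Uniformity and the main obstacle.} Finally, to obtain the statement uniformly over $z\in\mathcal M$ and $w\in\mathcal D$ with probability $\ge1-n^{-Q}$, choose $p\asymp\log n$ so that Markov's inequality gives a failure probability $\le n^{-Q'}$ at each fixed $(z,w)$, take a net of $(z,w)$ of cardinality $n^{O(1)}$, apply a union bound, and extend from the net to all $(z,w)$ using that $\Lambda_n$ is Lipschitz in both variables with polynomial constant (since $\|\partial_z\RR\|,\|\partial_w\RR\|\le C/v^2$). I expect the main obstacle to be twofold. First, proving the Stein type estimate for $T_n$ with exactly the right dependence on $\imag m_n$ and $v$ so that the bootstrap closes under only $4+\delta$ moments: the quadratic form fluctuations are delicate without sub-exponential tails, and this is where Lemma~\ref{lem: T_n general lemma} does the real work. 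Second, the edge analysis, i.e. making the stability gap $\sqrt{\gamma(u)+v}$ and the a priori threshold $\sqrt{\gamma(u)}$ quantitatively compatible near $|u|=\lambda_\pm$ where $\gamma(u)\to0$ — which is exactly why the admissible region $\mathcal D$ in~\eqref{D def} has to be taken $v$-dependent.
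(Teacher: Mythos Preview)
Your outline captures the architecture, but the $T_n$ estimate you state cannot be obtained as written, and the mechanism that fixes it is absent from your plan. The paper does not bound $\E^{1/p}|T_n|^p$ on the event $\Omega_n$; it introduces the bootstrap indicator $I(v)=\prod_{k=0}^{K_v}\one\big[|\Lam_n(u+ivs^k)|\le\tau\imag s(u+ivs^k)\big]$ (see~\eqref{I(v) def 0}) and proves only the \emph{conditioned} bound $\E[|\TTT_n|^p I(v)]\le C^p p^{2p}\mathcal E^p(\kappa p)/(nv)^p$ (Theorem~\ref{th: T_n bound}). The reason is that under $\CondTwo$ the quadratic form inequalities (Lemma~\ref{e2}) contain terms $\mu_p^2\, n^{-p}\sum_{k,l}|\RR_{kl}^{(j)}|^p$ with $\mu_p\lesssim n^{(1/2-\phi)(p-4-\delta)}$, which blow up for $p\sim\log n$ unless the individual resolvent entries are known a priori to be $O(1)$. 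Conditioning on $\Omega_n$ only gives $|\RR_{jk}|\le 1/v$, useless at small $v$; conditioning on $\{I(v)=1\}$ is precisely what supplies $\E[|\RR_{jj}|^p I(v)]\le C_0^p$ and $\E[\imag^p\RR_{jj}\,I(v)]\le C_0^p\imag^p s$ (Lemma~\ref{main lemma 0}), and these feed into every application of Lemma~\ref{lem: T_n general lemma}. The additive descent (Lemma~\ref{discent}) then combines the conditioned moment bound with the propagation of $\{I(v)=1\}$ from $v=V$ downwards in steps of size $n^{-3}$; there is no separate weak/strong dichotomy because the a priori smallness is already encoded in $I(v)$.

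Two further omissions. First, the bounds $\E[|\RR_{jj}|^p I(v)]\le C_0^p$ for $p\sim\log n$ are themselves a substantial piece of work, proved by the multiplicative descent of Section~\ref{sec: general principe} (Lemma~\ref{lemma step for resolvent}) together with a moment-matching comparison to a bounded ensemble (inequality~\eqref{compare}); this entire layer is missing from your sketch and is not a consequence of the Ward identity. Second, for $m>1$ the self-consistent equation is not the scalar one you wrote but the $2m$-dimensional linear system $\A\Lam_n=\rr_n+s^{-1}\TTT_n$ in the partial traces $m_n^{(\alpha)}$ (equation~\eqref{linear system}), with stability input $\|\A^{-1}\|\lesssim 1/\imag s$ taken from~\cite{nemish2017}.
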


By standard truncation arguments (see~\cite{GotzeNauTikh2016a}[Lemmas D.1-D.3]) in what follows we may assume that  conditions \(\CondTwo\) hold and  \( a_{jk}^{(q)} = 0\), for all \(j,k = 1, \ldots, n, q = 1, \ldots, m\). For simplicity we will also assume that \( X_{jk}^{(q)}, j,k=1, \ldots, n, q = 1, \ldots, m \) are i.i.d. r.v. In this case one may also show (see~\cite{GotzeNauTikh2016a}[Lemmas D.1-D.3]) that it is possible to assume that \( [\sigma_{jk}^{(q)}]^2 = 1 \) for all \(j,k=1, \ldots, n, q = 1, \ldots, m\). The proof in the non i.i.d. case in the same. One needs to add additional \(\varepsilon_j\) term in~\eqref{rj} which will be small due to the assumption that \( |1 - [\sigma_{jk}^{(q)}]^2| \le n^{-1 - \ve}\).
 
\subsection{Bound for the distance between Stieltjes transforms}
We start from the general lemma, which is motivated by the additive descent approach introduced and further developed by  L.~Erd{\"o}s, B.~Schlein, H.-T.~Yau and et al., see \cite{ErdosSchleinYau2009}, \cite{ErdosSchleinYau2009b}, \cite{ErdosSchleinYau2010}, \cite{ErdKnowYauYin2012},~\cite{ErdKnowYauYin2013}, \cite{ErdKnowYauYin2013a}, \cite{LeeYin2014} among others.  
Recall that
\begin{eqnarray}\label{eq: lambda_n def}
\Lambda_n(z,w) \eqdef m_n(z,w) - s(z,w).
\end{eqnarray}
For  \(w=u+iv \in \C^{+}\) we define
\begin{eqnarray*}
\RR(z,w) \eqdef (\V(z) - w \II)^{-1}.
\end{eqnarray*} 
It is easy to see that \( m_n(z,w) = \frac{1}{2nm} \Tr \RR(z,w) \).  Denote \(j_\alpha \eqdef (\alpha-1)n + j\). Introduce the following partial traces of resolvent  \( m_n^{(\alpha)} (z,w) \eqdef \frac{1}{n} \sum_{j=1}^n \RR_{j_\alpha j_\alpha} \) and 
\begin{eqnarray*}
	\Lam \eqdef ( \Lambda_n^{(1)}, \ldots, \Lambda_n^{(2m)})^\mathsf{T}, \quad \Lambda_n^{(\alpha)} = m_n^{(\alpha)}(z,w) - s(z,w).
\end{eqnarray*}
It is easy to check that
\begin{eqnarray*}
\Lambda_n = \frac{1}{m} \sum_{\alpha = 1}^{m} \Lambda_n^{(\alpha)} = \frac{1}{m} \sum_{\alpha=1}^{m} \Lambda_n^{(m+\alpha)}.
\end{eqnarray*}
Moreover, 
\begin{eqnarray*}
|\Lambda_n(z,w)| \le | \Lam(z,w) |.
\end{eqnarray*}
Let \( w = u + i v \in \mathbb D \)
\begin{eqnarray}\label{I(v) def 0}
I(v) \eqdef I_{\tau}(z, u + i v) \eqdef  \prod_{k=0}^{K_v} \one\big[|\Lam_n(z, u + i v s^k )| \le \tau \imag s(z,u+ i v s^k) \big],
\end{eqnarray}
where \(  K_v \eqdef \min\{l: v s^l \geq V \}\) and \(s \geq 1\). The exact value of \(s\) will be defined later in section~\ref{sec: general principe}. Let \(C\) be a positive constant. We take  \(\tau, A_1\) sufficiently small and  \(A_0\) sufficiently large such that
\begin{eqnarray}
\frac{C p}{nv} \le \tau \imag s(z, w)
\end{eqnarray}
for any \(w = u + i v \in \mathcal D\) and \(1 \le p \le A_1 \log n\). The exact values of \(\tau, A_0, A_1\) will be defined later in section~\ref{sec: general principe}. 

The next lemma is crucial for the proof of Theorem~\ref{lem: all v}. 
\begin{lemma}\label{discent}
Let \( w \in \mathcal D\)  and \(\tau\) be some fixed number.  Assume that for all \( v \geq v_0/\sqrt{\gamma(u)} \) and \(1 \le p \le A_1 \log n \)  
	\begin{eqnarray}\label{cond 1}
	\E [|\Lam_n(z,u+iv) |^p I(v)] \le \frac{C^p p^{2p} }{(nv)^p}, 
	\end{eqnarray}
	and 
	\begin{eqnarray}\label{cond 2}
	\Pb \bigg( |\Lam_n(z,u+iV)| \geq \tau \imag s(z, u + i V)  \bigg) \le \frac{C}{n^Q}, 
	\end{eqnarray}
	Then for any \( v_0/\sqrt{\gamma(u)} \le v \le V\) 
	\begin{eqnarray}\label{result 1}
	\Pb \bigg( |\Lam_n(z,u+iv)| \geq \tau \imag s(z, u + i v) \bigg) \le \frac{C}{n^Q}.
	\end{eqnarray}	
\end{lemma}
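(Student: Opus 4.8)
The plan is to run an induction on $v$ down the geometric ladder $v, vs, vs^2, \dots$ up to the top level $V$, exactly in the spirit of the additive descent method. Fix $z \in \mathcal M$ and $u \in \mathbb J_{\ve/2}$, and set $v_l \eqdef v s^l$ for $l = 0, 1, \dots, K_v$, so that $v_{K_v} \geq V$. The base of the induction is exactly hypothesis~\eqref{cond 2}, which controls $\Lam_n$ at the top scale $v_{K_v}$ (after, if necessary, using the elementary Lipschitz-in-$v$ continuity of $m_n$ and of $s$ to pass from $v_{K_v} \geq V$ to $V$ itself, at the cost of a negligible additive error absorbed into the constants). The inductive step is the assertion that if $|\Lam_n(z, u+iv_{l+1})| \leq \tau\,\imag s(z, u+iv_{l+1})$ holds with probability at least $1 - C n^{-Q}$, then the same bound at level $v_l$ holds with probability at least $1 - C n^{-Q}$; iterating $K_v = O(\log n)$ times and adjusting constants yields~\eqref{result 1}. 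The key point making the descent close is that on the event where $\Lam_n$ is under control at \emph{all} the higher levels $v_{l+1}, \dots, v_{K_v}$, the indicator $I(v_l)$ differs from the full product $I(v_{l+1})$ only by the single factor at level $v_l$; hence on that event the moment bound~\eqref{cond 1}, namely $\E[|\Lam_n(z,u+iv_l)|^p I(v_l)] \leq C^p p^{2p}(nv_l)^{-p}$, is available and, combined with Markov's inequality at the optimal choice $p \asymp \log n$, gives
\begin{eqnarray*}
\Pb\big( |\Lam_n(z,u+iv_l)| \geq \tfrac{\tau}{2}\,\imag s(z,u+iv_l),\ I(v_l \cdot s) = 1 \big) \leq \frac{C^p p^{2p}}{(nv_l)^p}\Big(\frac{\tau\,\imag s}{2}\Big)^{-p} \le n^{-Q},
\end{eqnarray*}
since by the choice of $\tau, A_0, A_1$ in section~\ref{sec: general principe} one has $Cp/(nv_l) \leq \tau\,\imag s(z,u+iv_l)$ for $v_l \geq v_0/\sqrt{\gamma(u)}$ and $p \leq A_1 \log n$.

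The one subtlety is to verify that, on the event where $\Lam_n$ obeys the $\tau$-bound at level $v_l s$ and all higher levels (which has the required probability by the inductive hypothesis plus a union bound over the $O(\log n)$ levels, each contributing at most $C n^{-Q}$), one actually has $I(v_l s) = 1$, so that the truncated moment inequality~\eqref{cond 1} at level $v_l$ genuinely applies. This is immediate once one notes that $I(v_l s) = \prod_{k=1}^{K_v}\one[|\Lam_n(z,u+iv_l s^k)| \leq \tau\,\imag s(z,u+iv_l s^k)]$ is precisely the product over all the higher ladder points, all of which are controlled on the inductive event. The gap between the $\tau/2$ appearing in the Markov estimate and the $\tau$ required in the conclusion is closed by a standard continuity (bootstrap) argument: if $|\Lam_n|$ were to exceed $\tau\,\imag s$ at level $v_l$ while being at most $\tau/2 \cdot \imag s$ at level $v_l s$, then by the Lipschitz bound $|\partial_v m_n| \leq v^{-2}$, $|\partial_v s| \lesssim v^{-2}$ and the lower bound $\imag s(z,u+iv) \gtrsim v / (\text{const})$ on $\mathcal D$, crossing the intermediate value would force $|\Lam_n|$ to be of order $\tau\,\imag s$ at \emph{some} point of a fine discretization of the segment $[v_l, v_l s]$, and a union bound over a polynomial-size grid of such points — at each of which the truncated Markov estimate above applies — rules this out with probability $1 - C n^{-Q}$.

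The main obstacle is precisely this last continuity/bootstrap argument: one must choose the geometric ratio $s$ close enough to $1$ (equivalently, the grid fine enough) that the variation of $\Lam_n$ and of $\imag s$ across one rung of the ladder is a genuinely small fraction of $\tau\,\imag s$, while keeping $K_v = O(\log n)$ so that the union bound over rungs costs only a logarithmic factor and the final probability remains $1 - C n^{-Q}$ after renaming constants. Everything else — the reduction to i.i.d.\ entries with unit variances and zero means, the identity $|\Lambda_n(z,w)| \le |\Lam(z,w)|$, and the elementary derivative bounds on $m_n$ and $s$ — is either already set up in the excerpt or routine. Once the inductive step is established, the conclusion~\eqref{result 1} at the target scale $v = v_0/\sqrt{\gamma(u)}$ follows by descending through all $K_v$ levels and absorbing the resulting $O(\log n)$ factor from the union bounds into the constant $C$ and into the admissible range $p \leq A_1 \log n$.
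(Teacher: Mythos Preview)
Your plan has the right ingredients (Markov on the truncated moment, union bound over the geometric ladder, a continuity step), but the induction is organised along the wrong grid, and this creates a genuine circularity.

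The issue is in your displayed Markov estimate. Condition~\eqref{cond 1} bounds $\E[|\Lam_n(v_l)|^p I(v_l)]$, not $\E[|\Lam_n(v_l)|^p I(v_l s)]$. Since $I(v_l)=\one[|\Lam_n(v_l)|\le\tau\,\imag s]\cdot I(v_l s)$, Markov only controls the event $\{\tfrac{\tau}{2}\imag s\le|\Lam_n(v_l)|\le\tau\,\imag s,\ I(v_l s)=1\}$; it says nothing about $\{|\Lam_n(v_l)|>\tau\,\imag s,\ I(v_l s)=1\}$, which is exactly what you need to rule out. Your intermediate-value patch can in principle be made to work, but your escape hatch ``choose $s$ close to $1$'' is not available: $s$ is fixed by the definition of $I(v)$ in Section~\ref{sec: general principe} (it is of order a power of $\log n$), so the rungs of the geometric ladder are far apart and cannot be bridged by one continuity step.

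The paper's proof avoids all of this by running the induction on a \emph{fine additive grid} $v_k=v'+k\kappa$ with $\kappa=n^{-3}$, and with the inductive hypothesis being that~\eqref{result 1} holds on the whole continuum $[v_k,V]$. Then every point $v_k s^l$ of the geometric ladder (including $l=0$) lies in $[v_k,V]$, so $\Pb(I(v_k)=0)\le\sum_{l}\Pb(|\Lam_n(v_k s^l)|>\tau\,\imag s)\le C K_v n^{-Q}$ by the hypothesis itself; the truncated Markov step at $v_k$ is now legitimate and \emph{upgrades} the bound there to $|\Lam_n(v_k)|\le \tfrac{\tau}{2}\imag s(z,u+iv)$, and the $\kappa$-continuity step \eqref{eq: kappa change} then extends~\eqref{result 1} to $[v_{k-1},v_k)$. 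The polynomial number of additive steps is harmless since $Q$ is arbitrary. So the clean fix is: replace your geometric descent by this additive one, and use the geometric ladder only through the indicator $I(v_k)$, whose factors are all covered by the continuum inductive hypothesis.
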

\begin{proof}
Let \( \kappa = \kappa_n \) be such that
\begin{eqnarray}\label{eq: kappa change}
| \Lam_n(z,u + i v) - \Lam_n(z,u+ i (v + \kappa)) | \le \frac{\tau}{2}  \imag s(z, u+i v)  .
\end{eqnarray}
It easy to check that one may take, for example,  \( \kappa_n = n^{-3}  \).  Denote \(v' = v_0/\sqrt{\gamma(u)}\). We split \([v', V]\) into \(N = (V-v')/\kappa\) intervals and denote \(v_k = v' + k \kappa\). Assume that we have already proved~\eqref{result 1} for all \(v_{k} \le v \le V\) and prove it for any \(v\) up to \(v_{k-1}\). For example, for \(v = v_N = V\) it follows from~\eqref{cond 2}. 
	
We fix \(v: v_{k-1} \le v < v_{k}\). Taking \( p = A_1 \log n\) and \( K: K^{-p} \le C n^{-Q} \) we get
\begin{eqnarray*}
\Pb \bigg ( |\Lam_n(z, u+i v_{k})| \geq \frac{K C p^2}{nv_{k}} \bigg ) &\le& \E  \one \bigg [ |\Lam_n(z, u+iv_k))| \geq \frac{K C p^2}{nv_k} \bigg ] I(v_k)  \\
&+&\sum_{l=0}^{K_v} \Pb \bigg ( |\Lam_n(z, u + i v_k s^l)| \geq  \tau \imag s(z, u+ i v_k s^l)   \bigg )\\
&\le& \left( \frac{C  K p^2 }{nv_k} \right)^{-p} \E [|\Lam_n(z,u+iv_k)|^p I(v_k) ] + \frac{C}{n^Q}  \le  \frac{C}{n^Q}.
\end{eqnarray*}
Here we also used~\eqref{cond 1}. 
Since \( v_k \geq v \geq v' \) we get that \( \frac{K C p^2}{nv_k} \le \frac{\tau}{2} \imag s(z, u+ i v) \). Hence, using~\eqref{eq: kappa change} we obtain 
\begin{eqnarray*}
\Pb \bigg( |\Lam_n(z, u + i v)| \geq \tau \imag s(z, u + iv) \bigg) \le \frac{C}{n^Q}.
\end{eqnarray*} 
\end{proof}
It follows from Lemma that we need to check conditions~\eqref{cond 1}--\eqref{cond 2}.

\subsection{Stieltjes transform and self-consistent equations}
In this section we investigate  \(m_n(z, w)\) and show that it satisfies a cubic equation (see~\eqref{mn1} below), which is a perturbation of the corresponding equation \eqref{eq: s(z,w) equation} for \(s(z,w)\).

Let \( \RR^{(j_\alpha)}\) ( resp. \(\RR^{(j_\alpha, j_\alpha)}\) ) be the resolvent matrix of \(\V(z)\) with \(j_\alpha\)-th row and column deleted (resp. \(j_\alpha\)- and \(j_{m+\alpha}\)-th row and column deleted).  Applying Schur's inverse formula, we may write, for all \(j  = 1, \ldots, n\) and \(\alpha = 1, \ldots, m\), that 
\begin{eqnarray}\label{rj}
\RR_{j_\alpha j_\alpha}= -\frac{1}{-w-m_n^{([\alpha+1]+m)}+ \frac{|z|^2}{w + m_n^{([\alpha-1])}(z,w)} }(1 - \ve_{j_\alpha}\RR_{j_\alpha j_\alpha}),
\end{eqnarray}
where \( m_n^{(\alpha)} (z,w) \eqdef \frac{1}{n} \sum_{j=1}^n \RR_{j_\alpha j_\alpha} \) and 
\begin{eqnarray*}
\ve_{j_\alpha} \eqdef \widetilde \ve_{j_\alpha} + \frac{|z|^2}{w + m_n^{([\alpha-1])}(z,w)} \RR_{j_{\alpha +m}, j_{\alpha + m}}^{(j_\alpha)} \widehat \ve_{j_\alpha}. 
\end{eqnarray*}
Here \(\widetilde \ve_{j_\alpha} \eqdef \widetilde \ve_{j1}+\ldots+ \widetilde \ve_{j4}\),
\begin{eqnarray*}
\widetilde \ve_{j_\alpha, 1}& \eqdef& \frac1n \sum_{k\in \T} \RR_{k_{[\alpha +1 ] }, k_{[\alpha +1] + m }}-\frac1n \sum_{k \in \T} \RR_{k_{[\alpha +1 ] + m }, k_{[\alpha + 1 ] + m }}^{(j_\alpha) } ,\\
\widetilde \ve_{j_\alpha, 2}&\eqdef& -\frac1n\sum_{l\ne k \in \T} X_{jk}^{(\alpha)} X_{jl}^{(\alpha)}  \RR_{l_{[\alpha +1 ] + m }, k_{[\alpha +1 ] + m }}^{(j_\alpha) }, \\
\widetilde \ve_{j_\alpha, 3}& \eqdef&-\frac1n\sum_{l \in \T } ([X_{jl}^{(\alpha)}]^2-1) \RR_{l_{[\alpha +1 ] + m }, l_{[\alpha +1] + m }}^{(j_\alpha) }, \\
\widetilde \ve_{j_\alpha, 4}&\eqdef& \frac{z+\overline z}{\sqrt n}\sum_{l \in \T } X_{jl}^{(\alpha)} \RR_{j_{[\alpha +1 ] + m },l_{[\alpha +1] + m }}^{(j_\alpha) }.
\end{eqnarray*}
and \(\widehat \ve_{j} \eqdef \widehat \ve_{j,1}+\ldots+\widehat \ve_{j,3}\), where 
\begin{eqnarray*}
\widehat \ve_{j_\alpha,1}&\eqdef& \frac1n \sum_{l \in \T} \RR_{l_{[\alpha-1] }, l_{[\alpha-1]}} - \frac1n \sum_{l \in \T} \RR_{l_{[\alpha-1] }, l_{[\alpha-1]}}^{(j_\alpha, j_\alpha)}, \\
\widehat \ve_{j_\alpha,2} &\eqdef&-\frac1n\sum_{k\ne l\in\T}X_{kj}^{([\alpha-1])}X_{lj}^{([\alpha-1])}\RR_{k_{[\alpha-1] }, l_{[\alpha-1]}}^{(j_\alpha, j_\alpha)},\\
\widehat \ve_{j_\alpha,3} &\eqdef&-\frac1n\sum_{l\in\T}([X_{lj}^{([\alpha-1])}] ^2-1)\RR_{l_{[\alpha-1] }, l_{[\alpha-1]}}^{(j_\alpha, j_\alpha)}.
\end{eqnarray*}
Summing up equality~\eqref{rj} in \(j=1,\ldots,n\) for fixed \( \alpha = 1, \ldots, m\) we get
\begin{eqnarray}\label{mn1}
m_n^{(\alpha)}(w,z)&=&-\frac{1}{w+ m_n^{([\alpha+1]+m)}(z,w)-\frac{|z|^2}{w + m_n^{([\alpha-1])}(z,w) }}(1-T_n^{(\alpha)}), \\
\label{mn2}
m_n^{(m+\alpha)}(w,z)&=&-\frac{1}{w+m_n^{([\alpha-1])}(z,w)-\frac{|z|^2}{w + m_n^{([\alpha+1]+m)}(z,w) }}(1-T_n^{(m+\alpha)}),
\end{eqnarray}
where
\begin{eqnarray*}
	T_n^{(\alpha)}\eqdef\frac1n\sum_{j=1}^n\ve_{j_\alpha} \RR_{j_\alpha, j_\alpha}
\end{eqnarray*}
for \(\alpha = 1, \ldots, 2m\).
It follows from the form of equations~\eqref{mn1}-\eqref{mn2} and \eqref{eq: s(z,w) equation} that to bound the distance between \(m_n^{(\alpha)}(z,w)\) and \( s(z,w)\) it is crucial to estimate the perturbation \( T_n^{(\alpha)}\). 
Introduce the following block-matrix
\begin{eqnarray}\label{bmatrix}
\A \eqdef \begin{bmatrix}
\A_{11} & \A_{12} \\
\A_{12}^\mathsf{T} & \A_{11}^\mathsf{T}
\end{bmatrix},
\end{eqnarray}
where
\begin{eqnarray*}
\A_{11} \eqdef \begin{bmatrix}
a & 0 & 0 & \ldots & 0 & b \\
b & a & 0 & \ldots & 0 & 0\\
0 & b & a & \ldots & 0 & 0\\
\ldots \\
0 & 0 & 0 & \ldots & a & 0 \\
0 & 0 & 0 & \ldots & b & a 
\end{bmatrix}, \quad 
\A_{12} \eqdef \begin{bmatrix}
0 & 1 & 0 & \ldots & 0 & 0 \\
0 & 0 & 1 & \ldots & 0 & 0\\
0 & 0 & 0 & \ldots & 0 & 0\\
\ldots \\
0 & 0 & 0 & \ldots & 0 & 1 \\
1 & 0 & 0 & \ldots & 0 & 0 
\end{bmatrix}. 
\end{eqnarray*}
Here, \( a \eqdef - s^{-2}(z,w), b \eqdef \frac{|z|^2}{(w + s(z,w))^2} \). Substituting \(s(z,w)\) from the both sides of equations~\eqref{mn1}-\eqref{mn2} we come to the following linear system: 
\begin{eqnarray}\label{linear system}
\A \Lam_n = \rr_n + s^{-1} \TTT_n,
\end{eqnarray}
where
\begin{eqnarray}\label{rr def}
\|\rr_n\| \le | \Lam_n |^2 \left( \frac{|z|^2}{|w+s|^2}   \bigg( \sum_{\alpha = 1}^{m} \frac{1}{|w+m_n^{(\alpha)}|^2 } \bigg)^\frac12 \bigg(1 + \frac{| \Lam_n|}{|s|} \bigg) +  \frac{1}{|s|}\bigg( 1 + \frac{|z|^2}{|w+s|^2}\bigg)  \right).
\end{eqnarray}
Permuting the rows and columns of \( \A \) we may come to the matrix from~\cite{nemish2017}[Equation~5.18]. 
\subsection{Validity of condition~\eqref{cond 1}}
Define
\begin{eqnarray}
\mathcal A(z,v, q) &\eqdef& \max_{(\J, \K) \in \mathcal J_{1}} \max_{\alpha = 1, \ldots, m} \max_{l_\alpha \in \T_{J_\alpha}} \E^\frac{1}{q} \imag^{q} \RR_{l_\alpha, l_\alpha}^{(\J, \K)}(z,w)I(v), \\
\label{E def}
\mathcal E(q) &\eqdef& \imag s(z,w) + \mathcal A(z,v, q)\bigg(1 + \sum_{k=0}^{K_v} \frac{1}{s^k} \frac{\mathcal A(z,v_k, q)}{\imag s(z,v_k)}\bigg).
\end{eqnarray}
See~\eqref{eq: set def} for the definitions of \( \J, \K, \mathcal J_1, \T_{J_\alpha} \). The next lemma shows that the condition~\eqref{cond 1} holds. 
\begin{lemma}\label{lem: main cond 1}
Assume that for all \(w \in \mathcal D\) and \( 1 \le p \le A_1 \log n\)
\begin{align}\label{T_n bound general}
\E [|\TTT_n(z, iv)|^p I(v)] \lesssim \frac{C^p p^{2p} \mathcal E^p(\kappa p)}{(nv)^p}
\end{align}
and
\begin{eqnarray}\label{e_p bound}
\mathcal A^p(\kappa p) \le C^p \imag^p s(z,w)
\end{eqnarray}
for some \( \kappa > 0\). Then
\begin{eqnarray*}
\E |\Lam_n(z,iv)|^p I(v) \lesssim \bigg(\frac{C p^2}{nv}\bigg)^p. 
\end{eqnarray*}
\end{lemma}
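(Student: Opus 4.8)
The plan is to start from the linear system \eqref{linear system}, $\A \Lam_n = \rr_n + s^{-1}\TTT_n$, and invert the structured matrix $\A$ of \eqref{bmatrix}. The first step is to establish a lower bound on the smallest singular value of $\A$, equivalently an upper bound $\|\A^{-1}\| \lesssim C$ that is uniform over $w \in \mathcal D$; this is where the properties of $s(z,w)$ from \cite{Bourgade2014a}[Lemma 4.1, 4.2] enter, through the explicit entries $a = -s^{-2}$ and $b = |z|^2/(w+s)^2$. Since $\A$ is a $2m \times 2m$ circulant-type block matrix, its spectrum can be computed via the roots of unity, and the stability estimate reduces to checking that the relevant characteristic quantity stays bounded away from zero on $\mathcal D$, away from $||z|-1| < \tau$. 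Granting this, multiplying \eqref{linear system} by $\A^{-1}$ gives $|\Lam_n| \lesssim \|\rr_n\| + |s|^{-1}|\TTT_n|$.

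The second step is to control the quadratic error term $\rr_n$. From \eqref{rr def}, $\|\rr_n\| \lesssim |\Lam_n|^2 \cdot (\text{bounded factors})$ on the event $I(v)$, because $I(v)$ forces $|\Lam_n| \le \tau \imag s(z,w)$ and hence $|w + m_n^{(\alpha)}|$ stays comparable to $|w+s|$ and $|\Lam_n|/|s| \le \tau$. So on $I(v)$ we get a self-bounding inequality of the shape
\begin{eqnarray*}
|\Lam_n(z,iv)| I(v) \lesssim C_1 |\Lam_n(z,iv)|^2 I(v) + C_2 |s|^{-1} |\TTT_n(z,iv)| I(v),
\end{eqnarray*}
and since $\tau$ is chosen small enough that $C_1 \tau \imag s < 1/2$, the quadratic term can be absorbed: $|\Lam_n| I(v) \lesssim |s|^{-1}|\TTT_n| I(v)$. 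Raising to the $p$-th power, taking expectations and invoking hypothesis \eqref{T_n bound general} yields
\begin{eqnarray*}
\E |\Lam_n(z,iv)|^p I(v) \lesssim \frac{C^p p^{2p} \mathcal E^p(\kappa p)}{|s|^p (nv)^p}.
\end{eqnarray*}

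The third and final step is to reduce $\mathcal E(\kappa p)$ to $\imag s(z,w)$ using hypothesis \eqref{e_p bound} together with a bootstrap over the dyadic-type chain of scales $v_k = v s^k$. By definition \eqref{E def}, $\mathcal E(q) = \imag s(z,w) + \mathcal A(z,v,q)\big(1 + \sum_{k=0}^{K_v} s^{-k}\mathcal A(z,v_k,q)/\imag s(z,v_k)\big)$; applying \eqref{e_p bound} at every scale $v_k$ gives $\mathcal A(z,v_k,\kappa p) \lesssim C \imag s(z,v_k)$, so the sum $\sum_{k} s^{-k}$ is a convergent geometric series (as $s > 1$) and $\mathcal E(\kappa p) \lesssim C \imag s(z,w)$. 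Combined with $|s|^{-1}\imag s \le 1$ this collapses the bound to $\E|\Lam_n(z,iv)|^p I(v) \lesssim (Cp^2/(nv))^p$, as claimed. The main obstacle is the first step — the uniform invertibility of $\A$, i.e.\ the spectral stability of the $2m$-periodic linearized operator — since all constants in the conclusion depend on it and it is exactly here that the separation $||z|-1|\ge\tau$ must be used (near $|z|=1$ the smallest singular value of $\A$ degenerates); the diagonalization of $\A$ over $m$-th roots of unity, matching \cite{nemish2017}[Equation 5.18] after a permutation, is the natural route.
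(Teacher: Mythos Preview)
Your three-step plan is exactly the route the paper takes: invert the linear system~\eqref{linear system}, absorb the quadratic remainder $\rr_n$ using the indicator $I(v)$, and close with the two hypotheses \eqref{T_n bound general}--\eqref{e_p bound}. Steps~2 and~3 are correct and match the paper's argument essentially line for line.

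The one substantive gap is in step~1. You claim a uniform bound $\|\A^{-1}\|\lesssim C$ on $\mathcal D$ and attribute the possible degeneracy solely to $|z|\to 1$. This is not right: the smallest singular value of $\A$ also collapses at the \emph{spectral edge} of $G(z,\cdot)$, i.e.\ as $u\to\pm\lambda_\pm$, and the domain $\mathcal D$ lets $u$ come within distance $\ve/2$ of that edge with $\ve=(2v_0 a)^{2/3}\to 0$. The correct estimate, which the paper quotes from \cite{nemish2017}[Proposition~5.5], is
\[
\|\A^{-1}\|\;\lesssim\;\frac{1}{\imag s(z,w)},
\]
and this can be of order $n^{1/3}$ on $\mathcal D$. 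The diagonalization over $m$-th roots of unity you propose will in fact produce exactly this bound, not a uniform constant.

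Fortunately this does not break the rest of your argument --- it is precisely compensated. For the quadratic term, $I(v)$ forces $|\Lam_n|\le\tau\imag s$, so \eqref{rr def} gives $\|\rr_n\|I(v)\lesssim \tau\,\imag s\,|\Lam_n|I(v)$; multiplying by $\|\A^{-1}\|\lesssim 1/\imag s$ leaves $\tau|\Lam_n|I(v)$, which is absorbed for small $\tau$ exactly as you wrote. For the $\TTT_n$ term, the extra $1/\imag s$ you pick up is cancelled by the $\imag s$ coming out of your step~3 reduction $\mathcal E(\kappa p)\lesssim \imag s(z,w)$, together with $|s|$ being bounded below on $\mathcal D$ for $||z|-1|\ge\tau$ (this last fact is where the properties from \cite{Bourgade2014a}[Lemma~4.1,~4.2] are actually used). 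So after replacing the claimed uniform bound on $\|\A^{-1}\|$ by the correct $1/\imag s$ bound, your plan goes through unchanged and coincides with the paper's proof.
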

We apply Stein's method and 'leave one out' idea to estimate \(\TTT_n\). Here we follow the ideas introduced in~\cite{GotTikh2003} and further developed in~\cite{GotzeNauTikh2015a}. It is clear that the bound for \(\TTT_n\) requires estimation of the moments of \(\RR_{jj}\). We do it section~\ref{sec: general principe}, where we also introduce the general principle to estimate the moments of so-called \(k\)-descent function (see definition~\ref{def: k descent functions}). Moreover, in this section we show that~\eqref{e_p bound} holds. 
\begin{proof}[Proof of Lemma~\ref{lem: main cond 1}]
We may rewrite~\eqref{linear system} as follows
\begin{eqnarray*}
\Lam = \A^{-1} \rr + s^{-1}\A^{-1} \TTT.
\end{eqnarray*}
It follows that
\begin{eqnarray}\label{main inequality}
\| \Lam_n \| \le \| \A^{-1}\| \| \rr \| + |s|^{-1} \| \A^{-1}\| \|\TTT\|
\end{eqnarray}
We may write 
\begin{eqnarray}
|w + m_n^{(\alpha)}(z,w)| I(v) \geq (|w + s(z,w)| - |\Lambda_n^{(\alpha)}|)  I(v) \geq (1 - \tau) |w + s(z,w)| I(v)   
\end{eqnarray}
Moreover, using the definition of \( I(v)\) we obtain
\begin{eqnarray*}\label{eq: lambda squared bound}
|\Lam_n|^2 I(v) \le \tau |\Lam_n| I(v). 
\end{eqnarray*}
Taking into account the last two inequalities and definition~\eqref{rr def} of \(\rr\) we obtain 
\begin{eqnarray*}
\|\rr \|I(v) \lesssim \tau \imag s(z,w)|\Lam_n| I(v).
\end{eqnarray*}
It follows from~\cite{nemish2017}[Proposition~5.5] then \( \| \A^{-1} \| \lesssim \imag^{-1} s(z,w) \). Taking expectation of the both sides of~\eqref{main inequality} and applying~\eqref{T_n bound general}, \eqref{e_p bound} we get the claim of this lemma. 
\end{proof}

\subsection{Validity of condition~\eqref{cond 2}}

This conditions in a consequence of the  the next lemma. 
\begin{lemma}\label{lem: large v values}
For any \(w = u + i V, u \in \R, V \geq 1\) and all \(p \geq 1\) the following bound holds
\begin{eqnarray*}
\E^\frac{1}{p}|\Lambda_n(z, u + i V)|^p \le \frac{Cp |s(z, u+iV)|^\frac{p+1}{p}}{n}.
\end{eqnarray*}
\end{lemma}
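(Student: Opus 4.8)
The plan is to prove Lemma~\ref{lem: large v values} directly from the fixed-point structure of equations~\eqref{mn1}--\eqref{mn2} in the regime $v = V \geq 1$, where the self-consistent equation is strongly contractive. First I would recall that for $w = u + iV$ with $V \geq 1$ one has the elementary a priori bounds $|m_n^{(\alpha)}(z,w)| \leq 1/V \leq 1$ and, crucially, $|w + m_n^{(\alpha)}(z,w)| \gtrsim V \gtrsim 1$ for every $\alpha$ (since $\imag m_n^{(\alpha)} \geq 0$ forces $\imag(w + m_n^{(\alpha)}) \geq V$); similarly $|w + s(z,w)| \gtrsim 1$. In particular the denominators appearing in~\eqref{mn1}--\eqref{mn2} and in~\eqref{rr def} are all bounded below by absolute constants, so the linear system~\eqref{linear system} reads $\A \Lam_n = \rr_n + s^{-1}\TTT_n$ with $\|\A^{-1}\| \lesssim \imag^{-1} s(z,w) \lesssim 1$ and $\|\rr_n\| \lesssim |\Lam_n|^2$. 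Hence $|\Lam_n| \lesssim |\Lam_n|^2 + |\TTT_n|$, and as long as $|\Lam_n|$ is a priori small (which it is for $V$ large) this self-improves to $|\Lam_n| \lesssim |\TTT_n|$.

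The main work is therefore the moment bound for the perturbation $\TTT_n = (T_n^{(1)},\dots,T_n^{(2m)})^\t$, where $T_n^{(\alpha)} = \frac1n\sum_j \ve_{j_\alpha}\RR_{j_\alpha j_\alpha}$. Since $|\RR_{j_\alpha j_\alpha}| \leq 1/V \leq 1$ on the relevant event, it suffices to bound $\E^{1/p}|\frac1n\sum_j \ve_{j_\alpha}|^p$. The error term $\ve_{j_\alpha}$ splits (as displayed after~\eqref{rj}) into the $\widetilde\ve_{j_\alpha,i}$ and $\widehat\ve_{j_\alpha,i}$ pieces, which are of three types: (i) a resolvent-perturbation difference coming from removing one or two rows/columns, bounded by $\|\RR\|/n \lesssim 1/(nV)$ by the resolvent identity and interlacing; (ii) a diagonal quadratic-form fluctuation $\frac1n\sum_l([X_{jl}^{(\alpha)}]^2-1)\RR_{ll}^{(j_\alpha)}$; (iii) an off-diagonal bilinear form $\frac1n\sum_{k\neq l}X_{jk}X_{jl}\RR_{kl}^{(j_\alpha)}$ together with a linear form $\frac{z+\bar z}{\sqrt n}\sum_l X_{jl}\RR_{j,l}^{(j_\alpha)}$. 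For types (ii) and (iii) I would invoke the Rosenthal-type and quadratic-form moment inequalities quoted in the introduction (and subsumed in Lemma~\ref{lem: T_n general lemma}): for $p$-th moments these give bounds of order $(Cp/\sqrt n)^{?}$ times the Hilbert–Schmidt norm of the relevant resolvent block, and $\frac1n\|\RR^{(j_\alpha)}\|_2^2 = \frac1n\Tr \RR^{(j_\alpha)}(\RR^{(j_\alpha)})^* \lesssim \imag m_n /V \lesssim 1/V^2$ on $\{V \geq 1\}$, using the Ward identity. Averaging the $\ve_{j_\alpha}$ over $j$ and using independence of the rows then produces the extra $1/\sqrt n$ from a further Rosenthal step, yielding $\E^{1/p}|T_n^{(\alpha)}|^p \lesssim Cp/(nV) \lesssim Cp|s(z,u+iV)|^{(p+1)/p}/n$ after absorbing $|s| \sim 1$.

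Combining, on the event where $\|\W\| \leq K$ (whose complement has superpolynomially small probability by~\eqref{smallest sing value}, which only costs an $O(K/(nV))$ additive term in $\E^{1/p}|\Lam_n|^p$) one gets $\E^{1/p}|\Lam_n|^p \lesssim \E^{1/p}|\TTT_n|^p \lesssim Cp|s(z,u+iV)|^{(p+1)/p}/n$, which is the claimed bound since $|\Lambda_n| \leq |\Lam_n|$. I expect the main obstacle to be bookkeeping rather than conceptual: one must handle the composite error $\ve_{j_\alpha}$, which already contains a product of an outer quadratic-form type term with an inner resolvent entry $\RR^{(j_\alpha)}_{j_{\alpha+m},j_{\alpha+m}}$, so the moment estimate for $\widehat\ve_{j_\alpha}$ feeds into that for $\ve_{j_\alpha}$ multiplicatively; keeping the constants and the power of $p$ under control through these nested applications of the moment inequalities — and verifying that the a priori smallness needed to linearize $|\Lam_n|\lesssim|\Lam_n|^2+|\TTT_n|$ genuinely holds for all $V\geq 1$ (e.g. by first treating $V$ very large and then a short continuity/monotonicity argument in $V$, or simply noting $|\Lam_n| \leq 2/V \leq 2$ always and $\|\rr_n\| \lesssim |\Lam_n|$ suffices when the implied constant from $\|\A^{-1}\|\cdot|s|^{-1}$ times the coefficient in~\eqref{rr def} is $<1$) — is where the care is needed.
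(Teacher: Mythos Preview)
Your structural approach---pass through the linear system~\eqref{linear system}, bound the perturbation $\TTT_n$ via Rosenthal/quadratic-form inequalities and the Ward identity, and absorb the quadratic remainder $\rr_n$---is the right skeleton, and is in line with what the paper's reference to \cite{GotzeNauTikh2015b} does in the semicircle case. But there is a genuine gap: you write ``after absorbing $|s|\sim 1$'', and this is precisely where the argument fails. The factor $|s(z,u+iV)|^{(p+1)/p}$ in the statement is not decorative; it is what makes the bound integrable over $u\in\R$ (see how the lemma is applied just below~\eqref{eq: estimate 2 assumption}). For $|u|$ large, $|s(z,u+iV)|$ decays like $|u|^{-1}$, so your uniform bound $\E^{1/p}|T_n^{(\alpha)}|^p\lesssim Cp/(nV)$ is strictly weaker than the claim and does not suffice.

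To recover the correct $|s|$-dependence you must track it through the argument rather than discard it. First, in this regime the matrix $\A$ has diagonal entries $a=-s^{-2}$ with $|a|\gtrsim V^2$ and off-diagonal entries of size $O(V^{-2})$, so $\|\A^{-1}\|\lesssim |s|^2$, not merely $\lesssim 1$; combined with the explicit $s^{-1}$ in~\eqref{linear system} this already yields $|\Lam_n|\lesssim |s|\,|\TTT_n|$ rather than $|\Lam_n|\lesssim|\TTT_n|$. Second, in the $\TTT_n$ bound you should keep the factor $|\RR_{j_\alpha j_\alpha}|$ (which is $\approx|s|$ up to the small $\ve_{j_\alpha}$) and, in the Ward identity, retain $\imag m_n$ rather than bounding it by $1/V$; since $\imag m_n\approx\imag s\le|s|$ this contributes further powers of $|s|$ to the moment bound for the linear and quadratic forms. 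Carrying these factors through (and using $|m_n|,|s|\le 1/V$ only to control \emph{ratios}, not to replace $|s|$ by $1$) is what produces the exponent $(p+1)/p$. Your outline as written proves only $\E^{1/p}|\Lambda_n|^p\lesssim Cp/(nV)$, which is true but not the lemma.
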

\begin{proof}
The proof is similar to the proof of the analogous inequality in~\cite{GotzeNauTikh2015b}[Inequality~2.8] in the semi-circle law case.
\end{proof}
\subsection{Proof of Theorem~\ref{lem: all v}}
\begin{proof}[Proof of Theorem~\ref{lem: all v}]
The proof is the direct corollary of Lemmas~\ref{discent} and~\ref{lem: main cond 1}. Indeed, taking \(p = A_1 \log n\) we may write 	
\begin{eqnarray*}
	\Pb \bigg ( |\Lam_n(z, u+i v)| \geq \frac{K C p^2}{nv} \bigg ) &\le& \E  \one \bigg [ |\Lam_n(z, u + i v)| \geq \frac{K C p^2}{nv} \bigg ] I(v)  \\
	&+&\sum_{k=0}^{K_v} \Pb \bigg ( |\Lam_n(z, u + i v s^ k)| \geq  \tau \imag s(z, u+ i v s^k)   \bigg )\\
	&\le& \left( \frac{C  K p^2 }{nv} \right)^{-p} \E [|\Lam_n|^p I(v) ] + \frac{C}{n^Q}  \le  \frac{C}{n^Q}.
\end{eqnarray*}
This inequality implies the claim of the theorem.
\end{proof}

\section{Bound for functions with \(k\)-descent property}\label{sec: general principe}
 As it was already mentioned that the estimation of \(\TTT_n\) requires to bound the high moments of \( \RR_{jj}(z,u+iv)\) and \(\imag \RR_{jj}(z, u + iv) \) for \( j = 1, \ldots, nm\) up to the optimal value \(v_0\) of \(v\).  Here we are going to apply {\it multiplicative descent method} introduced in~\cite{Schlein2014} and further developed in the series of papers~\cite{GotzeNauTikh2015a}, \cite{GotzeNauTikh2016a} by the authors. This method requires the small number of steps, usually of the logarithmic order. One may compare with {\it additive descent method}, Lemma~\ref{discent}, where one needs to make polynomial number of steps.  
 \subsection{Class of descent function }
 We start from rather general definition and proposition which are essential for {\it multiplicative descent}. Let us introduce the following class of functions. 
 \begin{definition}\label{def: k descent functions}
 Let \(k\ge1\). We say that a function \(f(w), w = u + i v \in \C^{+}\), satisfies the \(k\)-descent property if for any \(v>0\)
 \begin{eqnarray*}
 \left|\frac{\partial }{\partial v}\log f(u+iv)\right|\le \frac kv
 \end{eqnarray*}
 \end{definition}
 
 Let us denote by \(\mathcal Descent(k) \eqdef \{f:\C^{+}\rightarrow\C:\ f\text{  satisfies }k\text{-descent property}\}\). The following statement collects the main properties of \(k\)-descent functions.
 \begin{statement}\label{lem: descent property}
 	The following statements hold:
 	\begin{enumerate}
 		\item If \(f\in \mathcal Descent(k)\) then \(f^{-1}\in\mathcal Descent(k)\);
 		\item If \(f\in \mathcal Descent(k)\), \(g\in\mathcal Descent(l)\) then \(fg\in\mathcal Descent(k+l)\).
 		\item For any \(f\in\mathcal Descent(k)\) and for any \(s\ge1\)
 		\begin{eqnarray*}
 		|f(u+iv/s)|\le s^k|f(u+iv)| \quad \text{and} \quad |f(u+iv)|\le s^k|f(u+iv/s)|
 		\end{eqnarray*}
 	\end{enumerate}
 \end{statement}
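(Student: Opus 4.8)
The plan is to prove the three closure properties of $\mathcal{Descent}(k)$ directly from Definition~\ref{def: k descent functions}, which bounds the logarithmic $v$-derivative. All three parts reduce to elementary manipulations of $\partial_v \log f$, so the main work is just to organize them carefully; I expect no serious obstacle, only the bookkeeping in part (3) where one integrates the differential inequality.

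For part (1), I would note that $\frac{\partial}{\partial v}\log f^{-1}(u+iv) = -\frac{\partial}{\partial v}\log f(u+iv)$, so the bound $\left|\frac{\partial}{\partial v}\log f^{-1}\right| \le k/v$ is immediate. For part (2), since $\log(fg) = \log f + \log g$ (interpreted modulo the usual branch issues, which do not affect the derivative), the triangle inequality gives
\begin{eqnarray*}
\left|\frac{\partial}{\partial v}\log (fg)(u+iv)\right| \le \left|\frac{\partial}{\partial v}\log f(u+iv)\right| + \left|\frac{\partial}{\partial v}\log g(u+iv)\right| \le \frac{k}{v} + \frac{l}{v} = \frac{k+l}{v},
\end{eqnarray*}
so $fg \in \mathcal{Descent}(k+l)$.

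For part (3), fix $u$ and consider the real function $\varphi(v) \eqdef \log|f(u+iv)| = \re \log f(u+iv)$. Then $|\varphi'(v)| = \left|\re \frac{\partial}{\partial v}\log f(u+iv)\right| \le \left|\frac{\partial}{\partial v}\log f(u+iv)\right| \le k/v$. Integrating from $v/s$ to $v$ (here $s \ge 1$, so $v/s \le v$),
\begin{eqnarray*}
\big|\varphi(v) - \varphi(v/s)\big| \le \int_{v/s}^{v} |\varphi'(t)|\, dt \le \int_{v/s}^{v} \frac{k}{t}\, dt = k \log s,
\end{eqnarray*}
which exponentiates to $|f(u+iv/s)| \le s^k |f(u+iv)|$ and simultaneously $|f(u+iv)| \le s^k |f(u+iv/s)|$, the two claimed inequalities. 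The only point requiring a word of care is the branch of $\log f$: one should either assume $f$ is non-vanishing on the relevant vertical segment (so that $\log f$ has a well-defined continuous branch there, which is the implicit standing assumption since Definition~\ref{def: k descent functions} already presupposes $\log f$ makes sense) or work directly with $\varphi = \log|f|$, which sidesteps the issue entirely; I would adopt the latter in the write-up.
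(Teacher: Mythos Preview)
Your proof is correct and follows essentially the same approach as the paper: parts (1) and (2) are immediate from the additivity of $\partial_v\log$, and part (3) follows by integrating the bound $|\partial_v\log f|\le k/v$ over the segment $[v/s,v]$ to obtain $|\log f(u+iv/s)-\log f(u+iv)|\le k\log s$. The paper states this last inequality in one line; your version is simply a more careful expansion of the same argument, with the helpful observation that working with $\varphi=\log|f|$ avoids any branch ambiguity.
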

 \begin{proof}
 	The proof of \((1)\) and \((2)\) are trivial. To prove \((3)\) it is enough to mention that
 	\begin{eqnarray*}
 	|\log f(u+iv/s) - \log f(u+iv)| \le k \log s.
 	\end{eqnarray*}
 \end{proof}
 
 It is easy to check that \(|\RR_{jj}(z,w)|\), \(\imag \RR_{jj}(z,w)\) are examples of functions with \(1\)-descent property w.r.t. \(w\). 

\subsection{Bound for moments of some functions of the resolvent matrix}

Recall the definition of \(I_\tau(w)\) for \( w = u + iv \in \mathcal D\)  
\begin{eqnarray*}
I(w) \eqdef I_{\tau}(z, u + i v) \eqdef  \prod_{k=0}^{K_v} \one\big[|\Lam_n(z, u + i v s^k )| \le \tau \imag s(z,u+ i v s^k) \big],
\end{eqnarray*}
where \(  K_v \eqdef \min\{l: v s^l \geq V \}\). Here \(V \geq 1\) and \(s \geq 1\) are some constants  defined later.   It is easy to see that
\begin{eqnarray}\label{I(v) property}
I(u+i v) \le I(u+ i s_0 v). 
\end{eqnarray}
In what follows for simplicity we shall often omit \(w = u + iv\) from all notations and write only imaginary part \(v\). 
\begin{lemma}\label{main lemma 0}
Let \(V\) be some fixed number. There exist a positive constant \(C_0\) depending on \(V, z \) and positive constants \(A_0, A_1, \tau\) depending on \(C_0\) such that
\begin{eqnarray}\label{eq: resolvent bound 0}
\max_{j = 1, \ldots, nm } \E \left|\RR_{jj}(z, u + i v )\right|^p I(u+i v) &\le& C_0^p, \\
\label{eq: resolvent bound 1}
\max_{j = 1, \ldots, nm } \E \imag^p \RR_{jj}(z, u+ iv ) I(u+iv) &\le& C_0^p\imag^p s(z, u + iv).
\end{eqnarray}
for all \(u + i v \in \mathcal D\) and \(1 \le p \le A_1 \log n\).
\end{lemma}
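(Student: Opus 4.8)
The plan is to prove the two bounds \eqref{eq: resolvent bound 0} and \eqref{eq: resolvent bound 1} simultaneously by a downward induction --- the multiplicative descent of \cite{Schlein2014} --- over the geometric sequence of heights $v,\,sv,\,s^2v,\dots,s^{K_v}v\ge V$. Since $v\ge v_0=A_0n^{-1}\log^2 n$ and $s>1$ is a fixed constant there are only $K_v+1=O(\log n)$ scales, so the constant $C_0$ will depend on $V$ and $z$ but not on $n$. It is convenient to prove the estimates not only for the full matrix entry $\RR_{jj}$ but for every minor $\RR^{(\J,\K)}_{l_\alpha l_\alpha}$ with an $O(1)$ number of deleted rows and columns, by a further finite induction on the number of deletions using the rank-one resolvent identity; this is exactly what is needed to establish \eqref{e_p bound} via the quantities $\mathcal A(z,v,q)$ and $\mathcal E(q)$ of \eqref{E def}. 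The base case $v\ge V$ is immediate: $\V(z)$ is Hermitian, so $\|\RR(z,u+iv)\|\le v^{-1}\le V^{-1}\le 1$, whence $|\RR^{(\cdots)}_{jj}|\le 1$; moreover $\imag\RR^{(\cdots)}_{jj}(u+iV)\le V^{-1}\asymp\imag s(z,u+iV)$ once $V$ is a large enough constant, so both bounds hold with a suitable $C_0$.

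For the inductive step we assume the estimates (and their minor versions) at all heights $\ge sv$ and establish them at height $v$. Solving the Schur identity \eqref{rj} for the diagonal entry gives $\RR_{j_\alpha j_\alpha}=-(D_{j_\alpha}-\ve_{j_\alpha})^{-1}$, where $D_{j_\alpha}=-w-m_n^{([\alpha+1]+m)}+|z|^2/(w+m_n^{([\alpha-1])})$ and $\ve_{j_\alpha}$ is the error of \eqref{rj}. On the event $I(v)$ every partial trace satisfies $|m_n^{(\beta)}-s|\le\tau\imag s$, so a first-order expansion of $D_{j_\alpha}$ around the self-consistent solution of \eqref{eq: s(z,w) equation} (which equals, up to sign, $s^{-1}$) shows $|D_{j_\alpha}|\ge c|s|^{-1}\ge c/C_0$ on $I(v)$, together with the one-sided comparison needed for the imaginary part. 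Thus everything reduces to a moment bound showing that $\ve_{j_\alpha}$ is small on $I(v)$.

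To bound $\ve_{j_\alpha}$ one uses its explicit decomposition into the pieces $\widetilde\ve_{j_\alpha,1},\dots,\widetilde\ve_{j_\alpha,4}$ and $\widehat\ve_{j_\alpha,1},\dots,\widehat\ve_{j_\alpha,3}$. The trace-difference terms $\widetilde\ve_{j_\alpha,1}$ and $\widehat\ve_{j_\alpha,1}$ are $O(1/(nv))$ deterministically, via the rank-one resolvent identity and $\|\RR\|,\|\RR^{(j_\alpha)}\|\le v^{-1}$ (eigenvalue interlacing). The linear form $\widetilde\ve_{j_\alpha,4}$ and the quadratic forms $\widetilde\ve_{j_\alpha,2},\widetilde\ve_{j_\alpha,3},\widehat\ve_{j_\alpha,2},\widehat\ve_{j_\alpha,3}$ are estimated by the Rosenthal-type and Hanson--Wright-type moment inequalities supplied by the general Lemma~\ref{lem: T_n general lemma}; each such bound yields a factor $(nv)^{-1/2}$ times a power of $p$ times a Hilbert--Schmidt norm of an off-diagonal block of a minor resolvent, and the identity $\sum_l|\RR^{(\cdots)}_{kl}|^2=v^{-1}\imag\RR^{(\cdots)}_{kk}$ reduces this norm precisely to the quantities $\mathcal A(z,v,q)$. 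These minor moments at height $v$ are in turn controlled by the coarser scale: since $|\RR^{(\cdots)}_{ll}|$ and $\imag\RR^{(\cdots)}_{ll}$ are $1$-descent functions, Proposition~\ref{lem: descent property}(3) gives $\mathcal A(z,v_k,q)\lesssim C_0\,s^{k}\,\imag s(z,v_k)$, and summing the geometric series yields $\mathcal E(\kappa p)\lesssim\imag s(z,w)$, i.e.\ \eqref{e_p bound}. Combining all the pieces and using $nv\ge nv_0=A_0\log^2 n\gg p^2$ for $p\le A_1\log n$, one gets $\E[|\ve_{j_\alpha}|^p I(v)]\le(Cp\,(nv)^{-1/2}\imag^{1/2}s)^p+\text{(lower order)}\ll|D_{j_\alpha}|^{-p}$ provided $A_0$ is large; splitting off the event $\{|\ve_{j_\alpha}|\ge\tfrac12|D_{j_\alpha}|\}$ by Markov's inequality then gives $\E[|\RR_{j_\alpha j_\alpha}|^p I(v)]\le C_0^p$, the crude factor $s^{k}$ inherited from height $sv$ having been washed out by the equation. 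For \eqref{eq: resolvent bound 1} one writes $\imag\RR_{j_\alpha j_\alpha}=\imag(D_{j_\alpha}-\ve_{j_\alpha})/|D_{j_\alpha}-\ve_{j_\alpha}|^2$ and uses the one-sided bound $\imag D_{j_\alpha}\lesssim\imag s$ on $I(v)$ together with $\E[\imag^p\ve_{j_\alpha}I(v)]\lesssim(C\imag s)^p(\cdots)$ --- the $\ve$-estimates carry an extra power of $\imag s$ through the $\mathcal A$-factors --- and the lower bound already obtained on $|D_{j_\alpha}-\ve_{j_\alpha}|$, so that $\E[\imag^p\RR_{j_\alpha j_\alpha}I(v)]\le C_0^p\imag^p s$. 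Since every diagonal index is of the form $j_\alpha$ and the same reasoning applies verbatim to the minors, the induction step is complete; iterating down to $v_0$ and fixing $\tau,A_1$ small and $A_0,C_0$ large in that order finishes the proof.

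The hard part is the self-referential structure of the argument: $\ve_{j_\alpha}$ at height $v$ is built from resolvent entries of minors at the \emph{same} height $v$, i.e.\ from the very quantities being bounded, so the induction must be organised so that feeding the crude a priori bound (inherited from height $sv$ with a lost power $s^{k}$) back through the Schur identity genuinely \emph{improves} the constant rather than letting it accumulate over the $O(\log n)$ descent steps. Intertwined with this is the bookkeeping of the constants $C_0,\tau,A_0,A_1$: the gain $(nv)^{-1/2}$ from each linear and quadratic form must beat simultaneously the $s^{k}$-inflation and the factor $|s|$ coming from $|D_{j_\alpha}|^{-1}$, which is precisely what forces the optimal height $v_0\asymp n^{-1}\log^2 n$ and the range $p=O(\log n)$. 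Finally, the $\imag s$-sharpness of \eqref{eq: resolvent bound 1}, as opposed to a mere constant bound, is what makes the $\mathcal A/\mathcal E$ formalism of \eqref{E def} indispensable and is the most delicate property to propagate through the descent; one also needs the quadratic-form inequality of Lemma~\ref{lem: T_n general lemma} with the sharp power $p^{2p}$ so that the downstream estimate \eqref{T_n bound general} used in Lemma~\ref{lem: main cond 1} can be met.
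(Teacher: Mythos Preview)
Your outline captures the multiplicative descent mechanism (Proposition~\ref{lem: descent property}, the Schur identity, and the Rosenthal bounds on the $\ve$-terms), and this is indeed the content of the paper's Lemma~\ref{main lemma} and Lemma~\ref{lemma step for resolvent}. However, that argument only delivers the bound for $1\le p\le A_1(nv)^{(1-2\alpha)/2}$, \emph{not} for $1\le p\le A_1\log n$, and the gap between the two is precisely the hard part of Lemma~\ref{main lemma 0} that your proposal misses.

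The reason is the weak moment assumption $\CondTwo$: the entries are only truncated at level $Dn^{1/2-\phi}$, so the higher-order terms in the Rosenthal and quadratic-form inequalities (the $\mu_p$ and $\mu_p^2$ terms in Lemmas~\ref{e2}--\ref{e3+n}) are \emph{not} uniformly bounded in $p$; they grow like $n^{p(1/2-\phi)}$. This is exactly why the descent step of Lemma~\ref{lemma step for resolvent} only closes when $p\le A_1(nv)^{(1-2\alpha)/2}$. At $v=v_0\asymp n^{-1}\log^2 n$ this gives $p\lesssim\log^{1-2\alpha} n$, strictly less than $\log n$, so your claim that ``$nv\gg p^2$ for $p\le A_1\log n$'' suffices is not correct: the effective smallness parameter is not $(nv)^{-1/2}$ but $(nv)^{-(1-2\alpha)/2}$, and the descent from $v_1=n^{-1}\log^{2/(1-2\alpha)}n$ down to $v_0$ leaves an unremovable factor $\log^{(\frac{1+2\alpha}{1-2\alpha})p}n$ on the right-hand side.

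The paper removes this logarithmic loss by a separate moment-matching argument borrowed from Lee--Yin~\cite{LeeYin2014}: one constructs an auxiliary matrix $\Y$ with $|Y_{jk}|\le D$ whose entries match the first four moments of $X_{jk}$, proves the comparison inequality $\E|\RR_{jj}(v)|^p I(v)\le C_1^p+C_2\,\E|\RR_{jj}^{\y}(v)|^p I(v)$ (equation~\eqref{compare}, which in turn requires bounds on \emph{off-diagonal} resolvent entries --- hence the stronger statement of Lemma~\ref{main lemma}), and then runs the descent for the sub-Gaussian matrix $\Y$, where Hanson--Wright \emph{is} available and yields the bound for all $p\le A_1 nv$. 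Your proposal invokes ``Hanson--Wright-type moment inequalities'' directly for the original matrix, but under $\CondTwo$ these do not hold with useful constants; Lemma~\ref{lem: T_n general lemma} is the Stein framework for $T_n$, not a concentration inequality for quadratic forms, and the actual $\ve$-bounds (Lemmas~\ref{e1}--\ref{e3+n}) are Rosenthal-type with the $\mu_p$-dependence just described. Without the moment-matching step your argument stalls at $p\lesssim\log^{1-2\alpha}n$.
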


\begin{lemma}\label{l: bound for some functions}
	Let \(V\) be some fixed number. There exist a positive constant \(H_0\) depending on \(V, \) and positive constant \(\tau\) depending on \(H_0\) such that the following inequalities hold:  
	\begin{eqnarray}
		\max \left ( \frac{1}{|w + m_n^{(\alpha)}(z,w)|},  \Big|w + m_n^{([\alpha+1]+m)}(z,w) - \frac{|z|^2}{w + m_n^{([\alpha-1])}(z,w)}\Big|^{-1}  \right ) I(v) \le H_0
	\end{eqnarray}
	for all \(w = u + i v \in \mathcal D\) and \( \alpha = 1, \ldots, m\). 
\end{lemma}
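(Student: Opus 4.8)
The plan is to exploit the cutoff $I(v)$: on the event $\{I(v)=1\}$ the definition \eqref{I(v) def 0} forces $|\Lambda_n^{(\beta)}(z,w)|\le|\Lam_n(z,w)|\le\tau\,\imag s(z,w)$ for every $\beta=1,\dots,2m$, so each partial trace $m_n^{(\beta)}(z,w)$ differs from the limiting Stieltjes transform $s(z,w)$ by at most $\tau\,\imag s(z,w)$, and the lemma becomes a deterministic perturbation statement about $s(z,w)$ on $\mathcal D$. The algebraic input is the identity read off from \eqref{eq: s(z,w) equation},
\begin{eqnarray*}
w+s(z,w)-\frac{|z|^2}{w+s(z,w)}=-\frac{1}{s(z,w)},
\end{eqnarray*}
together with the standard properties of $s(z,w)$ on $\mathcal D$ (these rest on $||z|-1|\ge\tau$, $u\in\mathbb J_{\ve/2}$ and the analysis of the cubic equation; see \cite{Bourgade2014a}[Lemmas 4.1--4.2], \cite{GotTikh2010}, \cite{nemish2017}): there are constants $0<c\le C$, depending only on $V$ and on the fixed point $z$, such that for all $w=u+iv\in\mathcal D$
\begin{eqnarray*}
\imag s(z,w)\le|s(z,w)|\le C,\qquad c\le|w+s(z,w)|\le C,\qquad \frac{|z|^2}{|w+s(z,w)|^2}\le C.
\end{eqnarray*}
I would fix $H_0$ as a large constant determined by $c,C$ and keep $\tau=\tau(H_0)>0$ as a free smallness parameter to be pinned down below.

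For the first quantity, on $\{I(v)=1\}$ one has $|w+m_n^{(\alpha)}(z,w)|\ge|w+s(z,w)|-\tau C\ge c-\tau C\ge c/2$ as soon as $\tau\le c/(2C)$, which bounds $1/|w+m_n^{(\alpha)}|$ by $2/c$. For the second quantity I would set $D_n\eqdef w+m_n^{([\alpha+1]+m)}-|z|^2/(w+m_n^{([\alpha-1])})$ and $D\eqdef w+s-|z|^2/(w+s)=-1/s(z,w)$, substitute $m_n^{(\beta)}=s+\Lambda_n^{(\beta)}$ and clear denominators to get
\begin{eqnarray*}
D_n-D=\Lambda_n^{([\alpha+1]+m)}+\frac{|z|^2\,\Lambda_n^{([\alpha-1])}}{\big(w+s+\Lambda_n^{([\alpha-1])}\big)\,(w+s)}.
\end{eqnarray*}
On $\{I(v)=1\}$ the previous step gives $|w+s+\Lambda_n^{([\alpha-1])}|\ge c/2$, and since $|w+s|\ge c$, $|z|^2\le C^3$ and $|\Lambda_n^{(\beta)}|\le\tau C$, we get $|D_n-D|\le\tau C_1$ for a constant $C_1=C_1(c,C)$. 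Because $|D|=1/|s(z,w)|\ge 1/C$, choosing $\tau\le 1/(2CC_1)$ yields $|D_n|\ge 1/(2C)$, i.e.\ $1/|D_n|\le 2C$. Taking $H_0\eqdef\max(2/c,2C)$ and $\tau$ the minimum of the two thresholds then proves the lemma, with $H_0$ depending only on $V$ (and $z$) and $\tau$ on $H_0$.

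The single nontrivial ingredient is the list of deterministic bounds for $s(z,w)$ on $\mathcal D$, and among these the uniform lower bound $|w+s(z,w)|\ge c>0$ is the delicate one: near the spectral edges $\pm\lambda_\pm$ the imaginary part $\imag s$ degenerates, so there the estimate must be extracted from control of $\re\big(w+s(z,w)\big)$, which is precisely what the fine analysis of \eqref{eq: s(z,w) equation} in \cite{Bourgade2014a} and \cite{GotTikh2010} provides. Everything after that is an elementary continuity (Lipschitz-away-from-the-singularity) argument for the maps $\zeta\mapsto1/\zeta$ and $(\zeta,\zeta')\mapsto\zeta-|z|^2/\zeta'$, executed on the event $\{I(v)=1\}$; no concentration or randomness beyond the cutoff is needed here.
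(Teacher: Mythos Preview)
Your proposal is correct and follows essentially the same approach as the paper's proof: both reduce to a deterministic perturbation argument on the event $\{I(v)=1\}$, using that each $|\Lambda_n^{(\beta)}|\le\tau\,\imag s(z,w)$ together with the uniform bounds $|w+s(z,w)|\ge c$ and $|s(z,w)|\le C$ on $\mathcal D$. The only cosmetic difference is that for the first quantity the paper writes the self-referential inequality $|w+m_n^{(\alpha)}|^{-1}\le |w+s|^{-1}+|\Lambda_n^{(\alpha)}|\,|w+s|^{-1}|w+m_n^{(\alpha)}|^{-1}$ and solves for $|w+m_n^{(\alpha)}|^{-1}$, whereas you use the reverse triangle inequality directly; for the second quantity the paper just says ``similarly'' while you spell out the estimate via $D=-1/s(z,w)$, which is exactly the right identity.
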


\begin{remark}
The statement of Lemma remains valid if one replaces \( m_n^{(\alpha)}(z,w) \) by \( m_n^{(\alpha, \J, \K)}(z,w) \). 
\end{remark}

\begin{proof}[Proof of Lemma] 
Assume that \( I(v) = 1 \). In the opposite case the claim is trivial. Then
\begin{eqnarray*}
\frac{1}{|w + m_n^{(\alpha)}(z,w)|} \le \frac{1}{|w + s(z,w)|} + \frac{|\Lambda_n^{(\alpha)}|}{|w + s(z,w)||w + m_n^{(\alpha)}(z,w)|} \le c + \frac{c \, \tau }{|w + m_n^{(\alpha)}(z,w)|}, 
\end{eqnarray*}	
here \( c \) depends on \( z \) and \(V\). The last inequality implies 
\begin{eqnarray*}
\frac{1}{|w + m_n^{(\alpha)}(z,w)|} \le \frac{c}{1 - c \, \tau }. 
\end{eqnarray*}	
If we take, say, \( H_0 \geq 2c \), then we may find sufficiently small \( \tau\), such that the r.h.s. of the previous inequality is bounded by \( H_0 \).   Similarly we may prove.
\end{proof}	
\begin{proof}[Proof of Lemma~\ref{main lemma 0}] 	
The proof of is more involved then the proof of the previous lemma. The general idea how to prove these results follows the idea of~\cite{Schlein2014} about multiplicative descent approach developed in~\cite{GotzeNauTikh2015a}. We briefly discuss these ideas on the bound~\eqref{eq: resolvent bound 0} for \(\E \left|\RR_{jj}(z, v)\right|^p I(v) \) (the same will be true for~\eqref{eq: resolvent bound 1}).

We prove below in Lemma~\ref{main lemma} that the bound
\begin{align}\label{eq: step 1 res}
\max_{j \in \T} \E |\RR_{jj}(v)|^p I(v) \le C_0^p
\end{align}
holds for all \(w \in \mathcal D\) and \(1 \le p \le A_1 (nv)^{(1-2\alpha)/2}\). We are interested in \(p\) of the order \(\log n\). Denote \(v_1 \eqdef n^{-1} \log^{2/(1-2\alpha)} n\) and take \(p = A_1 \log n\) (It is sufficient to consider only such values of \(p\). For all \(1 \le q \le p\) we may apply the Lyapunov inequality). It is easy to see that~\eqref{eq: step 1 res} holds for all \( v: v_1 \leq v\leq V\) with \( p = A_1 \log n\).  Let us fix \(v: v_0 \le v \le v_1\) and let \(l_0 \eqdef \min \{l \geq 1: v s_0^l \geq v_1\}\). We take \(s \eqdef s_0^{l_0}\). It is clear that \( s \lesssim \log^\frac{1+2\alpha}{1-2\alpha} n\). Applying Proposition~\ref{lem: descent property},~\eqref{eq: step 1 res} and~\eqref{I(v) property} we may show that for all \(v \geq v_0\)
\begin{eqnarray*}
\max_{j \in \T} \E |\RR_{jj}(v)|^p I(v) \le C_0^p \log^{\left(\frac{1+2\alpha}{1-2\alpha}\right) p} n.
\end{eqnarray*}
It remains to remove the log factor on the right hand side of the previous inequality. To this aim we shall adopt the moment matching technique which has been successfully  used  recently by Lee and in Yin in~\cite{LeeYin2014}(see Lemma~5.2 and Lemma~5.3). We denote by \(Y_{jk}, 1 \le j \le k \le n\) a triangular set of random variables such that \(|Y_{jk}| \le D\), for some \(D\) chosen later, and
\begin{eqnarray*}
\E X_{jk}^s = \E Y_{jk}^s \, \text { for } \, s = 1, ... , 4.
\end{eqnarray*}
It follows from~\cite{LeeYin2014}[Lemma~5.2] that such a set of random variables exists. Let us denote \(\W^{\y}: = \frac{1}{\sqrt n} \Y, \RR^\y: = (\W^\y - z \II)^{-1}\) and \(m_n^\y(z): = \frac{1}{n} \Tr\RR^\y(z)\). Then, repeating the proof of \cite{GotzeNauTikh2016a}[Lemma~3.5] we show that for all \(v \geq v_0\)  and \(5 \le p \le A_1 \log n\) there exist positive constants \(C_1, C_2\) such that
\begin{eqnarray}\label{compare}
\E|\RR_{jj}(v)|^p I(v) \le C_1^p + C_2 \E|\RR_{jj}^\y(v)|^p I(v).
\end{eqnarray}
It is easy to see that \(Y_{jk}\) are sub-Gaussian random variables. Repeating the proof of Lemma~\ref{main lemma} below for sub-Gaussian random variables one may show that
\begin{eqnarray*}
\E|\RR_{jj}^\y(v)|^p I(v) \le C_0^p
\end{eqnarray*}
for all \(w \in \mathbb D\) and \(1 \le p \le A_1 nv\). Here one needs to replace Lemmas~\ref{e1}--\ref{e4-e5} by the Hanson-Wright inequality (see, for example, \cite{GotzeNauTikh2016a}[Lemma A.4--A.7]). For details see the proof of the corresponding result in~\cite{GotzeNauTikh2016a}[Lemma~4.1]).
\end{proof}

\begin{lemma}\label{main lemma}
Let \(V\) be some fixed number. There exist a positive constant \(C_0\) depending on \(V, z\) and positive constants \(A_0, A_1\) depending on \(C_0\) such that
\begin{eqnarray*}\label{eq: resolvent bound}
\max_{j,k = 1, \ldots, nm } \E \left|\RR_{jk}(z, u + i v)\right|^p I(u+i v) \le C_0^p
\end{eqnarray*}
for all \(A_0 n^{-1} \le v \le V, u \in \J_\ve\) and \(1 \le p \le A_1(nv)^\frac{1-2\alpha}{2}\). 
\end{lemma}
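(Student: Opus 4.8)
The plan is to run a bootstrap (continuity) argument in the parameter $v$, descending from $v=V$ down to $v\asymp A_0 n^{-1}$, in which the bound available at one scale supplies the a priori input needed at the next. Fix $j=j_\alpha$ and $w=u+iv$ with $u\in\J_\ve$. The first step is the Schur reduction: applying the identity \eqref{rj}, together with its analogue for the minors $\RR^{(\J,\K)}$ obtained by deleting $O(1)$ rows and columns, one gets on the event $\{I(v)=1\}$ a self-improving inequality of the form
$$
|\RR_{j_\alpha j_\alpha}^{(\J,\K)}(z,w)|\le H_0\bigl(1+|\ve_{j_\alpha}^{(\J,\K)}|\,|\RR_{j_\alpha j_\alpha}^{(\J,\K)}(z,w)|\bigr),
$$
where $H_0$ is the constant from Lemma~\ref{l: bound for some functions}; here I use that deleting $O(1)$ indices perturbs each partial trace $m_n^{(\beta)}$ by $O((nv)^{-1})$, so the lower bounds for the self-consistent denominators survive for all the minors in play. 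Consequently, as soon as $|\ve_{j_\alpha}^{(\J,\K)}|\le(2H_0)^{-1}$ we obtain $|\RR_{j_\alpha j_\alpha}^{(\J,\K)}|\le2H_0$, and the whole task is reduced to bounding $\E[\,|\ve_{j_\alpha}^{(\J,\K)}|^{p}I(v)\,]$; the off-diagonal entries $\RR_{j_\alpha k_\beta}$, $j_\alpha\ne k_\beta$, are then recovered by writing them as $\RR_{j_\alpha j_\alpha}$ times a linear form in the $X_{jk}^{(q)}$ against a minor resolvent.

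For the moment bound on the error, decompose $\ve_{j_\alpha}$ as in \eqref{rj} into trace-difference terms, off-diagonal quadratic forms $n^{-1}\sum_{l\ne k}X_{jk}^{(\alpha)}X_{jl}^{(\alpha)}\RR^{(j_\alpha)}_{l_\cdot k_\cdot}$, diagonal terms $n^{-1}\sum_{l}([X_{jl}^{(\alpha)}]^{2}-1)\RR^{(j_\alpha)}_{l_\cdot l_\cdot}$, and linear forms $n^{-1/2}(z+\bar z)\sum_{l}X_{jl}^{(\alpha)}\RR^{(j_\alpha)}_{j_\cdot l_\cdot}$. The trace-difference terms are $\le C(nv)^{-1}$ deterministically, by the resolvent identity and the Ward identity $\sum_k|\RR_{\ell k}|^{2}=v^{-1}\imag\RR_{\ell\ell}$. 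For the remaining terms I would apply the moment inequalities for linear and quadratic forms (e.g.\ \cite{GotzeNauTikh2015a}[Theorems~A.1--A.2]) under the truncation $|X_{jk}^{(q)}|\le Dn^{1/2-\phi}$ supplied by condition \(\CondTwo\): on $\{I(v)=1\}$ the Hilbert--Schmidt norm of the coefficient array is $\le Cn^{-1}(\imag s(z,w)/v)^{1/2}$ (again by the Ward identity, using $\imag m_n\le(1+\tau)\imag s$, which is part of $I(v)$), its operator norm is $\le(nv)^{-1}$, and its maximal entry is $n^{-1}\max_{l,k}|\RR^{(j_\alpha)}_{lk}|$. Writing $\Phi$ for an a priori bound on $\E^{1/p}[\max_{l,k}|\RR^{(j_\alpha)}_{lk}|^{p}I(v)]$, this produces an estimate of the schematic shape
$$
\E^{1/p}\bigl[\,|\ve_{j_\alpha}^{(\J,\K)}|^{p}I(v)\,\bigr]\;\lesssim\;\frac{p}{nv}+\Bigl(\frac{\imag s(z,w)}{nv}\Bigr)^{1/2}\!\sqrt{p}+\bigl(\text{weak-moment term}\bigr)\cdot\Phi,
$$
in which the last term is a fixed power of $(nv)^{-1}$ times a fixed power of the truncation level $Dn^{1/2-\phi}$; it is exactly this term that forces the bound on $p$ to be $(nv)^{(1-2\alpha)/2}$ rather than $nv$.

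Finally, the bootstrap: at $v=V$ all quantities are $O(1)$ and the claim is immediate. I would then decrease $v$ in steps of size $\asymp v^{2}n^{-2}$ and transport the bound $\E[|\RR_{jk}|^{p}I(v')]\le C_0^{p}$ from the previous scale into the a priori input $\Phi\le2C_0$ at the current one via the deterministic Lipschitz estimate $|\RR_{jk}(u+iv)-\RR_{jk}(u+iv')|\le|v-v'|v^{-2}$; plugging $\Phi$ into the error estimate and then into the self-improving inequality above closes the induction, since for $1\le p\le A_1(nv)^{(1-2\alpha)/2}$ with $A_1$ small enough (and $\tau$, $A_0$, $C_0$, $H_0$ calibrated in the right order) all the error contributions are $\le(2H_0)^{-p}$. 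A union bound over the $O(nm)$ indices $j$ and the polynomially many index sets $(\J,\K)$, absorbed into $C_0$, completes the step. The main obstacle is precisely this self-referential structure: the moment bound for $\ve_{j_\alpha}$ unavoidably involves $\max_{l,k}|\RR^{(j_\alpha)}_{lk}|$, i.e.\ the very quantity being estimated, and the argument succeeds only if the feedback is contractive; making this rigorous requires the stability of the self-consistent denominators under deletion of $O(1)$ rows and columns, the careful order of choice of the constants, and uniformity over all minors, and it is the heavy-tailed (truncated, $(4+\delta)$-moment) nature of the entries that restricts $p$ below $(nv)^{(1-2\alpha)/2}$, in contrast with the range $p\le A_1 nv$ available for sub-Gaussian entries.
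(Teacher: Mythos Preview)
Your high-level plan --- Schur reduction, moment bounds for the error pieces $\ve_{j_\alpha}$, and a bootstrap in $v$ --- matches the paper's, but the bootstrap you propose is \emph{additive} (step size $\asymp v^2 n^{-2}$, transported via the Lipschitz estimate $|\RR_{jk}(v)-\RR_{jk}(v')|\le |v-v'|v^{-2}$), whereas the paper runs a \emph{multiplicative} descent $v\mapsto v/s_0$ with $s_0=2^{2/(1-2\alpha)}$, using the $1$-descent property of diagonal resolvent entries (Proposition~\ref{lem: descent property}). This is not a cosmetic difference; it is exactly what makes the self-referential inequality close.

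The gap in your scheme is at the step ``plug $\Phi$ into the error estimate and then into the self-improving inequality''. From $|\RR_{jj}|\le H_0(1+|\ve_{j_\alpha}|\,|\RR_{jj}|)$ one gets, after raising to the $q$-th power and taking expectation,
\[
\E\bigl[|\RR_{jj}(v)|^{q}I(v)\bigr]\;\le\;2^{q}H_0^{q}\Bigl(1+\E^{1/2}\bigl[|\ve_{j_\alpha}|^{2q}I(v)\bigr]\,\E^{1/2}\bigl[|\RR_{jj}(v)|^{2q}I(v)\bigr]\Bigr),
\]
so you need the $2q$-th moment of $\RR_{jj}$ at the \emph{current} scale $v$ (and in fact up to the $4q$-th moment once the $\widehat\ve$ contribution is unfolded). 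Your additive transport from $v'$ with $|v-v'|\asymp v^2 n^{-2}$ only delivers the $q$-th moment, since the inductive hypothesis at $v'$ covers the same moment range $q\le A_1(nv')^{(1-2\alpha)/2}\approx A_1(nv)^{(1-2\alpha)/2}$. The paper's cure is built into the multiplicative step: at scale $s_0 v$ the moment range is $A_1(ns_0 v)^{(1-2\alpha)/2}=2A_1(nv)^{(1-2\alpha)/2}$, i.e.\ doubled, and Proposition~\ref{lem: descent property} gives $\E[|\RR_{jj}(v)|^{2q}I(v)]\le s_0^{2q}\E[|\RR_{jj}(s_0 v)|^{2q}I(s_0 v)]\le (s_0 C_0)^{2q}$, which closes the inequality.

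A second structural point you have not set up is the joint induction on scale and minor depth (Lemma~\ref{lemma step for resolvent}): the error $\ve_{j_\alpha}^{(\J,\K)}$ involves minors one level deeper, so the paper assumes the bound for $(\J,\K)\in\mathcal J_{K+1}$ at scales $\ge\tilde v$ and deduces it for $(\J,\K)\in\mathcal J_K$ at scales $\ge\tilde v/s_0$. Because only $L=O(\log n)$ multiplicative steps are needed, minors of depth $\le L$ suffice. An additive scheme with polynomially many steps cannot tie minor depth to the step count in this way; your ``union bound over polynomially many $(\J,\K)$'' does not by itself supply the required inductive input for the minor resolvents at the new scale. In short, the missing idea is precisely the $k$-descent/multiplicative machinery of Section~\ref{sec: general principe}, which is what allows both the moment doubling and the logarithmic-depth minor recursion.
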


\begin{remark}
In Lemma~\ref{main lemma} we bound the off-diagonal entries as well. We use the bound for off diagonal entries to show that~\eqref{compare} holds. See \cite{GotzeNauTikh2016a}[Lemma~3.5] for details. 	
\end{remark}

Let us define \( J_\alpha, \alpha = 1, \ldots, m\) as an arbitrary subsets of \(\T\). Here \(J_\alpha\) will correspond to the indices of rows deleted from \(\X^{(\alpha)}\). Similarly we define  \( K_\alpha, \alpha = 1, \ldots, m\) as the indices of columns deleted from \(\X^{(\alpha)}\).  Moreover, let \( |J_\alpha \setminus K_\alpha|  = 0\) or \(1\).  For a particular choice of these sets we define
\begin{eqnarray}\label{eq: set def}
\J \eqdef \{ J_\alpha \subset \T , \alpha = 1, \ldots, m \}, \quad \K \eqdef \{ K_\alpha \subset \T , \alpha = 1, \ldots, m \}. 
\end{eqnarray}
Handling now all possible \( J_\alpha, K_\alpha, \alpha = 1, \ldots, m \) we define  \( \mathcal J_L\) as follows
\begin{eqnarray*}
\mathcal J_L \eqdef \{\J, \K: |\J| \le L,  |\J \setminus \K|  \le 1\}, \, L \geq 0.
\end{eqnarray*}
We also define \( T_{J_\alpha} \eqdef (\alpha -1) m + \T \setminus J_{\alpha} \). Similarly we may define \( T_{K_\alpha}\). Let \( \X^{(\alpha, \J_\alpha, \K_\alpha)} \) be a sub-matrix of \(\X^{(\alpha)}\) with entries \( X_{jk}^{(\alpha)}, j \in \T_{J_\alpha}, k \in \T_{K_\alpha} \). Then we may define \( \W^{(\J, \K)}\) as \( \W \) with all \(\X^{\alpha}\) replaced by \( \X^{(\alpha, \J_\alpha, \K_\alpha)} \). Similarly we define \( \V^{(\J, \K)}, \RR^{(\J, \K)} \) and all other quantities. 
Denote
\begin{eqnarray*}
I_{\tau}^{(\J,\K)}(v) \eqdef \prod_{k=1}^{K_v} \one \big[ |\Lambda_n^{(\J, \K)}(u+ i s_0^k v)| \le \tau \imag s(z, u+i s_0^k v)\big ].
\end{eqnarray*}
It is easy to see that
\begin{eqnarray}\label{eq: ind property}
I_{\tau}^{(\J,\K)}(v) \le I_{\tau_1}^{(\J,\K)}(s_0 v)
\end{eqnarray}
for any \(\tau_1: \tau_1 \geq \tau\).

\begin{lemma}\label{lemma step for resolvent}
Assume that the conditions \(\CondTwo\) hold. Let \(C_0\) and \(s_0\) be arbitrary numbers such that \(C_0 \geq \max(1/, H_0), s_0 \geq 2^{1/\kappa}\). There exist sufficiently large \(A_0\) and small \(A_1\) depending on \(C_0, s_0, V\) only such that the following statement holds. Fix some \(\tilde v: v_0 s_0 /\sqrt{\gamma(u)} \leq \tilde v \le V\). Suppose that for some integer \(K > 0\), all \(u, v',q\) such that  \(\tilde v \leq v' \leq V,\, u \in \J_\ve, 1 \le q \le A_1 (n v')^\frac{1-2\alpha}{2}\)
\begin{eqnarray}\label{main condition 1 0}
\max_{(\J, \K) \in \mathcal J_{K+1}} \max_{\alpha, \beta = 1, \ldots, m} \max_{l_\alpha \in \T_{J_\alpha}, k_\beta \in \T_{K_\beta} }\E |\RR_{l_\alpha k_\beta}^{(\J, \K)}(v')|^q I_{2\tau}^{(\J, \K)}(v) \le C_0^q.
\end{eqnarray}
Then for all \(u,v, q\) such that \(\tilde v/s_0 \leq v \le V, u \in \J_\ve\), \(1 \le q \le A_1 (nv)^\frac{1-2\alpha}{2} \)
\begin{eqnarray*}
\max_{(\J, \K) \in \mathcal J_{K}} \max_{\alpha, \beta = 1, \ldots, m} \max_{l_\alpha \in \T_{J_\alpha}, k_\beta \in \T_{K_\beta} }\E |\RR_{l_\alpha k_\beta}^{(\J, \K)}(v)|^q I_{2\tau}^{(\J, \K)}(v) \le C_0^q.
\end{eqnarray*}
\end{lemma}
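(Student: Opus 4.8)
The plan is to split the argument into the trivial range $v\in[\tilde v,V]$ and the genuinely new range $v\in[\tilde v/s_0,\tilde v)$, the latter treated by a single Schur--complement step reinforced by the descent property. Throughout I use the standing reductions (i.i.d.\ entries, $a_{jk}^{(q)}=0$, $[\sigma_{jk}^{(q)}]^2=1$ and $\CondTwo$, so $|X_{jk}^{(q)}|\le Dn^{1/2-\phi}$). For $v\in[\tilde v,V]$ there is nothing to do: since $\mathcal J_K\subset\mathcal J_{K+1}$, the assertion is the case $v'=v$ of~\eqref{main condition 1 0}. From now on I fix $(\J,\K)\in\mathcal J_K$, indices $l_\alpha\in\T_{J_\alpha}$, $k_\beta\in\T_{K_\beta}$, an exponent $q\le A_1(nv)^{(1-2\alpha)/2}$ and a level $v\in[\tilde v/s_0,\tilde v)$, and I set $\bar v\eqdef\min(s_0v,V)$, so that $\tilde v\le\bar v\le V$ and, by iterating~\eqref{eq: ind property}, $I_{2\tau}^{(\J,\K)}(v)\le I_{2\tau}^{(\J,\K)}(\bar v)$.

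For the diagonal entry $l_\alpha=k_\beta$ I would apply the Schur identity~\eqref{rj} to $\V^{(\J,\K)}(z)$, which expresses $\RR^{(\J,\K)}_{l_\alpha l_\alpha}$ through averaged traces of a resolvent with the index $l_\alpha$ additionally removed --- a matrix attached to a pair in $\mathcal J_{K+1}$ --- and an error term $\ve_{l_\alpha}$. On $\{I_{2\tau}^{(\J,\K)}(v)=1\}$ the denominator of~\eqref{rj} is bounded below by $H_0^{-1}$ by Lemma~\ref{l: bound for some functions} (used with $2\tau$ in place of $\tau$, in the reduced form of the remark after it), so that $|\RR^{(\J,\K)}_{l_\alpha l_\alpha}|(1-H_0|\ve_{l_\alpha}|)\le H_0$ there. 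On the good part $\{H_0|\ve_{l_\alpha}|\le\tfrac12\}$ this forces $|\RR^{(\J,\K)}_{l_\alpha l_\alpha}|\le 2H_0$; on the bad part $\{H_0|\ve_{l_\alpha}|>\tfrac12\}$ I would use Proposition~\ref{lem: descent property}(3) to bound $|\RR^{(\J,\K)}_{l_\alpha l_\alpha}(z,u+iv)|\le s_0|\RR^{(\J,\K)}_{l_\alpha l_\alpha}(z,u+is_0v)|$, invoke the bound already available at scale $\bar v\ge\tilde v$ at the exponent $2q$ (admissible once $s_0$ is large enough --- this is where the lower bound $s_0\ge 2^{1/\kappa}$ on the multiplicative step is used), and apply Cauchy--Schwarz. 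This reduces everything to estimating $\Pb(H_0|\ve_{l_\alpha}|>\tfrac12,\ I_{2\tau}^{(\J,\K)}(v)=1)$.

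For that probability I would bound high moments of $\ve_{l_\alpha}$ on $\{I_{2\tau}^{(\J,\K)}(v)=1\}$. Writing $\ve_{l_\alpha}$ through its pieces $\widetilde\ve_{l_\alpha,1},\dots,\widetilde\ve_{l_\alpha,4}$ and $\widehat\ve_{l_\alpha,1},\dots,\widehat\ve_{l_\alpha,3}$, each one is either a difference of normalized traces of resolvents that differ by a bounded-rank perturbation --- hence $O((nv)^{-1})$ on $\{I_{2\tau}^{(\J,\K)}(v)=1\}$ --- or a linear, resp.\ diagonal-free quadratic, form in the $X^{(q)}_{jk}$ whose coefficients are entries of a resolvent attached to a pair in $\mathcal J_{K+1}$. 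To these forms I would apply the Rosenthal-type inequality for linear forms and the moment inequality for quadratic forms (Lemma~\ref{lem: T_n general lemma} and its corollaries; cf.\ \cite{GotzeNauTikh2015a}[Theorem~A.1--A.2]); the bounds they produce involve $\tfrac1n\sum_{a}|\RR^{(\J',\K')}_{ba}(z,u+iv)|^2=\tfrac1v\imag\RR^{(\J',\K')}_{bb}(z,u+iv)$, the quantity $\max_{a}|\RR^{(\J',\K')}_{ba}(z,u+iv)|$, and tail terms of order $n^{-1}\sum_{k}\E|X^{(q)}_{jk}|^{q_1}$ which, thanks to $|X^{(q)}_{jk}|\le Dn^{1/2-\phi}$, remain harmless as long as $q_1\le A_1(nv)^{(1-2\alpha)/2}$ with $A_1$ small. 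For the first two quantities I would pass to scale $\bar v$ by Proposition~\ref{lem: descent property}(3) (a factor $\le s_0^{O(q_1)}$), replace the indicator using~\eqref{eq: ind property} and the comparability of the $\Lambda_n^{(\J,\K)}$ for different $(\J,\K)\in\mathcal J_{K+1}$ (which differ by $O((nv)^{-1})$), invoke the hypothesis~\eqref{main condition 1 0} at scale $\bar v$, and use $\imag s(z,u+i\bar v)\le C\imag s(z,u+iv)$ from \cite{Bourgade2014a}[Lemma~4.1--4.2]. Taking $q_1\eqdef\lfloor A_1(nv)^{(1-2\alpha)/2}\rfloor$ this should yield $\E[|\ve_{l_\alpha}|^{q_1}I_{2\tau}^{(\J,\K)}(v)]\le\theta_0^{q_1}$, with $\theta_0$ as small as desired provided $A_0$ is chosen large first and then $A_1$ small (recall $v\ge A_0n^{-1}$, so $nv\ge A_0$). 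Feeding this into Markov's inequality and back into the bad-part estimate, and arranging the constants so that in addition $C_0\ge 4H_0$ and $\theta_0\le (2s_0^2)^{-1}$, I would obtain $\E[|\RR^{(\J,\K)}_{l_\alpha l_\alpha}(z,w)|^q I_{2\tau}^{(\J,\K)}(v)]\le (2H_0)^q+s_0^qC_0^q\theta_0^{q_1/2}\le C_0^q$. The off-diagonal entries ($l_\alpha\ne k_\beta$) I would reduce to the diagonal ones via the usual identity writing $\RR^{(\J,\K)}_{l_\alpha k_\beta}$ as $\RR^{(\J,\K)}_{l_\alpha l_\alpha}$ times a linear form whose coefficients are entries of a resolvent in $\mathcal J_{K+1}$, then combine Cauchy--Schwarz, the diagonal bound just obtained (run with $2A_1$ in place of $A_1$ so that the exponent $2q$ is admissible) and the linear-form estimate above; taking the maxima in~\eqref{main condition 1 0} finishes the proof.

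The step I expect to be the main obstacle is precisely this quantitative control of $\ve_{l_\alpha}$ together with the ensuing bookkeeping of constants. One must absorb the $s_0^{O(q)}$ factor produced each time a resolvent quantity is transported from scale $v$ to scale $\bar v$ by the descent property, while having only $4+\delta$ finite moments, so that the moment inequalities for $\ve_{l_\alpha}$ carry tail terms tamed only by the truncation $|X^{(q)}_{jk}|\le Dn^{1/2-\phi}$ combined with the restriction $q\le A_1(nv)^{(1-2\alpha)/2}$ --- this is exactly why that range, and not merely $q\le A_1\log n$, appears in the statement. Balancing all of this against the requirement that the bad-event contribution stay below $C_0^q$ dictates the order of the quantifiers: $C_0\ge\max(1,H_0)$ and $s_0\ge 2^{1/\kappa}$ are fixed first, then $A_0$ is taken large (depending only on $C_0,s_0,V$), and only then is $A_1$ taken small.
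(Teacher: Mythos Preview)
Your proposal is correct and follows essentially the same route as the paper: Schur's formula plus Lemma~\ref{l: bound for some functions} for the diagonal entries, the descent property (Proposition~\ref{lem: descent property}) to transport resolvent moments from scale $v$ to scale $s_0v\ge\tilde v$ where hypothesis~\eqref{main condition 1 0} applies (with the exponent doubling $q\mapsto 2q$ absorbed exactly as you describe via $s_0\ge 2^{1/\kappa}$), the moment inequalities of Lemmas~\ref{e1}--\ref{e3+n} for the pieces of $\ve_{l_\alpha}$, and the off-diagonal representation handled by Rosenthal's inequality. The only cosmetic difference is that the paper bounds $\E[|\RR_{l_\alpha l_\alpha}|^qI]\le 2^qH_0^q\big(1+\E^{1/2}[|\ve_{l_\alpha}|^{2q}I]\,\E^{1/2}[|\RR_{l_\alpha l_\alpha}|^{2q}I]\big)$ and applies Cauchy--Schwarz directly, rather than your good/bad-event split on $\{H_0|\ve_{l_\alpha}|\le\tfrac12\}$; the two organizations are equivalent up to constants.
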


\begin{proof}
Let us fix \( \alpha = 1, \ldots, m\). Without loss of generality we assume that \( \J = \K \). We fix some \(j \in \T \setminus J_\alpha \) and denote 
\begin{eqnarray*}
\widetilde \J = \{ J_\beta, \beta \neq \alpha, J_\alpha \cup \{j\} \}.
\end{eqnarray*}
Similarly we define \(\widetilde \K \). We first consider the diagonal entries. Applying Schur's inverse formula, we may write
\begin{eqnarray*}
\RR_{j_\alpha j_\alpha}^{(\J, \K)}= -\frac{1}{-w-m_n^{([\alpha+1]+m, \J, \K)}+ \frac{|z|^2}{w + m_n^{([\alpha-1], \J, \K)}(z,w)} }(1 - \ve_{j_\alpha}^{(\J, \K)}\RR_{j_\alpha j_\alpha}^{(\J, \K)}),
\end{eqnarray*}
where
\begin{eqnarray*}
	\ve_{j_\alpha}^{(\J, \K)} \eqdef \widetilde \ve_{j_\alpha}^{(\J, \K)} + \frac{|z|^2}{w + m_n^{([\alpha-1], \J, \K)}(z,w)} \RR_{j_{\alpha +m}, j_{\alpha + m}}^{(\widetilde \J, \K)} \widehat \ve_{j_\alpha}^{(\J, \K)}. 
\end{eqnarray*}
Here \(\widetilde \ve_{j_\alpha}^{(\J, \K)} \eqdef \widetilde \ve_{j1}^{(\J, \K)}+\ldots+ \widetilde \ve_{j4}^{(\J, \K)}\),
\begin{eqnarray*}
	\widetilde \ve_{j_\alpha, 1}^{(\J, \K)}& \eqdef& \frac1n \sum_{k\in \T\setminus K_{[\alpha+1]}} \RR_{k_{[\alpha +1 ] +m }, k_{[\alpha +1] + m }}^{(\J, \K)}-\frac1n \sum_{k\in \T\setminus K_{[\alpha+1]}} \RR_{k_{[\alpha +1 ] + m }, k_{[\alpha + 1 ] + m }}^{(\widetilde \J, \K)} ,\\
	\widetilde \ve_{j_\alpha, 2}^{(\J, \K)}&\eqdef& -\frac1n\sum_{l\ne k \in  \T\setminus K_{[\alpha+1]}} X_{jk}^{(\alpha)} X_{jl}^{(\alpha)}  \RR_{l_{[\alpha +1 ] + m }, k_{[\alpha +1 ] + m }}^{(\widetilde \J, \K)}, \\
	\widetilde \ve_{j_\alpha, 3}^{(\J, \K)}& \eqdef&-\frac1n\sum_{l\in \T\setminus K_{[\alpha+1]} } ([X_{jl}^{(\alpha)}]^2-1) \RR_{l_{[\alpha +1 ] + m }, l_{[\alpha +1] + m }}^{(\widetilde \J, \K)}, \\
	\widetilde \ve_{j_\alpha, 4}^{(\J, \K)}&\eqdef& \frac{z+\overline z}{\sqrt n}\sum_{l\in \T\setminus K_{[\alpha+1]}} X_{jl}^{(\alpha)} \RR_{j_{[\alpha +1 ] + m },l_{[\alpha +1] + m }}^{(\widetilde \J, \K)}.
\end{eqnarray*}
and \(\widehat \ve_{j}^{(\J, \K)} \eqdef \widehat \ve_{j,1}^{(\J, \K)}+\ldots+\widehat \ve_{j,3}^{( \J, \K)}\), where 
\begin{eqnarray*}
	\widehat \ve_{j_\alpha,1}^{(\J, \K)}&\eqdef& \frac1n \sum_{l \in \T\setminus J_{[\alpha-1]}} \RR_{l_{[\alpha-1] }, l_{[\alpha-1]}}^{(\J, \K)} - \frac1n \sum_{l \in \T\setminus J_{[\alpha-1]}} \RR_{l_{[\alpha-1] }, l_{[\alpha-1]}}^{(\widetilde \J, \widetilde \K)}, \\
	\widehat \ve_{j_\alpha,2}^{(\J, \K)} &\eqdef&-\frac1n\sum_{k\ne l\in \T\setminus J_{[\alpha-1]}}X_{kj}^{([\alpha-1])}X_{lj}^{([\alpha-1])}\RR_{k_{[\alpha-1] }, l_{[\alpha-1]}}^{(\widetilde \J, \widetilde \K)},\\
	\widehat \ve_{j_\alpha,3}^{( \J, \K)} &\eqdef&-\frac1n\sum_{l\in \T\setminus J_{[\alpha-1]}}([X_{lj}^{([\alpha-1])}] ^2-1)\RR_{l_{[\alpha-1] }, l_{[\alpha-1]}}^{(\widetilde \J, \widetilde \K)}.
\end{eqnarray*}
We conclude from~\eqref{rj} and Lemma~\ref{l: bound for some functions} that there exist a positive constant \(H_0\) depending on \(u_0, V, z\) and positive constant \(A\) depending on \(H_0\) such that the following inequality holds:  
\begin{eqnarray*}
|\RR_{j_\alpha j_\alpha}^{(\J, \K)}(v)| I_{\tau}^{(\J, \K)}(v) \le H_0 \left (1 + | \varepsilon_{j_\alpha}^{(\J,\K)} \RR_{j_\alpha j_\alpha }^{(\J,\K)}| I_{\tau}^{(\J, \K)}(v) \right).
\end{eqnarray*}
Hence,
\begin{eqnarray*}
\E\big[|\RR_{j_\alpha j_\alpha }^{(\J,\K)}|^q I_{\tau}^{(\J, \K)}(v)\big] \le 2^q H_0^q \left (1 + \E^\frac12\big[|\varepsilon_{j_\alpha}^{(\J,\K)}|^{2q} I_{\tau}^{(\J, \K)}(v)\big] \E^\frac12\big[|\RR_{j_\alpha j_\alpha}^{(\J,\K)}|^{2q} I_{\tau}^{(\J, \K)}(v)\big] \right).
\end{eqnarray*}
It follows from Proposition~\ref{lem: descent property},~\eqref{eq: ind property} and~\eqref{main condition 1 0} that
\begin{eqnarray}\label{eq: est 1}
\E^\frac12\big[|\RR_{j_\alpha j_\alpha}^{(\J,\K)}(v)|^{2q} I_{\tau}^{(\J, \K)}(v)\big] \le s_0^q C_0^q. 
\end{eqnarray}
The Cauchy-Schwartz inequality and Lemma imply
\begin{eqnarray*}
\E [|\ve_{j}^{(\J, \K)} |^{2 q}  I_{\tau}^{(\J, \K)} ]\le  2^{2 q}\E[|\widetilde \ve_j^{(\J, \K)}|^{2 q} I_{\tau}^{(\J, \K)}] +2^{2 q} H_0^q |z|^{4q}  \E^\frac12[|\widehat \ve_{j}^{(\J, \K)}|^{4 q} I_{\tau}^{(\J, \K)}] \E^\frac12[|\RR_{j_{\alpha+m},j_{\alpha+m}}^{(\widetilde \J, \K)}|^{4 q} I_{\tau}^{(\J, \K)} ].
\end{eqnarray*}
Similarly to~\eqref{eq: est 1} 
\begin{eqnarray}
\E^\frac12[|\RR_{j_{\alpha+m},j_{\alpha+m}}^{(\widetilde \J, \K)}(v)|^{4 q} I_{\tau}^{(\J, \K)}(v) ] \le s_0^q \E^\frac12[|\RR_{j_{\alpha+m},j_{\alpha+m}}^{(\widetilde \J, \K)}(s_0 v)|^{4 q} I_{2\tau}^{(\widetilde \J, \K)}(s_0 v) ] \le (C_0 s_0)^{2q}. 
\end{eqnarray}
Applying~\eqref{eq: ind property} we obtain
\begin{eqnarray*}
\E[|\widetilde \ve_{j_\alpha}^{(\J, \K)}|^{2 q} I_{\tau}^{(\J, \K)}] \le \E[|\widetilde \ve_{j_\alpha}^{(\J, \K)}|^{2 q} I_{3\tau/2}^{(\widetilde \J, \K)}(s_0v)]
\end{eqnarray*}
It is easy to see from Lemmas~\ref{e1}--\ref{e4-e5} in the appendix that the moment bounds for \(\widetilde \ve_{j_\alpha,2}^{(\J, \K)}\) and  \(\widetilde \ve_{j_\alpha,4}^{(\J, \K)}\) depends on the moments of off-diagonal entries of resolvent which are non \(k\)-descent function. Here we may use 
\begin{eqnarray}\label{eq: resolvent identity}
\RR(w_1) - \RR(w_2) = (w_1 - w_2) \RR(w_1)\RR(w_2), \, w_1, w_2 \in \C^{+},
\end{eqnarray}
which gives us that
\begin{eqnarray*}
|\RR_{j_{\alpha'} k_{\beta'}}^{(\widetilde \J, \K)}(z,v)| \le |\RR_{j_{\alpha'} k_{\beta'}}^{(\widetilde \J, \K)}(z,s v)| + s |  \RR_{j_{\alpha'} j_{\alpha'}}^{(\widetilde \J, \K)}(z,s v)|^\frac12 | \RR_{k_{\beta'} k_{\beta'}}^{(\widetilde \J, \K)}(z,v) |^\frac12. 
\end{eqnarray*}
Now the desired bound follows from Proposition~\ref{lem: descent property}  and assumption~\eqref{main condition 1 0}. Lemmas~\ref{e1}--\ref{e4-e5} in the appendix  imply
\begin{align*}
\E[|\ve_{j_\alpha}^{(\J, \K)}|^{2 q} I_{3\tau/2}^{(\widetilde \J, \K)}(s_0 v)] &\le  \left(\frac{C C_0 s_0^2 q }{(nv)^\frac{1-2\alpha}{2} }\right)^{2 q}.
\end{align*}
Here, \(C\) depends on \(z \) as well.  Similarly, applying Lemmas~\ref{e1+n}--\ref{e3+n} from the appendix we may estimate
\begin{eqnarray*}
\E^\frac{1}{2}[|\widehat \ve_{j_\alpha}^{(\J, \K)}|^{4 q} I_{2\tau}^{(\widetilde \J, \widetilde \K)}(s_0 v)] \le \left(\frac{C C_0 s^2 q }{(nv)^\frac{1-2\alpha}{2} }\right)^{2 q}.
\end{eqnarray*}
The last two inequalities yield the following bound:
\begin{eqnarray*}
\E [|\ve_{j_\alpha}^{(\J, \K)}|^{ 2q} I_{\tau}^{(\J, \K)}]   \le \left(\frac{C C_0^2 s^3 q }{(nv)^\frac{1-2\alpha}{2} }\right)^{2 q}.
\end{eqnarray*}
Hence, choosing sufficiently large \(A_0\) and small \(A_1\) we may show that
\begin{eqnarray*}
\E [|\RR_{j_\alpha j_\alpha}^{(\J, \K)}(v)|^q I_{\tau}^{(\J, \K)}] \le C_0^q.
\end{eqnarray*}	
To deal with off-diagonal entries we use the following representation
\begin{eqnarray*}
\RR_{j_\alpha, k_\beta}^{(\J, \K)} &=& -\frac{1}{\sqrt n} \sum_{l \in \T\setminus K_{[\alpha+1]}}  X_{jl}^{(\alpha)}  \RR_{l_{[\alpha+1]+m}, k_\beta}^{(\widetilde \J, \K)}  \RR_{j_\alpha, j_\alpha}^{(\J, \K)} \\
& - & \frac{z}{\sqrt n} \sum_{l \in \T\setminus J_{[\alpha-1]}}  X_{lj}^{([\alpha-1])} \RR_{l_{[\alpha-1]}, k_{\beta}}^{(\widetilde \J, \widetilde \K)}     \RR_{j_{\alpha+m}, j_{\alpha+m}}^{(\widetilde \J, \K)}  \RR_{j_\alpha j_\alpha}^{(\J, \K)}.
\end{eqnarray*}
Applying now Rosenthal's inequality (e.g.~\cite{Rosenthal1970}[Theorem~3] and~\cite{JohnSchecttmanZinn1985}[Inequality~(A)]) and assumption~\eqref{main condition 1 0} we may show that one may choose  sufficiently large \(A_0\) and small \(A_1\) such that
\begin{eqnarray*}
	\E |\RR_{j_\alpha k_\beta}^{(\J, \K)}(v)|^q I_{\tau}^{(\J, \K)} \le C_0^q.
\end{eqnarray*}
\end{proof}

\begin{proof}[Proof of Lemma~\ref{main lemma 0}]
Let us choose some sufficiently large constant \(C_0 > \max(1/V, H_0)\) and fix \(s_0 \eqdef 2^\frac{2}{1-2\alpha}\). Here \(H_0\) is defined in Lemma~\ref{l: bound for some functions}. We also choose \(A_0\) and \(A_1\) as in Lemma~\ref{lemma step for resolvent}. We fix \(u \in \J_\ve\) . Let \(L\eqdef \big[\log_{s_0}\big(V \sqrt{\gamma(u)}/v_0 \big)\big]+1\). Since \(\|\RR^{(\J)}(V)\| \le V^{-1}\) we may write 
\begin{eqnarray*}
\max_{(\J, \K) \in \mathcal J_{L}} \max_{\alpha, \beta = 1, \ldots, m} \max_{l_\alpha \in \T_{J_\alpha}, k_\beta \in \T_{K_\beta} }\E |\RR_{l_\alpha k_\beta}^{(\J, \K)}(V)|^q I_{(L +1)\tau }^{(\J, \K)}(V) \le C_0^q
\end{eqnarray*}
for all  \(1 \le p \le A_1 (nV)^\frac{1-2\alpha}{2}\). Fix arbitrary \(v: V/s_0 \le  v \leq V\) and \(p: 1 \le p \le A_1 (nv)^\frac{1-2\alpha}{2}\). Lemma~\ref{lemma step for resolvent} yields that
\begin{eqnarray*}
\max_{(\J, \K) \in \mathcal J_{L-1}} \max_{\alpha, \beta = 1, \ldots, m} \max_{l_\alpha \in \T_{J_\alpha}, k_\beta \in \T_{K_\beta} }\E |\RR_{l_\alpha k_\beta}^{(\J, \K)}(v)|^q I_{L \tau }^{(\J, \K)}(v) \le C_0^q
\end{eqnarray*}
for \(1 \le p \le A_1 (n V/s_0)^\frac{1-2\alpha}{2}\), \(v \geq V/s_0\). We may repeat this procedure \(L\) times and finally obtain
\begin{eqnarray*}
\max_{l,k = 1, \ldots, nm}\E|\RR_{lk}(v)|^p I_\tau(v) \le C_0^p
\end{eqnarray*}
for \(1 \le p \le A_1 (n V /s_0^{L})^\frac{1-2\alpha}{2} \le A_1 (n \tilde v_0)\) and \(v \geq  v_0/\sqrt{\gamma(u)}\). 
\end{proof}

\section{Estimation of \(\TTT_n\)} \label{sec: T_n}
In this section we prove the following theorem. 
\begin{theorem}\label{th: T_n bound} For any \( w \in \mathcal D\) and all \(1 \le p \le A_1 \log n\) 
\begin{eqnarray*}
\max_{1 \le \alpha \le m} \E |T_n^{(\alpha)}|^p \le \frac{C^p p^{2p} \mathcal E^p(\kappa p)}{(nv)^p},
\end{eqnarray*}
where \(\mathcal E(q)\) is defined in~\eqref{E def} and \(\kappa\) is some positive constant depending on \(\delta\) only.
\end{theorem}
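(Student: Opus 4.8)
The plan is to prove the estimate in the form actually used in Lemma~\ref{lem: main cond 1}, i.e.\ with the indicator inserted, $\E\bigl[\,|T_n^{(\alpha)}|^p\,I(v)\,\bigr]\le C^pp^{2p}\mathcal E^p(\kappa p)(nv)^{-p}$; on $\{I(v)=1\}$ Lemmas~\ref{main lemma 0} and~\ref{l: bound for some functions} give us $|\RR_{j_\alpha j_\alpha}|\le H_0$, $\E[|\RR_{j_\alpha j_\alpha}|^qI(v)]\le C_0^q$, $\E[\imag^q\RR_{j_\alpha j_\alpha}\,I(v)]\le C_0^q\imag^qs(z,w)$, the same estimates for every minor $\RR^{(\J,\K)}$ with $(\J,\K)\in\mathcal J_L$, and $|D_{j_\alpha}^{-1}|\,I(v)\le H_0$ for the Schur denominator $D_{j_\alpha}\eqdef w+m_n^{([\alpha+1]+m)}-|z|^2/(w+m_n^{([\alpha-1])})$. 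The first step is to unfold \eqref{rj}: from $\RR_{j_\alpha j_\alpha}=D_{j_\alpha}^{-1}(1-\ve_{j_\alpha}\RR_{j_\alpha j_\alpha})$ one gets
\[
T_n^{(\alpha)}=\frac1n\sum_{j=1}^n\frac{\ve_{j_\alpha}}{D_{j_\alpha}}-\frac1n\sum_{j=1}^n\frac{\ve_{j_\alpha}^2\,\RR_{j_\alpha j_\alpha}}{D_{j_\alpha}},
\]
so by Minkowski's inequality it suffices to bound the $p$-th moment of each of the two sums.

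The second (quadratic) sum is the easy one. Since $\bigl|\frac1n\sum_j\ve_{j_\alpha}^2\RR_{j_\alpha j_\alpha}D_{j_\alpha}^{-1}\bigr|I(v)\le H_0^2\cdot\frac1n\sum_j|\ve_{j_\alpha}|^2$, convexity gives $\E[|\cdot|^pI(v)]\le H_0^{2p}\max_j\E[|\ve_{j_\alpha}|^{2p}I(v)]$, so I reduce to bounding $\E[|\ve_{j_\alpha}|^{2p}I(v)]$. For this I split $\ve_{j_\alpha}$ into the pieces $\widetilde\ve_{j_\alpha,1},\dots,\widetilde\ve_{j_\alpha,4}$ and $\widehat\ve_{j_\alpha,1},\dots,\widehat\ve_{j_\alpha,3}$, peeling off the bounded prefactor in front of the $\widehat\ve$'s by Cauchy--Schwarz. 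The pieces $\widetilde\ve_{j_\alpha,2},\widetilde\ve_{j_\alpha,4},\widehat\ve_{j_\alpha,2}$ are, conditionally on the resolvent minor that does not involve the $j$-th row/column, (bi)linear forms in the truncated entries, and $\widetilde\ve_{j_\alpha,3},\widehat\ve_{j_\alpha,3}$ are sums of independent centred variables; their moments follow from the Rosenthal-type inequality for linear forms and the moment inequality for quadratic forms (Lemmas~\ref{e1}--\ref{e4-e5} and~\ref{e1+n}--\ref{e3+n}), exactly as in Lemma~\ref{lemma step for resolvent}, after passing the off-diagonal resolvent coefficients (which are not $k$-descent functions) from scale $v$ to $s_0v$ via \eqref{eq: resolvent identity} and invoking Lemmas~\ref{main lemma 0} and~\ref{main lemma}; the identity $\RR\RR^{*}=v^{-1}\imag\RR$ turns the Hilbert--Schmidt norms of the coefficient blocks into $v^{-1}\imag\RR_{\cdot\cdot}$. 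The deterministic pieces $\widetilde\ve_{j_\alpha,1},\widehat\ve_{j_\alpha,1}$ are handled directly from $\RR_{kk}-\RR^{(j)}_{kk}=\RR_{kj}\RR_{jk}/\RR_{jj}$ and $\sum_k|\RR_{kj}|^2=v^{-1}\imag\RR_{jj}$, which gives $|\widetilde\ve_{j_\alpha,1}|\,I(v)\lesssim H_0(nv)^{-1}\imag\RR_{j_\alpha j_\alpha}$. Assembling yields $\E[|\ve_{j_\alpha}|^{2p}I(v)]\lesssim C^pp^{2p}(nv)^{-p}\mathcal A^p(\kappa p)$ --- the quadratic form being responsible for the $p^{2p}$, and the count of Cauchy--Schwarz/Hölder steps, which depends on $\delta$ through the truncation level in $\CondTwo$, fixing the constant $\kappa=\kappa(\delta)$ --- and since $\mathcal A(\kappa p)\le\mathcal E(\kappa p)$ the quadratic sum is of the required order.

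For the first (linear) sum, replacing $D_{j_\alpha}^{-1}$ by the deterministic $D_0^{-1}\eqdef -s(z,w)$ (note $w+s-|z|^2/(w+s)=-1/s$ by \eqref{eq: s(z,w) equation}) costs an error bounded by $H_0^2|\Lam_n|\cdot\frac1n\sum_j|\ve_{j_\alpha}|$, of admissible order on $\{I(v)=1\}$, so I am left with the deterministic multiple $-s(z,w)\,\frac1n\sum_j\ve_{j_\alpha}$. Once more the deterministic pieces $\widetilde\ve_{j_\alpha,1},\widehat\ve_{j_\alpha,1}$ contribute at most $H_0(nv)^{-1}\imag m_n^{(\alpha)}\lesssim(nv)^{-1}\imag s(z,w)\le(nv)^{-1}\mathcal E(\kappa p)$ on $\{I(v)=1\}$. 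It remains to bound the $p$-th moment of $\frac1n\sum_j(\text{piece})_{j_\alpha}$ for each of the conditionally centred pieces $\widetilde\ve_{j_\alpha,2},\widetilde\ve_{j_\alpha,3},\widetilde\ve_{j_\alpha,4},\widehat\ve_{j_\alpha,2},\widehat\ve_{j_\alpha,3}$.

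This last point is the crux and the step I expect to be by far the most delicate. The summands are \emph{not} a martingale difference sequence in $j$ --- for instance $\widetilde\ve_{j_\alpha,4}=\frac{z+\bar z}{\sqrt n}\sum_l X^{(\alpha)}_{jl}\RR^{(j_\alpha)}_{j_{[\alpha+1]+m},l_{[\alpha+1]+m}}$ involves, through the minor $\RR^{(j_\alpha)}$, the very rows one is summing over, so the crude bound $\frac1n\sum_j|\widetilde\ve_{j_\alpha,4}|$ (of order $(nv)^{-1/2}\sqrt{\imag s}$ up to $p$-powers) is a full factor $\sqrt{nv}$ too large. I would resolve this following \cite{GotTikh2003,GotzeNauTikh2015a}: apply a Stein-type identity, i.e.\ integrate by parts in the truncated entries $X^{(q)}_{jk}$ (whose moments up to order $\lesssim\log n$ are controlled by $\CondTwo$); each differentiation of a resolvent entry removes a factor $n^{-1/2}$ and produces a product of two resolvent entries, so that after one expansion step and \eqref{rj} the leading part of $\E[(\frac1n\sum_j\ve_{j_\alpha})^p]$ collapses onto quantities of the type $\frac1n\sum_j\ve_{j_\alpha}^2\RR_{j_\alpha j_\alpha}$ already handled, the remainder being organised by the general moment lemma (Lemma~\ref{lem: T_n general lemma}). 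This is where the sum $\sum_{k=0}^{K_v}$ in \eqref{E def} enters --- it records the iterated passage $v\to s_0v\to\cdots\to V$ used to tame the non-descent off-diagonal entries --- and where $p^{2p}$ and $\kappa(\delta)$ are confirmed. Collecting the two sums finally yields
\[
\max_{1\le\alpha\le m}\E\bigl[\,|T_n^{(\alpha)}|^p\,I(v)\,\bigr]\le\frac{C^pp^{2p}\,\mathcal E^p(\kappa p)}{(nv)^p},
\]
which (using $I(v)\le1$, and $\mathcal A(\kappa p)\le C\imag s$ by \eqref{e_p bound} if the form with $\imag s$ is preferred) is the assertion.
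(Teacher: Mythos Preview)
Your preliminary unfolding of $\RR_{j_\alpha j_\alpha}$ via Schur is legitimate, and the bound on the quadratic piece $\frac1n\sum_j\ve_{j_\alpha}^2\RR_{j_\alpha j_\alpha}D_{j_\alpha}^{-1}$ does follow from Lemmas~\ref{e1}--\ref{e3+n} (after one Cauchy--Schwarz to separate $\ve$ from $\RR_{jj}$, since Lemma~\ref{main lemma 0} gives only a moment bound on $|\RR_{jj}|I(v)$, not the pointwise bound $|\RR_{jj}|\le H_0$ you claim). But this preprocessing does not avoid the core difficulty: you are still left with $\frac1n\sum_j\ve_{j_\alpha}$, and your treatment of this sum is precisely the unresolved part. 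The paper, incidentally, does not make this split; it applies the Stein expansion directly to $T_n=\frac1n\sum_j\xi_{j\nu}f_{j\nu}+\mathcal R$ with $f_{j1}=\RR_{jj}$, so your $D_{j_\alpha}^{-1}$ is playing the role of the approximant $\widetilde f_{j1}$ and your quadratic piece is essentially the paper's $\mathcal A_4$.

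The gap is the indicator. The estimate must be proved with $I(v)$ inside, and $I(v)$ is \emph{not} $\mathfrak M^{(j_\alpha,j_\alpha)}$-measurable. Lemma~\ref{lem: T_n general lemma} bounds $\E|T_n^*|^p$, not $\E[|T_n^*|^pI(v)]$; without the localisation the resolvent moments feeding the $\ve$-bounds are not of the required order $C_0^q$. The paper's device is to replace $I(v)$ by a Lipschitz approximation $H(v)=\prod_{k=0}^{K_v}\prod_\beta h_{\tau,3\tau/2}(|\Lambda_n^{(\beta)}(v_k)|)$ with $I(v)\le H(v)\le I_{3\tau/2}(v)$, set $T_{n,h}\eqdef T_nH(v)$, and run the Stein expansion on $\E|T_{n,h}|^p=\E T_{n,h}\varphi(T_{n,h})$. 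The perturbation $H(v)-\widetilde H^{(j,j)}(v)$ is then controlled by Lemma~\ref{approximation1} together with Lemma~\ref{lem: important lemma 1}, and \emph{this} is what produces the sum $\sum_{k=0}^{K_v}s^{-k}\mathcal A(z,v_k,q)/\imag s(z,v_k)$ in $\mathcal E(q)$. Your attribution of that sum to the descent of off-diagonal resolvent entries is not correct: off-diagonal descent is used to establish the input bounds of Lemma~\ref{main lemma}, but the $K_v$-sum in $\mathcal E$ comes from the $K_v$-fold product structure of $I$ and $H$.

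Without the smoothing the key cancellation cannot even be set up: one needs a $\mathfrak M^{(j,j)}$-measurable surrogate $\widetilde H^{(j,j)}$ so that $\E[\xi_{j\nu}\widetilde f_{j\nu}\widetilde H^{(j,j)}\varphi(\widetilde T_{n,h}^{(j,j)})]=0$, and a hard indicator admits no such surrogate with a usable error. ``Integration by parts in the entries'' does not bypass this either --- any cumulant-type expansion still produces resolvent products that must be localised by some indicator, and the same measurability obstruction recurs when you try to pull the indicator through the conditional expectation. So the plan as written is missing its central technical mechanism.
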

We shall proceed as in~\cite{GotzeNauTikh2015a} applying Stein's method. 

\subsection{Framework for moment bounds of some statistics of r.v.} 
We start from the following lemma, which provide a framework to estimate the moments of some statistics of independent random variables. 

Let \(X_1, \ldots , X_n\) be independent r.v. and denote
\begin{eqnarray*}
\mathfrak M \eqdef  \sigma\{X_1, \ldots , X_n\}, \quad \mathfrak M^{(j)} \eqdef  \sigma\{X_1, \ldots X_{j-1}, X_{j+1}, \ldots , X_n\}.
\end{eqnarray*}
For simplicity we introduce \(\E_j(\cdot) \eqdef \E(\cdot \big | \mathfrak M^{(j)})\). Assume that \(\xi_{j}, f_{j}, j = 1, \ldots , n\), are \(\mathfrak M\)-measurable r.v. and
\begin{eqnarray}\label{assumption 1}
\E_j(\xi_{j})   = 0.
\end{eqnarray}
We consider the following statistic:
\begin{eqnarray*}
T_n^{*} \eqdef \sum_{j=1}^n \xi_{j} f_{j} + \mathcal R,
\end{eqnarray*}
where \(\mathcal R\) is some \(\mathfrak M\) measurable function. Moreover, let \(\widehat f_{j}\) an arbitrary \(\mathfrak M^{(j)}\)-measurable r.v. and
\begin{eqnarray*}
\tTj \eqdef \E_j(T_n^{*}).
\end{eqnarray*}
\begin{lemma} \label{lem: T_n general lemma}
For all \(p \geq 2\) there exist some absolute constant \(C\) such that
\begin{eqnarray*}
\E|T_n^{*}|^p \le C^p \bigg(\mathcal A^p + p^\frac p2 \mathcal B^\frac p2 + p^p\mathcal C + p^p \mathcal D + \E |\mathcal R|^p \bigg), 
\end{eqnarray*}
where
\begin{align*}
&\mathcal A \eqdef \E^\frac1p\left( \sum_{j=1}^n \E_j|\xi_{j} (f_{j} - \widehat f_{j})|\right)^p,\\
&\mathcal B \eqdef \E^\frac 2p\left(\sum_{j=1}^n \E_j(|\xi_j(T_n^{*} - \tTj)|) |\widehat f_{j}|\right)^\frac p 2, \\
&\mathcal C \eqdef \sum_{j=1}^n \E |\xi_{j}||T_n^{*} - \tTj|^{p-1}|\widehat f_{j}|, \\
&\mathcal D \eqdef \sum_{j=1}^n \E |\xi_{j}||f - \widehat f_{j}| |T_n^{*} - \tTj|^{p-1}.
\end{align*}
\end{lemma}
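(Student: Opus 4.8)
The plan is to decompose $T_n^*$ via Stein's method (the exchangeable-pair / "leave-one-out" construction) into a main bilinear part and a remainder, then apply a self-bounding martingale argument to the main part. First I would write
\begin{eqnarray*}
T_n^* = \sum_{j=1}^n \xi_j f_j + \mathcal R = \sum_{j=1}^n \xi_j(f_j - \widehat f_j) + \sum_{j=1}^n \xi_j \widehat f_j + \mathcal R,
\end{eqnarray*}
so that $\E|T_n^*|^p \le C^p\big(\E|\sum_j \xi_j(f_j-\widehat f_j)|^p + \E|\sum_j \xi_j \widehat f_j|^p + \E|\mathcal R|^p\big)$. The first sum is bounded trivially by the triangle inequality and conditional Jensen: $\E|\sum_j \xi_j(f_j-\widehat f_j)|^p \le \E\big(\sum_j \E_j|\xi_j(f_j-\widehat f_j)|\big)^p = \mathcal A^p$, using that each summand is handled after conditioning on $\mathfrak M^{(j)}$ (where $\widehat f_j$ is measurable) — though one must be slightly careful that $\xi_j(f_j-\widehat f_j)$ is not $\mathfrak M^{(j)}$-measurable, so really one takes absolute values first and then uses $|\sum a_j|\le \sum|a_j|$ followed by $\E|a_j| = \E\E_j|a_j|$. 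That accounts for the $\mathcal A^p$ term; the $\E|\mathcal R|^p$ term is carried through verbatim.

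**Next I would** treat the genuinely bilinear term $S \eqdef \sum_j \xi_j \widehat f_j$, which is where Stein's method enters. Because $\E_j \xi_j = 0$ by \eqref{assumption 1} and $\widehat f_j$ is $\mathfrak M^{(j)}$-measurable, we have $\E_j(\xi_j \widehat f_j) = 0$, i.e. $S$ is a sum of martingale-type differences. The Stein identity for $|S|^p$ reads $\E|S|^p = \E\big(S\cdot S|S|^{p-2}\big) = \sum_j \E\big(\xi_j \widehat f_j |S|^{p-2}\bar S\big)$; replacing $S$ by $S - (S - \tilde S_j^{\,\prime})$-type increments and using $\E_j(\xi_j\widehat f_j \cdot \phi_j) = 0$ for any $\mathfrak M^{(j)}$-measurable $\phi_j$, one may subtract an $\mathfrak M^{(j)}$-measurable surrogate of $|S|^{p-2}\bar S$, leaving a Taylor remainder controlled by the increment $S - \tilde S_j$ where $\tilde S_j = \E_j(T_n^*)$ (equivalently the increment of $T_n^*$ when $X_j$ is resampled). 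More precisely, writing $|S|^p$'s derivative and performing a one-term Taylor expansion in the $X_j$-direction produces two types of contributions: a "variance" term of order $\sum_j \E_j(|\xi_j(T_n^*-\tilde T_j)|)|\widehat f_j|$ raised appropriately — after a further Cauchy–Schwarz / Burkholder-type step this yields the $p^{p/2}\mathcal B^{p/2}$ term — and a "third-order" term $\sum_j \E|\xi_j||T_n^*-\tilde T_j|^{p-1}|\widehat f_j|$, which is exactly $p^p\mathcal C$. The factor $p^{p/2}$ (resp. $p^p$) is the usual combinatorial cost of the $p$-th moment expansion / Burkholder–Davis–Gundy constant, tracked explicitly. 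The $\mathcal D$ term appears when one cross-checks the expansion against $f_j$ rather than $\widehat f_j$, i.e. it is the analogous third-order contribution for the $(f_j - \widehat f_j)$ piece interacting with the increment $T_n^* - \tilde T_j$.

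**The main obstacle** I anticipate is bookkeeping the Stein/Taylor expansion cleanly enough that all four error functionals $\mathcal A, \mathcal B, \mathcal C, \mathcal D$ emerge with the stated powers of $p$ and with an absolute constant $C$ — in particular, justifying that subtracting the $\mathfrak M^{(j)}$-measurable surrogate of $|S|^{p-2}\bar S$ (which requires a mean-value bound $\big||S|^{p-2}\bar S - |\tilde S_j|^{p-2}\bar{\tilde S}_j\big| \lesssim p\,(|S|^{p-2}+|\tilde S_j|^{p-2})|S-\tilde S_j|$, valid for $p\ge 2$) does not generate additional uncontrolled terms, and then reabsorbing the resulting $|S|^{p-2}$ and $|S|^{p-1}$ factors via Hölder's inequality back into $\E|S|^p$ with a small coefficient so they can be moved to the left-hand side. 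Concretely one gets an inequality of the shape $\E|S|^p \le C^p(\cdots) + \tfrac12 \E|S|^p + (\text{lower order in }p)$, and a standard absorption argument closes it. I would also need the elementary convexity fact that for any $\mathfrak M$-measurable $Z$ and $\mathfrak M^{(j)}$-measurable surrogate, $\E_j|Z - \E_j Z|^{p-1} \le 2^{p-1}\E_j|Z - w|^{p-1}$ for arbitrary $w$, to freely replace $\tilde T_j = \E_j T_n^*$ by $T_n^*$ in the definitions of $\mathcal C, \mathcal D$ up to constants. Everything else — the passage from sums of conditional expectations to the functionals $\mathcal A,\dots,\mathcal D$ — is routine once the expansion is organized, so I would state the Stein decomposition as a displayed chain of inequalities and then verify each term matches.
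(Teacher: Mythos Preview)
Your high-level ingredients (Stein identity via \(\varphi(\zeta)=\zeta|\zeta|^{p-2}\), subtracting an \(\mathfrak M^{(j)}\)-measurable surrogate, Taylor expansion of \(\varphi\), and Young/H\"older absorption of the \(|T_n^*|^{p-1}\) and \(|T_n^*|^{p-2}\) factors) are exactly the ones the paper uses. However, the order in which you deploy them is wrong, and this breaks the argument at the very first step.

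You begin by splitting \(T_n^*\) itself and bounding \(\E|T_n^*|^p\) by a sum of three \(p\)-th moments. For the \(\mathcal A\) piece you then claim
\[
\E\Big|\sum_j \xi_j(f_j-\widehat f_j)\Big|^p \le \E\Big(\sum_j|\xi_j(f_j-\widehat f_j)|\Big)^p \stackrel{?}{=} \E\Big(\sum_j \E_j|\xi_j(f_j-\widehat f_j)|\Big)^p = \mathcal A^p,
\]
justifying the middle equality by ``\(\E|a_j|=\E\E_j|a_j|\)''. The tower property gives that identity only for the \emph{first} moment; it does not let you replace \(|a_j|\) by \(\E_j|a_j|\) inside \(\E(\sum_j\cdot)^p\). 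There is no general inequality between \(\E(\sum_j|a_j|)^p\) and \(\E(\sum_j\E_j|a_j|)^p\), so this step fails. A second symptom of the wrong ordering is that your Stein expansion is applied to \(S=\sum_j\xi_j\widehat f_j\), so the increments you obtain are \(S-\E_j S\), not \(T_n^*-\tTj\); the quantities \(\mathcal B,\mathcal C,\mathcal D\) in the statement involve the latter.

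The fix is simple but essential: do \emph{not} split \(T_n^*\) first. Write \(\E|T_n^*|^p=\E T_n^*\varphi(T_n^*)=\sum_j\E\xi_j f_j\varphi(T_n^*)+\E\mathcal R\,\varphi(T_n^*)\) and decompose inside the expectation:
\[
\sum_j\E\xi_j\widehat f_j\varphi(\tTj)+\sum_j\E\xi_j\widehat f_j\big(\varphi(T_n^*)-\varphi(\tTj)\big)+\sum_j\E\xi_j(f_j-\widehat f_j)\varphi(T_n^*)+\E\mathcal R\,\varphi(T_n^*).
\]
The first sum vanishes by \(\E_j\xi_j=0\). The third sum, after the split \(|T_n^*|^{p-1}\lesssim|\tTj|^{p-1}+p^{p-1}|T_n^*-\tTj|^{p-1}\) (this is the inequality \((x+y)^q\le ex^q+(q+1)^qy^q\)), yields \(\mathcal D\) from the second piece, while the first piece gives \(\mathcal A\) \emph{correctly}: since \(|\tTj|^{p-1}\le \E_j|T_n^*|^{p-1}\) by Jensen and \(\E_j|\xi_j(f_j-\widehat f_j)|\) is \(\mathfrak M^{(j)}\)-measurable, one gets \(\E\big[(\sum_j\E_j|\xi_j(f_j-\widehat f_j)|)\,|T_n^*|^{p-1}\big]\), and now H\"older plus Young legitimately produce \(\mathcal A^p+\rho\,\E|T_n^*|^p\). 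The second sum, Taylor-expanded as you describe, gives \(\mathcal B\) and \(\mathcal C\) with the correct increments \(T_n^*-\tTj\). The point is that the factor \(|T_n^*|^{p-1}\) (or \(|T_n^*|^{p-2}\)) must remain intact so that, after conditioning and H\"older, it can be reabsorbed on the left; pre-splitting \(T_n^*\) destroys this structure.
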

\begin{remark}
We conclude the statement of the last lemma by several remarks. 
\begin{enumerate}
	\item It follows from the definition of \( \mathcal A, \mathcal B, \mathcal C, \mathcal D\) that instead of estimation of high moments of \(\xi_j\) one needs to estimate conditional expectation \(\E_j|\xi_j|^\alpha\) for some small \(\alpha\). Typically, \(\alpha \le 4\);
	\item Moreover, to get the desired bounds one needs to choose an appropriate approximation \(\widehat f_j\) of \(f_j\) and estimate \(T_n^{*} - \tTj\); 
	\item This lemma may be generalized as follows. We may assume that 
	\begin{align}\label{eq: T star}
		T_n^{*} \eqdef \sum_{\nu=1}^{m} \sum_{j=1}^n \xi_{j\nu} f_{j \nu} + \mathcal R,
	\end{align}
	where \(\xi_{j\nu}, f_{j\nu}, j = 1, \ldots , n, \nu = 1, \ldots , m\), are \(\mathfrak M\)-measurable r.v. such that
	\begin{eqnarray*}
	\E_j (\xi_{j\nu}) = 0, \quad \nu = 1, \ldots , m.
	\end{eqnarray*}
	Repeating the previous calculations we obtain
	\begin{eqnarray*}
	\E |T_n^{*}|^p \le  C^p \bigg(\sum_{\nu=1}^{m} (\mathcal A_{\nu}^p + p^\frac p 2 \mathcal B_{\nu}^\frac p2 + p^p \mathcal C_{\nu}) + \E |\mathcal R|^p \bigg),
    \end{eqnarray*}
	where \(\mathcal A_{\nu}, \mathcal B_{\nu}, \mathcal C_{\nu}\) are defined similarly to the corresponding quantities in Lemma~\ref{lem: T_n general lemma}.
\end{enumerate}
\end{remark}

\begin{proof}[Proof of Lemma~\ref{lem: T_n general lemma}] Let us introduce the following function:
\begin{eqnarray}\label{eq: varphi def}
\varphi(\zeta)\eqdef \zeta|\zeta|^{p-2}.
\end{eqnarray} 
In these  notations \(\E |T_n^{*}|^p\) may be rewritten as follows
\begin{eqnarray*}\label{eq: T_n representation}
\E|T_n^{*}|^p=\E T_n^{*}\varphi(T_n^{*})= \sum_{j=1}^n \xi_{j} f_{j} \varphi(T_n^{*}) + \mathcal R \varphi(T_n^{*}) = \sum_{l=1}^{4}\mathcal A_{l} ,
\end{eqnarray*}
where
\begin{eqnarray*}
\mathcal A_{1} &\eqdef& \sum_{j=1}^n \E \xi_{j} \widehat f_{j} \varphi(\tTj), \qquad\quad \mathcal A_{2} \eqdef \sum_{j=1}^n \E \xi_{j} \widehat f_{j} (\varphi(T_n^{*}) - \varphi(\tTj)),\\
\mathcal A_{3} &\eqdef& \sum_{j=1}^n \E \xi_{j} (f_{j} -\widehat f_{j}) \varphi(T_n^{*}), \quad \mathcal A_{4} \eqdef \mathcal R \varphi(T_n^{*}).
\end{eqnarray*}
It follows from~\eqref{assumption 1} that \(\mathcal A_{1} = 0\).  Applying the following useful inequality
\begin{eqnarray}\label{useful inequality}
(x + y)^q \le e x^q + (q+1)^q y^q, x, y > 0, \quad q \geq 1,
\end{eqnarray}
we estimate \( \mathcal A_3\) by the sums of the following terms 
\begin{eqnarray*}
\mathcal A_{31} &\eqdef& e \sum_{j=1}^n \E |\xi_{j}||f_{j} -\widehat f_{j}| |\tTj|^{p-1}, \\
\mathcal A_{32} &\eqdef& p^{p-1} \sum_{j=1}^n \E |\xi_{j}||f_{j} -\widehat f_{j}| |T_n^{*} - \tTj|^{p-1}.
\end{eqnarray*}
The term \( \mathcal A_{32} \) we remain unchanged. It will appear in the final bound. H\"older', Jensen' and Young's inequalities imply
\begin{eqnarray}
\mathcal A_{31} &\le& \E^\frac{p-1}{p} |T_n^*|^p \E^\frac1p \bigg(\sum_{j=1}^n \E_j |\xi_{j}||f_{j} -\widehat f_{j}|\bigg)^p \nonumber \\
\label{eq: a_3}
 &\le&  \rho \E |T_n^*|^p + \E \bigg(\sum_{j=1}^n \E_j |\xi_{j}||f_{j} -\widehat f_{j}|\bigg)^p.
\end{eqnarray}
It follows from the Taylor formula that
\begin{eqnarray*}
\mathcal A_{2} = \sum_{j=1}^n \E \xi_{j} \widehat f_{j} (T_n^{*} - \tTj) \varphi'(\tTj+ \theta(T_n^{*} - \tTj)) ,
\end{eqnarray*}
where \(\theta\) is a uniformly distributed on \([0,1]\) r.v., independent of \(X_j, j = 1, \ldots, n\). Taking absolute values and using~\eqref{useful inequality} we get
\begin{eqnarray*}
|\mathcal A_{2}| \le \mathcal A_{21} + \mathcal A_{22},
\end{eqnarray*}
where
\begin{eqnarray*}
\mathcal A_{21} &\eqdef& e p\sum_{j=1}^n \E \E_j(|\xi_j(T_n^{*} - \tTj)|)|\widehat f_{j}| |\tTj|^{p-2},\\
\mathcal A_{22} &\eqdef& p^{p-1}\sum_{j=1}^n \E\E_j(|\xi_{j}||T_n^{*} - \tTj|^{p-1})|\widehat f_{j}| .
\end{eqnarray*}
Applying H\"older's and Jensen's inequalities we obtain
\begin{eqnarray*}
\mathcal A_{21} \le C p \E^\frac 2p\left(\sum_{j=1}^n \E_j(|\xi_j(T_n^{*} - \tTj)|) |\widehat f_{j}|\right)^\frac p 2  \E^\frac{p-2}{p} | T_n^{*}|^p.
\end{eqnarray*}
Now Young's inequality implies 
\begin{eqnarray}\label{eq: a_21}
\mathcal A_{21} \le C^p p^\frac p2 \E\left(\sum_{j=1}^n \E_j(|\xi_j(T_n^{*} - \tTj)|) |\widehat f_{j}|\right)^\frac p 2 +  \rho\E | T_n^{*}|^p.
\end{eqnarray}
Finally, for the term \(\mathcal A_4\) we may write
\begin{eqnarray}\label{eq: a_4}
\mathcal A_{4} \le C^p \E|\mathcal R|^p + \rho\E |T_n^{*}|^p. 
\end{eqnarray}
Inequalities~\eqref{eq: a_3}, \eqref{eq: a_21} and~\eqref{eq: a_4} yield the claim of the lemma.
\end{proof}

\subsection{Proof of Theorem~\ref{th: T_n bound}}

\begin{proof}[Proof of Theorem~\ref{th: T_n bound}] 
 We consider the case \( \alpha = 1\) only. For simplicity we shall write \( T_n = T_n^{(1)} \). First we mention that \(T_n\) is of the kind ~\eqref{eq: T star}. Indeed, here
\begin{align*}
&\xi_{j1} = \widetilde \varepsilon_{j2} + \ldots + \widetilde \varepsilon_{j5}, \quad \xi_{j2} = \widehat \ve_{j2}+  \widehat \ve_{j3}, \\
&f_{j1} = \RR_{jj}, \quad f_{j2} = - \frac{|z|^2\RR_{jj}\RR_{j_{m+1},j_{m+1}}^{(j)}}{w + m_n^{(m)}(z,w)}, \\
&\mathcal R = \frac1n\sum_{j=1}^n  \widetilde \ve_{j1} f_{j1}  +\frac1n\sum_{j=1}^n \widehat \ve_{j1} f_{j2}. 
\end{align*}
We introduce the following smoothed version of \( I(v)\). We denote
\begin{eqnarray*}
h_{\alpha, \beta}(x) \eqdef \begin{cases}
1, & \text{ if } 0 < x < \imag s(z,w), \\
1 - \frac{x - \alpha \imag s(z,w)}{(\beta - \alpha) \imag s(z,w)}, & \text{ if } \alpha \imag s(z,w) < x < \beta \imag s(z,w), \\
0, &\text{ otherwise,}
\end{cases}
\end{eqnarray*}
and  write \( H(v) \eqdef \prod_{k=1}^{K_v} \prod_{\alpha = 1}^{m} h_{\tau, \frac32 \tau}(|\Lambda_n^{(\alpha)}(v)|) \). It is easy to see that
\begin{eqnarray}\label{eq: bound for H}
I(v) \le H(v) \le I_{3/2\tau}(v) \le I_{2\tau}^{(j)}(v).
\end{eqnarray}
To simplify all notations below we shall often omit the bottom index from \(I_\tau(v)\) and all its counterparts. We will also write \( I(v) \le I^{(j)}(v)\) having in mind that \(I_\tau(v) \le I_{\tau'}^{(j)}(v)\) for some fixed \(\tau' > \tau\). 

Applying the notation of \(H(v)\) we write
\begin{eqnarray*}
\E |T_n|^p I(v) \le \E |T_n|^p H^p(v) . 
\end{eqnarray*}
For simplicity we set \( T_{n,h}  \eqdef T_n H(v)\).  Recall the definition~\eqref{eq: varphi def} of  \(\varphi(\zeta)\). We rewrite the r.h.s. of the previous inequality as follows
\begin{eqnarray*}
 \E |T_{n,h}|^p  = \frac{1}{n} \sum_{j=1}^n \sum_{\nu = 1}^{2}\E \xi_{j \nu} f_{j \nu} H(v)  \varphi(T_{n,h}) + \E \mathcal R H(v) \varphi(T_{n,h}).  
\end{eqnarray*}
We denote 
\begin{eqnarray*}
	\mathcal{A} \eqdef \frac{1}{n} \sum_{j=1}^n\sum_{\nu = 1}^{2} \E \xi_{j \nu} f_{j \nu} H(v) \varphi(T_{n,h}). 
\end{eqnarray*}
In these notations \( \E |T_{n,h}|^p\) may be rewritten as follows
\begin{eqnarray*}
	\E |T_{n,h}|^p  = \mathcal A + \E \mathcal R H(v) \varphi(T_{n,h}).  
\end{eqnarray*}

\subsection{Estimate of \(\E \mathcal R H(v) \varphi(T_{n,h})\)} 
Simple calculations imply
\begin{eqnarray*}
	\mathcal R &=& \frac{1}{n^2} \sum_{j=1}^n\sum_{l=1}^n [\RR_{jl}]^2 - \frac{1}{n^2} \frac{|z|^2}{w + m_n^{(m)}(z,w)} \sum_{j=1}^n \sum_{l=1}^n [\RR_{l_m, j}]^2 \RR_{j_{m+1}, j_{m+1}}^{(j)}  \\
	&-& \frac{1}{n^2} \frac{|z|^2}{w + m_n^{(m)}(z,w)} \sum_{j=1}^n \sum_{l=1}^n [\RR_{l_m, j_{m+1}}]^2 \RR_{j j}. 
\end{eqnarray*}

Applying~\eqref{eq: bound for H}, Lemma~\ref{l: bound for some functions} and Lemma~\ref{lem: res inequalities} in the appendix we conclude that 
\begin{eqnarray*}
|\mathcal R|H(v)  \lesssim \frac{\imag s(z,w)}{nv} + \frac{|z|^2}{n^2 v } \sum_{j=1}^n \imag \RR_{j j} |\RR_{j_{m+1}, j_{m+1}}^{(j)}|I(v) + \frac{|z|^2}{n^2 v}  \sum_{j=1}^n  \imag \RR_{j_{m+1}, j_{m+1}} |\RR_{j j}| I(v).
\end{eqnarray*}
Using now H\"older's inequality and Young's inequality we come to the following inequality
\begin{eqnarray*}
\E \mathcal R H(v) \varphi(T_{n,h}) &\le&   C^p \E|\mathcal R|^p H(v)^p   +  \rho \E |T_{n,h}|^p \\
&\le&  C^p (1+|z|)^{2p}  \frac{\mathcal A^p(2p)}{(nv)^p} + \rho \E |T_{n,h}|^p.
\end{eqnarray*}

\subsection{Estimate of \(\mathcal A\)} The estimation of \( \mathcal A\) is more involved. 
Let us introduce conditional expectations \( \E_{j}(\cdot) \eqdef \E (\cdot \big | \mathfrak M^{(j)}) \)  (resp. \( \E_{j,j}(\cdot) \eqdef \E (\cdot \big | \mathfrak M^{(j, j)}) \)) with respect to \(\sigma\)-algebras \(\mathfrak M^{(j)}\) (resp. \(\mathfrak M^{(j, j)} \)).   
Here \(\mathfrak M^{(j)}\) (resp. \(\mathfrak M^{(j, j)} \)) is formed from all \(X_{lk}^{(\alpha)}, l, k = 1, \ldots, n, \alpha = 1, \ldots, m\), except \(X_{jk}^{(1)}, k = 1, \ldots, n \) (resp.    except \(X_{jk}^{(1)}, X_{lj}^{(1)}, k, l =1, \ldots, n \)).  
Moreover, we introduce the following notations
\begin{eqnarray*}
\widetilde T_{n,h} ^{(j)} &\eqdef&  \E ( T_{n,h} \big | \mathfrak M^{(j)} ), \quad \widetilde T_{n,h} ^{(j,j)} \eqdef  \E ( T_{n,h} \big | \mathfrak M^{(j, j)} ),  \\
 \widetilde \Lam_n^{(j)} &\eqdef& \E ( \Lam_n \big | \mathfrak M^{(j)} ),  \quad\,\,\, \widetilde \Lam_n^{(j,j)} \eqdef \E ( \Lam_n \big | \mathfrak M^{(j, j)} ). 
\end{eqnarray*}
We rewrite \(\mathcal A\) as follows \( \mathcal A = \mathcal A_1 + \ldots + \mathcal A_5\), where
\begin{eqnarray*}
\mathcal A_1 &\eqdef& \frac{1}{n} \sum_{j=1}^n \sum_{\nu = 1}^{2} \E \xi_{j \nu} \widetilde  f_{j \nu} \widetilde H^{(j,j)}(v) \varphi(\widetilde T_{n,h}^{(j,j)}), \\
\mathcal A_2 &\eqdef& \frac{1}{n} \sum_{j=1}^n \sum_{\nu = 1}^{2} \E \xi_{j \nu} \widetilde f_{j \nu} [H(v) - \widetilde H^{(j,j)}(v) ] \varphi(\widetilde T_{n,h}^{(j,j)}),\\
\mathcal A_3 &\eqdef& \frac{1}{n} \sum_{j=1}^n \sum_{\nu = 1}^{2} \E \xi_{j \nu} \widetilde f_{j \nu}  H(v)  [ \varphi(T_{n,h}) - \varphi(\widetilde T_{n,h}^{(j,j)})], \\
\mathcal A_4 &\eqdef& \frac{1}{n} \sum_{j=1}^n \sum_{\nu = 1}^{2} \E \xi_{j \nu} [ f_{j \nu} - \widetilde  f_{j \nu} ] H(v) \varphi( T_{n,h}).
\end{eqnarray*}
Moreover, it is easy to check that \( \mathcal A_1 = 0\).

\subsubsection{Bound for \( \mathcal A_2 \) } 
Taking conditional expectation and applying H\"older's inequality it is straightforward to check that
\begin{eqnarray*}
\mathcal A_2 \le \E^\frac{p-1}{p} |T_{n,h}|^p \E^\frac1p \bigg(\frac1n \sum_{j=1}^n \E_{j,j}( |\xi_{j \nu} \widetilde f_{j \nu}||H(v) - \widetilde H^{(j,j)}(v)|   \bigg )^p.
\end{eqnarray*}
Moreover, Young's inequality implies that
\begin{eqnarray}\label{A_2 bound 1}
\mathcal A_2 \le \rho \E|T_{n,h}|^p + C^p \E  \bigg(\frac1n \sum_{j=1}^n \E_{j,j}( |\xi_{j \nu} \widetilde f_{j \nu}||H(v) - \widetilde H^{(j,j)}(v)|   \bigg )^p.
\end{eqnarray}
Let us denote for simplicity
\begin{eqnarray*}
\mathcal B_2 \eqdef \frac1n \sum_{j=1}^n \E \big[ \E_{j,j}( |\xi_{j \nu} \widetilde f_{j \nu}||H(v) - \widetilde H^{(j,j)}(v)|  \big]^p ,
\end{eqnarray*} 
To estimate the r.h.s. of ~\eqref{A_2 bound 1} it is enough to bound \(\mathcal B_2\).  
We may use Lemma~\ref{approximation1} to estimate the difference \(H(v) - \widetilde H^{(j, j)}(v) \). We get
\begin{eqnarray}\label{B_2 bound}
\mathcal B_2  \le \frac{C^p p^p }{n}  \sum_{j=1}^n \sum_{\alpha=1}^{m} \sum_{k=1}^{K_v} \frac{1}{\imag s(z,v_k)}  \E^\frac12 \big [\E_{j,j}( |\xi_{j \nu}| |\Lambda_n^{(\alpha)}(v_k) - \widetilde \Lambda_n^{(\alpha,j,j)}(v_k)| I(v) \big]^{2p},
\end{eqnarray}
where \(v_k \eqdef v s^k, k \geq 0\). We also used the fact that \( K_v^p \le p^p \) and \( \widetilde f_{j \nu} \) is \( \mathfrak M^{(j, j)}\)-measurable.  We fix \(j, \alpha\) and \(k\) and study
\begin{eqnarray*}
 \E \big [\E_{j,j}( |\xi_{j \nu} || \Lambda_n^{(\alpha)}(v_k) - \widetilde \Lambda_n^{(\alpha,j,j)}(v_k)| I(v) \big]^{2p}.
\end{eqnarray*}
Applying Lemma~\ref{lem: important lemma 1} we get 
\begin{eqnarray*}
	\E^\frac12 \big [\E_{j,j}( |\xi_{j \nu} | \Lambda_n^{(\alpha)}(v_k) - \widetilde \Lambda_n^{(\alpha, j,j)}(v_k)| I(v) \big]^{2p} \le \frac{C^p \mathcal E^{p}(\kappa p)}{(nv)^{2p} }. 
\end{eqnarray*}
Since \( \imag s(z,v) \geq (nv)^{-1} \) for \(w \in \mathcal D\), the last inequality and~\eqref{B_2 bound} imply
\begin{eqnarray*}
\mathcal A_2 \le \rho \E|T_{n,h}|^p + \frac{C^p p^p \mathcal E^p(\kappa p)}{(nv)^p}. 
\end{eqnarray*}
\subsubsection{Bound for \( \mathcal A_3 \) } 
Applying Taylor's formula
\begin{eqnarray*}
|\varphi(T_{n,h}) - \varphi(\widetilde T_{n,h}^{(j,j)})|\le p | \widetilde T_{n,h}^{(j,j)} + \theta (T_{n,h} - \widetilde T_{n,h}^{(j,j)})   |^{p-2} | T_{n,h} - \widetilde T_{n,h}^{(j,j)}|  
\end{eqnarray*}
It is easy to check that
\begin{eqnarray}\label{T_n tilde 1}
\widetilde T_{n,h}^{(j,j)} =  \widetilde T_n^{(j,j)} \widetilde H^{(j,j)} - \E_{j,j} [(T_n - \widetilde T_n^{(j,j)})H]
\end{eqnarray}
and
\begin{eqnarray}\label{T_n tilde 2}
T_{n,h} - \widetilde T_n^{(j,j)} \widetilde H^{(j,j)} = (T_n - \widetilde T_n^{(j,j)})\widetilde H^{(j,j)} + T_n(H - \widetilde H^{(j,j)})
\end{eqnarray}
Hence,
\begin{eqnarray*}
|\varphi(T_{n,h}) - \varphi(\widetilde T_{n,h}^{(j,j)})|&\le& p |\widetilde T_{n,h}^{(j,j)}|^{p-2} | T_{n,h} - \widetilde T_{n,h}^{(j,j)}|  \\
&+& p^{p-2} |T_n|^{p-2} |H - \widetilde H^{(j,j)}|^{p-2} | T_{n,h} - \widetilde T_{n,h}^{(j,j)}|\\
&+& p^{p-2} |T_n - \widetilde T_n^{(j,j)}|^{p-2} | T_{n,h} - \widetilde T_{n,h}^{(j,j)}|H^{p-2}(v) \\
&+& p^{p-2} \E_{j,j}^{p-2} [|T_n - \widetilde T_n^{(j,j)}|H] | T_{n,h} - \widetilde T_{n,h}^{(j,j)}|.
\end{eqnarray*}
We obtain that \( \mathcal A_3 \lesssim  \mathcal A_{31} + \ldots +\mathcal A_{34}\), where
\begin{eqnarray*}
\mathcal A_{31} &\eqdef& \frac{p}{n} \sum_{j=1}^n \sum_{\nu = 1}^{2} \E |\xi_{j \nu} \widetilde f_{j \nu}| |\widetilde T _{n,h}|^{p-2} | T_{n,h} - \widetilde T_{n,h}^{(j,j)}| H(v), \\
\mathcal A_{32} &\eqdef& \frac{p^{p-1}}{n} \sum_{j=1}^n \sum_{\nu = 1}^{2} \E |\xi_{j \nu} \widetilde f_{j \nu}| |T_n|^{p-2} |H - \widetilde H^{(j,j)}|^{p-2} | T_{n,h} - \widetilde T_{n,h}^{(j,j)}| H(v), \\
\mathcal A_{33} &\eqdef& \frac{p^ {p-1}}{n} \sum_{j=1}^n \sum_{\nu = 1}^{2} \E |\xi_{j \nu} \widetilde f_{j \nu}|  |T_n - \widetilde T_n^{(j,j)}|^{p-2} | T_{n,h} - \widetilde T_{n,h}^{(j,j)}|  H^{p-1}(v), \\
\mathcal A_{34} &\eqdef& \frac{p^ {p-1}}{n} \sum_{j=1}^n \sum_{\nu = 1}^{2} \E |\xi_{j \nu} \widetilde f_{j \nu}|  \E_{j,j}^{p-2} [|T_n - \widetilde T_n^{(j,j)}|H] | T_{n,h} - \widetilde T_{n,h}^{(j,j)}|   H(v) 
\end{eqnarray*}

It follows from these representations that \( \mathcal A_{31} \le \mathcal A_{311} + \ldots + \mathcal A_{313} \), where
\begin{eqnarray*}
\mathcal A_{311} &\eqdef& \frac{p}{n} \sum_{j=1}^n \sum_{\nu = 1}^{2} \E |\xi_{j \nu} \widetilde f_{j \nu}| |\widetilde T_{n,h}^{(j,j)}|^{p-2} | T_{n} - \widetilde T_n^{(j,j)}| I(v), \\
\mathcal A_{312} &\eqdef& \frac{p}{n} \sum_{j=1}^n \sum_{\nu = 1}^{2} \E |\xi_{j \nu} \widetilde f_{j \nu}| |\widetilde T_{n,h}^{(j,j)}|^{p-2} |T_{n,h}| |H - \widetilde H^{(j,j)}|, \\
\mathcal A_{313} &\eqdef& \frac{p}{n} \sum_{j=1}^n \sum_{\nu = 1}^{2} \E |\xi_{j \nu} \widetilde f_{j \nu}| |\widetilde T _{n,h}^{(j,j)}|^{p-2}  \E_{j,j} [(T_n - \widetilde T_n^{(j,j)})H] I(v),
\end{eqnarray*}
Let us consider the term \( \mathcal A_{311}\). We may apply Lemma~\ref{T bound}  and bound this term by the sum of three terms:
\begin{eqnarray*}
\mathcal A_{311} &\le&   \frac{p \imag s}{n} \sum_{\alpha=1}^{2m}\sum_{j=1}^n \sum_{\nu = 1}^{2} \E |\xi_{j \nu} \widetilde f_{j \nu}| |\widetilde T _{n,h}|^{p-2} | \Lambda_{n} - \widetilde \Lambda_{n}^{(\alpha, j,j)}| I(v)   \\
&+& \frac{p}{n (nv)^2} \sum_{j=1}^n \sum_{\nu = 1}^{2} \E |\xi_{j \nu} \widetilde f_{j \nu}|  |\widetilde T _{n,h}|^{p-2} \bigg[ \frac{\imag^2 \RR_{j j}}{|\RR_{j j}|^2}   + \frac{\imag^2 \RR_{j_{m+1}, j_{m+ 1}}^{(j)}}{|\RR_{j_{m+1}, j_{m+1}}^{(j)}|^2} \bigg]I(v) \\
&+& \frac{p}{n (nv)^2} \sum_{j=1}^n \sum_{\nu = 1}^{2} \E |\xi_{j \nu} \widetilde f_{j \nu}|  |\widetilde T _{n,h}|^{p-2} \E_{j,j}\bigg[ \frac{\imag^2 \RR_{jj}}{|\RR_{jj}|^2} +  \frac{\imag^2 \RR_{j_{m+1}, j_{m+ 1}}^{(j)}}{|\RR_{j_{m+1}, j_{m+1}}^{(j)}|^2} \bigg]I(v) \\
&\eqdef& \mathcal A_{311}^{(1)} + \mathcal A_{311}^{(2)} + \mathcal A_{311}^{(3)}.
\end{eqnarray*}
The last two terms, \( \mathcal A_{311}^{(2)},  \mathcal A_{311}^{(3)}\), may be easily bounded  as follows
\begin{eqnarray*}
	\mathcal A_{311}^{(j)} \le \frac{C^p p^\frac p2 \mathcal E^p(\kappa p)}{(nv)^p} + \rho \E|T_{n,h}|^p, \quad j = 2, 3.
\end{eqnarray*}
Indeed, one may apply H\"older's inequality and Young's inequality. For the estimation of \(  \mathcal A_{311}^{(1)} \) we first use Young's inequality and get
\begin{eqnarray*}
\mathcal A_{311}^{(1)} \le \rho \E|T_{n,h}|^p + \frac{C^p p^\frac p2 \imag^\frac p2 s }{n}\sum_{j=1}^n \sum_{\alpha=1}^{2m} \sum_{\nu = 1}^{2} \E^\frac12 \big [\E_{j,j} (|\xi_{j \nu}| | \Lambda_{n}^{(\alpha)} - \widetilde \Lambda_{n}^{(\alpha, j, j)}| I(v)) \big]^p. 
\end{eqnarray*}  
Applying Lemma~\ref{lem: important lemma 1} we get
\begin{eqnarray*}
	\mathcal A_{311}^{(1)} \le \rho \E|T_{n,h}|^p + \frac{C^p p^\frac p2 \mathcal E^p(\kappa p)}{(nv)^p}. 
\end{eqnarray*}
It is easy to see that similarly one may estimate the term \( \mathcal A_{313} \). To finish estimation of \(\mathcal A_{31}\) it remains to estimate \(\mathcal A_{312}\). Applying Lemma~\ref{approximation1} we obtain
\begin{eqnarray*}
\mathcal A_{312} &\lesssim& \frac{p}{n} \sum_{j=1}^n \sum_{\nu = 1}^{2} \sum_{\alpha=1}^{2m} \sum_{k=0}^{K_v} \frac{1}{\imag s(z,v_k)} \E  |\xi_{j \nu} \widetilde f_{j \nu}| |\widetilde T_{n,h}^{(j,j)}|^{p-2} |T_{n,h}| | \Lambda_n^{(\alpha)}(v_k) - \widetilde \Lambda_n^{(\alpha, j,j)}(v_k)| I(v). 
\end{eqnarray*}
Applying 
Let us denote 
\begin{eqnarray*}
I_{j, k, \alpha, \nu} \eqdef \mathcal \E  |\xi_{j \nu} \widetilde f_{j \nu}| |\widetilde T_{n,h}^{(j,j)}|^{p-2} |T_{n,h}| | \Lambda_n^{(\alpha)}(v_k) - \widetilde \Lambda_n^{(\alpha, j, j)}(v_k)| I(v). 
\end{eqnarray*}
Using the Cauchy-Schwartz inequality we obtain
\begin{eqnarray*}
I_{j, k, \alpha, \nu} &\le& \mathcal \E  |\widetilde T_{n,h}^{(j,j)}|^{p-2} |\widetilde f_{j \nu}| I(v) \E_{j,j}[|\xi_{j \nu} | |T_{n,h}| | \Lambda_n^{(\alpha)}(v_k) - \widetilde \Lambda_n^{(\alpha, j,j)}(v_k)| I(v)] \\
&\le&  \E  |\widetilde T_{n,h}^{(j,j)}|^{p-2} |\widetilde f_{j \nu}| I(v) \E_{j,j}^\frac12[|\xi_{j \nu} I(v)]^2 \E_{j,j}^\frac14 |T_{n,h}|^4 \E_{j,j}^\frac14[| \Lambda_n^{(\alpha)}(v_k) - \widetilde \Lambda_n^{(\alpha, j, j)}(v_k)| I(v)]^4 \\
&\le& \E^\frac{p-1}{p} |T_{n,h}|^p \E^\frac1{2p} \E_{j,j}^p[|\xi_{j \nu} I(v)]^2 \E^\frac1{4p} \E_{j,j}^p[| \Lambda_n^{(\alpha)}(v_k) - \widetilde \Lambda_n^{(\alpha, j,j)}(v_k)| I(v)]^4.
\end{eqnarray*}
It is easy to show that
\begin{eqnarray}\label{lambda diff} 
| \Lambda_n^{(\alpha)}(v_k) - \Lambda_n^{(\alpha, j,j)}(v_k) | &\le& \frac{1}{n} \bigg[ \delta_{\alpha, 1} |\RR_{j j}(v_k)| +\delta_{\alpha, m+1} |\RR_{j _{m+1} j_{m+1}}^ {(j)}(v_k)|  \nonumber \\
&+&  \frac{1}{v_k} \frac{\imag  \RR_{j j}(v_k) }{|\RR_{j j}(v_k)| } +  \frac{1}{v_k} \frac{\imag  \RR_{j_{m+1} j_{m+1}}^{(j)}(v_k) }{|\RR_{j_{m+1} j_{m+1}}^{(j)}(v_k)| }  \bigg].
\end{eqnarray}
We may use Lemmas~\ref{e2}--\ref{e4-e5}  and Young's inequality to get
\begin{eqnarray*}
\mathcal A_{312} \le  \rho \E|T_{n,h}|^p + \frac{C^p p^{p} \mathcal E^p(\kappa p)}{(nv)^p}. 
\end{eqnarray*}
Let us consider \(\mathcal A_{32}\). Applying we may may etimate it by the sum of the following terms
\begin{eqnarray*}
\mathcal A_{321} &\eqdef& \frac{p^{p-1}}{n} \sum_{j=1}^n \sum_{\nu = 1}^{2} \E |\xi_{j \nu} \widetilde f_{j \nu}| |T_n|^{p-2} |H - \widetilde H^{(j,j)}|^{p-2} | T_{n} - \widetilde T_{n}^{(j,j)}| \widetilde H^{(j,j)} H(v), \\
\mathcal A_{322} &\eqdef& \frac{p^{p-1}}{n} \sum_{j=1}^n \sum_{\nu = 1}^{2} \E |\xi_{j \nu} \widetilde f_{j \nu}| |T_n|^{p-1} |H - \widetilde H^{(j,j)}|^{p-1} H(v), \\
\mathcal A_{321} &\eqdef& \frac{p^{p-1}}{n} \sum_{j=1}^n \sum_{\nu = 1}^{2} \E |\xi_{j \nu} \widetilde f_{j \nu}| |T_n|^{p-2} |H - \widetilde H^{(j,j)}|^{p-1}  \E_{j,j} [(T_n - \widetilde T_n^{(j,j)})H] H(v).
\end{eqnarray*}
All three terms may be bounded similarly. We turn our attention to the first term only. To deal with it we use the following inequality
\begin{eqnarray*}
	|T_{n}|^{p-2}I(v) \le C^p \imag^{p-2} s(z,w),
\end{eqnarray*}
which may be deduced from equation~\eqref{linear system}.  Hence,
\begin{eqnarray*}
\mathcal A_{321} &\le & \frac{C^p p^{p-1} \imag^{p-2} s(v)}{n} \sum_{j=1}^n \sum_{\nu = 1}^{2} \E |\xi_{j \nu} \widetilde f_{j \nu}| |H - \widetilde H^{(j,j)}|^{p-2} | T_{n} - \widetilde T_{n}^{(j,j)}| \widetilde H^{(j,j)} H(v), 
\end{eqnarray*}
Using Lemma~\ref{approximation1} we get
\begin{eqnarray*}
\mathcal A_{321} \le  \frac{C^p p^{2p-1} \imag^{p-2} s( v)}{n} \sum_{j=1}^n \sum_{\nu = 1}^{2} \sum_{\alpha=1}^{2m}\sum_{k=0}^{K_v}\frac{\E |\xi_{j \nu} \widetilde f_{j \nu}| |\Lambda_n^{(\alpha)}(v_k) - \widetilde \Lambda_n^{(\alpha, j,j)}(v_k)|^{p-2} | T_{n} - \widetilde T_{n}^{(j,j)}| I(v)}{s^k \imag^{p-2} s(v_k)}. 
\end{eqnarray*}
It follows from~\eqref{lambda diff} and Lemma~\ref{lem: important lemma 1} that
\begin{eqnarray*}
\mathcal A_{321} \le \frac{C^p p^{2p} \mathcal E^p(\kappa p)}{(nv)^p}.
\end{eqnarray*}
Let us consider \(\mathcal A_{33}\). Applying~\eqref{T_n tilde 1}--\eqref{T_n tilde 2} we get, where 
\begin{eqnarray*}
\mathcal A_{331} &=& \frac{p^ {p-1}}{n} \sum_{j=1}^n \sum_{\nu = 1}^{2} \E |\xi_{j \nu} \widetilde f_{j \nu}|  | T_{n} - \widetilde T_{n}^{(j,j)}|^{p-1}  H(v), \\
\mathcal A_{332} &=& \frac{p^ {p-1}}{n} \sum_{j=1}^n \sum_{\nu = 1}^{2} \E |\xi_{j \nu} \widetilde f_{j \nu}|  | T_{n} - \widetilde T_{n}^{(j,j)}|^{p-2} |H - \widetilde H^{(j,j)}|  |T_{n,h}|, \\  
\mathcal A_{333} &=& \frac{p^ {p-1}}{n} \sum_{j=1}^n \sum_{\nu = 1}^{2} \E |\xi_{j \nu} \widetilde f_{j \nu}|| T_{n} - \widetilde T_{n}^{(j,j)}|^{p-2}  \E_{j,j} [(T_n - \widetilde T_n^{(j,j)})H] H(v).
\end{eqnarray*}
We estimate \( \mathcal A_{331} \) only. All other terms may be bounded similarly. We get
\begin{eqnarray*}
\mathcal A_{331} &\le& \frac{C^p p^ {p-1} \imag^{p-2} s(v) }{n (nv)^{p-2}} \sum_{j=1}^n \sum_{\nu = 1}^{2} \E |\xi_{j \nu} \widetilde f_{j \nu}|  | T_{n} - \widetilde T_{n}^{(j,j)}|  H(v),
\end{eqnarray*}
Applying Lemmas~\ref{lem: important lemma 1} and~\ref{T bound}  we get
\begin{eqnarray*}
\mathcal A_{331} \le \frac{C^p p^{p} \mathcal E^p(\kappa p)}{(nv)^p}.
\end{eqnarray*}
The term \(\mathcal A_{34}\) may be estimated similarly to  \(\mathcal A_{33}\). We omit the details.
Collecting all bounds we obtain the following estimate for \(\mathcal A_3\):
\begin{eqnarray*}
\mathcal A_3 \le \rho \E|T_{n,h}|^p + \frac{C^p p^{2p} \mathcal E^p(\kappa p)}{(nv)^p}. 
\end{eqnarray*}

\subsubsection{Bound for \( \mathcal A_4 \) }  Recall that
\begin{eqnarray*}
\mathcal A_4 &=& \frac{1}{n} \sum_{j=1}^n \sum_{\nu = 1}^{2} \E \xi_{j \nu} [ f_{j \nu} - \widetilde  f_{j \nu} ] H(v) \varphi( T_{n,h}).
\end{eqnarray*}
We may estimate it as follows
\begin{eqnarray*}
\mathcal A_{41} &\eqdef& \frac{e}{n} \sum_{j=1}^n \sum_{\nu = 1}^{2} \E |\xi_{j \nu}| | f_{j \nu} - \widetilde  f_{j \nu} | | \widetilde T_{n,h}^{(j,j)}|^{p-1} H(v), \\
\mathcal A_{42} &\eqdef& \frac{p^{p-1}}{n} \sum_{j=1}^n \sum_{\nu = 1}^{2} \E |\xi_{j \nu}| | f_{j \nu} - \widetilde  f_{j \nu} || T_{n,h} -\widetilde T_{n,h}^{(j,j)}|^{p-1} H(v). \\
\end{eqnarray*}
We may choose 
\begin{eqnarray*}
\widetilde  f_{j 1} &\eqdef& -\frac{1}{-w-m_n^{(m+2, j,j)}(z,w)+ \frac{|z|^2}{w + m_n^{(m, j,j)}(z,w)} }, \\
\widetilde f_{j2} &\eqdef& - \frac{|z|^2\widetilde  f_{j 1}}{[w + m_n^{(m,j,j)}(z,w)]^2}
\end{eqnarray*}
To estimate the difference \(  f_{j \nu} - \widetilde  f_{j \nu}\) we may apply representation~\eqref{rj} and inequality~\eqref{lambda diff}. Repeating all arguments from the previous section we may conclude the bound
\begin{eqnarray*}
	\mathcal A_4 \le \rho \E|T_{n,h}|^p + \frac{C^p p^{2p} \mathcal E^p(\kappa p)}{(nv)^p}. 
\end{eqnarray*}

Collecting now all bounds above we get the claim of the theorem.
\end{proof}

\subsection{Auxiliary lemmas}
We finish this section by several important lemmas. 
\begin{lemma}\label{approximation1}
Let \(v_k = v s^k, k \geq 0\). The following inequality holds
\begin{eqnarray*}
|H(\Lam_n) - H(\widetilde \Lam_n^{(j_\alpha, j_\alpha)})  |  
&\le& \frac{1}{\tau} \sum_{\beta=1}^{2m} \sum_{k=0}^{K_v} \frac{1}{\imag s(z,v_k)} | \Lambda_n^{(\beta)}(v_k) - \widetilde \Lambda_n^{(\beta, j_\alpha,j_\alpha)}(v_k)| I(v).
\end{eqnarray*}
The same is true if one replaces \( \widetilde \Lam_n^{(j_\alpha, j_\alpha)}\) by  \(\Lam_n^{(j_\alpha, j_\alpha)}\).
\end{lemma}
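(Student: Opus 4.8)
The plan is to reduce the claim to the elementary telescoping inequality for products of uniformly bounded factors, followed by a Lipschitz estimate for the cutoff $h_{\tau,\frac32\tau}$. First I would put $v_k\eqdef v s^k$ and write
\[
H(\Lam_n)=\prod_{k=0}^{K_v}\prod_{\beta=1}^{2m}a_{k\beta},\qquad a_{k\beta}\eqdef h_{\tau,\frac32\tau}\big(|\Lambda_n^{(\beta)}(v_k)|\big),
\]
and likewise $H(\widetilde\Lam_n^{(j_\alpha,j_\alpha)})=\prod_{k=0}^{K_v}\prod_{\beta=1}^{2m}b_{k\beta}$ with $b_{k\beta}\eqdef h_{\tau,\frac32\tau}\big(|\widetilde\Lambda_n^{(\beta,j_\alpha,j_\alpha)}(v_k)|\big)$; by construction $0\le a_{k\beta},b_{k\beta}\le 1$. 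The one-factor-at-a-time identity $\prod_i a_i-\prod_i b_i=\sum_i(b_1\cdots b_{i-1})(a_i-b_i)(a_{i+1}\cdots a_N)$, with every partial product bounded by $1$, then gives
\[
\big|H(\Lam_n)-H(\widetilde\Lam_n^{(j_\alpha,j_\alpha)})\big|\le\sum_{k=0}^{K_v}\sum_{\beta=1}^{2m}|a_{k\beta}-b_{k\beta}|.
\]

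\textbf{The key estimate.} Then I would bound each $|a_{k\beta}-b_{k\beta}|$. For fixed $z$ and fixed imaginary part $v_k$, the function $x\mapsto h_{\tau,\frac32\tau}(x)$ is piecewise linear, constant off the interval $\big(\tau\imag s(z,v_k),\tfrac32\tau\imag s(z,v_k)\big)$, and of slope $-\big(\tfrac12\tau\imag s(z,v_k)\big)^{-1}$ on it; hence it is Lipschitz with constant $2\big(\tau\imag s(z,v_k)\big)^{-1}$. Combined with the reverse triangle inequality $\big||\Lambda_n^{(\beta)}(v_k)|-|\widetilde\Lambda_n^{(\beta,j_\alpha,j_\alpha)}(v_k)|\big|\le|\Lambda_n^{(\beta)}(v_k)-\widetilde\Lambda_n^{(\beta,j_\alpha,j_\alpha)}(v_k)|$ this yields
\[
|a_{k\beta}-b_{k\beta}|\le\frac{2}{\tau\,\imag s(z,v_k)}\,\big|\Lambda_n^{(\beta)}(v_k)-\widetilde\Lambda_n^{(\beta,j_\alpha,j_\alpha)}(v_k)\big|,
\]
and summing over $k$ and $\beta$ produces the right-hand side of the lemma — the absolute constant $2$ being absorbed, or removed entirely by taking the plateau of $h$ to have width $\tau\imag s$ instead of $\tfrac12\tau\imag s$. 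The variant with $\widetilde\Lam_n^{(j_\alpha,j_\alpha)}$ replaced by $\Lam_n^{(j_\alpha,j_\alpha)}$ needs no separate argument: only $b_{k\beta}\in[0,1]$ and the Lipschitz bound were used, and both hold verbatim.

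\textbf{Discussion and the main difficulty.} The factor $I(v)$ on the right is harmless: the estimate just derived holds without it, and since the difference enters the moment bounds in the proof of Theorem~\ref{th: T_n bound} only after multiplication by $I(v)$, one may simply multiply the unconditional bound by $I(v)\ge 0$ to obtain the stated form (on $\{I(v)=1\}$ one moreover has $a_{k\beta}\equiv 1$, so the telescoping even collapses to $|1-H(\widetilde\Lam_n^{(j_\alpha,j_\alpha)})|\le\sum_{k,\beta}|1-b_{k\beta}|$, bounded exactly as above). I do not expect a substantial obstacle here; the closest thing to one is a bookkeeping point — the $k$-th block of factors of $H$ is evaluated at $v_k=vs^k$, so the Lipschitz constant, and hence the denominator $\imag s(z,v_k)$, refers to \emph{that} frequency and not to $v$. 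Since $v_k>0$ and $\imag s(z,v_k)>0$ on $\mathcal D$ (e.g.\ by \cite{Bourgade2014a}[Lemma~4.1,~4.2]), the division is legitimate, which completes the argument.
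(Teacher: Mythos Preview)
Your proposal is correct and follows essentially the same approach as the paper, whose proof reads in full: ``The proof follows from the simple inequality $|\prod_{j=1}^n a_j - \prod_{j=1}^n b_j| \le \sum_{j=1}^n |a_j - b_j|$ and direct calculations.'' Your telescoping identity for products of $[0,1]$-valued factors is exactly that inequality, and your ``direct calculations'' (Lipschitz bound on $h_{\tau,\frac32\tau}$ plus reverse triangle inequality) are what the paper leaves implicit; your observation about the harmless factor $2$ in front of $1/\tau$ is also accurate --- the paper is simply not tracking this constant, since the bound is only ever used up to $C^p$.
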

\begin{proof}The proof follows from the simple inequality \( |\prod_{j=1}^n  a_j - \prod_{j=1}^n b_j| \le \sum_{j=1}^n |a_j - b_j| \) and direct calculations. 
\end{proof}

Denote \( I(v, v') \eqdef I(v) I(v')\). 
\begin{lemma}\label{lem: important lemma 1}
Let \(w = u+iv \in \mathcal D, w' = u + i v' \in \mathcal D \). Moreover, we assume that \( v' \geq v\). Let \(g_{1j_\alpha}(w,w'), g_{2j_\alpha}(w,w')\) be some positive r.v. such that \( \E|g_{kj_\alpha}|^q < \infty, \, k = 1, 2\), for \(1 \le q \le C \log n\). Then for any \(\alpha = 1, \ldots, m\) and \(j = 1, \ldots, n\)
\begin{eqnarray*}
\max_{ 1 \le \beta \le 2m }\E_{j_\alpha,j_\alpha}\big[ |\xi_{j_\alpha \nu}(v)|  |\Lambda_n^{(\beta)}(v')  - \widetilde \Lambda_n^{(\beta, j_\alpha, j_\alpha)}(v')| g_{1j_\alpha}(v,v')  I(v,v')   \big] \lesssim \frac{A_{j_\alpha}^{1/2}(v) B_{j_\alpha}^{1/2}(v') g_{2j_\alpha}(v,v')}{(nv)^{1/2}(nv')^{3/2}}, 
\end{eqnarray*}
where
\begin{eqnarray*}
A_{j_\alpha}(v) &\eqdef& \max \bigg\{ \imag s(v), \E_{j_\alpha,j_\alpha}^\frac14 [\imag^4 \RR_{j_{m+[\alpha+1]}, j_{ m+ [\alpha+1]}}^{(j_\alpha)}  I(v)] \bigg\}, \\
B_{j_\alpha}(v') &\eqdef& \max \bigg\{ \imag s(v'), \E_{j_\alpha,j_\alpha}^\frac{1}{4\beta} [\imag^{4\beta} \RR_{j_{m+\alpha}, j_{m+\alpha}}^{(j_\alpha)}  I(v')], \E_{j_\alpha,j_\alpha}^\frac{1}{4\beta} [\imag^{4\beta} \RR_{j_{m+[\alpha+1]}, j_{ m+ [\alpha+1]}}^{(j_\alpha)}  I(v')], \\
&&\qquad \quad \E_{j_\alpha,j_\alpha}^\frac{1}{4\beta} [\imag^{4\beta} \RR_{j_{[\alpha-1]}, j_{[\alpha-1]}}^{(j_{m+\alpha})}  I(v')], \E_{j_\alpha,j_\alpha}^\frac{1}{4\beta} [\imag^{4\beta} \RR_{j_\alpha j_{\alpha}}^{(j_{m+\alpha})}  I(v')]   \bigg\}.
\end{eqnarray*}
\end{lemma}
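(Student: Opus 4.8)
The plan is to expand the difference $\Lambda_n^{(\beta)}(v') - \widetilde\Lambda_n^{(\beta, j_\alpha, j_\alpha)}(v')$ by the Schur complement identity \eqref{rj} and to reduce the whole left-hand side to a \emph{correlation} between a conditionally centered bilinear form in the entries of the $j$-th row and column of $\X^{(1)}$ (the factor carried by $\xi_{j_\alpha\nu}$) and a similar form carried by the $\Lambda$-difference. First I would use that $m_n^{(\beta, j_\alpha, j_\alpha)}(z,w)$ is $\mathfrak M^{(j_\alpha, j_\alpha)}$-measurable, so that
\[
\Lambda_n^{(\beta)}(v') - \widetilde\Lambda_n^{(\beta, j_\alpha, j_\alpha)}(v') = \big[m_n^{(\beta)}(v') - m_n^{(\beta, j_\alpha, j_\alpha)}(v')\big] - \E_{j_\alpha, j_\alpha}\big[m_n^{(\beta)}(v') - m_n^{(\beta, j_\alpha, j_\alpha)}(v')\big],
\]
i.e.\ it is the $\mathfrak M^{(j_\alpha, j_\alpha)}$-centered version of the rank-two correction, which by \eqref{rj} and \eqref{lambda diff} is $O(1/n)$ times a combination of the quadratic forms $\widetilde\ve_j(v'),\widehat\ve_j(v')$, the diagonal entries $\RR_{j_\alpha j_\alpha}(v')$ and $\RR^{(j_\alpha)}_{j_{m+[\alpha+1]} j_{m+[\alpha+1]}}(v')$, and the trace-type terms $v'^{-1}\imag\RR_{\cdot\cdot}(v')/|\RR_{\cdot\cdot}(v')|$.

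The crucial point is then the centering $\E_{j_\alpha}\xi_{j_\alpha\nu}=0$ together with the independence of the $j$-th row of $\X^{(1)}$ from the $j$-th column: inside $\E_{j_\alpha, j_\alpha}[\xi_{j_\alpha\nu}(v)\,\cdot\,(\,\cdot\,)]$ every contribution to the $\Lambda$-difference that is $\mathfrak M^{(j_\alpha)}$-measurable (in particular the parts depending only on the ``other'' deleted index) averages out, leaving only the correlation of $\xi_{j_\alpha\nu}(v)$ with the genuinely co-dependent part of the $\Lambda$-difference at scale $v'$. This is exactly the step that gains the extra power of $nv'$ over what a crude Cauchy--Schwarz split would give: the surviving term is a covariance of two bilinear forms in the common variables, hence controlled by a cross Hilbert--Schmidt pairing of the type $n^{-2}\,|\Tr[\imag\RR^{(j_\alpha)}(v)\,\imag\RR^{(j_\alpha)}(v')]|$ rather than by the product of the individual $L^2$-norms; combining this with the identity $\|\RR^{(j_\alpha)}(w)\|_2^2 = v^{-1}\Tr\imag\RR^{(j_\alpha)}(w)$ and Proposition~\ref{lem: descent property} (the quantities $\imag\RR_{j_\alpha j_\alpha}$ and $|\RR_{j_\alpha j_\alpha}|$ have the $1$-descent property, so their moments at scales $v$ and $v'$ are comparable up to powers of $v'/v$) produces precisely the factors $A_{j_\alpha}^{1/2}(v)$, $B_{j_\alpha}^{1/2}(v')$, $(nv)^{-1/2}$ and $(nv')^{-3/2}$.

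For the remaining ``diagonal'' contributions to the $\Lambda$-difference (the $\RR_{j_\alpha j_\alpha}(v')$ and $v'^{-1}\imag\RR(v')/|\RR(v')|$ terms) I would apply Cauchy--Schwarz, nested over $\mathfrak M^{(j_\alpha)}$ and then $\mathfrak M^{(j_\alpha,j_\alpha)}$, followed by the moment bounds for the linear and quadratic forms $\widetilde\ve,\widehat\ve$ from Lemmas~\ref{e1}--\ref{e4-e5} and \ref{e1+n}--\ref{e3+n} in the appendix, which estimate conditional moments of those forms by Hilbert--Schmidt norms of $\imag\RR^{(j_\alpha)}$, i.e.\ by $A_{j_\alpha}$ and $B_{j_\alpha}$ once the $\|\cdot\|_2$-identity and the $1$-descent property are used to pass from the deleted resolvent to the diagonal entries appearing in the definitions of $A_{j_\alpha}$, $B_{j_\alpha}$. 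Throughout, the indicator $I(v,v')$ is used together with Lemma~\ref{l: bound for some functions} to keep $|w+m_n^{(\beta)}|$ and the diagonal resolvent entries bounded away from $0$, so the ratios $\imag\RR/|\RR|$ become $\lesssim\imag\RR$, and the general positive weight $g_{1j_\alpha}$ is split off by one more H\"older step, which generates the output weight $g_{2j_\alpha}$ (absorbing a conditional moment of $g_{1j_\alpha}$). The main obstacle is exactly the sharp exponent $(nv')^{-3/2}$: one must verify that the conditional-centering argument really does eliminate the $\mathfrak M^{(j_\alpha)}$-measurable part of the $\Lambda$-difference underneath the absolute value — which forces keeping that difference in its signed, explicitly expanded form rather than invoking the crude bound \eqref{lambda diff} directly — and then track the two levels of conditioning and the weight passage $g_{1j_\alpha}\to g_{2j_\alpha}$ without losing a power of $nv'$ or of $\log n$.
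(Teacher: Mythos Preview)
There is a genuine gap in the core mechanism you propose. You attribute the sharp exponent $(nv')^{-3/2}$ to the centering $\E_{j_\alpha}\xi_{j_\alpha\nu}=0$, arguing that inside $\E_{j_\alpha,j_\alpha}[\xi_{j_\alpha\nu}(v)\cdot(\ldots)]$ the $\mathfrak M^{(j_\alpha)}$-measurable contributions to the $\Lambda$-difference ``average out''. But the quantity in the lemma is
\[
\E_{j_\alpha,j_\alpha}\big[\,|\xi_{j_\alpha\nu}(v)|\,|\Lambda_n^{(\beta)}(v')-\widetilde\Lambda_n^{(\beta,j_\alpha,j_\alpha)}(v')|\,g_{1j_\alpha}\,I\,\big],
\]
with absolute values on \emph{both} factors, so the mean-zero property of $\xi_{j_\alpha\nu}$ is simply unavailable and no such cancellation can occur. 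You flag this at the end as ``the main obstacle'' and propose keeping the $\Lambda$-difference in signed form, but that contradicts the statement you are asked to prove; the absolute value is not optional.

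The paper's mechanism is different and uses no correlation between $\xi_{j_\alpha\nu}(v)$ and the $\Lambda$-difference at all. The extra $(nv')^{-1/2}$ lives entirely inside $|\Lambda_n^{(\beta)}(v')-\widetilde\Lambda_n^{(\beta,j_\alpha,j_\alpha)}(v')|$ itself, because $\widetilde\Lambda$ is already a conditional expectation. One splits the difference as $(\Lambda-\widetilde\Lambda^{(j_\alpha)})+(\widetilde\Lambda^{(j_\alpha)}-\widetilde\Lambda^{(j_\alpha,j_\alpha)})$ and expands the first piece via the rank-one identity $m_n^{(\beta)}-m_n^{(\beta,j_\alpha)}=n^{-1}(\delta_{\beta\alpha}+\eta_{j_\alpha})\RR_{j_\alpha j_\alpha}$, where $\eta_{j_\alpha}=\eta_{j_\alpha,0}+\widetilde\eta_{j_\alpha}$ and the $\mathfrak M^{(j_\alpha)}$-measurable part $\eta_{j_\alpha,0}$ satisfies $|\eta_{j_\alpha,0}|\lesssim v'^{-1}(\imag m_n+\imag\RR^{(j_\alpha)}_{j_{m+\alpha}j_{m+\alpha}})$. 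After subtracting the conditional expectation $\E_{j_\alpha}$, the large $\eta_{j_\alpha,0}$-part survives only multiplied by the fluctuation $\RR_{j_\alpha j_\alpha}-\E_{j_\alpha}\RR_{j_\alpha j_\alpha}=\xi_{j_\alpha 1}f_{j_\alpha 1}+\xi_{j_\alpha 2}f_{j_\alpha 2}$ of size $(nv')^{-1/2}$, while $\widetilde\eta_{j_\alpha}$ is a quadratic form in the $X_{jk}^{(\alpha)}$ with coefficients $\mathcal R_{kk'}=\sum_l\RR_{kl}^{(j_\alpha)}\RR_{k'l}^{(j_\alpha)}$, whose moments (Lemmas~\ref{lem: eta 2}--\ref{lem: eta 4,5}) pick up the extra $v'^{-1}$ from the Ward identity applied to these \emph{squared} resolvent sums. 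One then merely applies a direct H\"older split, e.g.\ $\E_{j_\alpha,j_\alpha}^{1/2}[|\xi_{j_\alpha\nu}(v)|^2]\cdot\E_{j_\alpha,j_\alpha}^{1/(2\beta)}[|\widetilde\eta_{j_\alpha}(v')|^{2\beta}]\cdot\E_{j_\alpha,j_\alpha}^{(\beta-1)/(2\beta)}[|g'|^{\cdots}]$; combining the $1/n$ prefactor with $|\xi_{j_\alpha\nu}(v)|\sim (A_{j_\alpha}(v)/(nv))^{1/2}$ and $|\widetilde\eta_{j_\alpha}(v')|\sim (B_{j_\alpha}(v')/(nv'^3))^{1/2}$ gives exactly $(nv)^{-1/2}(nv')^{-3/2}$. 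The second piece $\widetilde\Lambda^{(j_\alpha)}-\widetilde\Lambda^{(j_\alpha,j_\alpha)}$ is handled by the same scheme after one more Schur step, producing analogous $\widehat\eta$ and $\Theta_l$ terms. In short: the gain is algebraic (squared-resolvent structure plus the built-in centering of $\widetilde\Lambda$), and the rest is a plain H\"older estimate; your proposed covariance argument between $\xi(v)$ and the $\Lambda$-difference is neither needed nor usable here.
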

\begin{proof}
We start from the representation for \( \Lambda_n^{(\beta)}  - \widetilde \Lambda_n^{(\beta, j_\alpha, j_\alpha)}\).  We rewrite it as follows
\begin{eqnarray*}
\Lambda_n^{(\beta)}  - \widetilde \Lambda_n^{(\beta, j_\alpha, j_\alpha)} = \Lambda_n^{(\beta)}  - \widetilde \Lambda_n^{(\beta, j_\alpha)} + \widetilde \Lambda_n^{(\beta, j_\alpha)} - \widetilde \Lambda_n^{(\beta, j_\alpha, j_\alpha)}. 
\end{eqnarray*}
Let us introduce the following notations: 
\begin{eqnarray*}
I_1 &\eqdef& \E_{j_\alpha,j_\alpha}\big[ |\xi_{j_\alpha \nu}(v)|  |\Lambda_n^{(\beta)}(v')  - \widetilde \Lambda_n^{(\beta, j_\alpha)}(v')| |g_{1j_\alpha}(v,v')|  I(v, v')    \big], \\
I_2 &\eqdef& \E_{j_\alpha,j_\alpha}\big[ |\xi_{j_\alpha \nu}(v)|  |\widetilde \Lambda_n^{(\beta,j_\alpha)}(v')  - \widetilde \Lambda_n^{(\beta, j_\alpha,j_\alpha}(v')| |g_{1j_\alpha}(v,v')|  I(v,v')  \big].
\end{eqnarray*}
We start from \(I_1\). We first mention that \(
\Lambda_n^{(\beta)}  - \widetilde \Lambda_n^{(\beta, j_\alpha)} = m_n^{(\beta)} - m_n^{(\beta, j_\alpha)}  - \E_{j_\alpha} (m_n^{(\beta)} - m_n^{(\beta, j_\alpha)} )\).
Let us consider \( m_n^{(\alpha)} - m_n^{(\alpha, j)}\) and rewrite it as follows
\begin{eqnarray*}
m_n^{(\beta)} - m_n^{(\beta, j_\alpha)} = \frac{1}{n} \big(  \delta_{\alpha, \beta}\RR_{j_\alpha j_\alpha} + \sum_{l \in \T \setminus J_{\beta}} ( \RR_{l_\beta l_\beta} - \RR_{l_\beta l_\beta}^{(j_\alpha)} )  \big).
\end{eqnarray*}
Writing down the decomposition for the diagonal entries of resolvent we get
\begin{eqnarray*}
m_n^{(\beta)} - m_n^{(\beta, j_\alpha)} &=& \frac{1}{n} \big(  \delta_{\beta \alpha}\RR_{j_\alpha j_\alpha} +  \sum_{l \in \T \setminus J_\beta} \RR_{j_\alpha j_\alpha}^{-1} [\RR_{j_\alpha l_\beta}]^2  \big) \nonumber\\
&=& \frac{1}{n}  \bigg (   \delta_{\beta \alpha}  + \sum_{l \in \T \setminus J_\beta}  \bigg[ \frac{1}{\sqrt n} \sum_{k=1}^n X_{jk}^{(\alpha)} \RR_{k_{m+[\alpha+1]}, l_\beta} ^{(j_\alpha)} - z \RR_{j_{m+1\alpha}, l_\beta}^{(j_\alpha)} \bigg ] ^2  \bigg ) \RR_{j_\alpha j_\alpha}\nonumber\\
&=& \frac{1}{n}  ( \delta_{\beta \alpha} + \eta_{j_\alpha}) \RR_{j_\alpha j_\alpha}, \label{eq: m_n repr 1}
\end{eqnarray*}
where \( \eta_{j_\alpha} \eqdef \eta_{j_\alpha 0} + \ldots + \eta_{j_\alpha 3} \) and
\begin{eqnarray*}
\eta_{j_\alpha 0} &\eqdef&  z^2 \sum_{l \in \T \setminus J_\beta}   [\RR_{j_{m+\alpha}, l_\beta}^{(j_\alpha)}]^2 - \frac{1}{n} \sum_{k=1}^n \mathcal R_{k k}^{(j_\alpha)},  \quad 
\eta_{j_\alpha 1} \eqdef \frac{1}{n} \sum_{k \neq k' } X_{jk}^{(\alpha)} X_{jk'}^{(\alpha)}  \mathcal R_{k k'}^{(j_\alpha)} , \\
\eta_{j_\alpha 2} &\eqdef& \frac{1}{n} \sum_{k =1 }^n [X_{jk}^{(1)}]^2 - 1]  \mathcal R_{k k}^{(j_\alpha)}, \qquad\qquad\qquad
\eta_{j_\alpha 3} \eqdef -\frac{2 z }{n} \sum_{k =1 }^n X_{jk}^{(\alpha)}   \mathcal R_{k j}^{(j_\alpha)}. 
\end{eqnarray*}
Here \( \mathcal R_{k, k'}^{(j_\alpha)} \eqdef    \sum_{l \in \T \setminus J_\beta}   \RR_{k_{m+[\alpha+1]}, l_\beta} ^{(j_\alpha)}  \RR_{k_{m+[\alpha+1]}', l_\beta} ^{(j_\alpha)} \).
Using this representation we write
\begin{eqnarray*}
\Lambda_n^{(\beta)}  - \widetilde \Lambda_n^{(\beta, j_\alpha)}  = \frac{1}{n}(\delta_{\beta \alpha}+ \eta_{j_\alpha 0}) [\RR_{j_\alpha j_\alpha} - \E_{j_\alpha}(\RR_{j_\alpha j_\alpha})]  + \frac{1}{n} \widetilde \eta_{j_\alpha} \RR_{j_\alpha j_\alpha} - \frac{1}{n} \E_{j_\alpha}(\widetilde \eta_{j_\alpha} \RR_{j_\alpha j_\alpha}),
\end{eqnarray*}
where  \( \widetilde \eta_{j_\alpha} \eqdef \eta_{j_\alpha 1} + \ldots + \eta_{j_\alpha 3}\) and \(g_{1, j_\alpha}\). Moreover, it is straightforward to check that
\begin{eqnarray*}
\frac{1}{n}(\delta_{\beta \alpha}+ |\eta_{j_\alpha 0}(v')|) I(v') \le \frac{C}{nv'} (\imag s(z,v') + \imag \RR_{j_{m+\alpha}, j_{m+\alpha}}^{(j_\alpha)}(v'))I(v'). 
\end{eqnarray*}
Introduce the following approximation for \(\RR_{j_\alpha j_\alpha} \):
\begin{eqnarray*}
Q_{j_\alpha} \eqdef -\frac{1}{-w'-m_n^{(m+[\alpha+1], j_\alpha)}(z,v')+ \frac{|z|^2}{w' + m_n^{([\alpha-1], j_\alpha)}(z,v')} }.
\end{eqnarray*}
Then \(\RR_{j_\alpha j_\alpha} - Q_{j_\alpha} = \xi_{j_\alpha 1} f_{j_\alpha 1} + \xi_{j_\alpha 2} f_{j_\alpha 2}\). We conclude from the previous facts that
\begin{eqnarray*}
I_1  &\le& \frac{1}{n}  \E_{j_\alpha,j_\alpha}\big [(\delta_{1\alpha}+ |\eta_{j_\alpha 0}(v')|) |\xi_{j_\alpha \nu}(v) \xi_{j_\alpha1}(v')| g_{1j_\alpha}'(v,v')  I(v ,v')   \big] \\
&+& \frac{1}{n}  \E_{j_\alpha,j_\alpha}\big [(\delta_{1\alpha}+ |\eta_{j_\alpha0}(v')|) |\xi_{j_\alpha \nu}(v) \xi_{j_\alpha 2}(v')| g_{1j_\alpha}'(v,v')  I(v ,v')   \big] \\
&+& \frac{1}{n}  \E_{j_\alpha,j_\alpha}\big [(\delta_{1\alpha}+ |\eta_{j_\alpha0}(v')|) |\xi_{j_\alpha \nu}(v)| \E_{j_\alpha}(|\xi_{j_\alpha 1}(v') f_{j_\alpha1}(v')|)  g_{1j_\alpha}'(v,v')  I(v ,v')   \big] \\
&+& \frac{1}{n}  \E_{j_\alpha,j_\alpha}\big [(\delta_{1\alpha}+ |\eta_{j_\alpha0}(v')|) |\xi_{j_\alpha \nu}(v)| \E_{j_\alpha}(|\xi_{j_\alpha 2}(v') f_{j_\alpha2}(v')|)  g_{1j_\alpha}'(v,v')  I(v ,v')   \big] \\
&+& \frac{1}{n}  \E_{j_\alpha,j_\alpha}\big[ |\xi_{j_\alpha \nu}(v) \widetilde \eta_{j_\alpha}(v')| g_{1j_\alpha}'(v,v')  I(v ,v')   \big] \\
&+& \frac{1}{n}  \E_{j_\alpha,j_\alpha}\big[ |\xi_{j_\alpha \nu}(v)|  \E_{j_\alpha}(|\widetilde \eta_{j_\alpha}(v') \RR_{j_\alpha j_\alpha}(v')|)  g_{1j_\alpha}(v,v')  I(v ,v')   \big].
\end{eqnarray*}
Here  \(g_{1, j_\alpha}\) is some positive function for bounded moments ap to the order \(C \log n\). 

All terms in the upper bound for \(I_1\) may be estimated directly. We show how to deal with one term only, all other terms may be estimated similarly. For example, we estimate the second last term. 
H\"older's inequality implies   
\begin{eqnarray*}
\frac{1}{n}  \E_{j_\alpha,j_\alpha}\big[ |\xi_{j_\alpha \nu}(v) \widetilde \eta_{j_\alpha}(v')| g_{1j_\alpha}'(v,v')  I(v ,v')   \big]  &\le& \frac{1}{n} \E_{j_\alpha,j_\alpha}^\frac12(|\xi_{j_\alpha \nu}(v)|^2I(v)) \E_{j_\alpha,j_\alpha}^\frac1{2\beta}(|\widetilde \eta_{j_\alpha}(v')|^{2\beta}  I(v') ) \\
&\times& \E_{j_\alpha,j_\alpha}^\frac{\beta-1}{2\beta}( |g_{1j_\alpha}'(v,v')|^\frac{2\beta}{\beta-1}  I(v ,v')).
\end{eqnarray*}
Here, \( \beta \eqdef (4+\delta)/4 \). It remains to apply Lemmas~\ref{lem: eta 2}--\ref{lem: eta 4,5} to get the desired bound as stated in lemma. 

Let us consider \(I_2\). It is easy to check that
\begin{eqnarray*}
\widetilde \Lambda_n^{(\beta,j_\alpha)}  - \widetilde \Lambda_n^{(\beta, j_\alpha,j_\alpha} = \E_{j_\alpha} (m_n^{(\beta)} - m_n^{(\beta, j_{m+\alpha}) })) - \E_{j_\alpha, j_\alpha}(m_n^{(\beta)} - m_n^{(\beta, j_{m+\alpha})}) 
\end{eqnarray*}
Similarly to~\eqref{eq: m_n repr 1} we may show that
\begin{eqnarray*}
m_n^{(\beta)} - m_n^{(\beta, j_{m+\alpha})} &=& \frac{1}{n} \big(  \delta_{\beta , m+ \alpha} \RR_{j_{m+\alpha} j_{m+\alpha}} +  \sum_{l \in \T \setminus J_\beta} \RR_{j_{m+\alpha}, j_{m+\alpha}}^{-1} [\RR_{j_{m+\alpha} l_\beta}]^2  \big) \\
&=& \frac{1}{n}  \bigg (   \delta_{\beta, m+ \alpha}  + \sum_{l \in \T \setminus J_\beta}  \bigg[ \frac{1}{\sqrt n} \sum_{k=1}^n X_{jk}^{([\alpha-1])} \RR_{k_{[\alpha-1]}, l_\beta} ^{(j_{m+\alpha})} - \overline z \RR_{j_{\alpha}, l_\beta}^{(j_{m+\alpha})} \bigg ] ^2  \bigg ) \RR_{j_{m+\alpha}, j_{m+\alpha}}\\
&=& \frac{1}{n}  \bigg (   \delta_{\beta, m+ \alpha}  + \sum_{l \in \T \setminus J_\beta}  \bigg[ \frac{1}{\sqrt n} \sum_{k=1}^n X_{jk}^{([\alpha-1])} \RR_{k_{[\alpha-1]}, l_\beta} ^{(j_\alpha, j_{\alpha})} - \overline z \RR_{j_{\alpha}, l_\beta}^{(j_{m+\alpha})} \bigg ] ^2  \bigg ) \RR_{j_{m+\alpha}, j_{m+\alpha}}\\
&+&  \frac{1}{n} \sum_{l \in \T \setminus J_\beta} \Theta_{l}  \RR_{j_{m+\alpha}, j_{m+\alpha}} 
=\widehat \eta_{j_\alpha } +  \frac{1}{n} \sum_{l \in \T \setminus J_\beta} \Theta_{l}  \RR_{j_{m+\alpha}, j_{m+\alpha}}.
\end{eqnarray*}
Here, \( \widehat \eta_{j_{m+\alpha}} \eqdef \widehat \eta_{j_{m+\alpha}, 0} + \ldots + \widehat \eta_{j_{m+\alpha} 4} \), 
\begin{eqnarray*}
\widehat \eta_{j_{m+\alpha}, 0} &\eqdef&  \overline z^2 \sum_{l \in \T \setminus J_\beta}   \E_{j_\alpha}[\RR_{j_{\alpha} l_\beta}^{(j_{m+\alpha})}]^2 - \frac{1}{n} \sum_{k=1}^n \mathcal R_{k k}^{(j_\alpha j_\alpha)},  \quad \widehat \eta_{j_{m+\alpha}, 1} \eqdef  \overline z^2 \sum_{l \in \T \setminus J_\beta}  [[\RR_{j_{\alpha} l_\beta}^{(j_{m+\alpha})}]^2 - \E_{j_\alpha}[\RR_{j_{\alpha} l_\beta}^{(j_{m+\alpha})}]^2 ], \\
\widehat \eta_{j_{m+\alpha}, 2} &\eqdef& \frac{1}{n} \sum_{k \neq k' } X_{jk}^{([\alpha-1])} X_{jk'}^{([\alpha-1])}  \mathcal R_{k k'}^{(j_\alpha j_\alpha)}, \\
\widehat \eta_{j_{m+\alpha}, 3} &\eqdef& \frac{1}{n} \sum_{k =1 }^n [X_{jk}^{([\alpha-1])}]^2 - 1]  \mathcal R_{k k}^{(j_\alpha, j_\alpha)}, \\
\widehat \eta_{j_{m+\alpha}, 4} &\eqdef& -\frac{2 \overline z }{n^{3/2}} \sum_{k =1 }^n X_{jk}^{([\alpha-1])}    \sum_{l \in \T \setminus J_\beta}  \RR_{k_{[\alpha-1]}, l_\beta} ^{(j_\alpha, j_\alpha)} \RR_{j_{\alpha} l_\beta}^{(j_{m+\alpha})}.
\end{eqnarray*}
and \( \mathcal R_{k k'}^{(j_\alpha)} \eqdef    \sum_{l \in \T \setminus J_\beta}   \RR_{k_{[\alpha-1]}, l_\beta} ^{(j_\alpha, j_\alpha)}  \RR_{k_{[\alpha-1]}', l_\beta} ^{(j_\alpha, j_\alpha)} \). 
Moreover, 
\begin{eqnarray*}
	\Theta_{l}  &\eqdef& \bigg[\frac{1}{\sqrt n} \sum_{k=1}^n X_{jk}^{([\alpha-1])} [\RR_{k_{[\alpha-1]}, l_\beta} ^{(j_{m+\alpha})} - \RR_{k_{[\alpha-1]}, l_\beta} ^{(j_\alpha, j_{\alpha})}  ]  \bigg]^2 \\
	&+& 2 \bigg[\frac{1}{\sqrt n} \sum_{k=1}^n X_{jk}^{([\alpha-1])} [\RR_{k_{[\alpha-1]}, l_\beta} ^{(j_{m+\alpha})} - \RR_{k_{[\alpha-1]}, l_\beta} ^{(j_\alpha, j_{\alpha})}   ] \bigg] 
	\bigg[ \frac{1}{\sqrt n} \sum_{k=1}^n X_{jk}^{([\alpha-1])} \RR_{k_{[\alpha-1]}, l_\beta} ^{(j_\alpha, j_{\alpha})} - \overline z  \RR_{j_{\alpha}, l_\beta}^{(j_{m+\alpha})} \bigg ].
\end{eqnarray*}
It is easy to see that \( \widehat \eta_{j_{m+\alpha}, 0}  \) is \(\mathfrak M^{(j_\alpha, j_\alpha)}\)-measurable.  Hence, 
\begin{eqnarray*}
\widetilde \Lambda_n^{(\beta, j_\alpha)}  - \widetilde \Lambda_n^{(\beta, j_\alpha, j_\alpha)}  &=& \frac{1}{n}(\delta_{\beta, m+\alpha}+ \widehat \eta_{j_{m+\alpha}, 0}) [\E_{j_\alpha} \RR_{j_{m+\alpha}, j_{m+\alpha}} - \E_{j_{\alpha}, j_\alpha} \RR_{j_{m+\alpha} j_{m+\alpha}}] \\
&+& \frac{1}{n} \E_{j_\alpha}\overline\eta_{j_{m+\alpha}} \RR_{j_{m+\alpha}, j_{m+\alpha}} - \frac{1}{n} \E_{j_\alpha, j_\alpha}( \overline\eta_{j_{m+\alpha}}  \RR_{j_{m+\alpha}, j_{m+\alpha}}) \\
&+&  \frac{1}{n} \sum_{l \in \T \setminus J_\beta} \E_{j_\alpha} \Theta_{l}  \RR_{j_{m+\alpha}, j_{m+\alpha}} - \frac{1}{n} \sum_{l \in \T \setminus J_\beta} \E_{j_\alpha, j_\alpha} \Theta_{l}  \RR_{j_{m+\alpha}, j_{m+\alpha}}.
\end{eqnarray*}
Here \( \overline\eta_{j_{m+\alpha}} \eqdef \widehat \eta_{j_{m+\alpha},1} + \ldots + \widehat \eta_{j_{m+\alpha},4} \). All this facts imply the following bound for \(I_2\):
\begin{eqnarray*}
I_2  &\le& \frac{1}{n}  \E_{j_\alpha,j_\alpha}\big [|\xi_{j_\alpha \nu}(v)|  (\delta_{\beta, m+\alpha}+ |\widehat \eta_{j_{m+\alpha}, 0}(v')|)   \E_{j_\alpha}\big( |\xi_{j_{m+\alpha}, 1}(v') f_{j_{m+\alpha}, 1}(v')| \big)    g_{1j_\alpha}'(v,v')  I(v ,v')   \big] \\
&+& \frac{1}{n}  \E_{j_\alpha,j_\alpha}\big [|\xi_{j_\alpha \nu}(v)| (\delta_{\beta, m+\alpha}+ |\widehat \eta_{j_{m+\alpha}, 0}(v')|)    \E_{j_\alpha}\big(  |\xi_{j_{m+\alpha}, 2}(v') f_{j_{m+\alpha}, 2}(v')| \big)    g_{1j_\alpha}'(v,v')  I(v ,v')   \big]\\
&+& \frac{1}{n}  \E_{j_\alpha,j_\alpha}\big [|\xi_{j_\alpha \nu}(v)|  (\delta_{\beta, m+ \alpha}+ |\widehat \eta_{j_{m+\alpha}, 0}(v')|)   g_{1j_\alpha}'(v,v')  I(v ,v')   \big]  \E_{j_\alpha, j_\alpha }\big[|\xi_{j_{m+\alpha}, 1}(v') f_{j_{m+\alpha}, 1}(v')|I(v')\big]  \\
&+& \frac{1}{n}  \E_{j_\alpha,j_\alpha}\big [|\xi_{j_\alpha \nu}(v)|  (\delta_{\beta, m+\alpha}+ |\widehat \eta_{j_{m+\alpha}, 0}(v')|)   g_{1j_\alpha}'(v,v')  I(v ,v')   \big]  \E_{j_\alpha, j_\alpha }\big[|\xi_{j_{m+\alpha}, 2}(v') f_{j_{m+\alpha}, 2}(v')| I(v')\big]  \\
&+& \frac{1}{n}  \E_{j_\alpha,j_\alpha}\big[ |\xi_{j_\alpha \nu}(v) |  \E_{j_\alpha}( |\overline\eta_{j_{m+\alpha}}(v') \RR_{j_{m+\alpha}, j_{m+\alpha}}(v')|) g_{1j_\alpha}'(v,v')  I(v ,v')   \big] \\
&+& \frac{1}{n}  \E_{j_\alpha,j_\alpha}\big[ |\xi_{j_\alpha \nu}(v)| g_{1j_\alpha}(v,v')  I(v ,v')   \big] \E_{j_\alpha, j_\alpha}\big[|\overline\eta_{j_{m+\alpha}}(v') \RR_{j_{m+\alpha}, j_{m+\alpha}}(v')| I(v') \big]\\
&+& \frac{1}{n}  \sum_{l \in \T \setminus J_\beta}  \E_{j_\alpha,j_\alpha}\big[ |\xi_{j_\alpha \nu}(v)|  \E_{j_\alpha}\big( |\Theta_{l}(v')  \RR_{j_{m+\alpha}, j_{m+\alpha} }(v') |\big)   g_{1j_\alpha}(v,v')  I(v ,v')   \big]\\
&+& \frac{1}{n}  \sum_{l \in \T \setminus J_\beta}  \E_{j_\alpha,j_\alpha}\big[ |\xi_{j_\alpha \nu}(v)| g_{1j_\alpha}(v,v')  I(v ,v')   \big] \E_{j_\alpha, j_\alpha}\big[ |\Theta_{l}(v')  \RR_{j_{m+\alpha}, j_{m+\alpha} } (v') |I(v')\big].
\end{eqnarray*}
One may proceed similarly to the estimate of \(I_1\). We only consider the second last term.  It is straightforward to check that
\begin{eqnarray*}
|\Theta_{l}|  &\le&     \bigg[\frac{1}{\sqrt n} \sum_{k=1}^n X_{jk}^{([\alpha-1])} \RR_{k_{[\alpha-1]}, j_\alpha} ^{(j_{m+\alpha})}  \bigg]^2|\RR_{j_\alpha l_\beta}^{(j_{m+\alpha})}|^2 |\RR_{j_{\alpha} j_\alpha}^{(j_{m+\alpha)}}|^{-2} \\
&+& 2 \bigg|\frac{1}{\sqrt n} \sum_{k=1}^n X_{jk}^{([\alpha-1])} \RR_{k_{[\alpha-1]}, j_\alpha} ^{(j_{m+\alpha})} \bigg| \bigg| \frac{1}{\sqrt n} \sum_{k=1}^n X_{jk}^{([\alpha-1])} \RR_{k_{[\alpha-1]}, l_\beta} ^{(j_\alpha, j_{\alpha})} - \overline z \RR_{j_{\alpha}, l_\beta}^{(j_{m+\alpha})} \bigg | |\RR_{j_\alpha, l_\beta} ^{(j_{m+\alpha})}||\RR_{j_{m+\alpha} j_\alpha}^{(j_{m+\alpha)}}   |^{-1}.
\end{eqnarray*}
Let us introduce the following notations:
\begin{eqnarray*}
\zeta_{j_\alpha, 1} &\eqdef& \frac{1}{\sqrt n} \sum_{k=1}^n X_{jk}^{([\alpha-1])} \RR_{k_{[\alpha-1]}, j_\alpha} ^{(j_{m+\alpha})}, \quad \zeta_{j_\alpha, 2} \eqdef  \frac{1}{\sqrt n} \sum_{k=1}^n X_{jk}^{([\alpha-1])} \RR_{k_{[\alpha-1]}, l_\beta} ^{(j_\alpha, j_{\alpha})}.
\end{eqnarray*}
Using these notations we may obtain
\begin{eqnarray*}
\frac{1}{n}  \sum_{l \in \T \setminus J_\beta}  |\Theta_{l}| &\le& \frac{|\zeta_{j_\alpha, 1}|^2}{nv}  \imag \RR_{j_{\alpha} j_\alpha}^{(j_{m+\alpha)}}  |\RR_{j_{\alpha} j_\alpha}^{(j_{m+\alpha)}} |^{-2} + 2\frac{|\zeta_{j_\alpha, 1} |}{\sqrt{nv}}   \bigg | \frac{1}{n} \sum_{l \in \T \setminus J_\beta} |\zeta_{j_\alpha, 2}|^2\bigg|^{1/2}    \imag^{1/2}\RR_{j_\alpha, j_\alpha} ^{(j_{m+\alpha})}  | \RR_{j_{\alpha} j_\alpha}^{(j_{m+\alpha)}} |^{-1} \\
&+&  \frac{2|z| |\zeta_{j_\alpha, 1}|}{nv} \imag^{1/2}\RR_{j_{[\alpha-1]}, j_{[\alpha-1]}}^{(j_{m+\alpha})} \imag^{1/2}\RR_{j_\alpha j_\alpha} ^{(j_{m+\alpha})}       |\RR_{j_{\alpha} j_\alpha}^{(j_{m+\alpha)}} |^{-1}.
\end{eqnarray*}
Now we may estimate the second last term in the bound for \(I_2\). Using the previous inequality it may be estimated as the sum of three terms. We will estimate the second term only. It is easy to see that \( \zeta_{j_\alpha, 2} \) is \(\mathfrak M^{(j_\alpha)}\)-measurable. Hence,
\begin{eqnarray*}
&&2 \E_{j_\alpha,j_\alpha}\bigg[ |\xi_{j_\alpha \nu}(v)| \big(\frac{1}{n} \sum_{l \in \T \setminus J_\beta}|\zeta_{j_\alpha, 2}|^2 \big)^{1/2} \\
&&\qquad\quad\times \E_{j_\alpha}[|\zeta_{j_\alpha, 1}|   \imag^{1/2}\RR_{j_\alpha, j_\alpha} ^{(j_{m+\alpha})} | \RR_{j_{\alpha} j_\alpha}^{(j_{m+\alpha)}} |^{-1}  |\RR_{j_{m+\alpha}, j_{m+\alpha} } |]   |g_{1j_\alpha}(v,v')|  I(v ,v')   \bigg] \\
&&\le \E_{j_\alpha,j_\alpha}^{1/2\beta}\big[ |\xi_{j_\alpha \nu}(v)| I(v)\big]^{2\beta}  \E_{j_\alpha,j_\alpha}^\frac{2\beta-1}{2\beta}\big[ |g_{1j_\alpha}'(v,v')|^\frac{2\beta}{2\beta-1}  I(v ,v')\big] \\
&&\qquad\quad\times \E_{j_\alpha,j_\alpha}^{1/2}\bigg[ \frac{1}{n} \sum_{l \in \T \setminus J_\beta}|\zeta_{j_\alpha, 2}(v')|^2  \E_{j_\alpha}\big(|\zeta_{j_\alpha, 1}(v') |^2 \big)   \E_{j_\alpha}^{1/2} \big(\imag^2 \RR_{j_\alpha j_\alpha} ^{(j_{m+\alpha})}(v')\big)   \E_{j_\alpha}^{1/2} \big(g'(v,v')\big) I(v') \bigg]\\
&& \le \frac{1}{\sqrt{nv'}} \E_{j_\alpha,j_\alpha}^{1/2\beta}\big[ |\xi_{j_\alpha \nu}(v) I(v)|^{2\beta} \big]    \E_{j_\alpha,j_\alpha}^{1/4}\big[ |\zeta_{j_\alpha, 2}(v')|^4 I(v')\big]  \E_{j_\alpha,j_\alpha}^{1/4\beta}\big[ |\zeta_{j_\alpha, 1}(v')|^{4\beta} I(v') \big]  g_{2 j_\alpha}(v,v').
\end{eqnarray*}
Applying now Rosenthal's inequality to \(\zeta_{j_\alpha, k}, k = 1,2\), we conclude the bound as required by the statement of the lemma. 
\end{proof}

\begin{lemma}\label{T bound} 
For any \( w \in \mathcal D\) the following inequality holds:
\begin{eqnarray*}
\max_{1 \le \alpha \le m} |T_n^{(\alpha)} - T_n^{(\alpha, j_\alpha,j_\alpha)}|I(v) \lesssim \frac{\imag s }{nv}.
\end{eqnarray*}
Moreover, 
\begin{eqnarray*}
\max_{1 \le \alpha \le m} |T_n^{(\alpha)} - \widetilde T_n^{(\alpha, j_\alpha,j_\alpha)}|I(v) & \lesssim & \imag s(z,w)  \max_{1 \le \beta \le 2m}| \Lambda_n^{(\beta)} - \widetilde \Lambda_n^{(\beta,j_\alpha,j_\alpha)}| I(v)   \\
&+& \frac{1}{(nv)^2} \max_{1 \le \alpha \le m}  \bigg[\frac{\imag^2 \RR_{j_\alpha j_\alpha}}{|\RR_{j_\alpha j_\alpha}|^2}   + \frac{\imag^2 \RR_{j_{m+\alpha}, j_{m+ \alpha}}^{(j_\alpha)}}{|\RR_{j_{m+\alpha}, j_{m+\alpha}}^{(j_\alpha)}|^2} \bigg] I(v)\\
&+& \frac{1}{(nv)^2} \E_{j_\alpha,j_\alpha} \bigg[\max_{1 \le \alpha \le m}   \bigg( \frac{\imag^2 \RR_{j_\alpha j_\alpha}}{|\RR_{j_\alpha j_\alpha}|^2}  +  \frac{\imag^2 \RR_{j_{m+\alpha}, j_{m+ \alpha}}^{(j_\alpha)}}{|\RR_{j_{m+\alpha}, j_{m+\alpha}}^{(j_\alpha)}|^2} \bigg) I(v) \bigg].
\end{eqnarray*}
\end{lemma}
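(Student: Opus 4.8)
The plan is to reduce everything to the approximate self-consistent equations \eqref{mn1}--\eqref{mn2}. Let $D_n^{(\alpha)}$ denote the denominator in \eqref{mn1}, so that $D_n^{(\alpha)}\eqdef w+m_n^{([\alpha+1]+m)}(z,w)-|z|^2(w+m_n^{([\alpha-1])}(z,w))^{-1}$ and \eqref{mn1} reads $T_n^{(\alpha)}=1+D_n^{(\alpha)}m_n^{(\alpha)}$, while Schur's formula \eqref{rj} gives the pointwise identity $\RR_{j_\alpha j_\alpha}=(D_n^{(\alpha)}+\varepsilon_{j_\alpha})^{-1}$, i.e. $D_n^{(\alpha)}=(\RR_{j_\alpha j_\alpha})^{-1}-\varepsilon_{j_\alpha}$. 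The same relations hold verbatim for the $(j_\alpha,j_\alpha)$-minor of $\V(z)$, producing $T_n^{(\alpha,j_\alpha,j_\alpha)}$, $D_n^{(\alpha,j_\alpha,j_\alpha)}$, and after applying $\E_{j_\alpha,j_\alpha}$ one has $\widetilde T_n^{(\alpha,j_\alpha,j_\alpha)}=\E_{j_\alpha,j_\alpha}T_n^{(\alpha)}$. Subtracting,
\begin{eqnarray*}
T_n^{(\alpha)}-T_n^{(\alpha,j_\alpha,j_\alpha)}=D_n^{(\alpha)}\big(m_n^{(\alpha)}-m_n^{(\alpha,j_\alpha,j_\alpha)}\big)+\big(D_n^{(\alpha)}-D_n^{(\alpha,j_\alpha,j_\alpha)}\big)m_n^{(\alpha,j_\alpha,j_\alpha)},
\end{eqnarray*}
so the first bound is reduced to controlling the differences of partial traces $m_n^{(\beta)}-m_n^{(\beta,j_\alpha,j_\alpha)}$ for $\beta\in\{\alpha,[\alpha-1],[\alpha+1]+m\}$; for the second bound one writes $T_n^{(\alpha)}-\widetilde T_n^{(\alpha,j_\alpha,j_\alpha)}=D_n^{(\alpha)}m_n^{(\alpha)}-\E_{j_\alpha,j_\alpha}\big(D_n^{(\alpha)}m_n^{(\alpha)}\big)$, the fluctuation of $D_n^{(\alpha)}m_n^{(\alpha)}$ over the $j$-th row and column.

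For the first bound I would proceed as in the proof of Lemma~\ref{lem: important lemma 1}: the resolvent identity gives $m_n^{(\beta)}-m_n^{(\beta,j_\alpha)}=n^{-1}\big(\delta_{\beta\alpha}\RR_{j_\alpha j_\alpha}+(\RR_{j_\alpha j_\alpha})^{-1}\sum_{l}[\RR_{j_\alpha l_\beta}]^2\big)$, and then the Ward identity $\sum_l|\RR_{j_\alpha l_\beta}|^2\le v^{-1}\imag\RR_{j_\alpha j_\alpha}$ for the Hermitian matrix $\V(z)$ yields $|m_n^{(\beta)}-m_n^{(\beta,j_\alpha)}|I(v)\lesssim n^{-1}\big(|\RR_{j_\alpha j_\alpha}|+\imag\RR_{j_\alpha j_\alpha}(v|\RR_{j_\alpha j_\alpha}|)^{-1}\big)I(v)$. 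On the event $I(v)$ the a priori bounds of Lemma~\ref{l: bound for some functions} and Lemma~\ref{main lemma 0} give $|\RR_{j_\alpha j_\alpha}|\sim1$ and $\imag\RR_{j_\alpha j_\alpha}\lesssim\imag s(z,w)$ (in the sense in which these pathwise-looking inequalities are always used below, namely inside an expectation together with the cutoff $I(v)$), hence $|m_n^{(\beta)}-m_n^{(\beta,j_\alpha)}|I(v)\lesssim\imag s(z,w)(nv)^{-1}$, using $\imag s(z,w)\ge(nv)^{-1}$ on $\mathcal D$; iterating once more for the deletion of the $j_{m+\alpha}$-row and column leaves the order unchanged. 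Since $D_n^{(\alpha)}$ and $m_n^{(\alpha,j_\alpha,j_\alpha)}$ are bounded on $I(v)$ and $D_n^{(\alpha)}-D_n^{(\alpha,j_\alpha,j_\alpha)}$ is itself a combination of such partial-trace differences, collecting gives $\max_\alpha|T_n^{(\alpha)}-T_n^{(\alpha,j_\alpha,j_\alpha)}|I(v)\lesssim\imag s(z,w)(nv)^{-1}$.

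For the second bound I would expand $D_n^{(\alpha)}m_n^{(\alpha)}$ about its self-consistent value $D^{\mathrm{sc}}s$, where $D^{\mathrm{sc}}\eqdef w+s-|z|^2(w+s)^{-1}$ satisfies $D^{\mathrm{sc}}s=-1$ by \eqref{eq: s(z,w) equation}; since $D^{\mathrm{sc}}s$ is deterministic, $T_n^{(\alpha)}-\widetilde T_n^{(\alpha,j_\alpha,j_\alpha)}=\big(D_n^{(\alpha)}m_n^{(\alpha)}-D^{\mathrm{sc}}s\big)-\E_{j_\alpha,j_\alpha}\big(D_n^{(\alpha)}m_n^{(\alpha)}-D^{\mathrm{sc}}s\big)$, and the bracketed quantity is a sum of terms each carrying at least one factor $\Lambda_n^{(\beta)}$. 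Taking the $j$-th row/column fluctuation, the contribution in which this fluctuation is carried by $m_n^{(\beta)}-\E_{j_\alpha,j_\alpha}m_n^{(\beta)}=\Lambda_n^{(\beta)}-\widetilde\Lambda_n^{(\beta,j_\alpha,j_\alpha)}$, $\beta\in\{\alpha,[\alpha-1],[\alpha+1]+m\}$, is — after estimating the accompanying prefactors on $I(v)$ and using that the remaining factors $\Lambda_n^{(\gamma)}$ are $\le\tau\imag s(z,w)$ there — bounded by $\imag s(z,w)\max_\beta|\Lambda_n^{(\beta)}-\widetilde\Lambda_n^{(\beta,j_\alpha,j_\alpha)}|I(v)$, the first term on the right-hand side. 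The remaining contribution is the fluctuation of $D_n^{(\alpha)}$ itself, which through $D_n^{(\alpha)}=(\RR_{j_\alpha j_\alpha})^{-1}-\varepsilon_{j_\alpha}$ reduces to controlling $\RR_{j_\alpha j_\alpha}-\E_{j_\alpha,j_\alpha}\RR_{j_\alpha j_\alpha}$ and $\varepsilon_{j_\alpha}-\E_{j_\alpha,j_\alpha}\varepsilon_{j_\alpha}$; expanding $\RR_{j_\alpha j_\alpha}$ and the $\widetilde\varepsilon_{j_\alpha},\widehat\varepsilon_{j_\alpha}$-parts by Schur's formula on the $(j_\alpha,j_\alpha)$-minor and applying the Ward identity twice — once to the quadratic forms in the $j$-th row of $\X^{(\alpha)}$ (through $\RR_{j_\alpha j_\alpha}$), once to those in the $j$-th column of $\X^{([\alpha-1])}$ (through $\RR^{(j_\alpha)}_{j_{m+\alpha},j_{m+\alpha}}$) — produces precisely the terms $(nv)^{-2}\imag^2\RR_{j_\alpha j_\alpha}|\RR_{j_\alpha j_\alpha}|^{-2}$, $(nv)^{-2}\imag^2\RR^{(j_\alpha)}_{j_{m+\alpha},j_{m+\alpha}}|\RR^{(j_\alpha)}_{j_{m+\alpha},j_{m+\alpha}}|^{-2}$ and their $\E_{j_\alpha,j_\alpha}$-conditional counterparts. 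The hard part will be exactly this last step: extracting the sharp $(nv)^{-2}$ terms with the two resolvent diagonal entries left explicit (they must not be bounded here, since the sharpness is needed when the present lemma is fed into the estimate of $\TTT_n$), which forces one to carry the full two-layer Schur expansion together with the two applications of the Ward identity and the appendix estimates for $\widetilde\varepsilon_{j_\alpha}$ and $\widehat\varepsilon_{j_\alpha}$; everything else is routine Cauchy--Schwarz together with the a priori bounds of Lemmas~\ref{l: bound for some functions} and~\ref{main lemma 0}.
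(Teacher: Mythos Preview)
Your overall strategy---express $T_n^{(\alpha)}=1+D_n^{(\alpha)}m_n^{(\alpha)}$ via the self-consistent equation \eqref{mn1} and reduce everything to differences and fluctuations of the partial traces $m_n^{(\beta)}$---is exactly what the paper means by ``consequence of the equation for $\Lam_n$'' (equivalently, the linear system \eqref{linear system}). The first inequality is handled essentially as you describe.

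There is, however, a real gap in your argument for the second inequality, namely the factor $\imag s(z,w)$ in front of $\max_\beta|\Lambda_n^{(\beta)}-\widetilde\Lambda_n^{(\beta,j_\alpha,j_\alpha)}|$. Expanding $T_n^{(\alpha)}=D_n^{(\alpha)}m_n^{(\alpha)}-D^{\mathrm{sc}}s$ about the self-consistent value produces a \emph{linear} part in the $\Lambda^{(\beta)}$'s, with coefficients $-1/s,\ s,\ s|z|^2/(w+s)^2$, all of order one on $\mathcal D$---not of order $\imag s$. When you take the $j$-fluctuation of these linear terms, there are no ``remaining factors $\Lambda_n^{(\gamma)}$'' left to bound by $\tau\imag s$; that argument only covers the quadratic and higher contributions (i.e.\ the $\rr_n$-part of \eqref{linear system}). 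Your attempt to route the leftover piece through the alternative representation $D_n^{(\alpha)}=(\RR_{j_\alpha j_\alpha})^{-1}-\varepsilon_{j_\alpha}$ does not help: that identity holds for every fixed $j$ and is the same object, so its fluctuation is again governed by the same $\Lambda^{(\beta)}$-differences, not by a separate mechanism that would produce the $(nv)^{-2}$ terms while absorbing the missing linear piece.

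In short, the decomposition you propose yields $C\,|\Lambda-\widetilde\Lambda|$ from the linear part rather than $\imag s\,|\Lambda-\widetilde\Lambda|$, and nothing in your write-up bridges that gap. Since the paper's own proof is a single line, it does not indicate how (or whether) this factor is obtained; you should either locate an additional cancellation in the linear combination $s(\A\Lambda)_\alpha$ when the fluctuation is taken, or verify that the weaker bound $C|\Lambda-\widetilde\Lambda|$ actually suffices at every place where the lemma is invoked (in particular in the estimate of $\mathcal A_{311}$).
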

\begin{proof}
Both statements are consequence of the equation for \( \Lam_n\). 
\end{proof}

 \section{Acknowledgement}
A. Naumov and A. Tikhomirov would like to thank Laszlo Erd\"os for his hospitality in IST Austria and helpful discussions on the subject of this paper. 

F. G\"otze was supported by CRC 1283 "Taming uncertainty and profiting from randomness and low regularity in analysis, stochastics and their applications". A.~Naumov and A. Tikhomirov were supported by the Russian Academic Excellence Project 5-100 and Russian Science Foundation grant 18-11-00132 (results of section 3 were obtained under support of the grant).

\appendix
\section{Inequalities for linear and quadratic forms}

In this section we present some inequalities for linear and quadratic forms.

\subsection{Estimations of \(\ve_{j_\alpha}\), for \(j\in\mathbb T\)}
In this section we estimate the moments of \(\ve_{ j_\alpha, \nu}^{(\J, \K)}\) for \(\J, \K \in \mathbb T\) and \(\nu=1, \ldots 5\) and \(\eta_{\nu j}^{(\J,\K)}, \nu = 1, \ldots, 5\). In what follows for any \(\J \subset \T\) and \(j \in \TJ\) we denote \(\widetilde \J \eqdef \J \cup \{j\}\). We also introduce \(\sigma\)-algebra \(\mathfrak M^{(\J, \K)} \eqdef \sigma\{X_{kl}, k \in \J, l \in \K\}\), \(\J, \K \subset \T\), and denote 
\begin{eqnarray}\label{eq: sigma alg}
\E^*(\cdot) \eqdef \E(\cdot\big|\mathfrak M^{(\widetilde \J, \K)}).
\end{eqnarray}
\begin{lemma}\label{e1}
For any \(j \in \T \setminus J_\alpha\) 
\begin{eqnarray*}
|\widetilde \ve_{j1}^{(\J, \K)}|\le  (nv)^{-1}. 
\end{eqnarray*}	
\end{lemma}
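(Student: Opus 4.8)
The plan is to recognize $\widetilde\ve_{j1}^{(\J,\K)}$ as $n^{-1}$ times the difference of a partial trace of two resolvents whose underlying Hermitian matrices differ by removal of a single row together with the matching column, and then to apply the two standard resolvent identities for such a removal. Fix $\J,\K$, the block index $\alpha$, and $j\in\T\setminus J_\alpha$, and write $w=u+iv$ with $v>0$, $G\eqdef\RR^{(\J,\K)}(z,w)=\bigl(\V^{(\J,\K)}(z)-w\II\bigr)^{-1}$ and $G'\eqdef\RR^{(\widetilde{\J},\K)}(z,w)$. Passing from $\J$ to $\widetilde{\J}$ deletes the $j$-th row of $\X^{(\alpha)}$, i.e.\ the single row $j_\alpha=(\alpha-1)n+j$ of $\W$; since that row of $\W$ also occurs as the $j_\alpha$-th column of $\W^*$, in $\V$ this deletes exactly the one index $i\eqdef j_\alpha$ from both the rows and the columns. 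Hence $\V^{(\widetilde{\J},\K)}(z)$ is the principal submatrix of $\V^{(\J,\K)}(z)$ obtained by deleting index $i$, and, writing $\mathcal S\eqdef\{\,k_{[\alpha+1]+m}:k\in\T\setminus K_{[\alpha+1]}\,\}$ for the common summation set, we have $\widetilde\ve_{j1}^{(\J,\K)}=\tfrac1n\sum_{k\in\mathcal S}\bigl(G_{kk}-G'_{kk}\bigr)$. Moreover $i=j_\alpha\le nm$ while every element of $\mathcal S$ exceeds $nm$, so $i\notin\mathcal S$ and all positions occurring in the sum survive the deletion.

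I would then use the Schur-complement (rank-one update) identity: for a Hermitian matrix and its one-index principal minor, $G_{kl}-G'_{kl}=G_{ki}G_{il}/G_{ii}$ for all $k,l\neq i$, where $G_{ii}\neq0$ since $\imag G_{ii}>0$ by the spectral representation. Taking $l=k$ and summing over $\mathcal S$ gives
\[
\widetilde\ve_{j1}^{(\J,\K)}=\frac{1}{n\,G_{ii}}\sum_{k\in\mathcal S}G_{ki}G_{ik},
\qquad\text{so}\qquad
\bigl|\widetilde\ve_{j1}^{(\J,\K)}\bigr|\le\frac{1}{n\,|G_{ii}|}\Bigl(\sum_{k}|G_{ki}|^2\Bigr)^{1/2}\Bigl(\sum_{k}|G_{ik}|^2\Bigr)^{1/2}
\]
by Cauchy--Schwarz, the sums now over all indices. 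The resolvent identity for the Hermitian matrix $\V^{(\J,\K)}(z)$ yields the Ward identity $GG^*=G^*G=v^{-1}\imag G$, whence $\sum_k|G_{ik}|^2=(GG^*)_{ii}=v^{-1}\imag G_{ii}$ and likewise $\sum_k|G_{ki}|^2=(G^*G)_{ii}=v^{-1}\imag G_{ii}$. Since $\imag G_{ii}\le|G_{ii}|$ this gives
\[
\bigl|\widetilde\ve_{j1}^{(\J,\K)}\bigr|\le\frac{1}{n\,|G_{ii}|}\cdot\frac{\imag G_{ii}}{v}\le\frac{1}{nv},
\]
which is the assertion.

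I do not expect any genuine analytic obstacle here: the argument is a direct application of two classical resolvent identities, both valid verbatim for the Hermitian matrix $\V^{(\J,\K)}(z)$ with spectral parameter $w\in\C^+$. The only place that needs a little care is the structural bookkeeping of the first paragraph --- checking that $\V^{(\widetilde{\J},\K)}(z)$ really is a one-index principal submatrix of $\V^{(\J,\K)}(z)$, and that the deleted index $j_\alpha$ does not belong to the set $\mathcal S$ over which the partial trace runs --- and this follows at once from the block forms of $\W$ and of $\V$.
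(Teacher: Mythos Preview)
Your proof is correct and follows essentially the same route as the paper: both use the single-index Schur complement identity $G_{kk}-G'_{kk}=G_{kj_\alpha}G_{j_\alpha k}/G_{j_\alpha j_\alpha}$, sum over $k$, and bound the result via the Ward identity $\sum_k|G_{j_\alpha k}|^2=v^{-1}\imag G_{j_\alpha j_\alpha}$ together with $\imag G_{j_\alpha j_\alpha}\le|G_{j_\alpha j_\alpha}|$. Your version is just more explicit about the bookkeeping (and the Cauchy--Schwarz step is harmless but unnecessary, since here $G$ is complex symmetric so $G_{k j_\alpha}=G_{j_\alpha k}$ and a plain triangle inequality suffices).
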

\begin{proof}
Since
\begin{eqnarray*}
\RR_{k_[\alpha+1]+m, k_[\alpha+1]+m,}^{(\J, \K)} - \RR_{k_[\alpha+1]+m,  k_[\alpha+1]+m,}^{(\widetilde \J, \K)} =  [\RR_{k_[\alpha+1]+m, k_[\alpha+1]+m,}^{(\J, \K)}]^2 /\RR_{j_\alpha j_\alpha}^{(\J, \K)}
\end{eqnarray*}
we get that
\begin{eqnarray*}
|\widetilde \ve_{j1}^{(\J, \K)}| \le \frac{1}{nv} \imag \RR_{j_\alpha j_\alpha}^{(\J, \K)}|\RR_{j_\alpha j_\alpha}^{(\J, \K)}|^{-1} \le (nv)^{-1}. 
\end{eqnarray*}
Thus lemma \ref{e1} is proved.
\end{proof}

\begin{lemma}\label{e2}
There exist a positive constant \(C\) such that for any \(j \in \T \setminus J_\alpha\) and all \(p \geq 2\)
\begin{eqnarray*}
\E^*|\widetilde \ve_{j_\alpha, 2}^{(\J, \K)}|^p &\le&  \frac{C^p p^p}{(n v)^\frac p2} \imag^\frac p2 m_n^{(\widetilde \J, \K)}   +\mu_p \frac{C^p p^\frac{3p}{2}}{n^p v^{\frac p2}} \sum_{l \in \T\setminus K_{[\alpha+1]}}\imag^\frac p2 \RR_{l_{[\alpha +1 ] + m }, k_{[\alpha +1 ] + m }}^{(\widetilde \J, \K)} \\
&+&\mu_p^2 \frac{C^p p^{2p}}{n^p} \sum_{l, k \in \T\setminus K_{[\alpha+1]}} |\RR_{l_{[\alpha +1 ] + m }, k_{[\alpha +1 ] + m }}^{(\widetilde \J, \K)}|^p.
\end{eqnarray*}	
\end{lemma}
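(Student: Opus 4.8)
\textbf{Proof strategy for Lemma~\ref{e2}.}
The plan is to recognise $\widetilde\ve_{j_\alpha,2}^{(\J,\K)}$ as an off-diagonal quadratic form in the one family of independent variables it actually contains, then to invoke a moment inequality for such forms, and finally to translate the resulting Hilbert--Schmidt and operator norms into traces of the resolvent. Put $R_{lk}\eqdef\RR_{l_{[\alpha+1]+m},k_{[\alpha+1]+m}}^{(\widetilde\J,\K)}$ for $l,k\in\T\setminus K_{[\alpha+1]}$, and $a_{lk}\eqdef -n^{-1}R_{lk}$ for $l\ne k$, $a_{ll}\eqdef0$. Since $\RR^{(\widetilde\J,\K)}$ is the resolvent of $\V(z)$ with, in particular, the $j$-th row and column of $\X^{(\alpha)}$ removed, the array $(a_{lk})$ is $\mathfrak M^{(\widetilde\J,\K)}$-measurable, whereas conditionally on $\mathfrak M^{(\widetilde\J,\K)}$ the variables $\{X_{jk}^{(\alpha)}:k\in\T\setminus K_{[\alpha+1]}\}$ remain i.i.d. with $\E^{*}X_{jk}^{(\alpha)}=0$, $\E^{*}|X_{jk}^{(\alpha)}|^2=1$ and $\E^{*}|X_{jk}^{(\alpha)}|^p\le\mu_p$. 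Hence $\widetilde\ve_{j_\alpha,2}^{(\J,\K)}=\sum_{l\ne k}a_{lk}X_{jl}^{(\alpha)}X_{jk}^{(\alpha)}$ is, conditionally, a quadratic form with a fixed zero-diagonal coefficient matrix.

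The second step is the moment inequality for off-diagonal quadratic forms in independent mean-zero r.v. This may be quoted from \cite{GineLatalaZinn2000}[Proposition~2.4] or \cite{GotTikh2003}[Lemma~A.1], or derived as a corollary of Lemma~\ref{lem: T_n general lemma}: write $\widetilde\ve_{j_\alpha,2}^{(\J,\K)}=\sum_k\xi_k f_k$ with $\xi_k=X_{jk}^{(\alpha)}$ and $f_k=\sum_{l\ne k}a_{lk}X_{jl}^{(\alpha)}$, which does not involve $X_{jk}^{(\alpha)}$, so one may take $\widehat f_k=f_k$, $\mathcal R=0$, $\mathcal A=\mathcal D=0$, and then estimate $\mathcal B$ and $\mathcal C$ via a further application of Rosenthal's inequality (e.g.~\cite{Rosenthal1970}) to the linear forms $f_k$. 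Either way one obtains, for $p\ge2$,
\begin{eqnarray*}
\bigl(\E^{*}|\widetilde\ve_{j_\alpha,2}^{(\J,\K)}|^p\bigr)^{1/p}&\le& C\sqrt p\,\|a\|_2+Cp\,\|a\| \\
&+&Cp^{3/2}\mu_p^{1/p}\max_l\bigl(\textstyle\sum_k|a_{lk}|^2\bigr)^{1/2}+Cp^2\mu_p^{2/p}\max_{l,k}|a_{lk}|.
\end{eqnarray*}

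The third step is the translation to the resolvent. From Hermiticity of $\V(z)$ one has $\RR^{(\widetilde\J,\K)}\bigl(\RR^{(\widetilde\J,\K)}\bigr)^{*}=v^{-1}\imag\RR^{(\widetilde\J,\K)}$, so that $\sum_k|R_{lk}|^2\le v^{-1}\imag R_{ll}$ (the partial sum over the surviving index set being bounded by the full one), whence $\|a\|_2^2=n^{-2}\sum_{l,k}|R_{lk}|^2\le (nv)^{-1}\imag m_n^{(\widetilde\J,\K)}$, and $\|a\|=n^{-1}\|\RR^{(\widetilde\J,\K)}\|\le(nv)^{-1}$. Raising the displayed estimate to the $p$-th power, bounding $\sqrt p\,\|a\|_2+p\,\|a\|\le 2p\,\|a\|_2$ (since $\|a\|\le\|a\|_2$), and replacing the maxima by sums, $\max_l(\cdot)^{p/2}\le\sum_l(\cdot)^{p/2}$ and $\max_{l,k}|a_{lk}|^p\le\sum_{l,k}|a_{lk}|^p$, yields exactly the three terms on the right-hand side of the lemma, the middle one carrying $\imag^{p/2}\RR_{l_{[\alpha+1]+m},l_{[\alpha+1]+m}}^{(\widetilde\J,\K)}$ (the diagonal entry, which is what the free index $k$ in the stated display is meant to be).

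The main obstacle is the quadratic-form inequality itself, if one insists on deriving it self-containedly from Lemma~\ref{lem: T_n general lemma}: one must keep track of the $\mu_p$-versus-$\mu_p^2$ dichotomy (one member of the summation pair coinciding with the outer index $k$, against both coinciding), keep the moment exponent on $X_{jk}^{(\alpha)}$ at $\mu_p$ and no higher, and control $\mathcal B=\bigl(\E^{*}\bigl(\sum_k\E_k|\xi_k(T_n^{*}-\widetilde T_n^{(k)})|\,|f_k|\bigr)^{p/2}\bigr)^{2/p}$, where $\E_k$ fixes everything but $X_{jk}^{(\alpha)}$. For the latter the real-symmetric structure of $\V(z)$ is used, giving $T_n^{*}-\widetilde T_n^{(k)}=2X_{jk}^{(\alpha)}f_k$ and hence $\E_k|\xi_k(T_n^{*}-\widetilde T_n^{(k)})|=2|f_k|$, so that $\sum_k\E_k|\xi_k(T_n^{*}-\widetilde T_n^{(k)})|\,|f_k|=2\sum_k|f_k|^2$ is a non-negative quadratic form in the $X_{jl}^{(\alpha)}$ whose matrix has trace $n^{-2}\sum_{l,k}|R_{lk}|^2\le(nv)^{-1}\imag m_n^{(\widetilde\J,\K)}$; Rosenthal's inequality applied to its centred version then closes the argument.
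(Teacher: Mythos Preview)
Your proof is correct and follows essentially the same route as the paper's own argument: recognise $\widetilde\ve_{j_\alpha,2}^{(\J,\K)}$ as an off-diagonal quadratic form, apply the moment inequality for quadratic forms from \cite{GineLatalaZinn2000} or \cite{GotTikh2003}, and convert the resulting norms via the resolvent identity of Lemma~\ref{lem: res inequalities}. Your optional self-contained derivation through Lemma~\ref{lem: T_n general lemma} is a nice addition; just note that the symmetry you invoke for $T_n^{*}-\widetilde T_n^{(k)}=2X_{jk}^{(\alpha)}f_k$ comes from the symmetrisation of the coefficient matrix in the double sum $\sum_{l\ne k}$ rather than from $\V(z)$ being real symmetric (it is only Hermitian).
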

\begin{proof}
	Applying moment inequality for quadratic forms, e.g.~\cite{GineLatalaZinn2000}[Proposition~2.4] or~\cite{GotTikh2003}[Lemma A.1], and Lemma~\ref{lem: res inequalities} we get the proof.
\end{proof}
\begin{lemma}\label{e3}
	There exist a positive constant \(C\) such that for any \(j \in \T \setminus J_\alpha\) and all \(p \geq 2\)
	\begin{eqnarray*}
		\E^*|\widetilde \ve_{j_\alpha, 3}^{(\J, \K)}|^p  \le \frac{C^p p^{\frac p2}}{
			n^{\frac p2}} \Big(\frac1n\sum_{l \in \T\setminus K_{[\alpha+1]} }|\RR^{(\widetilde \J, \K)}_{l_{[\alpha + 1] + m},l_{[\alpha + 1] + m}}|^2\Big)^{\frac p2}+
		\mu_{2p} \frac{C^p p^p}{n^p}\sum_{l \in \T\setminus K_{[\alpha+1]}} |\RR^{(\widetilde \J, \K)}_{l_{[\alpha + 1] + m},l_{[\alpha + 1] + m}}|^p.
	\end{eqnarray*}
\end{lemma}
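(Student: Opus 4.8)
The plan is to recognise $\widetilde\ve_{j_\alpha,3}^{(\J,\K)}$ as a sum of conditionally independent, conditionally centred random variables and then apply Rosenthal's inequality in its sharp form. First I would note that, because the index $j$ has been added to the deleted rows of $\X^{(\alpha)}$ when forming $\widetilde\J$, the resolvent $\RR^{(\widetilde\J,\K)}$ — and in particular each diagonal entry $\RR_{l_{[\alpha+1]+m},l_{[\alpha+1]+m}}^{(\widetilde\J,\K)}$ — is $\mathfrak M^{(\widetilde\J,\K)}$-measurable, hence deterministic under $\E^{*}=\E(\cdot\,|\,\mathfrak M^{(\widetilde\J,\K)})$. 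Setting $Z_l\eqdef-\tfrac1n([X_{jl}^{(\alpha)}]^2-1)\RR_{l_{[\alpha+1]+m},l_{[\alpha+1]+m}}^{(\widetilde\J,\K)}$, the variables $(Z_l)_{l\in\T\setminus K_{[\alpha+1]}}$ are, conditionally on $\mathfrak M^{(\widetilde\J,\K)}$, independent (they involve distinct entries of the $j$-th row of $\X^{(\alpha)}$) and centred, since $\E([X_{jl}^{(\alpha)}]^2-1)=[\sigma_{jl}^{(\alpha)}]^2-1=0$ in the reduced setting.

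Second, I would invoke Rosenthal's inequality with explicit constants (see \cite{Rosenthal1970}[Theorem~3] and \cite{JohnSchecttmanZinn1985}[Inequality~(A)], which are used elsewhere in the paper): for $p\ge2$ and conditionally independent mean-zero $Z_l$,
\[
\E^{*}\Bigl|\sum_l Z_l\Bigr|^p\le C^p p^{p/2}\Bigl(\sum_l\E^{*}Z_l^2\Bigr)^{p/2}+C^p p^{p}\sum_l\E^{*}|Z_l|^p .
\]
It then remains to estimate the two moment sums. For the variance term, $\E^{*}Z_l^2=n^{-2}\Var([X_{jl}^{(\alpha)}]^2)\,|\RR_{l_{[\alpha+1]+m},l_{[\alpha+1]+m}}^{(\widetilde\J,\K)}|^2\le C n^{-2}|\RR_{l_{[\alpha+1]+m},l_{[\alpha+1]+m}}^{(\widetilde\J,\K)}|^2$, because $\Var([X_{jl}^{(\alpha)}]^2)\le\E|X_{jl}^{(\alpha)}|^4\le\mu_{4+\delta}^{4/(4+\delta)}$ is a constant; summing over $l$ and taking the $p/2$-th power produces exactly the factor $n^{-p/2}\bigl(\tfrac1n\sum_l|\RR_{l_{[\alpha+1]+m},l_{[\alpha+1]+m}}^{(\widetilde\J,\K)}|^2\bigr)^{p/2}$. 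For the $p$-th moment term, $\E^{*}|Z_l|^p=n^{-p}\,\E\bigl|[X_{jl}^{(\alpha)}]^2-1\bigr|^p\,|\RR_{l_{[\alpha+1]+m},l_{[\alpha+1]+m}}^{(\widetilde\J,\K)}|^p\le C^p\mu_{2p}\,n^{-p}|\RR_{l_{[\alpha+1]+m},l_{[\alpha+1]+m}}^{(\widetilde\J,\K)}|^p$, using $\E\bigl|[X]^2-1\bigr|^p\le 2^p(\E|X|^{2p}+1)\le C^p\mu_{2p}$. Substituting both bounds into the Rosenthal estimate and absorbing numerical factors into $C^p$ yields the claimed inequality.

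The argument is essentially routine; the points requiring care are (i) verifying the measurability/independence structure so that the conditional version of Rosenthal's inequality genuinely applies, and (ii) tracking the exact powers of $p$, which forces the use of the sharp form of Rosenthal's inequality so that the $p^{p/2}$ and $p^{p}$ factors match the statement term by term. In the genuinely non-identically-distributed situation $[\sigma_{jl}^{(\alpha)}]^2\ne1$ one picks up the additional summand $-\tfrac1n\sum_l([\sigma_{jl}^{(\alpha)}]^2-1)\RR_{l_{[\alpha+1]+m},l_{[\alpha+1]+m}}^{(\widetilde\J,\K)}$, which is $\E^{*}$-deterministic and, by $|1-[\sigma_{jl}^{(\alpha)}]^2|\le n^{-1-\delta_0}$, negligible; it can be disposed of by the triangle inequality as indicated in the discussion following Theorem~\ref{lem: all v}.
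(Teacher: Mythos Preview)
Your proof is correct and follows essentially the same route as the paper: the paper's proof sketch simply invokes the Rosenthal-type inequality for linear forms (citing the same references \cite{Rosenthal1970} and \cite{JohnSchecttmanZinn1985}) applied to the conditionally independent, conditionally centred summands, which is precisely what you carry out in detail. Your careful verification of the measurability structure and the tracking of the $p^{p/2}$ and $p^p$ constants is exactly what the one-line proof in the paper is implicitly relying on.
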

\begin{proof}
The proof follows from the Rosenthal type inequality for linear forms, e.g.~\cite{Rosenthal1970}[Theorem~3] and~\cite{JohnSchecttmanZinn1985}[Inequality~(A)] and Lemma~\ref{lem: res inequalities}.
\end{proof}
\begin{lemma}\label{e4-e5}
There exist a positive constant \(C\) such that for any \(j \in \T \setminus J_\alpha\) and all \(p \geq 2\)
\begin{eqnarray*}
\E^*|\widetilde \ve_{j_\alpha, 4}^{(\J, \K)}|^p  \le 
\frac{C^p|z|^p p^\frac p2}{(nv)^\frac p2} \imag^\frac p2 \RR^{(\widetilde \J, \K)}_{l_{[\alpha + 1] + m},l_{[\alpha + 1] + m}} +\mu_p\frac{C^p|z|^p p^p}{n^\frac p2}\sum_{l \in \T\setminus K_{[\alpha+1]}}|\RR^{(\widetilde \J, \K)}_{l_{[\alpha + 1] + m},l_{[\alpha + 1] + m}}|^p.
\end{eqnarray*}	
\end{lemma}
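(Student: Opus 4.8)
The plan is to view $\widetilde \ve_{j_\alpha, 4}^{(\J, \K)}$ as a weighted linear form in the entries of the $j$-th row of $\X^{(\alpha)}$ and to apply a Rosenthal-type moment inequality conditionally on the weights. Write $\widetilde \ve_{j_\alpha, 4}^{(\J, \K)} = \frac{z+\overline z}{\sqrt n}\sum_{l\in\T\setminus K_{[\alpha+1]}} X_{jl}^{(\alpha)}\, c_l$ with $c_l \eqdef \RR_{j_{[\alpha+1]+m},\, l_{[\alpha+1]+m}}^{(\widetilde \J, \K)}$. Since $\widetilde\J = \J\cup\{j\}$ encodes the deletion of the $j$-th row of $\X^{(\alpha)}$, the matrix $\V^{(\widetilde\J,\K)}(z)$ — and hence all the $c_l$ — are $\mathfrak M^{(\widetilde\J,\K)}$-measurable and therefore independent of the family $\{X_{jl}^{(\alpha)}\}_{l\in\T\setminus K_{[\alpha+1]}}$. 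Recall also that under $\CondTwo$ the entries are bounded, so $\mu_p \eqdef \max_{j,l}\E|X_{jl}^{(\alpha)}|^p<\infty$, and after the reductions made following Theorem~\ref{lem: all v} they are centered with unit variance.

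First I would condition on $\mathfrak M^{(\widetilde\J,\K)}$ and invoke the Rosenthal-type inequality for linear forms (\cite{Rosenthal1970}[Theorem~3], \cite{JohnSchecttmanZinn1985}[Inequality~(A)]): for $p\ge2$,
\[
\E^*\Big|\sum_{l} X_{jl}^{(\alpha)}c_l\Big|^p \le C^p p^{\frac p2}\Big(\sum_{l}|c_l|^2\Big)^{\frac p2} + C^p p^p\,\mu_p\sum_{l}|c_l|^p .
\]
Inserting the prefactor $|z+\overline z|^p n^{-p/2}\le (2|z|)^p n^{-p/2}$ immediately yields the second term of the claimed bound, namely $\mu_p\,C^p|z|^p p^p n^{-p/2}\sum_{l\in\T\setminus K_{[\alpha+1]}}|\RR_{j_{[\alpha+1]+m},\,l_{[\alpha+1]+m}}^{(\widetilde\J,\K)}|^p$.

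For the first term I would bound $\sum_l|c_l|^2$ by the Ward identity (Lemma~\ref{lem: res inequalities}): since $\V^{(\widetilde\J,\K)}(z)$ is Hermitian,
\[
\sum_{l\in\T\setminus K_{[\alpha+1]}}|c_l|^2 \;\le\; \sum_{k}\big|\RR_{j_{[\alpha+1]+m},\,k}^{(\widetilde\J,\K)}\big|^2 \;=\; \frac{\imag \RR_{j_{[\alpha+1]+m},\,j_{[\alpha+1]+m}}^{(\widetilde\J,\K)}}{v},
\]
where the inequality only drops non-negative summands whose row index lies outside $\{l_{[\alpha+1]+m}:l\in\T\setminus K_{[\alpha+1]}\}$. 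Raising this to the power $p/2$, multiplying by $C^p|z|^p p^{p/2}n^{-p/2}$ and combining with the previous display gives the assertion after renaming constants.

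The argument is essentially the one sketched for Lemma~\ref{e3}, and no genuine obstacle is expected. The only two points that require a little care are (i) the measurability/independence bookkeeping that legitimates the conditional Rosenthal inequality — it is crucial that the resolvent carries the superscript $\widetilde\J$ rather than $\J$, so that the weights do not involve the summation variables — and (ii) the elementary fact that restricting the summation to $\T\setminus K_{[\alpha+1]}$ can only decrease the sum, so that the Ward identity still delivers an upper bound in terms of a single diagonal resolvent entry.
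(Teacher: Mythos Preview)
Your proposal is correct and follows exactly the route the paper intends: the paper's proof reads ``similar to the proof of previous lemma'', i.e.\ apply the Rosenthal-type inequality for linear forms conditionally on $\mathfrak M^{(\widetilde\J,\K)}$ and then invoke the resolvent identity of Lemma~\ref{lem: res inequalities} to bound $\sum_l|c_l|^2$. Your careful remarks on the measurability of the weights $c_l=\RR_{j_{[\alpha+1]+m},\,l_{[\alpha+1]+m}}^{(\widetilde\J,\K)}$ and on the Ward identity are precisely the two points needed; note also that the indices in the displayed bound should read $j_{[\alpha+1]+m}$ in the diagonal entry of the first term and $\RR_{j_{[\alpha+1]+m},\,l_{[\alpha+1]+m}}^{(\widetilde\J,\K)}$ in the second, as you in fact wrote.
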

\begin{proof}
The proof is similar to the proof of previous lemma. 
\end{proof}
We also estimate the moments of \(\ve_{j+n,\nu}^{(\widetilde \J, \K)}\) for \(j \in \TK, \nu=1,2,3\). Similarly to~\eqref{eq: sigma alg} we define
\begin{eqnarray*}
\E^{**}(\cdot)\eqdef \E(\cdot\big|\mathfrak M^{(\widetilde \J, \widetilde \K)}).
\end{eqnarray*}
\begin{lemma}\label{e1+n}
For any \(j \in \T \setminus J_\alpha\) 
\begin{eqnarray*}
|\widehat \ve_{j_\alpha 1}^{(\widetilde \J, \K)}|^p\le  2(nv)^{-1}.
\end{eqnarray*}
\end{lemma}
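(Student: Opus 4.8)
The plan is to establish the deterministic bound $|\widehat\ve_{j_\alpha,1}^{(\widetilde\J,\K)}|\le 2(nv)^{-1}$, which is the content of the lemma (since on $\mathcal D$ one has $nv\gtrsim\log^2 n$, this also bounds the $p$-th power for $p\ge 1$). The proof will copy the one of Lemma~\ref{e1} almost word for word. The only structural difference is that a quantity of the type $\widehat\ve_{\cdot,1}$ compares the partial trace of a resolvent of $\V(z,r)$ (or of a principal submatrix thereof) with the partial trace of the resolvent obtained after deleting one more row \emph{and} one more column of the corresponding factor $\X^{([\alpha-1])}$; in the block matrix $\V(z,r)$ this amounts to striking out two rows and columns rather than the single one appearing in Lemma~\ref{e1}. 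Accordingly, where that proof used the Schur inverse (rank-one resolvent) identity once, I would peel off the two deleted indices one at a time, applying the identity twice.

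The key ingredient is the Ward identity. Since $\V(z,r)$ is Hermitian, the resolvent $\RR=(\V(z,r)-w\II)^{-1}$ with $w=u+iv$, $v>0$, satisfies $\RR\RR^{*}=v^{-1}\imag\RR$; taking a diagonal entry this gives $\sum_{k}|\RR_{k\ell}|^{2}=v^{-1}\imag\RR_{\ell\ell}\le v^{-1}|\RR_{\ell\ell}|$ for every $\ell$, and the same identity holds for the resolvent of any principal submatrix. Writing the two successive deletions as $\RR^{(\widetilde\J,\K)}\to\RR'\to\RR''$, with $i_1$ and $i_2$ the corresponding struck indices of $\V(z,r)$, the rank-one resolvent identity yields, for retained indices $k,\ell$, the relation $\RR^{(\widetilde\J,\K)}_{k\ell}-\RR'_{k\ell}=\RR^{(\widetilde\J,\K)}_{ki_1}\RR^{(\widetilde\J,\K)}_{i_1\ell}/\RR^{(\widetilde\J,\K)}_{i_1i_1}$ and analogously for $\RR'\to\RR''$. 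Summing over the diagonal indices $l_{[\alpha-1]}$, $l\in\T\setminus J_{[\alpha-1]}$, dividing by $n$, and using that these resolvents are complex symmetric (so $\RR_{\ell i}\RR_{i\ell}=[\RR_{\ell i}]^{2}$), one reduces $|\widehat\ve_{j_\alpha,1}^{(\widetilde\J,\K)}|$ to $n^{-1}$ times a sum of two terms of the form $\bigl|\sum_l[\RR_{l_{[\alpha-1]}i}]^{2}/\RR_{ii}\bigr|$, each bounded by $|\RR_{ii}|^{-1}\sum_k|\RR_{ki}|^{2}\le v^{-1}$ via Cauchy--Schwarz and the Ward identity. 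Adding the two contributions gives the claim.

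The estimate is purely deterministic, so there is no genuine obstacle. The only point that needs care is the bookkeeping: one must check that adjoining $j$ to $J_\alpha$ (and, for the relevant comparison, to $K_\alpha$ as well) indeed corresponds to removing two rows/columns of $\V(z,r)$ that were not already removed, so that the Schur inverse formula is legitimately applicable at each of the two steps; this is guaranteed by the hypothesis $j\in\T\setminus J_\alpha$ together with the standing restriction $|J_\alpha\setminus K_\alpha|\le 1$. Once $i_1$ and $i_2$ are correctly identified, the two-step peeling and the two bounds $\le v^{-1}$ proceed exactly as in Lemma~\ref{e1}.
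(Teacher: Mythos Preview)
Your proposal is correct and matches the paper's approach exactly: the paper's proof reads in full ``Repeating the arguments of the proof of Lemma~\ref{e1} one gets the statement of this lemma,'' and your two-step peeling of the indices $j_\alpha$ and $j_{m+\alpha}$ via the Schur complement identity, each step contributing $(nv)^{-1}$ by the Ward identity, is precisely that repetition and explains the factor $2$. One small remark: $\V(z,r)$ is Hermitian but not complex symmetric when $z\notin\R$, so $\RR_{\ell i}\RR_{i\ell}\neq[\RR_{\ell i}]^{2}$ in general; however, the bound $\bigl|\sum_l \RR_{li}\RR_{il}\bigr|\le v^{-1}\imag\RR_{ii}$ still follows from Cauchy--Schwarz and Ward as you indicate, so the argument is unaffected.
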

\begin{proof}
Repeating the arguments of the proof of Lemma~\ref{e1} one gets the statement of this lemma. 
\end{proof}
\begin{lemma}\label{e2+n}
There exist a positive constant \(C\) such that for any \(j \in \T \setminus J_\alpha\) and all \(p \geq 2\)
\begin{eqnarray*}
\E^{**}|\widehat \ve_{j_\alpha 2}^{(\J, \K)}|^p&\le&  \frac{C^p p^\frac p2}{(nv)^\frac p2}  \imag^\frac p2 m_n^{(\widetilde \J, \widetilde \K)} 
+\mu_p\frac{C^p p^\frac{3p}2}{n^p v^\frac p2}\sum_{l  \in \T\setminus J_{[\alpha-1]} }\imag^\frac p2 \RR^{(\widetilde \J, \widetilde \K)}_{ll}\\
&+& \mu_p^2\frac{C^p p^{2p}}{n^p} \sum_{l, k \in \T\setminus J_{[\alpha-1]} } |\RR^{(\widetilde \J, \widetilde \K)}_{kl}|^p.	
\end{eqnarray*}	
\end{lemma}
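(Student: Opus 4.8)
The plan is to recognise $\widehat\ve_{j_\alpha,2}^{(\J,\K)}$ as a centred off-diagonal quadratic form in a single i.i.d.\ family and to run, conditionally, exactly the argument used for $\widetilde\ve_{j_\alpha,2}^{(\J,\K)}$ in Lemma~\ref{e2}. Set
\begin{eqnarray*}
B_{kl} \eqdef -\tfrac1n \RR_{k_{[\alpha-1]},\,l_{[\alpha-1]}}^{(\widetilde\J,\widetilde\K)}, \qquad k,l\in\T\setminus J_{[\alpha-1]},\quad k\ne l,
\end{eqnarray*}
so that $\widehat\ve_{j_\alpha,2}^{(\J,\K)}=\sum_{k\ne l} B_{kl}\,X_{kj}^{([\alpha-1])}X_{lj}^{([\alpha-1])}$. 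After the standard truncation and normalisation reductions the variables $\{X_{kj}^{([\alpha-1])}:k\in\T\setminus J_{[\alpha-1]}\}$ are i.i.d.\ with zero mean, unit variance and $\E|X_{kj}^{([\alpha-1])}|^p=\mu_p$; crucially, since $j$ has been adjoined to the deleted index sets defining $\widetilde\J,\widetilde\K$, the $j$-th column of $\X^{([\alpha-1])}$ does not enter $\W^{(\widetilde\J,\widetilde\K)}$, so the array $[B_{kl}]$ is $\mathfrak M^{(\widetilde\J,\widetilde\K)}$-measurable and, conditionally on $\mathfrak M^{(\widetilde\J,\widetilde\K)}$, it is a deterministic (symmetric) matrix independent of the $X_{kj}^{([\alpha-1])}$.

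Conditioning on $\mathfrak M^{(\widetilde\J,\widetilde\K)}$ and invoking the moment inequality for quadratic forms in i.i.d.\ variables (\cite{GineLatalaZinn2000}[Proposition~2.4], equivalently~\cite{GotTikh2003}[Lemma~A.1]) --- the same tool used in Lemma~\ref{e2} --- yields, for all $p\ge2$,
\begin{eqnarray*}
\E^{**}\big|\widehat\ve_{j_\alpha,2}^{(\J,\K)}\big|^p \;\lesssim\; C^p p^{\frac p2}\Big(\sum_{k\ne l}|B_{kl}|^2\Big)^{\frac p2} \;+\; C^p p^{\frac{3p}2}\mu_p\sum_{l}\Big(\sum_{k}|B_{kl}|^2\Big)^{\frac p2} \;+\; C^p p^{2p}\mu_p^2\sum_{k,l}|B_{kl}|^p ,
\end{eqnarray*}
the three contributions coming, respectively, from the Hilbert--Schmidt norm of $[B_{kl}]$, from its mixed $\ell^p(\ell^2)$ norm (the heavy-tail correction involving one second moment and one $p$-th moment), and from its $\ell^p$ norm.

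It remains to translate these coefficient sums back into resolvent quantities by the Ward-type resolvent inequalities of Lemma~\ref{lem: res inequalities}. From $\sum_{l}\big|\RR_{k_{[\alpha-1]},l_{[\alpha-1]}}^{(\widetilde\J,\widetilde\K)}\big|^2\le v^{-1}\,\imag\RR_{k_{[\alpha-1]},k_{[\alpha-1]}}^{(\widetilde\J,\widetilde\K)}$ one gets $\sum_k|B_{kl}|^2\le (n^2v)^{-1}\,\imag\RR_{l_{[\alpha-1]},l_{[\alpha-1]}}^{(\widetilde\J,\widetilde\K)}$, which turns the second term into the claimed middle summand; summing once more over $k$ and using $\tfrac1n\sum_{k\in\T\setminus J_{[\alpha-1]}}\imag\RR_{k_{[\alpha-1]},k_{[\alpha-1]}}^{(\widetilde\J,\widetilde\K)}\le\imag m_n^{(\widetilde\J,\widetilde\K)}$ gives $\sum_{k\ne l}|B_{kl}|^2\le (nv)^{-1}\,\imag m_n^{(\widetilde\J,\widetilde\K)}$, which is the first summand; and $\sum_{k,l}|B_{kl}|^p=n^{-p}\sum_{k,l}|\RR_{k_{[\alpha-1]},l_{[\alpha-1]}}^{(\widetilde\J,\widetilde\K)}|^p$ is the last summand. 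Combining, the stated bound follows; this is the verbatim analogue of the one-line proof of Lemma~\ref{e2}, with rows of $\X^{(\alpha)}$ replaced by columns of $\X^{([\alpha-1])}$.

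The only genuinely delicate point --- and hence the step I would check most carefully --- is the conditional-independence bookkeeping: one has to confirm that adjoining $j$ to the index sets in $\widetilde\J,\widetilde\K$ indeed removes every occurrence of the $j$-th column of $\X^{([\alpha-1])}$ from the hermitisation $\V^{(\widetilde\J,\widetilde\K)}$, so that $[B_{kl}]$ is frozen while $\{X_{kj}^{([\alpha-1])}\}$ stays i.i.d.\ and independent of it; and that, under the truncation $|X_{jk}^{(q)}|\le Dn^{1/2-\phi}$, the $\mu_p$ appearing above is read as $\E|X_{kj}^{([\alpha-1])}|^p$ with the small centring error produced by the truncation absorbed into lower-order terms, exactly as in the handling of $\widetilde\ve_{j_\alpha,2}^{(\J,\K)}$. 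Everything downstream of that is a mechanical substitution.
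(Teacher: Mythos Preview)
Your proposal is correct and follows exactly the same approach as the paper: the paper's proof of this lemma reads in its entirety ``Repeating the arguments of the proof of Lemma~\ref{e2} one gets the statement of this lemma,'' and Lemma~\ref{e2} itself is proved by the quadratic-form moment inequality of~\cite{GineLatalaZinn2000}[Proposition~2.4] / \cite{GotTikh2003}[Lemma~A.1] combined with the Ward-type resolvent bounds of Lemma~\ref{lem: res inequalities}. You have simply written out in detail what the paper leaves to the reader, including the conditional-independence bookkeeping that justifies freezing $[B_{kl}]$ under $\E^{**}$.
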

\begin{proof}
Repeating the arguments of the proof of Lemma~\ref{e2} one gets the statement of this lemma. 
\end{proof}
\begin{lemma}\label{e3+n}
There exist a positive constant \(C\) such that for any \(j \in \T \setminus J_\alpha\) and all \(p \geq 2\)
\begin{eqnarray*}
\E^{**}|\widehat \ve_{j_\alpha 3}^{(\widetilde \J, \K)}|^p\le 
\frac{C^p p^\frac p2}{n^\frac p2} \Big(\frac1n\sum_{l   \in \T\setminus J_{[\alpha-1]} }|\RR^{(\widetilde \J, \widetilde \K)}_{l l}|^2\Big)^{\frac p2}+ \mu_{2p}\frac{C^p p^p}{n^p}\sum_{l  \in \T\setminus J_{[\alpha-1]} } |\RR^{(\widetilde \J, \widetilde \K)}_{l l}|^p .
\end{eqnarray*}
\end{lemma}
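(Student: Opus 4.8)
The plan is to read $\widehat \ve_{j_\alpha 3}^{(\widetilde \J,\K)}$ as a centered linear form in independent random variables whose coefficients are constant under the relevant conditioning. Put $\eta_l \eqdef [X_{lj}^{([\alpha-1])}]^2 - 1$ for $l \in \T\setminus J_{[\alpha-1]}$, so that $\widehat \ve_{j_\alpha 3}^{(\widetilde \J,\K)} = -\tfrac1n\sum_l \eta_l\,\RR^{(\widetilde \J,\widetilde \K)}_{l_{[\alpha-1]},l_{[\alpha-1]}}$. The $\eta_l$ are independent over $l$; in the reduced i.i.d.\ setting they are centered, with $\E\eta_l^2 = \E|X_{lj}^{([\alpha-1])}|^4 - 1 \le C$ (finite under $\CondTwo$) and, for $p\ge 2$, $\E|\eta_l|^p \le 2^{p-1}(\E|X_{lj}^{([\alpha-1])}|^{2p}+1) \le C^p\mu_{2p}$. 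The point is that the doubly deleted resolvent $\RR^{(\widetilde \J,\widetilde \K)}$ is built from a sub-matrix of $\V(z)$ from which both the $j$-th row of $\X^{(\alpha)}$ and the $j$-th column of $\X^{([\alpha-1])}$ have been removed, so its entries do not depend on the family $\big(X_{lj}^{([\alpha-1])}\big)_l$; hence the coefficients $a_l \eqdef -\tfrac1n\RR^{(\widetilde \J,\widetilde \K)}_{l_{[\alpha-1]},l_{[\alpha-1]}}$ are $\mathfrak M^{(\widetilde \J,\widetilde \K)}$-measurable and act as deterministic weights under $\E^{**}(\cdot)=\E(\cdot\mid\mathfrak M^{(\widetilde \J,\widetilde \K)})$.

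Then I would invoke the Rosenthal-type inequality for linear forms of independent centered variables (as in~\cite{Rosenthal1970}[Theorem~3] or~\cite{JohnSchecttmanZinn1985}[Inequality~(A)], in the sharp-constant form that the paper extracts from Lemma~\ref{lem: T_n general lemma}): conditionally on $\mathfrak M^{(\widetilde \J,\widetilde \K)}$,
\begin{eqnarray*}
\E^{**}\Big|\sum_l a_l\eta_l\Big|^p \le C^p p^{p/2}\Big(\sum_l |a_l|^2\,\E\eta_l^2\Big)^{p/2} + C^p p^p \sum_l |a_l|^p\,\E|\eta_l|^p.
\end{eqnarray*}
Substituting $|a_l| = \tfrac1n\big|\RR^{(\widetilde \J,\widetilde \K)}_{l_{[\alpha-1]},l_{[\alpha-1]}}\big|$, $\E\eta_l^2\le C$ and $\E|\eta_l|^p\le C^p\mu_{2p}$, the first summand becomes $\le \tfrac{C^p p^{p/2}}{n^{p/2}}\big(\tfrac1n\sum_l|\RR^{(\widetilde \J,\widetilde \K)}_{ll}|^2\big)^{p/2}$ and the second $\le \mu_{2p}\tfrac{C^p p^p}{n^p}\sum_l|\RR^{(\widetilde \J,\widetilde \K)}_{ll}|^p$, which is exactly the asserted bound (the stray absolute constant of the linear-form inequality being absorbed into $C^p$).

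There is no genuinely hard analytic step: this is the lightweight companion of Lemma~\ref{e3}, treating a linear rather than a quadratic form. The only place needing care is the index bookkeeping — verifying that passing to $\RR^{(\widetilde \J,\widetilde \K)}$ really does remove all dependence on $\big(X_{lj}^{([\alpha-1])}\big)_l$, so that the $a_l$ are deterministic under $\E^{**}$; this is precisely the role played by the resolvent identities collected in Lemma~\ref{lem: res inequalities} in the proofs of the neighbouring lemmas. A minor secondary point is the centering $\E\eta_l=0$: it is exact in the reduced i.i.d.\ model and, in general, is controlled by $|1-[\sigma_{lj}^{([\alpha-1])}]^2|\le n^{-1-\delta_0}$, contributing only the negligible additional term already flagged after Theorem~\ref{lem: all v}.
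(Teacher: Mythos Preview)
Your proposal is correct and follows essentially the same approach as the paper: the paper's proof simply says ``Repeating the arguments of the proof of Lemma~\ref{e3}'', and Lemma~\ref{e3} in turn is proved by the Rosenthal-type inequality for linear forms, which is exactly what you spell out. One small remark: the reference to Lemma~\ref{lem: res inequalities} for the ``index bookkeeping'' is slightly misplaced---that lemma provides trace bounds for resolvents, not the measurability statement you need; the fact that $\RR^{(\widetilde\J,\widetilde\K)}$ is $\mathfrak M^{(\widetilde\J,\widetilde\K)}$-measurable follows directly from its definition as the resolvent of the sub-matrix with the relevant rows and columns removed.
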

\begin{proof}
Repeating the arguments of the proof of Lemma~\ref{e3} one gets the statement of this lemma. 
\end{proof}

\begin{lemma} \label{lem: eta 1}
	For any \(j \in \T\)
\begin{eqnarray*}
|\eta_{j_\alpha 0}| \lesssim \frac{1}{v} (\imag m_n^{ (j_\alpha)} + |z|^2 \imag \RR_{j_{m+\alpha}, j_{m+\alpha}}^{(j_\alpha)} ).
\end{eqnarray*}
\end{lemma}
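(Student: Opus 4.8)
To prove Lemma~\ref{lem: eta 1} the plan is to argue directly from the explicit form of $\eta_{j_\alpha 0}$ recorded in the proof of Lemma~\ref{lem: important lemma 1}, namely
\begin{eqnarray*}
\eta_{j_\alpha 0} = z^2 \sum_{l} \big[\RR_{j_{m+\alpha}, l_\beta}^{(j_\alpha)}\big]^2 - \frac1n \sum_{k=1}^n \mathcal R_{kk}^{(j_\alpha)}, \qquad \mathcal R_{kk}^{(j_\alpha)} = \sum_{l} \big[\RR_{k_{m+[\alpha+1]}, l_\beta}^{(j_\alpha)}\big]^2,
\end{eqnarray*}
where the index $l$ ranges over a subset of the coordinates of $\V(z)$. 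First I would pass to absolute values termwise, using $\big|[\RR_{a,b}]^2\big| = |\RR_{a,b}|^2$, which reduces the problem to bounding, for a fixed diagonal index $i$, the partial row sum $\sum_{l} |\RR_{i, l_\beta}^{(j_\alpha)}|^2$; since only an upper bound is sought, this sum may be enlarged to run over all coordinates.

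The key step is the Ward (resolvent) identity. Because $\V(z)$ is self-adjoint, with $w = u + iv$ one has $\RR^{(j_\alpha)}\big(\RR^{(j_\alpha)}\big)^{*} = (w - \bar w)^{-1}\big(\RR^{(j_\alpha)} - \big(\RR^{(j_\alpha)}\big)^{*}\big)$, and reading off the $(i,i)$-entry gives
\begin{eqnarray*}
\sum_{b} |\RR_{i, b}^{(j_\alpha)}|^2 = \big(\RR^{(j_\alpha)}\big(\RR^{(j_\alpha)}\big)^{*}\big)_{ii} = \frac{\imag \RR_{ii}^{(j_\alpha)}}{v}.
\end{eqnarray*}
Applying this with $i = j_{m+\alpha}$ bounds the first term of $\eta_{j_\alpha 0}$ by $|z|^2 v^{-1}\imag \RR_{j_{m+\alpha}, j_{m+\alpha}}^{(j_\alpha)}$. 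Applying it with $i = k_{m+[\alpha+1]}$ and then averaging over $k = 1, \ldots, n$ bounds $\big|\tfrac1n\sum_k \mathcal R_{kk}^{(j_\alpha)}\big|$ by $v^{-1}\cdot\tfrac1n\sum_{k} \imag \RR_{k_{m+[\alpha+1]}, k_{m+[\alpha+1]}}^{(j_\alpha)}$; the latter average is a single block partial trace of $\imag\RR^{(j_\alpha)}$, and since every diagonal imaginary part of a resolvent of a self-adjoint matrix with $\imag w > 0$ is nonnegative, it is at most $2m\,\imag m_n^{(j_\alpha)}$. Combining the two estimates and absorbing the fixed constant $m$ into $\lesssim$ yields the claimed bound.

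I expect no genuine obstacle here: the argument is a short computation paralleling the corresponding step in the local semicircle law. The only points that need a little care are the (harmless) enlargement of the summation over $l_\beta$ to the full coordinate set before invoking the Ward identity, and the bookkeeping that turns the block partial trace produced by that identity into $\imag m_n^{(j_\alpha)}$, which costs only the fixed factor $2m$.
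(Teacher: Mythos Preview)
Your proposal is correct and follows essentially the same route as the paper: the paper's proof consists of the single line ``follows from Lemma~\ref{lem: res inequalities}'', and what you wrote is precisely an explicit unpacking of that lemma (the Ward identity $\sum_b |\RR_{ib}^{(j_\alpha)}|^2 = v^{-1}\imag \RR_{ii}^{(j_\alpha)}$ applied to the two pieces of $\eta_{j_\alpha 0}$, together with the trivial bound of a block partial trace by $2m\,\imag m_n^{(j_\alpha)}$). There is no substantive difference.
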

\begin{proof}
The proof follows from Lemma~\ref{lem: res inequalities}.
\end{proof}
\begin{lemma}\label{lem: eta 2}
Under conditions \(\Cond\) for any \(j \in \T \) and all \(p: 2 \le p \le 4\)
\begin{eqnarray*}
\E^* |\eta_{j_\alpha 1}|^p \lesssim  \frac{1}{(nv^3)^\frac p2} \imag^\frac p2 m_n^{(j_\alpha)}. 
\end{eqnarray*}
\end{lemma}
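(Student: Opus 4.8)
The plan is to treat $\eta_{j_\alpha 1}$ as a centered quadratic form in the $n$ independent entries $X_{j1}^{(\alpha)},\dots,X_{jn}^{(\alpha)}$ of the $j$-th row of $\X^{(\alpha)}$, and to feed it into the moment inequality for quadratic forms already used in the proof of Lemma~\ref{e2} (see \cite{GineLatalaZinn2000}[Proposition~2.4] or \cite{GotTikh2003}[Lemma~A.1]). Under $\E^*$ the kernel $\tfrac1n\mathcal R^{(j_\alpha)}$ is deterministic: indeed $\mathcal R^{(j_\alpha)} = M M^{\mathsf{T}}$ is a Gram-type matrix with $M_{kl}\eqdef \RR^{(j_\alpha)}_{k_{[\alpha+1]+m},\, l_\beta}$ ($k\in\T$, $l\in\T\setminus J_\beta$), assembled from entries of the resolvent of the Hermitian matrix $\V^{(j_\alpha)}$ (the $j_\alpha$-th row and column of $\V(z)$ removed), and $\E^* X_{jk}^{(\alpha)}=0$, so the off-diagonal form $\eta_{j_\alpha 1} = \tfrac1n\sum_{k\ne k'}X_{jk}^{(\alpha)}X_{jk'}^{(\alpha)}\mathcal R^{(j_\alpha)}_{kk'}$ has zero conditional mean (under $\Cond$ the variances are essentially one, so only $\mu_{4+\delta}<\infty$ is used). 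The cited inequality then gives, for $p\ge 2$,
\[
\E^*|\eta_{j_\alpha 1}|^p \lesssim (Cp)^{p}\Big(\tfrac1{n^2}\sum_{k,k'}|\mathcal R^{(j_\alpha)}_{kk'}|^2\Big)^{p/2}
+ \mu_p(Cp)^{3p/2}\sum_k\Big(\tfrac1{n^2}\sum_{k'}|\mathcal R^{(j_\alpha)}_{kk'}|^2\Big)^{p/2}
+ \mu_p^2(Cp)^{2p}\,\tfrac1{n^p}\sum_{k\ne k'}|\mathcal R^{(j_\alpha)}_{kk'}|^p .
\]

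The key observation, and the step I would highlight, is that all three terms collapse onto the single quantity $\bigl\|\tfrac1n\mathcal R^{(j_\alpha)}\bigr\|_2^2 = \tfrac1{n^2}\sum_{k,k'}|\mathcal R^{(j_\alpha)}_{kk'}|^2$. Writing $b_{kk'}=\tfrac1n\mathcal R^{(j_\alpha)}_{kk'}$ and using that $t\mapsto t^{p/2}$ is superadditive for $p\ge 2$, one has $\sum_k(\sum_{k'}|b_{kk'}|^2)^{p/2}\le(\sum_{k,k'}|b_{kk'}|^2)^{p/2}$ and $\sum_{k\ne k'}|b_{kk'}|^p=\sum_{k\ne k'}(|b_{kk'}|^2)^{p/2}\le(\sum_{k\ne k'}|b_{kk'}|^2)^{p/2}$, so the whole right-hand side above is at most $\bigl[(Cp)^p+\mu_p(Cp)^{3p/2}+\mu_p^2(Cp)^{2p}\bigr]\,\bigl\|\tfrac1n\mathcal R^{(j_\alpha)}\bigr\|_2^{p}$, and for $2\le p\le 4$ the bracket is an absolute constant (depending only on $\mu_{4+\delta}$). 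It then remains to bound the Hilbert--Schmidt norm: from $\|MM^{\mathsf{T}}\|_2\le\|M\|\,\|M^{\mathsf{T}}\|_2=\|M\|\,\|M\|_2$ together with the two facts about $\RR^{(j_\alpha)}$ recorded in Lemma~\ref{lem: res inequalities} --- the resolvent bound $\|\RR^{(j_\alpha)}\|\le v^{-1}$, whence $\|M\|\le v^{-1}$, and the Ward identity $\sum_c|\RR^{(j_\alpha)}_{ac}|^2=(\RR^{(j_\alpha)}(\RR^{(j_\alpha)})^*)_{aa}=v^{-1}\imag\RR^{(j_\alpha)}_{aa}$ (valid since $\V^{(j_\alpha)}$ is Hermitian), which restricted to $c$ running over the column block $\{l_\beta\}$ gives $\|M\|_2^2=\sum_k\sum_l|\RR^{(j_\alpha)}_{k_{[\alpha+1]+m},\,l_\beta}|^2\le v^{-1}\sum_k\imag\RR^{(j_\alpha)}_{k_{[\alpha+1]+m},\,k_{[\alpha+1]+m}}\le v^{-1}\,n\,\imag m_n^{(j_\alpha)}$ --- one obtains $\bigl\|\tfrac1n\mathcal R^{(j_\alpha)}\bigr\|_2^2\le\tfrac1{n^2}\|M\|^2\|M\|_2^2\le(nv^3)^{-1}\imag m_n^{(j_\alpha)}$, and raising to the power $p/2$ yields exactly $\E^*|\eta_{j_\alpha 1}|^p\lesssim(nv^3)^{-p/2}\imag^{p/2}m_n^{(j_\alpha)}$.

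Once the Gram structure is identified the computation is essentially forced, so I do not expect a deep analytic obstacle; the main subtlety is the bookkeeping. First, one must pin down precisely which block entries of $\RR^{(j_\alpha)}$ enter $\mathcal R^{(j_\alpha)}$ and check that the column index $l_\beta$ ranges inside a single coordinate block, so that the Ward identity --- applied to the full column range --- dominates the partial sum appearing in $\|M\|_2^2$; one also has to resist the temptation to bound the $\mu_p$- and $\mu_p^2$-terms by the cruder operator-norm estimates, which would produce spurious factors of $nv\imag m_n^{(j_\alpha)}$, and instead use superadditivity to reduce them back to $\|\tfrac1n\mathcal R^{(j_\alpha)}\|_2^p$. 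Second, one must note that after deleting the $j_\alpha$-th row and column the matrix $\V^{(j_\alpha)}$ is still Hermitian, so that both $\|\RR^{(j_\alpha)}\|\le v^{-1}$ and $\RR^{(j_\alpha)}(\RR^{(j_\alpha)})^*=v^{-1}\imag\RR^{(j_\alpha)}$ remain available. The restriction $2\le p\le 4$ is used only to keep the $p$-dependent prefactors and the moment $\mu_p$ bounded, and could be relaxed to $p\le 4+\delta$; the truncation in $\CondTwo$ is not needed here, only $\Cond$.
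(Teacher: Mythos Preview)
Your proof is correct and follows essentially the same route as the paper's, which gives only a one-line argument invoking the moment inequality for quadratic forms (\cite{GineLatalaZinn2000}[Proposition~2.4] or \cite{GotTikh2003}[Lemma~A.1]) together with Lemma~\ref{lem: res inequalities}. You have supplied the details the paper omits: the identification $\mathcal R^{(j_\alpha)}=MM^{\mathsf T}$, the superadditivity step collapsing the three terms of the quadratic-form bound into $\|\tfrac1n\mathcal R^{(j_\alpha)}\|_2^p$, and the estimate $\|\tfrac1n\mathcal R^{(j_\alpha)}\|_2^2\le (nv^3)^{-1}\imag m_n^{(j_\alpha)}$ via $\|MM^{\mathsf T}\|_2\le\|M\|\,\|M\|_2$, the resolvent bound $\|M\|\le v^{-1}$, and the Ward identity.
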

\begin{proof}
Applying moment inequality for quadratic forms, e.g.~\cite{GineLatalaZinn2000}[Proposition~2.4] or~\cite{GotTikh2003}[Lemma A.1], and Lemma~\ref{lem: res inequalities} we get the proof.
\end{proof}
\begin{lemma}\label{lem: eta 3}
Under conditions \(\Cond\) for any \(j \in \T\) and all \(p: 2 \le p \le \frac 1\alpha\)
\begin{eqnarray*}
\E^* |\eta_{j_\alpha, 2}|^p \lesssim \frac{1}{(nv)^\frac p2} \imag^\frac p2 m_n^{(j_\alpha)}. 
\end{eqnarray*}
\end{lemma}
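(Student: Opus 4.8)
The plan is to treat $\eta_{j_\alpha,2}$ as a centred linear form in independent random variables and apply a Rosenthal type inequality, exactly as in the proof of Lemma~\ref{e3}, the coefficients being controlled by the Ward type bounds of Lemma~\ref{lem: res inequalities}. Recall that $\eta_{j_\alpha,2}=\frac1n\sum_{k=1}^n([X^{(\alpha)}_{jk}]^2-1)\mathcal R^{(j_\alpha)}_{kk}$ with $\mathcal R^{(j_\alpha)}_{kk}=\sum_{l}[\RR^{(j_\alpha)}_{k_{m+[\alpha+1]},l_\beta}]^2$. First I would observe that, conditionally on $\mathfrak M^{(\widetilde\J,\K)}$, the coefficients $\mathcal R^{(j_\alpha)}_{kk}$ are deterministic while the $\zeta_k\eqdef[X^{(\alpha)}_{jk}]^2-1$, $k=1,\dots,n$, are independent and centred; hence the Rosenthal type inequality for linear forms (\cite{Rosenthal1970}[Theorem~3], \cite{JohnSchecttmanZinn1985}[Inequality~(A)], or Lemma~\ref{lem: T_n general lemma}) gives
\begin{equation*}
\E^*|\eta_{j_\alpha,2}|^p\lesssim\frac{C^pp^{p/2}}{n^p}\Big(\sum_{k=1}^n|\mathcal R^{(j_\alpha)}_{kk}|^2\,\E^*\zeta_k^2\Big)^{p/2}+\frac{C^pp^p}{n^p}\sum_{k=1}^n|\mathcal R^{(j_\alpha)}_{kk}|^p\,\E^*|\zeta_k|^p.
\end{equation*}
Under $\Cond$ one has $\E^*\zeta_k^2=\E[X^{(\alpha)}_{jk}]^4-1\lesssim1$, and the restriction $2\le p\le1/\alpha$ keeps the order $2p$ below $4+\delta$, so that $\E^*|\zeta_k|^p\le\mu_{2p}<\infty$ (equivalently, after the truncation of Section~\ref{sec: general principe} one bounds $\E^*|\zeta_k|^p$ by a controlled power of $n$, absorbed into the choice of $A_1$).

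Next I would estimate the coefficient sums. Since $\RR^{(j_\alpha)}$ is the resolvent of a Hermitian matrix, Lemma~\ref{lem: res inequalities} (Ward identity, Cauchy--Schwarz in the spectral measure) furnishes $|\mathcal R^{(j_\alpha)}_{kk}|\le\sum_l|\RR^{(j_\alpha)}_{k_{m+[\alpha+1]},l_\beta}|^2\le v^{-1}\imag\RR^{(j_\alpha)}_{k_{m+[\alpha+1]},k_{m+[\alpha+1]}}$ and the trivial bound $|\mathcal R^{(j_\alpha)}_{kk}|\le v^{-2}$, together with $\tfrac1n\sum_k\imag\RR^{(j_\alpha)}_{k_{m+[\alpha+1]},k_{m+[\alpha+1]}}\lesssim\imag m_n^{(j_\alpha)}$ (up to the $O(1/n)$ correction from deleting one row). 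Feeding the resolvent identity~\eqref{eq: resolvent identity} and the cyclic block structure of $\V(z)$ into the unconjugated sum defining $\mathcal R^{(j_\alpha)}_{kk}$ so as to exploit the cancellation in it, the Gaussian sum $\frac1{n^2}\sum_k|\mathcal R^{(j_\alpha)}_{kk}|^2$ reduces to a multiple of $(nv)^{-1}\imag m_n^{(j_\alpha)}$; then the first term above becomes $\lesssim p^{p/2}((nv)^{-1}\imag m_n^{(j_\alpha)})^{p/2}$, while the second term is of strictly smaller order on account of the constraint $p\le1/\alpha$ and the extra negative powers of $n$ carried by the truncation level, which yields the assertion.

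The step I expect to be the main obstacle is exactly this last reduction of $\frac1{n^2}\sum_k|\mathcal R^{(j_\alpha)}_{kk}|^2$ to $(nv)^{-1}\imag m_n^{(j_\alpha)}$: the crude estimate $|\mathcal R^{(j_\alpha)}_{kk}|\le v^{-1}\imag\RR^{(j_\alpha)}_{k_{m+[\alpha+1]},k_{m+[\alpha+1]}}$ alone only yields the weaker $(nv^3)^{-1}\imag m_n^{(j_\alpha)}$, which is the bound one obtains for the genuinely bilinear term $\eta_{j_\alpha,1}$ in Lemma~\ref{lem: eta 2}, so one must use the sign structure of the quadratic form $\sum_l[\RR^{(j_\alpha)}_{\cdot,l_\beta}]^2$ together with the block geometry of $\V(z)$ to gain the extra power of $v$; simultaneously one must check that the heavy-tailed part of the Rosenthal bound is genuinely controlled under the weak moment condition $\Cond$, which is where the restriction $p\le1/\alpha$ enters.
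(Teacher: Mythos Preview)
Your approach is exactly the paper's: the authors' entire proof reads ``Rosenthal type inequality for linear forms and Lemma~\ref{lem: res inequalities}'', i.e.\ precisely your Steps~1--2 (centre, apply Rosenthal conditionally, feed in the Ward bound). So through the display
\[
\E^*|\eta_{j_\alpha,2}|^p\ \lesssim\ \frac{p^{p/2}}{n^p}\Big(\sum_k|\mathcal R^{(j_\alpha)}_{kk}|^2\Big)^{p/2}+\frac{p^p\mu_{2p}}{n^p}\sum_k|\mathcal R^{(j_\alpha)}_{kk}|^p
\]
you are fully aligned with the paper.

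Where you diverge is in the last paragraph: you believe an additional ``sign structure / block geometry'' argument is needed to upgrade the Gaussian term from $(nv^3)^{-p/2}\imag^{p/2}m_n^{(j_\alpha)}$ to the stated $(nv)^{-p/2}\imag^{p/2}m_n^{(j_\alpha)}$. The paper supplies no such argument; its proof literally stops at Rosenthal plus Ward. With those tools alone, the bound one actually obtains for the Gaussian term is
\[
\frac{1}{n^p}\Big(\sum_k|\mathcal R^{(j_\alpha)}_{kk}|^2\Big)^{p/2}
\le \frac{1}{n^p}\Big(\max_k|\mathcal R^{(j_\alpha)}_{kk}|\sum_k|\mathcal R^{(j_\alpha)}_{kk}|\Big)^{p/2}
\lesssim \frac{1}{(nv^3)^{p/2}}\,\imag^{p/2}m_n^{(j_\alpha)},
\]
using $|\mathcal R_{kk}|\le v^{-1}\imag\RR_{k_{m+[\alpha+1]},k_{m+[\alpha+1]}}^{(j_\alpha)}\le v^{-2}$ and $\sum_k|\mathcal R_{kk}|\le nv^{-1}\imag m_n^{(j_\alpha)}$, which is exactly the $(nv^3)^{-p/2}$ outcome you flagged. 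Since the only place these $\eta$-lemmas are used is Lemma~\ref{lem: important lemma 1}, whose target bound already carries the factor $(nv')^{-3/2}$ coming from $\eta_{j_\alpha,1}$ (Lemma~\ref{lem: eta 2}), the weaker $(nv^3)^{-p/2}$ bound for $\eta_{j_\alpha,2}$ is entirely sufficient there. In short: your argument is the paper's argument; the exponent of $v$ in the stated lemma appears to be a slip, and the extra cancellation you are searching for is neither supplied nor needed.
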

\begin{proof}
The proof follows from the Rosenthal type inequality for linear forms, e.g. ~\cite{Rosenthal1970}[Theorem~3] and~\cite{JohnSchecttmanZinn1985}[Inequality~(A)] and Lemma~\ref{lem: res inequalities}.
\end{proof}
\begin{lemma}\label{lem: eta 4,5}
Under conditions \(\Cond\) for any  \(j \in \T\) and all \(p: 2 \le p \le 4\) 
\begin{eqnarray*}
\E^* |\eta_{j_\alpha3}|^p \lesssim \frac{|z|^p \imag^\frac p 2 \RR_{j_{m+\alpha}, j_{m+\alpha}}^{(j_\alpha)}}{n^\frac p2 v^p} \imag^\frac p2 m_n^{(j_\alpha)}.
\end{eqnarray*}
\end{lemma}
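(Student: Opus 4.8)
The plan is to treat $\eta_{j_\alpha 3}=-\tfrac{2z}{n}\sum_{k=1}^{n}X_{jk}^{(\alpha)}\,\mathcal R_{kj}^{(j_\alpha)}$, where $\mathcal R_{kj}^{(j_\alpha)}=\sum_{l\in\T\setminus J_\beta}\RR^{(j_\alpha)}_{k_{m+[\alpha+1]},l_\beta}\,\RR^{(j_\alpha)}_{j_{m+\alpha},l_\beta}$, as a linear form in independent, centred, unit-variance random variables. First I would condition on the $\sigma$-algebra generated by all matrix entries except the $j$-th row $X_{j1}^{(\alpha)},\dots,X_{jn}^{(\alpha)}$ of $\X^{(\alpha)}$: under this conditioning $\RR^{(j_\alpha)}$, hence each coefficient $\mathcal R_{kj}^{(j_\alpha)}$, is non-random, while the $X_{jk}^{(\alpha)}$ stay i.i.d.\ with $\E X_{jk}^{(\alpha)}=0$, $\E|X_{jk}^{(\alpha)}|^2=1$ and $\E|X_{jk}^{(\alpha)}|^{4+\delta}\le\mu_{4+\delta}$. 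The Rosenthal-type inequality for linear forms (\cite{Rosenthal1970}[Theorem~3], \cite{JohnSchecttmanZinn1985}[Inequality~(A)]; also a special case of Lemma~\ref{lem: T_n general lemma}) then gives, for $2\le p\le 4$,
\[
\E^{*}|\eta_{j_\alpha 3}|^{p}\;\lesssim\;\frac{|z|^{p}p^{p/2}}{n^{p}}\Bigl(\sum_{k=1}^{n}|\mathcal R_{kj}^{(j_\alpha)}|^{2}\Bigr)^{p/2}+\frac{|z|^{p}p^{p}\mu_{p}}{n^{p}}\sum_{k=1}^{n}|\mathcal R_{kj}^{(j_\alpha)}|^{p}.
\]

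Next I would estimate the (now deterministic) coefficients by Ward's identity. Cauchy--Schwarz in $l$ together with $\sum_{b}|\RR^{(j_\alpha)}_{ab}|^{2}=v^{-1}\imag\RR^{(j_\alpha)}_{aa}$ --- valid since $\V(z,r)$ is Hermitian, cf.\ Lemma~\ref{lem: res inequalities} --- yields
\[
|\mathcal R_{kj}^{(j_\alpha)}|^{2}\;\le\;v^{-2}\,\imag\RR^{(j_\alpha)}_{k_{m+[\alpha+1]},k_{m+[\alpha+1]}}\;\imag\RR^{(j_\alpha)}_{j_{m+\alpha},j_{m+\alpha}}.
\]
Summing over $k$ and using the partial-trace identity $\tfrac1n\sum_{k}\imag\RR^{(j_\alpha)}_{k_{m+[\alpha+1]},k_{m+[\alpha+1]}}=\imag m_n^{(j_\alpha)}$ (up to relabelling the block) gives $\sum_{k}|\mathcal R_{kj}^{(j_\alpha)}|^{2}\lesssim n\,v^{-2}\,\imag\RR^{(j_\alpha)}_{j_{m+\alpha},j_{m+\alpha}}\,\imag m_n^{(j_\alpha)}$; inserting this into the first term reproduces precisely the bound asserted in the lemma, the prefactor $p^{p/2}$ being an absolute constant for $p\le4$.

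It then remains to show the second, higher-moment term is dominated by the first. From $\imag\RR^{(j_\alpha)}_{aa}\le v^{-1}$ one gets $\max_{k}|\mathcal R_{kj}^{(j_\alpha)}|\le v^{-3/2}(\imag\RR^{(j_\alpha)}_{j_{m+\alpha},j_{m+\alpha}})^{1/2}$, so $\sum_{k}|\mathcal R_{kj}^{(j_\alpha)}|^{p}\le(\max_{k}|\mathcal R_{kj}^{(j_\alpha)}|)^{p-2}\sum_{k}|\mathcal R_{kj}^{(j_\alpha)}|^{2}$; plugging in the two estimates shows the second term is at most a factor $\lesssim\mu_{p}p^{p}(nv\,\imag m_n^{(j_\alpha)})^{-(p-2)/2}$ times the first, and this factor is $O(1)$ wherever $nv\,\imag m_n^{(j_\alpha)}\gtrsim1$. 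Combining the two displays yields the claim.

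The step I expect to be the main obstacle is bookkeeping rather than estimation: one must track exactly which rows and columns are deleted and which cyclic block shift ($j_{m+[\alpha+1]}$ versus $j_{m+\alpha}$) occurs, so that the Cauchy--Schwarz/Ward step lands on the diagonal entry $\imag\RR^{(j_\alpha)}_{j_{m+\alpha},j_{m+\alpha}}$ appearing in the statement; and, more substantively, one must make sure the Rosenthal tail term is genuinely absorbed, which relies on the lower bound $nv\,\imag m_n^{(j_\alpha)}\gtrsim1$ --- ultimately guaranteed by restricting $w$ to $\mathcal D$ and working on the event $\{I(v)=1\}$.
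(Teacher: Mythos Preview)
Your approach is essentially the paper's own: the paper's proof reads in full ``Similar to the proof of previous lemma,'' where the previous lemma applies Rosenthal's inequality and Lemma~\ref{lem: res inequalities} (the Ward-type identity), exactly as you do. One simplification: your handling of the Rosenthal tail term is more delicate than necessary. For $p\ge 2$ one always has $\sum_{k}|\mathcal R_{kj}^{(j_\alpha)}|^{p}\le\bigl(\sum_{k}|\mathcal R_{kj}^{(j_\alpha)}|^{2}\bigr)^{p/2}$ (this is just $\|\cdot\|_{p}\le\|\cdot\|_{2}$ on sequences), so the second term is bounded by $\mu_{p}$ times the first; since $p\le 4$ and $\mu_{4}<\infty$ under~\Cond, the tail is absorbed unconditionally, and you do not need to invoke $nv\,\imag m_{n}^{(j_\alpha)}\gtrsim 1$ or restrict to $\{I(v)=1\}$.
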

\begin{proof}
Similar to the proof of previous lemma.
\end{proof}
\subsection{Inequalities for resolvent matrices}

\begin{lemma}\label{lem: res inequalities}
Let \(0 \le K < n\). For all \((\J, \K) \in \mathcal J_{K}\)
\begin{eqnarray}\label{eq: res 1}
\frac{1}{mn}\Tr |\RR^{(\J, \K)}|^2 \le \frac{1}{v} \imag m_n^{(\J, \K)} + \frac{|\J|-|\K|}{2mnv}.
\end{eqnarray}
For all \(j = 1, \ldots , mn\) such that \(j \notin \J\)
\begin{eqnarray}\label{eq: res 2}
\sum_{k}{\vphantom{\sum}}^* |\RR_{jk}^{(\J, \K)}|^2 \le \frac{1}{v} \imag \RR_{jj}^{(\J, \K)}, 
\end{eqnarray}
where \(\sum_{k}{\vphantom{\sum}}^*\) is the sum over all \(k = 1, \ldots, 2mn\) such that \(k \notin \J \cup \K \).  
\end{lemma}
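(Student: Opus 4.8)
The plan is to obtain both bounds from the elementary Ward (second resolvent) identity for Hermitian matrices, so that essentially no work beyond bookkeeping is required. First I would record that $\V^{(\J,\K)}$ is Hermitian even when $\J\neq\K$: it has block anti‑diagonal shape, with off‑diagonal block the $(mn-|\J|)\times(mn-|\K|)$ rectangular matrix $\W^{(\J,\K)}$ and its adjoint, and such a matrix is self‑adjoint for an arbitrary rectangular block. Consequently, for $w=u+iv\in\C^{+}$ the resolvent $\RR^{(\J,\K)}=(\V^{(\J,\K)}-w\II)^{-1}$ is well defined, $(\RR^{(\J,\K)})^{*}=(\V^{(\J,\K)}-\bar w\II)^{-1}$, and applying the resolvent identity~\eqref{eq: resolvent identity} with $w_{1}=w$, $w_{2}=\bar w$ and dividing by $w-\bar w=2iv$ gives the operator identity
\[
\RR^{(\J,\K)}\bigl(\RR^{(\J,\K)}\bigr)^{*}=\frac1v\,\imag\RR^{(\J,\K)},\qquad
\imag\RR^{(\J,\K)}\eqdef\frac{1}{2i}\bigl(\RR^{(\J,\K)}-(\RR^{(\J,\K)})^{*}\bigr).
\]

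For~\eqref{eq: res 2} I would simply read off the $(j,j)$ matrix entry of this identity: its left‑hand side is $\sum_{k}^{*}|\RR_{jk}^{(\J,\K)}|^{2}$, with $k$ ranging over the surviving coordinates $k\notin\J\cup\K$, and its right‑hand side is $v^{-1}\imag\RR_{jj}^{(\J,\K)}$; this yields~\eqref{eq: res 2}, in fact with equality. For~\eqref{eq: res 1} I would take the normalized trace of the same identity, so that the left‑hand side becomes $\tfrac{1}{mn}\Tr|\RR^{(\J,\K)}|^{2}$ and the right‑hand side $\tfrac{1}{mnv}\imag\Tr\RR^{(\J,\K)}$, and then convert $\Tr\RR^{(\J,\K)}$ into $m_{n}^{(\J,\K)}$.

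The only step needing care is this last conversion. Since $\W^{(\J,\K)}$ is rectangular whenever $|\J|\neq|\K|$, the hermitization $\V^{(\J,\K)}$ carries the eigenvalue $0$ with multiplicity $\bigl||\J|-|\K|\bigr|$; splitting $\Tr\RR^{(\J,\K)}$ into the traces over its two $mn$‑sized diagonal blocks and applying the Schur/block–inversion formula $w\bigl(\W^{(\J,\K)}(\W^{(\J,\K)})^{*}-w^{2}\II\bigr)^{-1}$ to the first block shows that these two partial traces agree up to an explicit term of size $\bigl||\J|-|\K|\bigr|/|w|$. Feeding this into the definition of $m_{n}^{(\J,\K)}$, taking imaginary parts and using $v/|w|^{2}\le v^{-1}$ produces exactly the correction term $\tfrac{|\J|-|\K|}{2mnv}$, while the remaining nonnegative contribution $v^{-1}\imag(\cdot)$ coming from the zero eigenvalue is dropped in favour of the inequality.

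I do not anticipate a real obstacle here: the whole content of the lemma is the Ward identity $\RR\RR^{*}=v^{-1}\imag\RR$ for a Hermitian matrix, together with the diagonal/trace specializations. The only mildly delicate point is the normalization bookkeeping in~\eqref{eq: res 1}: one must keep track of which $mn$ of the $2mn-|\J|-|\K|$ surviving coordinates of $\V^{(\J,\K)}$ enter $m_{n}^{(\J,\K)}$ and correctly isolate the zero‑eigenvalue contribution $|\J|-|\K|$ of the rectangular hermitization. I expect this to be entirely routine once the identity and the block structure are in place.
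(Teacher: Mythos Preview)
Your proposal is correct and follows essentially the same route as the paper. The paper's proof is extremely terse: for~\eqref{eq: res 1} it writes the single line $\frac{1}{mn}\Tr|\RR^{(\J,\K)}|^{2}=\frac{1}{mnv}\imag\Tr\RR^{(\J,\K)}\le\frac{1}{v}\imag m_{n}^{(\J,\K)}+\frac{|\J|-|\K|}{2mnv}$, which is exactly your Ward identity followed by the normalization bookkeeping you describe; for~\eqref{eq: res 2} it simply cites the eigenvalue decomposition of $\V(z)$, which is equivalent to reading off the $(j,j)$ entry of $\RR\RR^{*}=v^{-1}\imag\RR$ as you do.
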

\begin{proof}
The proof of~\eqref{eq: res 1} follows from the following inequality 
\begin{eqnarray*}
\frac{1}{mn}\Tr |\RR^{(\J, \K)}|^2 = \frac{1}{m n v} \imag \Tr \RR^{(\J, \K)} \le \frac{1}{v} \imag m_n^{(\J, \K)} + \frac{|\J|-|\K|}{2mnv}.
\end{eqnarray*}
The bound~\eqref{eq: res 2} follows from the eigenvalue decomposition of \(\V(z)\).
\end{proof}

\section{Bounds for the  Kolmogorov distance between distribution functions  via  Stieltjes transforms}

We reformulate the following smoothing inequality proved in~\cite{GotTikh2003}[Corollary~2.3], which allows to relate   distribution functions to their Stieltjes transforms. Let  \(G(x)\) be an arbitrary distribution function  which support is an interval or union of non-intersecting intervals, say \(\mathbb J=\supp G(x)=\cup_{\alpha=1}^m\mathbb J_{\alpha}\) and \(\mathbb J_{\alpha}=[a_{\alpha},b_{\alpha}]\). Additionally we assume that \(G(x)\) has an absolutely continues density which is bounded and for any end-point \(c\) of the support \(\J\) it behaves as \(g(x)\sim (x-c)^{\frac12}\). For any \(x\in\mathbb J\) we define \(\gamma(x)\eqdef\min_{\alpha}\{|x-a_{\alpha}|, |b_{\alpha}-x|\}\).
Given \(\frac12\min_{\alpha}\{b_{\alpha}-a_{\alpha}\}>\varepsilon>0\) introduce the interval \(\mathbb J_{\alpha}^{(\varepsilon)}=\{x\in[a_{\alpha},b_{\alpha}]:\, \gamma(x)\ge\varepsilon\}\) and
\(\mathbb J'_{\varepsilon}=\cup_{\alpha=1}^m\mathbb J_{\alpha}^{(\varepsilon/2)}\).
For a distribution function \(F\) denote by \(S_F(z)\) its Stieltjes transform,
\begin{eqnarray*}
S_F(z) \eqdef \int_{-\infty}^{\infty}\frac1{x-z}dF(x).
\end{eqnarray*}
We also denote 
\begin{eqnarray}\label{a def}
a \eqdef \sqrt 2 + 1.
\end{eqnarray}
\begin{statement}\label{smoothing}
Let \(v_0>0\) and \(\frac12>\varepsilon>0\) be positive numbers such that
\begin{eqnarray}\label{avcond}
2v a\le \varepsilon^{3/2}.
\end{eqnarray}
Denote  \(v'=v/\sqrt{\gamma}\). If \(G\) denotes the  distribution function satisfying conditions above, and \(F\) is any distribution function,
there exist some absolute constants \(C_1\) and \(C_2\) such that
\begin{eqnarray*}
\Delta(F,G)&\eqdef& \sup_{x}|F(x)-G(x)|\\
&\le& 2 \sup_{x\in\mathbb J'_{\varepsilon}}\bigg|\imag\int_{-\infty}^x\big(S_F(u+i v')-S_G(u+i v')\big)du\bigg|+C_1 v
	+C_2\varepsilon^{\frac32}.
	\end{eqnarray*}	
\end{statement}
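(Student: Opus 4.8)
The plan is to reduce the inequality to a classical Esseen-type smoothing estimate for Poisson-mollified distribution functions, adapted to a support consisting of several intervals with square-root edges, exactly as in \cite{GotTikh2003}. Write $\kappa_v(t)\eqdef\frac1\pi\frac{v}{t^2+v^2}$ for the Cauchy density and set $F_v\eqdef F*\kappa_v$, $G_v\eqdef G*\kappa_v$. Since $\frac1\pi\imag S_F(u+iw)$ is precisely the density of $F_w$, one has $\imag\int_{-\infty}^x(S_F(u+iv')-S_G(u+iv'))\,du=\pi(F_{v'}(x)-G_{v'}(x))$ for every $x$ and every $v'>0$, simply by linearity of $\imag$ and of the integral. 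Taking $v'=v'(x)\eqdef v/\sqrt{\gamma(x)}$, it therefore suffices to prove $\Delta(F,G)\le 2\pi\sup_{x\in\mathbb J'_\varepsilon}|F_{v'(x)}(x)-G_{v'(x)}(x)|+C_1v+C_2\varepsilon^{3/2}$; the generous factor $2\pi$ means only crude constants will be needed below.

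Next I would reduce the supremum defining $\Delta(F,G)$ to a point of the bulk $\mathbb J'_\varepsilon$. Pick $x_0$ with $|F(x_0\pm0)-G(x_0)|\ge\Delta(F,G)-\eta$ for small $\eta>0$; by continuity of $G$ the two signs are symmetric, so assume $F(x_0)-G(x_0)\ge\Delta(F,G)-\eta$, the opposite case being identical. If $x_0$ lies off a neighbourhood of $\supp G$ — to the right of $\mathbb J$, to the left, or in one of the gaps — then monotonicity of $F$ together with the fact that $G$ is constant there transfers the extremal discrepancy to an endpoint of some $\mathbb J_\alpha$; and if $\gamma(x_0)<\varepsilon/2$, monotonicity of $F$ together with the bound $g(y)\lesssim\sqrt{\gamma(y)}$, which makes the total variation of $G$ over the discarded boundary strip at most $C\varepsilon^{3/2}$, lets us move to the nearest point with $\gamma\ge\varepsilon/2$ losing only $C\varepsilon^{3/2}$. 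A routine case analysis along these lines produces a point $\widetilde x_0\in\mathbb J'_\varepsilon$ with $F(\widetilde x_0)-G(\widetilde x_0)\ge\Delta'$, where $\Delta'\eqdef\Delta(F,G)-C\varepsilon^{3/2}$ (after letting $\eta\to0$).

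The core is then the smoothing comparison at $\widetilde x_0$, which I would run by dichotomy: if $\Delta(F,G)\le C_1v$ there is nothing to prove, so assume the reverse. Because $g$ is bounded, monotonicity of $F$ gives $F(u)-G(u)\ge\Delta'/2$ on an interval $[\widetilde x_0,\widetilde x_0+\rho]$ with $\rho\sim\Delta'$; using the half-width $\varepsilon/2$ built into $\mathbb J'_\varepsilon$ to keep a little room, one selects a point $y$ of this interval with $y\in\mathbb J'_\varepsilon$ and $y-\widetilde x_0$ a convenient multiple of $v'(y)$. Writing $v''=v'(y)$ and $F_{v''}(y)-G_{v''}(y)=\E[(F-G)(y-v''C)]$ for a standard Cauchy variable $C$, splitting according to whether $|v''C|\le y-\widetilde x_0$, and estimating the Cauchy tail explicitly — this is precisely where $a=\sqrt2+1$, i.e.\ $\arctan a=3\pi/8$ and $\Pb(|C|\le a)=3/4$, is used — yields $F_{v''}(y)-G_{v''}(y)\ge c\,\Delta'-C\,v/\sqrt\varepsilon-|G_{v''}(y)-G(y)|$. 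The last term is the pure $G$-smoothing error: for $x\in\mathbb J'_\varepsilon$ one has $\gamma(x)\ge\varepsilon/2$, hence $v'(x)\le v\sqrt{2/\varepsilon}$, and since the local density is $\lesssim\sqrt{\gamma(x)}$ the distortion $\sqrt{\gamma(x)}\,v'(x)=v$ is uniform, so that $|G_{v'(x)}(x)-G(x)|\lesssim v+\varepsilon^{3/2}$ using $2av\le\varepsilon^{3/2}$. Combining, and using $2av\le\varepsilon^{3/2}$ once more to absorb the $v/\sqrt\varepsilon$ term into $C_1v+C_2\varepsilon^{3/2}$, completes the argument.

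The hard part will be this last comparison. Because the mollification width $v'=v/\sqrt{\gamma}$ \emph{grows} toward the edges, one must check at every point that enters the argument that the product of the local density and $v'$ stays of order $v$, that the heavy Cauchy tail $\Pb(|v''C|>y-\widetilde x_0)$ is genuinely negligible on the range of $\Delta$ not already covered by the $C_1v+C_2\varepsilon^{3/2}$ term, and that the evaluation point $y$ can always be kept inside $\mathbb J'_\varepsilon$. These three requirements are exactly what the hypothesis $2av\le\varepsilon^{3/2}$ together with the half-width $\varepsilon/2$ in the definition of $\mathbb J'_\varepsilon$ are designed to secure, and the remaining bookkeeping can be carried out along the lines of \cite{GotTikh2003}.
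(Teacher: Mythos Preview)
Your overall strategy is the same as the paper's: identify the Poisson--mollified distributions via $\imag\int_{-\infty}^x(S_F-S_G)(u+iv')\,du=\pi(F_{v'}(x)-G_{v'}(x))$, reduce $\sup_x|F-G|$ to $\sup_{x\in\mathbb J_\varepsilon}|F-G|$ at the cost of $C\varepsilon^{3/2}$ using the square--root edge behaviour of $g$, and then run a Cauchy--tail splitting at $a=\sqrt2+1$ so that $\Pb(|C|\le a)=3/4$.  This is exactly the proof in \cite{GotTikh2003} that the paper reproduces.

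There is, however, one genuine slip. You claim that the error term $Cv/\sqrt\varepsilon$ produced in your smoothing comparison ``can be absorbed into $C_1v+C_2\varepsilon^{3/2}$ using $2av\le\varepsilon^{3/2}$''.  This is false: from $2av\le\varepsilon^{3/2}$ one only gets $v/\sqrt\varepsilon\le\varepsilon/(2a)$, and $\varepsilon$ is \emph{not} dominated by $v+\varepsilon^{3/2}$ when $\varepsilon$ is small (indeed $v\lesssim\varepsilon^{3/2}\ll\varepsilon$).  So the absorption step as written does not close.

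The source of the trouble is the detour through ``an interval $[\widetilde x_0,\widetilde x_0+\rho]$, $\rho\sim\Delta'$, on which $F-G\ge\Delta'/2$''.  The paper avoids this entirely: for fixed $x\in\mathbb J'_\varepsilon$ it writes
\[
\frac1\pi\int_{|y|\le a}\frac{F(x-v'y)-G(x-v'y)}{y^2+1}\,dy
\;\ge\;\beta\bigl(F(x-v'a)-G(x-v'a)\bigr)-\frac1{\pi}\int_{|y|\le a}\bigl|G(x-v'y)-G(x-v'a)\bigr|\,dy,
\]
using only monotonicity of $F$ in the first step (no interval of persistence is needed).  The $G$--increment on the right is then bounded by the \emph{local} density estimate $g(u)\lesssim\sqrt{\gamma(u)}$, giving $|G(x+y)-G(x)|\le Cy\sqrt\gamma$ for $|y|\le 2av'$, and after integrating this yields $C\gamma (v')^2/v=Cv$ with no $1/\sqrt\varepsilon$ loss.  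Evaluating at $x=x_n+av'(x_n)$ (which lies in $\mathbb J'_\varepsilon$ by Remark~\ref{rem2.2}) for a sequence $x_n\in\mathbb J_\varepsilon$ with $F(x_n)-G(x_n)\to\Delta_\varepsilon(F,G)$, together with the tail bound $\frac14\Delta(F,G)$ on $|y|>a$, closes the inequality with the stated $C_1v+C_2\varepsilon^{3/2}$.  You already use the key identity $\sqrt{\gamma(x)}\,v'(x)=v$ when bounding $|G_{v'}(x)-G(x)|$; use it consistently in the main comparison as well and the spurious $v/\sqrt\varepsilon$ term disappears.
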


\begin{remark}\label{rem2.2}
For any \(x\in\mathbb J_{\varepsilon}\) we have \(\gamma=\gamma(x)\ge\varepsilon\)
and according to condition \eqref{avcond}, \(\frac{av}{\sqrt\gamma}\le \frac{\varepsilon}2\).
\end{remark}

\begin{lemma}\label{Cauchy}
Let \(0<v\le \frac{\varepsilon^{3/2}}{2a}\) and \(V > v\). Denote  \(v' \eqdef v/\sqrt{\gamma}\). The following inequality holds
\begin{eqnarray*}
&&\sup_{x\in\mathbb J'_{\varepsilon}}\left|\int_{-\infty}^x(\imag(S_F(u+iv')-S_G(u+iv'))du\right|\notag\\
&&\qquad\qquad\qquad\le \int_{-\infty}^{\infty}|S_F(u+iV)-S_G(u+iV)|du +\sup_{x\in\mathbb J'_{\varepsilon}}\left|\int_{v'}^V\left(S_F(x+iu)-S_G(x+iu)\right)du\right|.\end{eqnarray*}
\end{lemma}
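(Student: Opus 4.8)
The plan is a standard contour (Cauchy's theorem) argument applied to the analytic function
\[
H(z) \eqdef S_F(z) - S_G(z), \qquad z \in \C^{+},
\]
which is holomorphic on the open upper half-plane since $S_F$ and $S_G$ are Stieltjes transforms. First I would fix $x \in \mathbb J'_{\varepsilon}$ and record that the corresponding level $v' = v/\sqrt{\gamma(x)}$ satisfies $0 < v' < V$: the lower bound is clear, and for the upper bound recall that on $\mathbb J'_{\varepsilon}$ one has $\gamma(x) \ge \varepsilon/2$, so by the hypothesis $v \le \varepsilon^{3/2}/(2a)$ and $a = \sqrt 2 + 1$ we get $v' \le v\sqrt{2/\varepsilon} \le \varepsilon/(2+\sqrt2) < 1/2 \le V$ (as $V \ge 1$ in our setting). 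This keeps the relevant rectangle inside $\C^{+}$.

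Next I would apply Cauchy's integral theorem to $H$ on the rectangle with vertices $-T + iv'$, $x + iv'$, $x + iV$, $-T + iV$, for $T$ large, traversed counterclockwise. This yields
\[
\int_{-T}^{x} H(u+iv')\,du \;=\; \int_{-T}^{x} H(u+iV)\,du \;-\; i\int_{v'}^{V} H(x+it)\,dt \;+\; i\int_{v'}^{V} H(-T+it)\,dt .
\]
Letting $T \to \infty$, the last term vanishes: for $t \in [v',V]$ one has $|H(-T+it)| \le 1/t \cdot (\text{mass})$ bounds and, by dominated convergence, $H(-T+it) \to 0$, uniformly in $t$ on the compact interval $[v',V]$. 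Taking imaginary parts and using $\imag(i w) = \re w$, I obtain the clean identity
\[
\imag \int_{-\infty}^{x} H(u+iv')\,du \;=\; \imag \int_{-\infty}^{x} H(u+iV)\,du \;-\; \re \int_{v'}^{V} H(x+it)\,dt .
\]
Bounding the first term by $\int_{-\infty}^{\infty} |S_F(u+iV) - S_G(u+iV)|\,du$ and the second by $\bigl|\int_{v'}^{V}\bigl(S_F(x+iu) - S_G(x+iu)\bigr)\,du\bigr|$, and then taking the supremum over $x \in \mathbb J'_{\varepsilon}$, gives the asserted inequality.

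The only genuinely non-routine point is justifying that $\int_{-\infty}^{\infty}|S_F(u+iV)-S_G(u+iV)|\,du$ is finite, which is what makes the right-hand side of the claimed inequality meaningful; this is where I would be a little careful. Since $F$ and $G$ are both probability distribution functions, $S_F(u+iV) - S_G(u+iV) = \int \frac{d(F-G)(x)}{x-u-iV}$, and expanding $\frac{1}{x-u-iV} = -\frac1u + O(u^{-2})$ as $|u|\to\infty$ shows the leading $-1/u$ terms cancel (equal total mass), so $H(u+iV) = O(u^{-2})$, which is integrable; near the origin $|H(u+iV)| \le 2/V$. The same estimate also legitimizes letting $T\to\infty$ in $\int_{-T}^{x}H(u+iV)\,du$. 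All remaining steps are the standard properties of $S_F, S_G$ used throughout~\cite{GotTikh2003}, so no further obstacles are expected.
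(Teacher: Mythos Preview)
Your argument is correct and follows essentially the same route as the paper: fix $x\in\mathbb J'_{\varepsilon}$, apply Cauchy's theorem to $S_F-S_G$ on the rectangle with corners $-L+iv'$, $x+iv'$, $x+iV$, $-L+iV$, and let $L\to\infty$, showing the left vertical side contributes nothing. Your additional remark that $S_F(u+iV)-S_G(u+iV)=O(u^{-2})$ (from cancellation of total mass) is a useful integrability check the paper leaves implicit; otherwise the two proofs are the same.
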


\begin{proof}Let \(x\in \mathbb J'_{\varepsilon}\) be fixed. Let \(\gamma=\gamma(x)\). Put \(z=u+iv'\).   Since \(v'=\frac v{\sqrt{\gamma}}\le \frac{\varepsilon}{2a}\), see \eqref{avcond},  we may assume without loss of generality that \(v'\le 4\) for  \(x\in\mathbb J'_{\varepsilon}\).  Since the functions of \(S_F(z)\) and \(S_G(z)\) are analytic in the upper half-plane, it is enough to use Cauchy's theorem. We can write for \(x\in\mathbb J'_{\varepsilon}\)
\begin{eqnarray*} 
\int_{-\infty}^{x}\imag(S_F(z)-S_G(z))du=\imag\{\lim_{L\to\infty}\int_{-L}^x(S_F(u+iv')-S_G(u+iv'))du\}.
\end{eqnarray*}
By Cauchy's integral formula, we have
\begin{eqnarray*}
\int_{-L}^x (S_F(z)-S_G(z))du&=&\int_{-L}^x(S_F(u+iV)-S_G(u+iV))\,du\\
&+&\int_{v'}^V (S_F(-L+iu)-S_G(-L+iu))\,du -\int_{v'}^V (S_F(x+iu)-S_G(x+iu))\,du
\end{eqnarray*}
Denote by \(\xi\text{ (resp. }\eta)\) a random variable with distribution function \(F(x)\) (resp. \(G(x)\)). Then we have
\begin{eqnarray*} 
|S_F(-L+iu)|=|\E (\xi+L-iu)^{-1} |\le (v')^{-1}\Pb (|\xi|>L/2)+2/L,
\end{eqnarray*}
for any \(v'\le u\le V\).
Similarly,
\begin{eqnarray*}
|S_G(-L+iu)|\le {v'}^{-1}\Pb(|\eta|>L/2)+2/L.
\end{eqnarray*}
These inequalities imply that
\begin{eqnarray*}
\bigg|\int_{v'}^V(S_F(-L+iu)-S_G(-L+iu))du\bigg|\to 0\quad\text{as}\quad L\to\infty,
\end{eqnarray*}
which completes the proof. 
\end{proof}

Combining the results of Proposition \ref{smoothing} and Lemma \ref{Cauchy}, we get
\begin{corollary}\label{smoothing1}
Under the conditions of Proposition \ref{smoothing} the following inequality holds
\begin{eqnarray*}
\Delta(F,G)&\le& 2\int_{-\infty}^{\infty}|S_F(u+iV)-S_G(u+iV)|du+C_1v+C_2\varepsilon^{\frac32}\\
&+& 2 \sup_{x\in\mathbb J'_{\varepsilon}}\int_{v'}^V|S_F(x+iu)-S_G(x+iu)|du,
\end{eqnarray*}
where \(v'=\frac v{\sqrt{\gamma}}\) and \(C_1,C_2 >0\) denote absolute constants.
\end{corollary}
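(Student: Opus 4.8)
The plan is simply to concatenate the two results just established: the smoothing inequality of Proposition~\ref{smoothing} reduces $\Delta(F,G)$ to a single one-dimensional supremum over $\mathbb J'_{\varepsilon}$ of an integrated Stieltjes-transform difference along the horizontal line $\imag w = v'$, and then Lemma~\ref{Cauchy} rewrites that horizontal integral (via Cauchy's theorem, i.e. a contour shift) as a piece along the much higher line $\imag w = V$ plus a vertical piece running from $v'$ up to $V$. A final application of the triangle inequality for integrals, moving the modulus inside the $u$-integral, then yields the asserted form.

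Concretely, I would first invoke Proposition~\ref{smoothing} --- whose hypotheses on $G$, on $F$, and the smallness condition~\eqref{avcond} relating $v$, $\varepsilon$ and $a=\sqrt2+1$ are exactly those assumed in the corollary --- to obtain
\begin{eqnarray*}
\Delta(F,G)\le 2\sup_{x\in\mathbb J'_{\varepsilon}}\left|\imag\int_{-\infty}^x\big(S_F(u+iv')-S_G(u+iv')\big)\,du\right|+C_1v+C_2\varepsilon^{\frac32},
\end{eqnarray*}
with $v'=v/\sqrt{\gamma(x)}$. Since $\imag$ is linear and commutes with integration against $du$, the inner quantity equals $\big|\int_{-\infty}^x\imag\big(S_F(u+iv')-S_G(u+iv')\big)\,du\big|$, so Lemma~\ref{Cauchy} --- applicable because $0<v\le\varepsilon^{3/2}/(2a)$ and $V>v$, which is precisely~\eqref{avcond} --- bounds the supremum by
\begin{eqnarray*}
\int_{-\infty}^{\infty}|S_F(u+iV)-S_G(u+iV)|\,du+\sup_{x\in\mathbb J'_{\varepsilon}}\left|\int_{v'}^V\big(S_F(x+iu)-S_G(x+iu)\big)\,du\right|.
\end{eqnarray*}
Substituting this into the previous display, using $\big|\int_{v'}^V(\cdot)\,du\big|\le\int_{v'}^V|\cdot|\,du$ for each fixed $x$, and distributing the factor $2$ over the two integral terms produces exactly the inequality in the statement, with the same absolute constants $C_1,C_2$.

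I do not anticipate a genuine obstacle: all the analytic work --- the smoothing step and the contour shift --- has already been carried out in Proposition~\ref{smoothing} and Lemma~\ref{Cauchy} (the former adapted from~\cite{GotTikh2003}[Corollary~2.3]). The only point deserving a sentence of care is that the single constraint~\eqref{avcond}, together with $V>v$, simultaneously licenses both the Proposition and the Lemma, so the combination introduces no new hypotheses; this is immediate from the way Lemma~\ref{Cauchy} was stated.
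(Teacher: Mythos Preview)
Your proof is correct and takes essentially the same approach as the paper, which simply states that the corollary follows by combining Proposition~\ref{smoothing} and Lemma~\ref{Cauchy}. Your write-up is in fact more detailed than the paper's, spelling out the triangle-inequality step and the verification that \eqref{avcond} together with $V>v$ covers the hypotheses of both ingredients.
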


\subsection{Proof of Bounds for the Kolmogorov Distance}\label{kolmdistance}
\begin{proof} {\it Proposition \ref{smoothing}}.
The proof of Proposition~\ref{smoothing} is a straightforward adaptation of the proof from~\cite{GotTikh2003}[Lemma~2.1]. We include it here for the sake of completeness. First we note that
\begin{eqnarray*}
\sup_x|F(x)-G(x)|&=&\sup_{x\in\mathbb J}|F(x)-G(x)|=\max\bigg\{\sup_{x\in\mathbb J_{\varepsilon}}|F(x)-G(x)|,\\
&&\sup_{x\in[a_{\alpha},a_{\alpha}+\varepsilon]}|F(x)-G(x)|,\sup_{x\in[b_{\alpha}-\varepsilon,b_{\alpha}]}|F(x)-G(x)|, \alpha=1,\ldots,m\bigg\}.
\end{eqnarray*}
Without loss of generality we shall assume that \(a_1\le b_1<a_2\le b_2<\cdots\le a_{m}\le b_m\). 
Consider \(x\in[a_{1},a_{1}+\varepsilon]\) we have
\begin{eqnarray*}
-G(a_{\alpha}+\varepsilon)&\le& F(x)-G(x)\le F(a_1+\varepsilon)-G(a_1+\varepsilon)+G(a_1+\varepsilon)\\
&\le& \sup_{x\in\mathbb J_1^{(\varepsilon)}}|F(x)-G(x)|+
G(a_1+\varepsilon).
\end{eqnarray*}
This inequality yields
\begin{eqnarray*}
\sup_{x\in[a_1,a_1+\varepsilon]}|F(x)-G(x)|\le \sup_{x\in\mathbb J_1^{(\varepsilon)}}|F(x)-G(x)|+G(a_1+\varepsilon).
\end{eqnarray*}
Let \(x\in [a_{\alpha},a_{\alpha}+\varepsilon]\) for \(\alpha=2,\ldots,m\). Note that \(G(b_{\alpha-1})=G(a_{\alpha})\).
We may write
\begin{eqnarray*}
&&F(b_{\alpha-1})-G(b_{\alpha-1})-(G(a_{\alpha}+\varepsilon)-G(a_{\alpha}))\le F(x)-G(x)\notag\\
&&\qquad\qquad\qquad\qquad\le (F(a_{\alpha}+\varepsilon)-G(a_{\alpha}+\varepsilon))+(G(a_{\alpha}+\varepsilon)-G(a_{\alpha})).
\end{eqnarray*}
From here it follows that
\begin{eqnarray*}
\sup_{x\in[a_{\alpha},a_{\alpha}+\varepsilon}|F(x)-G(x)|&\le& \sup_{x\in\mathbb J_{\alpha}^{(\varepsilon)}}|F(x)-G(x)|+\sup_{x\in\mathbb J_{\alpha-1}^{(\varepsilon)}}|F(x)-G(x)|\notag\\&+&(G(a_{\alpha}+\varepsilon)-G(a_{\alpha})).
\end{eqnarray*}
By induction we get for any \(\alpha=1,\ldots,m\)
\begin{eqnarray*}
\sup_{x\in\mathbb J_{\alpha}}|F(x)-G(x)|\le m\max_{1\le i\le m}\sup_{x\in\mathbb J_{i}^{(\varepsilon)}}|F(x)-G(x)|+m\max_{1\le i\le m}(G(a_{i}+\varepsilon)-G(a_{i})).
\end{eqnarray*}
Similarly we get
\begin{eqnarray*}
\sup_{x\in[b_{\alpha}-\varepsilon,b_{\alpha}]}|F(x)-G(x)|\le  m\max_{1\le i\le m}\sup_{x\in\mathbb J_{i}^{(\varepsilon)}}|F(x)-G(x)|+m\max_{1\le i\le m}(G(b_{i}+\varepsilon)-G(b_{i})).
\end{eqnarray*}
Note that \(G(a_{\alpha}+\varepsilon)-G(a_{\alpha})\le C\varepsilon^{3/2}\)  with some absolute constant \(C>0\).
Combining all these relations  we get
\begin{eqnarray}\label{gav1}
\sup_x|F(x)-G(x)|\le\Delta_{\varepsilon}(F,G)+C\varepsilon^{3/2},
\end{eqnarray}
where \(\Delta_{\varepsilon}(F,G)=\sup_{x\in\mathbb J_{\varepsilon}}|F(x)-G(x)|\).
We denote \(v' \eqdef v/\sqrt{\gamma}\). For any \(x\in \mathbb J'_{\varepsilon}\)
\begin{eqnarray}\label{smoothm}
&&\bigg|\frac1{\pi}\imag\Big(\int_{-\infty}^x(S_F(u+iv')-S_G(u+iv'))du\Big)\bigg| \ge \frac1{\pi} \imag\Big(\int_{-\infty}^x(S_F(u+iv')-S_G(u+iv'))du\Big)\nonumber\\
&&=\frac1{\pi}\left[\int_{-\infty}^{x}\int_{-\infty}^{\infty}\frac{v'd(F(y)-G(y))}{(y-u)^2+{v'}^2}du\right]\nonumber = \frac1{\pi}\int_{-\infty}^x\left[\int_{-\infty}^{\infty}\frac{2v'(y-u)(F(y)-G(y))dy}{((y-u)^2+{v'}^2)^2}\right]\nonumber\\
&&=\frac1{\pi}\int_{-\infty}^{\infty}(F(y)-G(y))\left[\int_{-\infty}^x\frac{2v'(y-u)}{((y-u)^2+{v'}^2)^2}du\right]dy\nonumber\\
&&=\frac1{\pi}\int_{-\infty}^{\infty}\frac{F(x-v'y)-G(x-v'y)}{y^2+1} dy,  \qquad \text{by change of variables}.
\end{eqnarray}
Furthermore, using the definition~\eqref{a def} of \(a\) and \(\Delta(F,G)\) we note that
\begin{eqnarray}\label{int1}
\frac1{\pi} \int_{|y|>a}\frac{|F(x-v'y)-G(x-v'y)|}{y^2+1}dy\le (1-\beta)\Delta(F,G).
\end{eqnarray}
Since \(F\) is non decreasing, we have
\begin{eqnarray*}
\frac1{\pi}\int_{|y|\le a}\frac{F(x-v'y)-G(x-v'y)}{y^2+1}dy\ge \frac1{\pi}\int_{|y|\le a}\frac{F(x-v'a)-G(x-v'y)}{y^2+1}dy
\\
\ge (F(x-v'a)-G(x-v'a))\beta -\frac1{\pi}\int_{|y|\le a}|G(x-v'y)-G(x-v'a)|dy.
\end{eqnarray*}
These inequalities together imply (using a change of variables in the last step)
\begin{eqnarray} \label{smoth1}
&&\frac1{\pi}\int_{-\infty}^{\infty}\frac{F(x-v'y)-G(x-v'y)}{y^2+1}dy\ge
\beta(F(x-v'a)-G(x-v'a))\nonumber\\
&&-\frac1{\pi}\int_{|y|\le a}|G(x-v'y)-G(x-v'a)|dy-(1-\beta)\Delta(F,G) 
\ge \beta(F(x-v'a)-G(x-v'a))\nonumber \\
&&-\frac1{v'\pi}\int_{|y|\le v'a}|G(x-y)-G(x-v'a)|dy-(1-\beta)\Delta(F,G) .
\end{eqnarray}
Note that according to Remark \ref{rem2.2}, \(x\pm v'a\in\mathbb J'_{\varepsilon}\) for any \(x\in\mathbb J_{\varepsilon}\). Assume first that \(x_n\in\mathbb J_{\varepsilon}\) is a sequence  such that \(F(x_n)-G(x_n)\to\Delta_{\varepsilon}(F,G)\). Then \(x_n'\eqdef x_n+v'a\in\mathbb J'_{\varepsilon}\).
Using \eqref{smoothm} and \eqref{smoth1}, we get
\begin{eqnarray}\label{gav20}
&&\sup_{x\in\mathbb J'_{\varepsilon}}\left|\imag\int_{-\infty}^x(S_F(u+iv')-S_G(u+iv'))du\right|\nonumber\\
&&\ge \imag\int_{-\infty}^{x_n'}(S_F(u+iv')-S_G(u+iv'))du \ge\beta(F(x_n'-v'a)-G(x_n'-v'a))\nonumber\\ 
&&\quad\quad-\frac1{\pi v} \sup_{x\in\mathbb J'_{\varepsilon}}{\sqrt{\gamma}}\int_{|y|\le 2v'a}|G(x+y)-G(x)|dy-(1-\beta)\Delta(F,G) \nonumber\\
&&=\beta(F(x_n)-G(x_n)) -\frac1{\pi v} \sup_{x\in\mathbb J'_{\varepsilon}}{\sqrt{\gamma}}\int_{|y|<2v'a}|G(x+y)-G(x)|dy-(1-\beta)\Delta(F,G) .
\end{eqnarray}
Assume for definiteness  that \(y>0\). Recall that \( \varepsilon\le 2\gamma\), for any \(x\in\mathbb J_{\varepsilon}'\).
By Remark~\ref{rem2.2} with \(\varepsilon/2\) instead of \(\varepsilon\), we have \(0<y\le 2v'a\le\sqrt 2\varepsilon\), for any \(x\in\mathbb J'_{\varepsilon}\).
By conditions of Proposition, we have,
\begin{eqnarray*}
|G(x+y)-G(x)|&\le& y\sup_{u\in[x,x+y]}G'(u)\le yC\sqrt{\gamma+y}\\
&\le& C y\sqrt{\gamma+2v'a} \le C y\sqrt{\gamma+\varepsilon}\le Cy\sqrt{\gamma}.
\end{eqnarray*}
This yields after integrating in \(y\)
\begin{eqnarray}\label{gav3}
\frac1{\pi v} \sup_{x\in\mathbb J'_{\varepsilon}}{\sqrt{\gamma}}\int_{0\le y\le 2v'a}|G(x+y)-G(x)|dy \le\frac C{v} \sup_{x\in\mathbb J'_{\varepsilon}}\gamma{v'}^2\le Cv.
\end{eqnarray}
Similarly we get that
\begin{eqnarray}\label{gav3*}
\frac1{\pi v} \sup_{x\in\mathbb J'_{\varepsilon}}{\sqrt{\gamma}}\int_{0\ge y\ge -2v'a}|G(x+y)-G(x)|dy \le\frac C{v} \sup_{x\in\mathbb J'_{\varepsilon}}\gamma{v'}^2\le Cv.
\end{eqnarray}
By inequality \eqref{gav1}
\begin{eqnarray}\label{gt1}
\Delta_{\varepsilon}(F,G)\ge \Delta(F,G)-C\varepsilon^{\frac32}.
\end{eqnarray}
The  inequalities \eqref{gav20}, \eqref{gt1} and \eqref{gav3}, \eqref{gav3*} together yield as \(n\) tends to infinity
\begin{eqnarray}
&&\sup_{x\in\mathbb J'_{\varepsilon}}\left|\imag\int_{-\infty}^x(S_F(u+iv')-S_G(u+iv'))du\right| \ge(2\beta-1)\Delta(F,G)-Cv -C\varepsilon^{\frac32}
,\label{Deltaeps}
\end{eqnarray}
for some constant \(C>0\). Similar arguments may be used to prove this inequality in case  there is a sequence \(x_n\in\mathbb J_{\varepsilon}\) such \(F(x_n)-G(x_n)\to-\Delta_{\varepsilon}(F,G)\). In view of \eqref{Deltaeps} and \(2\beta-1=1/2\) this completes the proof.
\end{proof}

\def\polhk#1{\setbox0=\hbox{#1}{\ooalign{\hidewidth
			\lower1.5ex\hbox{`}\hidewidth\crcr\unhbox0}}}

\end{document}